%% file: hcbgs.tex
\DeclareRobustCommand{\GenericInfo}[2]{} 
\RequirePackage[2022-06-01]{latexrelease} 
\documentclass{amsart}
\usepackage[margin=2cm]{geometry}

\makeatletter
\DeclareRobustCommand*\cal{\@fontswitch\relax\mathcal}
\DeclareRobustCommand*\mit{\@fontswitch\relax\mathnormal}
\DeclareRobustCommand*\frak{\@fontswitch\relax\mathfrak}
\makeatother

\usepackage[T1]{fontenc} 
\usepackage{newunicodechar}
\makeatletter
\def\newunicodechar#1#2{%
  \@tempswafalse
  \edef\nuc@tempa{\detokenize{#1}}%
  \if\relax\nuc@tempa\relax
    \nuc@emptyargerr
  \else
    \edef\@tempb{\expandafter\@car\nuc@tempa\@nil}%
    \nuc@check
    \if@tempswa
      \@namedef{u8:\nuc@tempa}{#2}%
    \fi
  \fi
}
\makeatother

\usepackage{amssymb} 
\usepackage[mathscr]{euscript} 

\usepackage[all,cmtip]{xy}
\SilentMatrices

\usepackage{tikz}
\usetikzlibrary{cd}
\usetikzlibrary{decorations.markings}
\usetikzlibrary{decorations.pathmorphing}
\usetikzlibrary{decorations.pathreplacing,calc,tikzmark}
\usetikzlibrary{positioning,arrows.meta,automata,shapes}
\input lectures.tikzstyles

\usepackage{pgfplots}
\pgfplotsset{width=7cm,compat=1.18}
\pgfdeclarelayer{edgelayer} 
\pgfdeclarelayer{nodelayer} 
\pgfsetlayers{edgelayer,nodelayer,main}

\usepackage{xr}[=v5] 

\usepackage{iftex} 
\ifpdf
\PassOptionsToPackage{pdfencoding=auto,psdextra}{hyperref}

\else
\PassOptionsToPackage{hypertex}{hyperref}
\fi
\usepackage[backref=section,linktoc=all]{hyperref} 


\usepackage[shortlabels]{enumitem} 

\usepackage[capitalize,nosort]{cleveref} 
\crefformat{equation}{(#2#1#3)} 
\crefmultiformat{equation}{(#2#1#3)}{ and~(#2#1#3)}{, (#2#1#3)}{, and~(#2#1#3)} 
\crefname{figure}{Figure}{Figures}
\Crefname{enumi}{}{}

\Crefname{subdefinition}{Definition}{Definitions}
\Crefname{subenumi}{Definition}{Definitions}
\AtBeginEnvironment{subdefinition}{%
    \crefalias{enumi}{subenumi}%
    \setlist[enumerate,1]{ref={\thedefinition.(\arabic*)}}%
}

\Crefname{conditionenumi}{Condition}{Conditions}
\newenvironment{conditions}{\crefalias{enumi}{conditionenumi}\enumerate}{\endenumerate\crefalias{enumi}{enumi}}
\Crefname{partenumi}{Part}{Parts}
\newenvironment{parts}{\crefalias{enumi}{partenumi}\enumerate}{\endenumerate\crefalias{enumi}{enumi}}

\def\axsymbol{{\sf A}} 
\def\frakaxsymbol{$\mathfrak A$}
\Crefname{axiomdefinition}{Definition}{Definitions}
\Crefname{axiomenumi}{Axiom}{Axioms}
\Crefformat{axiomenumi}{Axiom~#2#1#3}
\Crefrangeformat{axiomenumi}{Axioms #3#1#4--#5#2#6}
\Crefmultiformat{axiomenumi}{Axioms~#2#1#3}{ and~#2#1#3}{, #2#1#3}{, and~#2#1#3}
\AtBeginEnvironment{axiomdefinition}{%
    \crefalias{enumi}{axiomenumi}%
}

\def\doi#1{\href{https://doi.org/#1}{doi:#1}}
\def\arXiv#1{\href{https://arxiv.org/abs/#1}{arXiv:#1}}
\def\jstor#1{\href{https://www.jstor.org/stable/#1}{JSTOR:#1}}
\def\numdam#1{\href{http://www.numdam.org/item/?id=#1}{numdam:#1}}

\makeatletter
\let\over\@@over
\let\atop\@@atop
\let\above\@@above
\let\overwithdelims\@@overwithdelims
\let\atopwithdelims\@@atopwithdelims
\let\abovewithdelims\@@abovewithdelims
\def\eqlabel#1{\refstepcounter{equation}\label{#1}\ifmmode\ifinner\else\eqno\fi\fi\hbox{\@eqnnum}} 
\makeatother

\makeatletter
\@addtoreset{equation}{section}
\@addtoreset{equation}{subsection}
\let\proof@qed\displaymath@qed
\let\c@subsubsection\c@equation

\makeatother

\theoremstyle{definition}
\newtheorem{theorem}[equation]{Theorem}
\newtheorem{definition}[equation]{Definition}
\newtheorem{subdefinition}[equation]{Definition}
\newtheorem{axiomdefinition}[equation]{Definition}
\newtheorem{proposition}[equation]{Proposition}
\newtheorem{conjecture}[equation]{Conjecture}

\newtheorem{remark}[equation]{Remark}

\newtheorem{notation}[equation]{Notation}

\newtheorem{example}[equation]{Example}

\numberwithin{equation}{subsection}
\numberwithin{subsubsection}{subsection}

\newunicodechar{α}{\alpha} \newunicodechar{β}{\beta} \newunicodechar{γ}{\gamma} \newunicodechar{δ}{\delta} \newunicodechar{ϵ}{\epsilon} \newunicodechar{ζ}{\zeta} \newunicodechar{η}{\eta} \newunicodechar{θ}{\theta} \newunicodechar{ι}{\iota} \newunicodechar{κ}{\kappa} \newunicodechar{λ}{\lambda} \newunicodechar{μ}{\mu} \newunicodechar{ν}{\nu} \newunicodechar{ξ}{\xi} \newunicodechar{ο}{o} \newunicodechar{π}{\pi} \newunicodechar{ρ}{\rho} \newunicodechar{σ}{\sigma} \newunicodechar{τ}{\tau} \newunicodechar{υ}{\upsilon} \newunicodechar{ϕ}{\phi} \newunicodechar{χ}{\chi} \newunicodechar{ψ}{\psi} \newunicodechar{ω}{\omega} \newunicodechar{ε}{\varepsilon} \newunicodechar{ϑ}{\vartheta} \newunicodechar{ϖ}{\varpi} \newunicodechar{ϱ}{\varrho} \newunicodechar{ς}{\varsigma} \newunicodechar{φ}{\varphi} \newunicodechar{Γ}{\Gamma} \newunicodechar{Δ}{\Delta} \newunicodechar{Θ}{\Theta} \newunicodechar{Λ}{\Lambda} \newunicodechar{Ξ}{\Xi} \newunicodechar{Π}{\Pi} \newunicodechar{Σ}{\Sigma} \newunicodechar{Υ}{\Upsilon} \newunicodechar{Φ}{\Phi} \newunicodechar{Ψ}{\Psi} \newunicodechar{Ω}{\Omega} \newunicodechar{ℵ}{\aleph} \newunicodechar{ℏ}{\hbar} \newunicodechar{ℓ}{\ell} \newunicodechar{℘}{\wp} \newunicodechar{ℜ}{\Re} \newunicodechar{ℑ}{\Im} \newunicodechar{∂}{\partial} \newunicodechar{∞}{\infty} \newunicodechar{∅}{\emptyset} \newunicodechar{∇}{\nabla} \newunicodechar{√}{\surd} \newunicodechar{⊤}{\top} \newunicodechar{⊥}{\bot} \newunicodechar{∠}{\angle} \newunicodechar{△}{\triangle} \newunicodechar{∀}{\forall} \newunicodechar{∃}{\exists} \newunicodechar{♭}{\flat} \newunicodechar{♮}{\natural} \newunicodechar{♯}{\sharp} \newunicodechar{♣}{\clubsuit} \newunicodechar{♢}{\diamondsuit} \newunicodechar{♡}{\heartsuit} \newunicodechar{♠}{\spadesuit} \newunicodechar{∐}{\coprod} \newunicodechar{⋁}{\bigvee} \newunicodechar{⋀}{\bigwedge} \newunicodechar{⨄}{\biguplus} \newunicodechar{⋂}{\bigcap} \newunicodechar{⋃}{\bigcup} \newunicodechar{∫}{\int} \newunicodechar{∏}{\prod} \newunicodechar{∑}{\sum} \newunicodechar{⨂}{\bigotimes} \newunicodechar{⨁}{\bigoplus} \newunicodechar{⨀}{\bigodot} \newunicodechar{⨆}{\bigsqcup} \newunicodechar{◁}{\triangleleft} \newunicodechar{▷}{\triangleright} \newunicodechar{▽}{\bigtriangledown} \newunicodechar{∧}{\wedge} \newunicodechar{∨}{\vee} \newunicodechar{∩}{\cap} \newunicodechar{∪}{\cup} \newunicodechar{⊓}{\sqcap} \newunicodechar{⊔}{\sqcup} \newunicodechar{⊎}{\uplus} \newunicodechar{⨿}{\amalg} \newunicodechar{⋄}{\diamond} \newunicodechar{∙}{\bullet} \newunicodechar{≀}{\wr} \newunicodechar{⊙}{\odot} \newunicodechar{⊘}{\oslash} \newunicodechar{⊗}{\otimes} \newunicodechar{⊖}{\ominus} \newunicodechar{⊕}{\oplus} \newunicodechar{∓}{\mp} \newunicodechar{∘}{\circ} \newunicodechar{○}{\Orb} \newunicodechar{∖}{\setminus} \newunicodechar{⋅}{\cdot} \newunicodechar{∗}{\ast} \newunicodechar{⨯}{\times} \newunicodechar{⋆}{\star} \newunicodechar{∝}{\propto} \newunicodechar{⊑}{\sqsubseteq} \newunicodechar{⊒}{\sqsupseteq} \newunicodechar{∥}{\parallel} \newunicodechar{∣}{\divides} \newunicodechar{⊣}{\dashv} \newunicodechar{⊢}{\vdash} \newunicodechar{↗}{\nearrow} \newunicodechar{↘}{\searrow} \newunicodechar{↖}{\nwarrow} \newunicodechar{↙}{\swarrow} \newunicodechar{⇔}{\Leftrightarrow} \newunicodechar{⇐}{\Leftarrow} \newunicodechar{⇒}{\Rightarrow} \newunicodechar{≠}{\neq} \newunicodechar{≤}{\leq} \newunicodechar{≥}{\geq} \newunicodechar{≻}{\succ} \newunicodechar{≺}{\prec} \newunicodechar{≈}{\approx} \newunicodechar{≽}{\succeq} \newunicodechar{≼}{\preceq} \newunicodechar{⊃}{\supset} \newunicodechar{⊂}{\subset} \newunicodechar{⊇}{\supseteq} \newunicodechar{⊆}{\subseteq} \newunicodechar{∈}{\in} \newunicodechar{∋}{\ni} \newunicodechar{≫}{\gg} \newunicodechar{≪}{\ll} \newunicodechar{↔}{\leftrightarrow} \newunicodechar{↦}{\mapsto} \newunicodechar{∼}{\sim} \newunicodechar{≃}{\simeq} \newunicodechar{≡}{\equiv} \newunicodechar{≍}{\asymp} \newunicodechar{⌣}{\smile} \newunicodechar{⌢}{\frown} \newunicodechar{↼}{\leftharpoonup} \newunicodechar{↽}{\leftharpoondown} \newunicodechar{⇀}{\rightharpoonup} \newunicodechar{⇁}{\rightharpoondown} \newunicodechar{↪}{\hookrightarrow} \newunicodechar{↩}{\hookleftarrow} \newunicodechar{⋈}{\bowtie} \newunicodechar{⊨}{\models} \newunicodechar{⟹}{\Longrightarrow} \newunicodechar{⟶}{\longrightarrow} \newunicodechar{⟵}{\longleftarrow} \newunicodechar{⟸}{\Longleftarrow} \newunicodechar{⟼}{\longmapsto} \newunicodechar{⟷}{\longleftrightarrow} \newunicodechar{⟺}{\Longleftrightarrow} \newunicodechar{⋯}{\cdots} \newunicodechar{⋮}{\vdots} \newunicodechar{⋱}{\ddots} \newunicodechar{↕}{\updownarrow} \newunicodechar{⇑}{\Uparrow} \newunicodechar{⇓}{\Downarrow} \newunicodechar{⇕}{\Updownarrow} \newunicodechar{⌉}{\rceil} \newunicodechar{⌈}{\lceil} \newunicodechar{⌋}{\rfloor} \newunicodechar{⌊}{\lfloor} \newunicodechar{≅}{\cong} \newunicodechar{∉}{\notin} \newunicodechar{⇌}{\rightleftharpoons} \newunicodechar{≐}{\doteq} \newunicodechar{∄}{\not\exists} \newunicodechar{∌}{\not\ni} \newunicodechar{∔}{\dot+} \newunicodechar{∕}{/} \newunicodechar{∤}{\not|} \newunicodechar{∦}{\not\|} \newunicodechar{∬}{\int\!\!\!\int} \newunicodechar{∭}{\int\!\!\!\int\!\!\!\int} \newunicodechar{∮}{\oint} \newunicodechar{∸}{\dot-} \newunicodechar{≁}{\not\sim} \newunicodechar{≄}{\not\simeq} \newunicodechar{≆}{\not\cong} \newunicodechar{≇}{\not\cong} \newunicodechar{≉}{\not\approx} \newunicodechar{≔}{:=} \newunicodechar{≕}{=:} \newunicodechar{≢}{\not\equiv} \newunicodechar{≭}{\not\asump} \newunicodechar{≮}{\not<} \newunicodechar{≯}{\not<} \newunicodechar{≰}{\not\le} \newunicodechar{≱}{\not\ge} \newunicodechar{⊀}{\not\prec} \newunicodechar{⊁}{\not\succ} \newunicodechar{⊄}{\not\subset} \newunicodechar{⊅}{\not\supset} \newunicodechar{⊈}{\not\subseteq} \newunicodechar{⊉}{\not\supseteq} \newunicodechar{⊦}{\vdash} \newunicodechar{⊧}{\models} \newunicodechar{⊬}{\not\vdash} \newunicodechar{⊭}{\not\models} \newunicodechar{⊲}{\triangleleft} \newunicodechar{⊳}{\triangleright} \newunicodechar{⋠}{\not\preceq} \newunicodechar{⋡}{\not\succeq} \newunicodechar{⋤}{\not\sqsubseteq} \newunicodechar{⋥}{\not\sqsupseteq} \newunicodechar{⋪}{\not\triangleleft} \newunicodechar{⋫}{\not\triangleright} \newunicodechar{◻}{\square}

\newunicodechar{↑}{\uparrow}
\newunicodechar{↓}{\downarrow}
\newunicodechar{→}{\to}
\newunicodechar{←}{\gets}
\newunicodechar{⟨}{\langle}
\newunicodechar{⟩}{\rangle}
\newunicodechar{±}{\pm}
\newunicodechar{…}{\ldots}
\newunicodechar{∞}{\ifmmode\infty\else$\infty$\fi}
\newunicodechar{≔}{\colon=}
\let\oldO\O
\newunicodechar{Ø}{\oldO}
\newunicodechar{⊠}{\boxtimes}
\newunicodechar{◻}{\mathbin{\square}}
\newunicodechar{⇄}{\rightleftarrows}
\def\ppet{\mathbin{\bar\boxtimes}} 

\let\into\hookrightarrow

\def\nonisotopy#1{{\sf#1}}
\def\isotopy#1{{\frak#1}}
\def\nongeo#1{\rm#1}
\def\geo#1{G#1}
\def\smooth#1{C^\infty#1}

\def\smEmb{\nonisotopy{Emb}}
\def\fraksmEmb{\isotopy{Emb}}
\def\FEmb{\nonisotopy{\geo{Emb}}}
\def\frakFEmb{\isotopy{\geo{Emb}}}
\def\smFEmb{\nonisotopy{\smooth{Emb}}}
\def\fraksmFEmb{\isotopy{\smooth{Emb}}}
\def\SFEmb{\nonisotopy{\smooth{SEmb}}}
\def\frakSFEmb{\isotopy{\smooth{SEmb}}}
\def\cFEmb{\nonisotopy{HolEmb}}
\def\frakcFEmb{\isotopy{HolEmb}}

\def\Struct{\nonisotopy{Field}}
\def\frakStruct{\isotopy{Field}}
\def\frakI{{\frak I}} 

\def\Bord{\nonisotopy{Bord}} 
\def\frakBord{\isotopy{Bord}} 
\def\CatBord{\nonisotopy{CBord}} 
\def\frakCatBord{\isotopy{CBord}} 
\def\ncBord{\nonisotopy{PreBord}} 
\def\frakncBord{\mathfrak{PreBord}} 
\def\EBord{\nonisotopy{EBord}} 
\def\frakEBord{\isotopy{EBord}} 

\def\Cat#1{Cat^\otimes_{∞,\mit#1}}
\def\Catuple#1{Cat^{\otimes,\uple}_{∞,\mit#1}}
\def\cat#1{\nonisotopy{\nongeo{\Cat{#1}}}}
\def\catuple#1{\nonisotopy{\nongeo{\Catuple{#1}}}}
\def\smcat#1{\nonisotopy{\geo{\Cat{#1}}}}
\def\smcatuple#1{\nonisotopy{\geo{\Catuple{#1}}}}
\def\fraksmcat#1{\isotopy{\geo{\Cat{#1}}}}
\def\fraksmcatuple#1{\isotopy{\geo{\Catuple{#1}}}}

\def\ecm{\nonisotopy{e}} 
\def\frakecm{\isotopy{e}} 

\def\stcart{\nonisotopy{\geo{Cart}}}
\def\cart{\nonisotopy{\smooth{Cart}}} 
\def\scart{\nonisotopy{\smooth{SCart}}} 
\def\ccart{\nonisotopy{HolCart}} 

\def\man{\nonisotopy{\smooth{Man}}} 
\def\sman{\nonisotopy{\smooth{SMan}}} 
\def\cman{\nonisotopy{HolMan}} 

\def\cahiers{\nonisotopy{\smooth{Cahiers}}} 

\DeclareMathOperator{\red}{\mathtt{Re}} 
\def\base{\mathop{\flat}\nolimits}
\def\relgrobare{\smallint\nolimits}
\def\relgro{\relgrobare_{\base}}

\def\FFT{\nonisotopy{FFT}} 
\def\frakFFT{\isotopy{FFT}} 
\def\FRiem{\nonisotopy{Met}} 

\def\sm{{\rm C}^\infty}
\def\smpi{{\tilde\pi}_0}
\def\smset{\sm\set}
\def\smsset{\sm\sset}
\def\sPSh{\PSh_\Delta}
\def\smsPSh{{\sm\sPSh}}

\def\Inj{\mathop{\rm Inj}\nolimits}
\def\Surj{\mathop{\rm Surj}\nolimits}
\def\Aut{\mathop{\rm Aut}\nolimits}
\def\image{\mathop{\rm Im}\nolimits}

\def\ldf{\mathbb{L}} 
\def\dtp{\otimes^\ldf} 
\def\rdf{\mathbb{R}} 
\def\dmap{\rdf\map} 
\def\map{\mathop{\rm Map}\nolimits} 
\def\hom{\mathop{\rm Hom}\nolimits} 
\def\rmap{\mathop{\rm map}\nolimits} 
\def\Hom{\mathop{\rm Hom}\nolimits} 
\def\Homtheta{\Hom_\Theta} 
\def\Fun{\mathop{\rm Fun}\nolimits} 
\def\Funmonglob{\Fun^\otimes}
\def\Funmonuple{\Fun^\otimes_{\uple}}
\def\tglob{\otimes_{\glob}}

\def\Cut{\nonisotopy{Cut}}

\def\frakCut{\isotopy{Cut}}

\def\tCut{\Cut_\pitchfork^\uple}
\def\tCutglob{\Cut_\pitchfork}

\def\mtCut{\Cut_{⊗,\pitchfork}^\uple}
\def\mtCutglob{\Cut_{⊗,\pitchfork}}
\def\frakmtCut{\frakCut_{⊗,\pitchfork}^\uple}
\def\frakmtCutglob{\frakCut_{⊗,\pitchfork}}

\def\SO{{\rm SO}}
\def\Spin{{\rm Spin}}

\def\GL{{\rm GL}}
\def\lgO{{\rm O}}
\def\lgU{{\rm U}}
\def\Eucl{{\rm Eucl}}

\def\NN{{\bf N}}
\def\ZZ{{\bf Z}}
\def\RR{{\bf R}}
\def\EE{{\bf E}}
\def\CC{{\bf C}}
\def\Or{{\sf Or}} 

\def\T{{\sf T}} 
\def\trivialsite{{\bf1}} 

\def\munit{{\bf1}} 
\def\latch{{\cal L}} 

\def\proj{{\sf proj}}
\def\inj{{\sf inj}}
\def\Reedy{{\sf Reedy}}
\def\glob{{\sf glob}}
\def\local{{\sf local}}
\def\uple{{\sf uple}}
\def\Cech{{\sf \check Cech}} 

\def\crep{{\cal Q}} 
\def\frep{{\cal R}} 
\def\lbl{{\tt L}} 
\def\Fr{{\sf Fr}} 

\def\gs{{\cal F}} 
\def\site{{\cal S}} 

\def\cA{{\cal A}} 

\def\cB{{\cal B}} 

\def\smallcat{\mathscr{C}\mathrm{at}}
\def\set{\mathscr{S}\mathrm{et}}
\def\sset{\mathrm{s}\set}
\def\po{\mathscr{P}{\rm o}} 
\def\poset{\mathscr{P}{\rm o}\set}
\def\Sh{\mathscr{S}\mathrm{h}}
\def\PSh{\mathscr{PS}\mathrm{h}}

\def\hocolim{\mathop{\rm hocolim}} 
\def\holim{\mathop{\rm holim}} 
\def\colim{\mathop{\rm colim}} 
\def\sing{\mathop{\sf sing}} 
\def\ncore{\mathop{\rm core}\nolimits} 

\def\CSAlg{{\sf CSAlg}} 

\def\vcat{\mathscr{C}} 
\def\catC{\mathscr{C}}
\def\catD{\mathscr{D}}
\def\catV{{\sf V}}
\def\catW{{\sf W}}

\def\ev{\mathop{\rm ev}\nolimits}

\def\Yo#1{{\cal Y}_{#1}} 

\def\id{{\rm id}} 
\def\tr{{\rm tr}} 
\def\Vect{{\rm Vect}} 
\def\ad{{\rm ad}} 
\def\et{{\rm et}} 
\def\op{{\rm op}} 
\def\el{{\rm el}} 
\def\nerve{{\cal N}} 
\def\diag{\mathop{\sf diag}\nolimits} 
\def\deloop{{\bf B}} 
\def\delooptotal{{\bf E}} 
\def\tdeloop{{\rm B}} 
\def\tdelooptotal{{\rm E}} 
\def\hq{{/\!/}} 
\def\Bunconn{{\sf Bun}_\nabla} 

\def\Se{{\sf Se}} 
\def\Cpt{{\sf Cpt}} 
\def\thloc{\mathscr{S}} 

\def\Din{{\rm in}}
\def\Daux{{\rm aux}}
\def\Dout{{\rm out}}

\def\catObj{{\rm obj}}
\def\catMor{{\rm mor}}
\def\catIso{{\rm isot}}
\def\catVirt{{\rm vir}}
\def\catMult{{\rm mobj}}
\def\catMultIso{{\rm misot}}

\def\catMorIso{{\rm morisot}}
\def\catEmpty{\emptyset}
\def\catEmptyIso{\emptyset\catIso}

\def\gsim{\Delta_{e}} 
\def\gbou{\partial\Delta_{e}} 

\def\cC{{\sf C}} 
\def\cD{{\sf D}} 

\def\frakFRiem{\mathfrak{Met}}

\def\ltoarr#1{\mathop{\count0=#1 \loop\ifnum\count0>0 \smash-\mkern-7mu \advance\count0 -1 \repeat \mathord\rightarrow}\limits} 
\def\lto#1#2{\mathrel{\ltoarr{#1}^{#2}}} 
\def\lgetsarr#1{\mathop{\mathord\leftarrow \count0=#1 \loop\ifnum\count0>0 \mkern-7mu\smash-\advance\count0 -1 \repeat}\limits} 
\def\lgets#1#2{\mathrel{\lgetsarr{#1}\limits^{#2}}} 
\def\lmapsto{\mapstochar\lto}

\mathcode`:="603A 
\mathchardef\colon="303A 

\def\eprefix{EL-}
\def\gprefix{GL-}
\def\sprefix{SL-}
\def\eecref#1{\cref*{\eprefix#1}}
\def\ggcref#1{\cref*{\gprefix#1}}
\def\sscref#1{\cref*{\sprefix#1}}
\def\ecref#1{Grady–Pav\-lov \cite[\eecref{#1}]{GradyPavlov.Loc}}
\def\gcref#1{Grady–Pav\-lov \cite[\ggcref{#1}]{GradyPavlov.GCH}}
\def\scref#1{Grady–Pav\-lov \cite[\sscref{#1}]{GradyPavlov.Str}}

\makeatletter
\DeclareRobustCommand\and{\end{tabular}\hskip 1em plus.17fil \begin{tabular}[t]{c}}
\def\maketitle{\null\vskip 2em \begin{center}{\LARGE\@title\par}\vskip 1.5em {\large\lineskip .5em \begin{tabular}[t]{c}\authors\end{tabular}\par}\end{center}\par\vskip 1.5em \typesetabstract}
\def\typesetabstract{\begingroup
	\skip0=20pt \advance\skip0 -\lastskip \advance\skip0 -\baselineskip \vskip\skip0
	\box\abstractbox
	\prevdepth0pt
	\normalsize
	\dimen0=34pt \advance\dimen0 -\baselineskip \vskip\dimen0 \relax 
	\endgroup}
\def\subsubsection{\@startsection{subsubsection}{3}%
  \z@{.5\linespacing\@plus.7\linespacing}{-.5em}%
  {\normalfont\bfseries}}
\makeatother

\begin{document}

\author{Daniel Grady\\\href{https://www.gradydaniel.com/}{gradydaniel.com}
\\Wichita State University}
\author{Dmitri Pavlov\\\href{https://dmitripavlov.org/}{dmitripavlov.org}
\\Texas Tech University}


\title{Higher categories of bordisms with geometric structures}

\begin{abstract}
We introduce a system of axioms that uniquely defines an $(\infty,d)$-category of bordisms equipped with geometric data.
The underlying manifolds of these bordisms may be smooth, complex, super, or formal smooth manifolds,
as well as any class of manifolds satisfying conditions specified in this paper.
We develop a general notion of a field on a manifold,
encompassing structures such as Riemannian metrics, principal bundles with connection, conformal structures, and traditional tangential structures.
Using this framework, we construct a symmetric monoidal $(∞,d)$-category of bordisms with prescribed underlying manifolds and fields,
and prove that it satisfies our axioms.
\end{abstract}

\maketitle

\tableofcontents

\section{Introduction}

This paper, together with \cite{GradyPavlov.Loc,GradyPavlov.Str,GradyPavlov.GCH},
is the first of a series devoted to the study of nontopological extended functorial field theories (FFTs),
culminating in a proof of a geometric version of the cobordism hypothesis (Grady–Pavlov \cite{GradyPavlov.GCH}).

For a geometric symmetric monoidal $(\infty,d)$-category $\vcat$, we are interested in the moduli space of functorial field theories
$$\Funmonglob(\frakBord_d^\gs,\vcat).$$
A direct analysis of this object would be incredibly complicated, due to the large amount of structure imposed on both the source and target of the mapping object.
Following the topological cobordism hypothesis of Baez–Dolan \cite{BaezDolan}, and Lurie \cite{Lurie.TFT}, we seek to identify the above moduli object in simpler terms.
The end result (\gcref{intro.mainthm.geometric}) is a classification theorem that is similar to statement of the cobordism hypothesis,
as formulated by Lurie \cite[Theorem 2.4.18]{Lurie.TFT}. 

The framework of functorial quantum field theory was proposed by Segal \cite{Segal.CFT},
following Witten's axiomatic approach to path integrals
in the context of a two-dimensional conformal field theory.
Segal's definition can be summarized as follows:
a $d$-dimensional quantum field theory $Z$ is a monoidal functor
$$Z:{\rm Bord}_d\to \Vect_k,$$
where ${\rm Bord}_d$ is the $d$-dimensional bordism category (possibly equipped with additional structures),
with monoidal structure given by disjoint union of bordisms, and $\Vect_k$ is the category of vector spaces over a field~$k$.

Thus, an FFT assigns to an object in ${\rm Bord}_d$ (i.e., a closed $(d-1)$-dimensional smooth manifold $M_{d-1}$) a vector space $Z(M_{d-1})$.
This vector space is to be thought of as the vector space of quantum states (hence, emphasizing the Schrödinger picture).
Following Segal and Atiyah \cite{Atiyah}, Freed \cite{Freed.Extended,Freed.Ext} and Lawrence \cite{Lawrence}
proposed \emph{extended} topological quantum field theories,
which incorporate bordisms with corners of codimension~2 and higher.

In this paper, we introduce a version of an extended bordism category that is capable of handling nontopological fields on bordisms.
We also set the stage for the follow-up paper (Grady–Pavlov \cite{GradyPavlov.Loc}) on locality of extended field theories,
by introducing axioms for extended geometric bordism categories and proving that our definition satisfies these axioms.
The locality result in Grady–Pavlov \cite{GradyPavlov.Loc} holds for any extended bordism category satisfying the axioms,
which makes the proof independent of the construction of the bordism category presented here.

The first definition of a (nonextended) geometric bordism category is due to Segal \cite{Segal.CFT},
building on Milnor's definition \cite[Section 1]{Milnor} in the topological case.
A more elaborate bordism category capable of handling various geometries
was constructed by Stolz and Teichner \cite{StolzTeichner.Elliptic,StolzTeichner.SUSY} in their study of supersymmetric Euclidean field theories.
Despite these constructions, very little work has been done in the extended setting.
One reason for this is that a rigorous definition (in the topological case) of an extended bordism category
has not appeared until Lurie \cite[Section 2.2]{Lurie.TFT} and Calaque–Scheimbauer \cite[Section~5]{CalaqueScheimbauer}.
Moreover, to talk about the nontopological case, one needs to parameterize the bordism category over various flavors of manifolds (e.g., smooth, holomorphic, super),
and this adds another layer of complexity that has not yet been explored in the literature in the extended case.

To incorporate a variety of geometric fields on bordisms,
such as Riemannian metrics, orientations, spin and string structures,
as well as more complicated higher geometric fields such as principal bundles with connection or geometric string structures,
we introduce the notion of a \emph{field stack}.
Motivated by fields and their usage in physics, we make the following three observations.
\begin{enumerate}[(1)]
\item Given a field on a bordism, we should be able to restrict the field to an open subset of the bordism.
\item Given fields on elements of an open cover of a bordism that are compatible on intersections and higher intersections (possibly up to a gauge transformation),
we should be able to reconstruct a field on the entire bordism.
\item A functorial field theory should map smooth, holomorphic, or super families of bordisms and fields to corresponding families in the target category.
Such families can be encoded by varying both the bordism data and the field data with respect to some parameter space.
We expect that the values of the field theory should pull back along a map of parameter spaces.
\end{enumerate}

Thus, a class of fields should give rise to a sheaf (or a higher sheaf)
on the site of families of $d$-dimensional manifolds with fiberwise open embeddings as morphisms,
and it is natural to consider any sheaf on this site as a legitimate space of fields.
The definition of a field stack is given in \cref{geostr}.
The following list provides examples of geometric structures that can be encoded in our setting.
\begin{itemize}
\item Smooth maps to a fixed target manifold, i.e., sigma models.
\item Riemannian and pseudo-Riemannian metrics, possibly with restrictions on Ricci or sectional curvature.
\item Super-Euclidean structures of Stolz–Teichner \cite[Section~2.5]{StolzTeichner.SUSY}.
\item Topological tangential structures, such as orientation, spin structure, string structure, and framing.
\item Principal $G$-bundles with connection.
\item Conformal, complex, symplectic, contact, and Kähler structures.
\item Bundle $n$-gerbes with connection.
\item Geometric spin, string, and fivebrane structures (Fiorenza–Schreiber–Stasheff \cite[Definition 6.3.1]{FiorenzaSchreiberStasheff}).
\item Immersions into a fixed target manifold.
\item Foliations, possibly equipped with additional structures such as transversal metrics.
\end{itemize}

Continuing with observation (3) above, we will need to make sense of families of objects and morphisms in both the source and the target category for field theories.
This leads us to the notion of a \emph{geometric symmetric monoidal $(\infty,d)$-category} (\cref{globular.model.structure}).
In particular, for a field stack $\gs$, we get a geometric symmetric monoidal $(\infty,d)$-category of bordisms $\Bord_d^\gs$ (\cref{bord}).

Our extended bordism category is different from the definitions of Lurie \cite[Definition Sketch 2.4.17]{Lurie.TFT},
Calaque–Scheimbauer \cite[Definition~9.10]{CalaqueScheimbauer},
and Schommer-Pries \cite[Definition~5.8]{SchommerPries},
see \cref{comparison.conjecture}, though.
For experts, we list some of the most important differences.
\begin{itemize}
\item We allow nontopological fields on bordisms, such as Riemannian metrics or principal bundles with connections.
\item Bordisms come in geometric families, parameterized by a structured manifold (e.g., smooth, holomorphic, super),
and functorial field theories map families of bordisms to families in the target category.
\item
Since fields glue along open subsets, sources and targets of bordisms must be equipped with two-sided collars that themselves carry fields.
The actual codimension~1 manifold can then be extracted from such a collar as $h^{-1}(0)$ for a height function $h:M→\RR$ that has 0 as its regular value.
We do not keep track of such a height function~$h$.
Instead, we keep track of the \emph{cut} induced by~$h$, defined as the triple of sets $(h^{-1}(-∞,0),h^{-1}(0),h^{-1}(0,∞))$ (\cref{cut}).
\item We allow cuts in a composable sequence of bordisms to overlap (\cref{cut.figure}), but not to intersect transversally.
These features allow us to implement \emph{strict} globularity in our bordism categories (\cref{globular.cut.grid.example,globulargrid}).
This leads to simplifications compared to the usual approaches involving cylindrical neighborhoods, which produce weakly equivalent bordism categories.
\item We encode the homotopy type of the diffeomorphism group of a bordism as a \emph{smooth set}, i.e., a presheaf on $\cart$.
This presheaf-theoretic approach enables the use of stalks (\cref{stalk.eq}), leading to significantly more efficient proofs.
By encoding the moduli spaces of bordisms as smooth sets, we avoid using Whitney's $\sm$-topology and the resulting technicalities.
We do not embed bordisms into a contractible space like $\RR^∞$ or attempt to topologize the resulting set.
\item We do not require bordism categories to be fibrant in some model structure (\cref{not.fibrant,bordism.not.fibrant}).
All constructions in this series of papers respect weak equivalences and work equally well with fibrant or nonfibrant objects.
Furthermore, fibrancy is useful when mapping \emph{into} an object, whereas a functorial field theory maps \emph{out} of a bordism category.
The latter type of maps benefits from a cofibrancy condition, and all objects are cofibrant in the relevant model category.
Removing the fibrancy condition leads to further simplifications in the proofs.
For example, the embedded bordism category admits a simplified description (\cref{embcat.v1}) as a presheaf of sets,
which does not hold for a fibrant version because of \cref{deck.transformations}.
Working with a nonfibrant bordism category does not require us to add h-cobordisms to the moduli stack of objects, which would further complicate the proof.
\item We use \emph{embedded bordism categories} (\cref{embcat.v1})
as a fundamental building block of bordism categories with geometric structures.
These categories are encoded as presheaves of (smooth) \emph{sets} (as opposed to (smooth) simplicial sets)
on the category $\stcart⨯Γ⨯Δ^{⨯d}$.
This allows for further simplifications in the proof of locality (Grady–Pavlov \cite{GradyPavlov.Loc})
and the geometric cobordism hypothesis (Grady–Pavlov \cite{GradyPavlov.GCH}).
\end{itemize}

The above notions lead to a natural definition of a \emph{geometric} extended functorial field theory.

\begin{definition}
\label{def.fft}
Let $T$ be a fibrant geometric symmetric monoidal $(\infty,d)$-category (\cref{globular.model.structure}).
A \emph{$T$-valued extended field theory} $Z$ with field stack $\gs\in \Struct_d$
is a map
$$Z:\Bord_d^\gs\to T$$
in $\smcat{d}$.
\end{definition}

In the case of representable field stacks~$\gs$, i.e., representable objects in $\Struct_d=\sPSh(\FEmb_d)$,
it turns out that our bordism category simplifies to a very concrete and easy-to-understand category of \emph{embedded} bordisms (\cref{embcat.v1}).
For the proof of the locality property in Grady–Pavlov \cite{GradyPavlov.Loc}, we first reduce to the case of representable presheaves.
We then leverage the explicit description of this category of embedded bordisms to decompose bordisms into small pieces subordinate to an open cover.

The reduction to representable presheaves allows us to treat any bordism category that satisfies the following axioms.

\begin{axiomdefinition}
\label{axioms}
Fix $d\geq 0$.
Let $\gs\in \Struct_d$ be a field stack in the sense of \cref{geometric.structure}.
Let $\Bord_d^\gs$ be an object in $\smcat{d}$ (\cref{globular.model.structure}) that satisfies the following axioms.
\begin{enumerate}[label=(\axsymbol\arabic*),ref=(\axsymbol\arabic*)]
\item\label{axiom.functorial}
The object $\Bord_d^\gs$ is $\sset$-enriched functorial in the field stack $\gs$.
\item\label{axiom.cocontinuous}
The functor $\gs↦\Bord_d^\gs$ is cocontinuous as a functor on $\infty$-presheaves.
More precisely, it is homotopy cocontinuous as a functor on the injective model structure on simplicial presheaves.
\item\label{axiom.embedded}
Restricting to the subcategory of representable presheaves of objects $q\in \FEmb_d$ (\cref{fibered.geometric.site}),
the corresponding bordism category $\Bord_d^q$ is naturally (in $q$) weakly equivalent to the category of embedded bordisms $\EBord_d^q$ (\cref{embcat.v1}),
or its isomorphic variant in \cref{embcat.v3}.
\end{enumerate}
We call the functor $\gs\mapsto \Bord_d^\gs$ a \emph{geometric symmetric monoidal $(\infty,d)$-category of bordisms}.
\end{axiomdefinition}

See the discussion immediately preceding \cref{def.fft} for a motivation of \cref{axiom.embedded}.

\begin{remark}
\label{stronger.axiom.cocontinuous}
The main theorem of \cite{GradyPavlov.Loc} proves that \cref{axioms} implies the following much stronger variant of \cref{axiom.cocontinuous}:
\begin{enumerate}[start=2,label=(\axsymbol\arabic*$'$),ref=(\axsymbol\arabic*)]
\item\label{axiom.cocontinuous.sheaf}
The functor $\gs↦\Bord_d^\gs$ is cocontinuous as a functor on $\infty$-sheaves.
More precisely, it is homotopy cocontinuous as a functor on the Čech-local injective model structure on simplicial presheaves.
\end{enumerate}
The weaker \cref{axiom.cocontinuous} is much easier to verify in practice, as shown in \cref{bord.cocontinuous},
which motivates its appearance in \cref{axioms}.
\end{remark}

\begin{theorem}[\cref{unenriched.axioms.hold}]
\label{existencebord}
There exists a geometric symmetric monoidal $(\infty,d)$-category of bordisms, unique up to a contractible choice.
In particular, the category $\Bord_d$ of \cref{bord} satisfies \cref{axioms}.
\end{theorem}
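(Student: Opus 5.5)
The proof has two halves: existence, by checking that the explicit $\Bord_d$ of \cref{bord} satisfies \cref{axioms}, and uniqueness, by a density argument in $\Struct_d=\sPSh(\FEmb_d)$.

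For uniqueness, suppose $\gs\mapsto B^\gs$ and $\gs\mapsto B'^\gs$ both satisfy \cref{axioms}. By \cref{axiom.functorial} both are $\sset$-enriched functors $\Struct_d\to\smcat{d}$. By \cref{axiom.cocontinuous} each is homotopy cocontinuous for the injective model structure, and so each descends to a colimit-preserving functor of $\infty$-categories out of the $\infty$-category of $\infty$-presheaves on $\FEmb_d$. The universal property of $\infty$-presheaves as the free cocompletion then identifies the $\infty$-category of such functors with the $\infty$-category of functors $\FEmb_d\to\smcat{d}$, given by restriction along the Yoneda embedding. \Cref{axiom.embedded} states that both restrictions are naturally weakly equivalent to $q\mapsto\EBord_d^q$. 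The space of cocontinuous extensions of this fixed functor is therefore contractible, since it is the fiber of an equivalence. This gives uniqueness up to a contractible choice.

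Concretely, $B^\gs$ is recovered as the homotopy colimit of $\EBord_d^q$ over the category of elements of a cofibrant replacement of $\gs$. I need to be careful that the Yoneda embedding lands in cofibrant objects: in the injective model structure every object is cofibrant, so this holds. I also need that $\smcat{d}$ is a combinatorial simplicial model category, so that homotopy colimits are computed correctly.

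For existence, I verify each axiom for $\Bord_d^\gs$ of \cref{bord}, which is built from the fields over families of bordisms with cuts.

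\begin{enumerate}[(i)]
\item \Cref{axiom.functorial} is immediate from the construction. $\gs$ enters only through mapping spaces from the bordism data into $\gs$, and these are enriched functorial in $\gs$.
\item For \cref{axiom.cocontinuous} I invoke \cref{bord.cocontinuous}. The plan there is as follows. The construction evaluates $\gs$ on germs and stalks of families of bordisms (\cref{stalk.eq}), and evaluation at an object commutes with colimits and with levelwise weak equivalences. After that, only colimit-preserving operations such as left Kan extensions and diagonals are applied, and injective weak equivalences are levelwise, so the functor is homotopy cocontinuous.
\item For \cref{axiom.embedded}, with $\gs=q$ representable, a field on a family of bordisms is a fiberwise open embedding into $q$. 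The moduli of bordisms with such fields then collapses, via a Yoneda-type argument, to the presheaf of sets of embedded bordisms in $q$, which is $\EBord_d^q$ of \cref{embcat.v1}, naturally in $q$. I would produce a natural comparison map and check that it is a weak equivalence stalkwise. The variant \cref{embcat.v3} follows from the isomorphism stated there.
\end{enumerate}

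I expect the main obstacle to be \cref{axiom.embedded}. The quotient by diffeomorphisms, encoded as smooth sets, must be shown to be resolved by the embedding into $q$. This requires that fiberwise embeddings have trivial automorphisms, so that the simplicial presheaf is homotopically discrete; it needs checking, since \cref{deck.transformations} flags a related subtlety with fibrant replacements. I would first compare the two on stalks and then use that both sides are natural in $q$.
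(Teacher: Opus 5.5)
Your uniqueness argument and overall plan match the paper: verify the three axioms for $\Bord_d$, then use that $\Struct_d$ with the injective structure is the homotopy cocompletion of $\FEmb_d$. The gap is in \cref{axiom.embedded}, which you flag as the main obstacle. The idea you propose for it would not suffice.

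In the paper, $\EBord_d^q$ is \emph{defined} as $\pi_0\Bord_d^{\Yo{q}}$ (\cref{embcat.v1}), so no Yoneda-type collapse is needed. Since $\Yo{q}$ is simplicially discrete, $\Bord_d^{\Yo{q}}(U,\langle\ell\rangle,{\bf m})$ is just the nerve of $\cC=\CatBord_d^{\Yo{q}}(U,\langle\ell\rangle,{\bf m})_0$. The comparison map $\ecm^q$ is then the quotient $\nerve\cC\to\pi_0\cC$, and the whole content of the axiom is that this nerve is homotopically discrete.

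Trivial automorphisms give at most that $\cC$ is a preorder. The paper proves this by factoring $\sigma'$ through the $\base$-cartesian lift of $\base\sigma'$ followed by a monic open embedding. But nerves of preorders can have any homotopy type, so thinness alone proves nothing.

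The missing ingredient is that $\cC$ has pullbacks:
\begin{itemize}
\item Every morphism of $\cC$ lies over $\id_U$, hence is admissible by \cref{base.creates.admissible}.
\item Fiber products therefore exist by \cref{admissible.pullbacks}.
\item Equipping the fiber product with the pulled-back cut grid and field gives an object of $\cC$. The core condition holds because the core lies in both images, and the resulting square is a pullback in $\cC$.
\end{itemize}
A preorder with pullbacks is weakly cofiltered, so each connected component has contractible nerve (\cref{weakly.cofiltered.contractible}). The same pullbacks compose zigzags into a single span, which gives the description of identifications used in \cref{embcat.v3}.

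The stalkwise check you propose is unnecessary and does not bypass this issue. In the unenriched statement there is no smooth-set direction, so \cref{stalk.eq} plays no role. The paper obtains an objectwise weak equivalence directly, and on stalks you would face the same contractibility question.

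Your account of why \cref{bord.cocontinuous} holds is also not the actual mechanism. Nothing is evaluated on stalks, no left Kan extensions occur, and the nerve of a Grothendieck construction is not formally colimit-preserving. What makes it work is the natural decomposition $\nerve(\CatBord_d^\gs(U,\langle\ell\rangle,{\bf m})_l)_k\cong\coprod_\alpha\gs(p_k)_l$. Here $\alpha$ ranges over $k$-chains in $\CatBord_d^{*}(U,\langle\ell\rangle,{\bf m})_l$ with final object $p_k$. This exhibits the functor levelwise as a coproduct of evaluations. Hence it is an enriched left adjoint preserving monomorphisms and objectwise weak equivalences, i.e., left Quillen for the injective structure. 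Since you cite the proposition, this does not break your proof, but your own sketch would not establish it.
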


If the field stack $\gs∈\Struct_d$ is a presheaf of sets,
the resulting geometric symmetric monoidal $(∞,d)$-category $\Bord_d^\gs$
is a geometric symmetric monoidal $(d+1,d)$-category,
with $(d+1)$-morphisms being structure-preserving diffeomorphisms of $d$-bordisms.
The truncation of this category to a $(d,d)$-category
has as $d$-morphisms structure-preserving diffeomorphism classes of $d$-bordisms,
mimicking the traditional constructions of bordism categories.

In the proof of the geometric cobordism hypothesis (Grady–Pavlov \cite{GradyPavlov.GCH}),
we will use another variant of the category of bordisms which incorporates the \emph{smooth set} of diffeomorphisms of bordisms.
As a consequence, this yields a genuine $(∞,d)$-category of bordisms,
with $(d+2)$-morphisms being isotopies of diffeomorphisms of $d$-bordisms, $(d+3)$-morphisms being isotopies of isotopies, and so on.
Including isotopies is crucial, since without them the category of bordisms does not have all duals (see \cref{d1bords,d1bordsisot}).
For additional motivation, see Lurie \cite[Section~1.4]{Lurie.TFT} for an exposition in the topological case.
We denote the geometric symmetric monoidal $(\infty,d)$-category of bordisms \emph{with isotopies} by $\frakBord_d^\gs$,
where $\gs$ is a field stack with isotopies in the sense of \cref{geometric.structure.isotopy}.
We remark that the proof of locality in Grady–Pavlov \cite{GradyPavlov.Loc} works for both bordism categories, with or without isotopies.

The bordism category with isotopies satisfies an analogous set of axioms as \cref{axioms}.

\begin{axiomdefinition}
\label{frakaxioms}
Fix $d\geq 0$.
Let $\gs\in \frakStruct_d$ be a field stack with isotopies in the sense of \cref{geometric.structure.isotopy}.
Let $\frakBord_d^\gs$ be an object in $\fraksmcat{d}$ (\cref{globular.model.structure.smooth}) that satisfies the following axioms.
\begin{enumerate}[label=(\frakaxsymbol\arabic*),ref=(\frakaxsymbol\arabic*)]
\item\label{frakaxiom.functorial}
The object $\frakBord_d^\gs$ is $\smsset$-enriched functorial in the geometric structure $\gs$.
\item\label{frakaxiom.cocontinuous}
The functor $\gs↦\frakBord_d^\gs$ is cocontinuous as a functor on $\infty$-presheaves.
More precisely, it is homotopy cocontinuous as a functor on the injective model structure on $\smsset$-valued presheaves.
\item\label{frakaxiom.embedded}
Restricting to the subcategory of representable presheaves of objects $q\in \frakFEmb_d$ (\cref{fibered.geometric.site.isotopy}),
the corresponding bordism category $\frakBord_d^q$ is naturally (in $q$) weakly equivalent
to the category of embedded bordisms with isotopies $\frakEBord_d^q$ (\cref{embcat.v1}),
or its isomorphic variant in \cref{embcat.v3}.
\end{enumerate}
We call the functor $\gs\mapsto \frakBord_d^\gs$ a \emph{geometric symmetric monoidal $(\infty,d)$-category of bordisms with isotopies}.
\end{axiomdefinition}

The discussion about \cref{axiom.cocontinuous} in \cref{stronger.axiom.cocontinuous} also applies to \cref{frakaxiom.cocontinuous}.
An additional motivation for \cref{frakaxiom.embedded} is provided by the \emph{geometric cobordism hypothesis} (\gcref{intro.mainthm}),
which proves that for every $U∈\cart$ the geometric symmetric monoidal $(∞,d)$-category $$\frakEBord_d^{\RR^d⨯U→U}$$
is the free geometric symmetric monoidal $(∞,d)$-category with duals on a $U$-family of objects.

\begin{theorem}[\cref{enriched.axioms.hold}]
\label{existencebord2}
There exists a geometric symmetric monoidal $(\infty,d)$-category of bordisms with isotopies, unique up to a contractible choice.
In particular, the category $\frakBord_d$ of \cref{frakbord} satisfies \cref{frakaxioms}.
\end{theorem}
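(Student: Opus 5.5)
The proof splits into uniqueness and existence, following the pattern of the unenriched case, \cref{existencebord}.

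\emph{Uniqueness.} Here we use that the injective model structure on $\smsset$-valued presheaves on $\frakFEmb_d$ is generated under homotopy colimits by representables. Every field stack with isotopies $\gs$ is the homotopy colimit of its simplicial category of elements, a diagram of representables $q\in\frakFEmb_d$. Suppose $\gs\mapsto\frakBord_d^\gs$ and $\gs\mapsto\frakBord_d'^\gs$ both satisfy \cref{frakaxioms}. By \cref{frakaxiom.embedded}, their restrictions to representables are both naturally weakly equivalent to $q\mapsto\frakEBord_d^q$. By \cref{frakaxiom.functorial} and \cref{frakaxiom.cocontinuous}, each is the homotopy left Kan extension of that restriction along the Yoneda embedding. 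The space of such functors is therefore equivalent to the space of homotopy cocontinuous $\smsset$-enriched functors extending $\frakEBord_d^{(-)}$. By the universal property of $\infty$-presheaves, this space is contractible: it is the homotopy fiber, over $\frakEBord_d^{(-)}$, of restriction to representables, and restriction is an equivalence onto functors from $\frakFEmb_d$. The one point needing care is that \cref{frakaxiom.embedded} only gives a weak equivalence, not an isomorphism. I would handle this by working in the $\infty$-category underlying $\fraksmcat{d}$ and using that weak equivalences are preserved by homotopy left Kan extension.

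\emph{Existence.} Take $\frakBord_d$ as in \cref{frakbord} and check the three axioms in turn.
\begin{itemize}
\item \cref{frakaxiom.functorial} follows from the construction. $\frakBord_d^\gs$ is built levelwise from mapping objects out of $\gs$ (or a weighted colimit against $\gs$), so an $\smsset$-enriched action on $\gs$ induces one on the output.
\item \cref{frakaxiom.cocontinuous} I would prove as in \cref{bord.cocontinuous}. The construction is a composite of a strict colimit-preserving operation in $\gs$, essentially a coend of $\gs$ against a fixed functor of cut bordisms, followed by a left Quillen localization. In the injective model structure all objects are cofibrant, and the coend preserves objectwise (hence injective) weak equivalences. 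It is therefore left Quillen, and so homotopy cocontinuous.
\item \cref{frakaxiom.embedded}: evaluate the coend at a representable $q$. The co-Yoneda lemma collapses it to bordisms equipped with a fiberwise embedding into $q$ together with isotopy data. I would then identify this with $\frakEBord_d^q$ of \cref{embcat.v1} or \cref{embcat.v3}, naturally in $q$.
\end{itemize}

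I expect the last identification to be the main obstacle. In the isotopy setting, the smooth set of diffeomorphisms of a bordism and its isotopies must match exactly the embedded-bordism description. This requires showing that quotienting by the diffeomorphism smooth group is weakly equivalent to restricting to the images of embeddings. I would first try a stalkwise check using \cref{stalk.eq}. Over a germ of a cartesian space, the fiber over a fixed image of the bordism category is the smooth set of embeddings modulo diffeomorphisms. That fiber is a free quotient, hence equivalent to the set of images, and this gives the identification with $\frakEBord_d^q$.
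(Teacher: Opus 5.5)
Your uniqueness argument and your treatment of \cref{frakaxiom.functorial,frakaxiom.cocontinuous} agree with the paper. For \cref{frakaxiom.cocontinuous}, though, you must say why your ``coend'' preserves objectwise weak equivalences and monomorphisms, since a general coend does neither. The reason is the explicit decomposition $\nerve(\frakCatBord_d^\gs(U,\langle\ell\rangle,{\bf m})_{L,l})_k\cong\coprod_\alpha\gs(p_k)_{L,l}$ from \cref{bord.cocontinuous.enriched}, which holds because each $\sigma_i$ is determined by $\sigma_k$.

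\textbf{The gap is in \cref{frakaxiom.embedded}.} Since $\frakEBord_d^q$ is \emph{defined} as $\smpi\frakBord_d^{\Yo{q}}$, the whole content of this axiom is that the quotient map $\frakecm^q$ is a weak equivalence. Concretely, $\nerve\cD\to\smpi\nerve\cD$ must be a weak equivalence in $\smsset$, where $\cD=\frakCatBord_d^{\Yo{q}}(U,\langle\ell\rangle,{\bf m})_0$.

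Your ``free quotient'' argument treats $\cD$ as an action groupoid of diffeomorphism groups on embeddings into~$q$. But the morphisms of $\cD(L)$ are $L$-families of open embeddings whose image contains the core, not diffeomorphisms, and no group acts. What freeness actually gives you is that $\cD(L)$ is a preorder. The paper proves this from $\sigma'\varphi=\sigma$, the universal property of the $\base$-cartesian lift of $\base\sigma'$, and the fact that open embeddings are monic. A preorder can still have a non-contractible nerve: the poset $a,b<c,d$ has nerve homotopy equivalent to a circle. So ``free, hence equivalent to the set of images'' does not follow. Replacing $\cD$ by a groupoid of germs would itself require exactly the argument that is missing.

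\textbf{The missing idea is that $\cD$ is locally weakly cofiltered.} Every cospan $(p_1,C_1,\sigma_1)\to(p,C,\sigma)\leftarrow(p_2,C_2,\sigma_2)$ in $\cD(L)$ must acquire a cone after restriction to an open cover of~$L$.
\begin{itemize}
\item Without isotopies, the cone is just the pullback $p_1\times_p p_2$. It exists because morphisms with $\base\varphi=\id$ are admissible and admissible maps admit base changes.
\item With isotopies, the intersection of the images of the $L$-families $\omega_1,\omega_2$ is in general not of the form $p_{12}\times\id_L$, so it is not an object of $\cD(L)$. The paper therefore uses properness of $p$ on the core. Around each $l\in L$ it finds an open $B\ni l$ and an open $N$ with $\ncore(i_B^*r,i_B^*C)\subset N\times B\subset\image\omega_1\cap\image\omega_2$, where $r=p\times\id_L$. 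It then takes $p_{12}=p|_N$ and, for a general $\frakFEmb_d$, lifts the resulting square along the admissibility fibration $\red_t$.
\end{itemize}
Only after this does the stalkwise criterion you allude to (\cref{weakly.cofiltered.contractible}, via \cref{stalk.eq}) apply. At a stalk one obtains a preorder with cones over cospans, and its components have contractible nerves. You name the right tool but never supply this cone construction, which is where the actual work lies.
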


\subsection{An overview of the series}

Below we provide a summary of how each paper in the series contributes to the problem of computing moduli spaces of functorial field theories.
In each paper, we use certain model categories that present corresponding $(\infty,1)$-categories or $(\infty,d)$-categories.
These specific models are chosen for technical reasons and allow us to simplify certain arguments.
To emphasize the key conceptual aspects, the summary below omits details of homotopy-theoretic constructions.

\subsubsection{Higher categories of bordisms (present work)}
In this first paper, we introduce the extended geometric bordism categories $\Bord_d^\gs$ and $\frakBord_d^\gs$
and define the moduli spaces of extended geometric functorial field theories $\FFT_{d,\vcat}^\gs$ and $\frakFFT_{d,\vcat}^\gs$.
The bordism categories introduced in this work are capable of encoding a large class of structures on bordisms,
both in the field content and in the structure of the bordism itself.
In particular, we allow for bordisms to be equipped with a supermanifold or holomorphic structure.
In addition, we allow for structured families of such bordisms, such as superfamilies or holomorphic families.
To encode this type of structured bordism, we introduce the notion of a \emph{fibered geometric site} $\FEmb_d$
and \emph{fibered geometric site with isotopies} $\frakFEmb_d$, where the $\sf G$ and $\frak G$ refer to the geometric structure. 

We also take a first step in describing this extended geometric bordism category in simpler terms.
For brevity, we work with the isotopy variants below, with the nonisotopy versions obtained in a similar manner.
\cref{proof.cocontinuous} asserts that the functor
$$\gs\mapsto \frakBord_d^{\gs}, \quad \gs\in \PSh_{(\infty,1)}(\frakFEmb_d)$$
is homotopy cocontinuous in its argument $\gs$,
which is an $\infty$-presheaf on a certain site $\frakFEmb_d$ of families of structured $d$-dimensional manifolds with fiberwise open embeddings as morphisms.
Every $∞$-presheaf can be written as a homotopy colimit over representables $\Yo{q}$, with $q\in \frakFEmb_d$.
Using formal properties of the mapping object, we get that 
$$\frakFFT_{d,\vcat}^\gs=\map(\frakBord_d^\gs,\vcat)≃\holim_{\Yo{q}→\gs}\map(\frakBord_d^{\Yo{q}},\vcat)≃\holim_{\Yo{q}→\gs}\map(\Yo{q},\frakFFT_{d,\vcat})≃\map(\gs,\frakFFT_{d,\vcat}),$$
where $\frakFFT_{d,\vcat}$ is the ∞-presheaf on $\frakFEmb_d$ given by $q\mapsto \frakFFT_{d,\vcat}^{\Yo{q}}$.

This is an interesting result, but not very useful in practice on its own, since the ∞-presheaf $\frakFFT_{d,\vcat}$ is still much too complicated to compute directly.
We emphasize that at this stage, the mapping object $\map(\gs,\frakFFT_{d,\vcat})$ computes maps of ∞-presheaves, not ∞-sheaves.

\subsubsection{Locality for extended field theories}
Next, the main theorem of \cite{GradyPavlov.Loc} proves that the copresheaf 
$$q\mapsto \frakBord_d^{\Yo{q}}, \quad q\in \frakFEmb_d$$ 
satisfies the homotopy codescent condition, i.e., is an ∞-cosheaf.
As a consequence, the presheaf $\frakFFT_{d,\vcat}$ satisfies the homotopy descent condition, i.e., is an ∞-sheaf.
Thus, the mapping object of ∞-presheaves $\map(\gs,\frakFFT_{d,\vcat})$ can be computed as the mapping object of ∞-sheaves:
$$\map_{\PSh}(\gs,\frakFFT_{d,\vcat}) ≃ \map_{\Sh}(\gs,\frakFFT_{d,\vcat}).$$

\subsubsection{Reduction to $\lgO(d)$-equivariant maps}
For simplicity, we consider the case $\frakFEmb_d=\fraksmFEmb_d$ and $\stcart=\cart$ below.
See \cite{GradyPavlov.Str} for the general case.
In \cite{GradyPavlov.Str} we establish an equivalence
$$\ev_d:\Sh_{(\infty,1)}(\fraksmFEmb_d)\lto5{\simeq} \Sh_{(\infty,1)}(\cart)^{\lgO(d)},\qquad \ev_d(\gs)(U)=\gs(p_U:\RR^d⨯U→U),$$
where the $\lgO(d)$-action on the evaluated presheaf is supplied by the canonical $\lgO(d)$-action on the fibers of the projection map $p_U:\RR^d⨯U→U$.
Consequently, we get an equivalence
$$\map_{\Sh}(\gs,\frakFFT_{d,\vcat}) ≃ \map^{\lgO(d)}(\ev_d\gs,\ev_d\frakFFT_{d,\vcat}).$$
Thus, \cite{GradyPavlov.Str} reduces the problem of computing the mapping object $\map(\gs,\frakFFT_{d,\vcat})$ of ∞-sheaves to the following three problems.
\begin{enumerate}
\item For each projection $p_U:\RR^d\times U\to U$, a calculation of $$(\ev_d\frakFFT_{d,\vcat})(U)=\frakFFT_{d,\vcat}^{\Yo{p_U}}.$$
\item An identification of the resulting $\lgO(d)$-action on $\frakFFT_{d,\vcat}^{\Yo{p_U}}$.
\item A computation of the $\lgO(d)$-equivariant mapping object $$\map^{\lgO(d)}(\ev_d\gs,\ev_d\frakFFT_{d,\vcat}).$$
\end{enumerate} 

\subsubsection{The geometric cobordism hypothesis}
In the final paper \cite{GradyPavlov.GCH}, we show that when the geometric symmetric monoidal $(\infty,d)$-category $\vcat$ has all duals,
then for all $U\in \stcart$, the evaluation map at a $U$-family of connected 0-bordisms
$$e:\frakFFT_{d,\vcat}^{\Yo{p_U}}\lto5{\simeq}\vcat^⨯(U)$$
is an equivalence, where $\vcat^⨯$ is $\vcat$ with noninvertible $k$-morphisms ($0<k≤d$) discarded.
Thus, $\vcat^⨯$ obtains an $\lgO(d)$-action induced by the $\lgO(d)$-action on~$p_U$.
The end result is an equivalence 
$$\map(\frakBord_d^\gs,\vcat)\simeq \map^{\lgO(d)}(\ev_d\gs,\vcat^⨯).$$

In many practical examples, the identification of the $\lgO(d)$-action on $\vcat^⨯$ can proceed as follows.
There is a simple guess as to what the action of $\lgO(d)$ can be for a particular choice of $\vcat$,
e.g., when $\vcat=\tdeloop^d \lgU(1)$ or $\vcat$ is a delooping of the category of Hilbert spaces.
To show that this guess~$\cD$ coincides with the $\lgO(d)$-action provided by the above construction,
we construct a map
$$ψ:\cD → \frakFFT_{d,\vcat}^{\Yo{p_U}},$$
which in practice amounts to writing down a particularly simple functorial field theory.
For example, in the case of $\vcat=\tdeloop^d \lgU(1)$ such a field theory
can be given by integrating a differential $d$-form over a $d$-bordism.
Once the map $ψ$ is set up, it remains to show that the composition $e ∘ ψ$ is an equivalence of ∞-sheaves on $\stcart$,
which in practice is often tautological given the construction of~$\cD$.

Therefore, we can compute the moduli space of functorial field theories as
$$\map^{\lgO(d)}(\ev_d\gs,\vcat^⨯)≃\map^{\lgO(d)}(\ev_d\gs,\cD).$$
The computation of the latter object is unique to the class of field theories at hand,
with some examples presented in Grady–Pavlov \cite{GradyPavlov.GCH}, Kenig–Pavlov \cite{KenigPavlov}, and further examples forthcoming.

\subsection{Applications}

The main theorems of \cite{GradyPavlov.Loc,GradyPavlov.GCH} rely on \cref{existencebord,existencebord2}.
We point out some additional applications of these results.

\begin{itemize}
\item
The codescent theorem \cite{GradyPavlov.Loc} enables a simple proof (forthcoming work in preparation)
of a geometric fully extended variant of the Galatius–Madsen–Tillmann–Weiss theorem \cite{GMTW}.
Another proof of the same result is presented in \gcref{invertible.tft},
using the geometric cobordism hypothesis.
\item
In \ecref{concordance.twisted}, we prove a conjecture originally posed by Stolz and Teichner \cite{StolzTeichner.SUSY}:
concordance classes of extended field theories admit a classifying space (in the sense of Brown's representability theorem).
This uses the codescent theorem of \cite{GradyPavlov.Loc}
together with the \emph{smooth Oka principle} established in Berwick-Evans–Boavida–Pavlov \cite{BEBdBP}.
\item
In \cref{2.1.Euclidean.bordisms} we give a complete definition of fully extended $2|1$-Euclidean field theories.
In a forthcoming paper, we will use the geometric cobordism hypothesis to classify such field theories in terms
of representation theory of the $2|1$-Euclidean group.
\end{itemize}

\subsection*{Prerequisites}

We assume familiarity with the language of simplicial homotopy theory and model categories, in particular, the following topics.
\begin{itemize}
\item
Simplicial homotopy theory, including simplicial sets, simplicial maps, simplicial weak equivalences, nerves of categories, Quillen's Theorem~A, simplicial categories.
See Goerss–Jardine \cite{GoerssJardine} and Richter \cite{Richter}.
\item
Model categories, including model structures, Quillen adjunctions, injective and projective model structures on presheaves,
monoidal and cartesian model categories, combinatorial model categories.
See Hovey \cite{Hovey}, Hirschhorn \cite{Hirschhorn}, Barwick \cite{Barwick.Model},
Lurie \cite[Appendix~A]{Lurie.HTT},
Cisinski \cite[Section~7.11]{Cisinski},
and the survey of Balchin \cite{Balchin}.
\item
Homotopy limits and colimits.
See Bousfield–Kan \cite[Chapters XI and XII]{BousfieldKan}, Hirschhorn \cite[Chapter~18]{Hirschhorn}, Shulman \cite{Shulman.hocolim}, Riehl \cite[Chapters 5–8]{Riehl}.
\item
Grothendieck fibrations in sets, groupoids, and categories.
See Borceux \cite[Chapter~8]{Borceux.2}, Vistoli \cite{Vistoli}, Johnstone \cite[Section B1.3]{Johnstone}.
\end{itemize}
We also make use of the following formalisms, for which we review the necessary background and provide appropriate references as needed.
\begin{itemize}
\item
Segal's $Γ$-objects.
See Segal \cite[Sections 1 and~2]{Segal.Gamma}, Bousfield–Friedlander \cite[Sections 3 and~5]{BousfieldFriedlander}, Schwede \cite[Section~1]{Schwede}, Richter \cite[Section~14.3]{Richter}.
\item
Complete Segal spaces (Rezk \cite{Rezk.CSS}) and $n$-fold complete Segal spaces (Barwick \cite[Section~2.3]{Barwick.CSS}).
\item
Simplicial presheaves and descent.
See Dugger–Hollander–Isaksen \cite{DHI}, Jardine \cite{Jardine}, Glass–\hskip0pt Minichiello \cite{GlassMinichiello}.
\end{itemize}

\subsection*{Acknowledgments}

We thank David Aretz for a careful reading of the entire manuscript and suggesting numerous improvements.
We thank Daniel Berwick-Evans, Arun Debray, Claudia Scheimbauer, and Luuk Stehouwer for discussions about bordism categories and functorial field theory.
We thank Daniel Brügmann, Jacek Kenig, Nino Scalbi, and Alexander Zahrer for questions and feedback on the first version of this article.
We thank André Henriques, Thomas Nikolaus, and David Reutter for discussions about this paper.
We thank Stephan Stolz and Peter Teichner for sharing their insights about bordism categories and field theories.
We thank Hisham Sati and Urs Schreiber for discussions on the physics applications.
We thank Nils Carqueville, Domenico Fiorenza, and Konrad Waldorf for organizing a workshop on this paper and \cite{GradyPavlov.Loc,GradyPavlov.GCH}, which led to many improvements in the exposition.

\clearpage
\section*{Table of notation}
\begin{center}
{\small
\begin{tabular}{c|c|l}
Notation & Definition & Description\\
\hline
\axsymbol$n$ & \cref{axioms} & Axioms for bordism categories\\
\frakaxsymbol$n$ & \cref{frakaxioms} & Axioms for bordism categories with isotopies\\
$d$ & \cref{symminfn} & Dimension of the bordisms and the category number\\
$\Delta$, $[m]$ & \cref{threecats} & The simplex category and objects in it\\
$\Delta^{\times d}$, ${\bf m}$ & \cref{threecats} & The $d$-fold product of the simplex category and objects in it\\
$\Gamma$, $⟨ℓ⟩$ & \cref{threecats} & Segal's Gamma category and objects in it\\
$\cart$ & \cref{def.man.cart} & Smooth cartesian spaces (e.g., $\RR^n$, for some $n\in \NN$)\\
$\man$ & \cref{def.man.cart} & Smooth manifolds\\
$\PSh(\catC,\catV)$ & \cref{presheaf.notation} & (Enriched) presheaves on a small category $\catC$ with values in $\catV$\\
$\Yo{}$ & \cref{enriched.yoneda} & The (enriched) Yoneda embedding\\
$\map$ & \cref{enriched.mapping.object} & The enriched mapping object in an enriched category\\
$⊗$, $\hom$ & \cref{day.convolution} & Day convolution and internal hom on presheaves\\
$\rdf$ & \cref{right.derived.functor} & Right derived functor\\
$\smset$ & \cref{def.smset} & Presheaves of sets on $\cart$\\
$\PSh(\catC)$ & \cref{presheaves.of.sets} & Presheaves of sets on a small category $\catC$\\
$\sPSh(\catC)$ & \cref{simplicial.presheaves} & Simplicial presheaves on a small category $\catC$\\
$|{-}|$, $\sing$ & \cref{smsset.injective} & Smooth realization and singular complex\\
$\smsset$ & \cref{smsset.injective} & Simplicial presheaves on $\cart$, with the cartesian $\RR$-local model structure\\
$\smsPSh(\catC)$ & \cref{injective.model.structure} & $\smsset$-valued presheaves on $\catC$ with the injective model structure\\
$⊠$& \cref{external.product} & External product of presheaves\\
$\lbl_S M$ & \cref{left.Bousfield.localization} & The (enriched) left Bousfield localization of~$M$ at~$S$\\
$\site$ & \cref{site.and.enrichment} & A small site (like $\stcart$)\\
$\catV$ & \cref{site.and.enrichment} & A combinatorial cartesian model category (like $\sset$ or $\smsset$)\\
$\smcatuple{d}$ & \cref{multiple.model.structure} & $\sPSh(\stcart\times \Gamma\times \Delta^{\times d})$, with the uple model structure\\
$\fraksmcatuple{d}$ & \cref{multiple.model.structure.smooth} &$\smsPSh(\stcart\times \Gamma\times \Delta^{\times d})$, with the uple model structure\\
$\catuple{d}$ & \cref{multiple.model.structures} & $\sPSh(\Gamma\times \Delta^{\times d})$, with the uple model structure\\
$\smcat{d}$ & \cref{globular.model.structure} &$\sPSh(\stcart\times \Gamma\times \Delta^{\times d})$, with the globular model structure\\
$\fraksmcat{d}$ & \cref{globular.model.structure.smooth} &$\smsPSh(\stcart\times \Gamma\times \Delta^{\times d})$, with the globular model structure\\
$\cat{d}$ & \cref{globular.model.structures} &$\sPSh(\Gamma\times \Delta^{\times d})$, with the globular model structure\\
$\Theta_d$ & \cref{joyal.cell.category} & Joyal's cell category\\
$\Funmonuple$& \cref{def.funmonuple} & The internal hom in $\smcatuple{d}$\\
$\tglob$, $\Funmonglob$& \cref{globular.hom} & Globular product and functor object in $\smcat{d}$\\
$\stcart$ & \cref{def.stcart} & Structured cartesian spaces (e.g., $\RR^n$, $\RR^{n\mid m}$, for some $n,m\in \NN$)\\
$\red=\red_b$ & \cref{def.stcart} & The reduction functor $\stcart→\cart$\\
$\smFEmb_d$ & \cref{def.smFEmb} & Submersions with $d$-dimensional fibers and fiberwise embeddings\\
$\FEmb_d$ & \cref{fibered.geometric.site} & A fibered geometric site\\
$\base$ & \cref{fibered.geometric.site} & The base space functor (on base objects)\\
$\red_t$ & \cref{fibered.geometric.site} & The reduction functor (on fibered objects)\\
$\fraksmFEmb_d$ & \cref{def.fraksmFEmb} & Submersions with $d$-dimensional fibers and fiberwise embeddings with isotopies\\
$\frakFEmb_d$ & \cref{fibered.geometric.site.isotopy} & A fibered geometric site with isotopies\\
$\Struct_d$ & \cref{geometric.structure} & $\sPSh(\FEmb_d)$, with the Čech-local injective model structure\\
$\frakStruct_d$ & \cref{geometric.structure.isotopy} & $\smsPSh(\frakFEmb_d,\smsset)$, with the Čech-local injective model structure\\
$\ncore(p,C)$ & \cref{nonembedded.core} & The core of a monoidal cut grid $(p,C)$\\
$\mtCutglob$ & \cref{globular.monoidal.cut.grid} & Globular monoidal cut grids\\
$\frakmtCutglob$ & \cref{globular.monoidal.cut.grid.smooth} & Globular monoidal cut grids with isotopies\\
$p:M→U$ & \cref{bord} & The smooth family of manifolds containing cut grids of bordisms\\
$\Bord_d^\gs$ & \cref{bordstr} & The globular $d$-category of bordisms with field stack~$\gs$\\
$\frakBord_d^\gs$ & \cref{enrichedbordstr} & The globular $d$-category of bordisms with field stack~$\gs$ and isotopies\\
$\FFT_{d,\vcat}^\gs$ & \cref{def.FFTspace} & The moduli stack of functorial field theories with structure~$\gs$ valued in $\vcat$\\
$\frakFFT_{d,\vcat}^\gs$ & \cref{def.FFTspace} & The moduli stack of functorial field theories with isotopies\\
$q:N→V$ & \cref{bord.cocontinuous} & The smooth family of manifolds into which bordisms are embedded\\
$\FFT_d$ & \cref{def.FFT} & The field stack of functorial field theories\\
$\frakFFT_d$ & \cref{def.frakFFT} & The field stack of functorial field theories with isotopies\\
$\EBord_d^q$ & \cref{embcat.v1} & Bordisms embedded fiberwise in $(q:N\to V)\in \FEmb_d$\\
$\frakEBord_d^q$ & \cref{embcat.v1} & Bordisms with isotopies embedded fiberwise in $(q:N\to V)\in \frakFEmb_d$\\
$\ecm$ & \cref{emapconstr.unenriched} & The comparison map for bordisms and embedded bordisms\\
$\frakecm$ & \cref{emapconstr.enriched} & The comparison map for bordisms and embedded bordisms with isotopies\\
\end{tabular}
}
\end{center}
\clearpage

\section{Homotopy theory and higher categories}
\label{symminfn}

As discussed in the introduction, the main goal of this paper is to establish a model for the extended bordism category that is capable of handling geometric structures.
Such an extended bordism category should have the following basic structure.
\begin{enumerate}[(1)]
\item It should be a higher category with $k$-morphisms for all $k>0$ such that for every $k>d$, all $k$-morphisms are invertible.
\item It should be symmetric monoidal, with coherences for the monoidal structure.
\item It should allow for geometric families for objects or $k$-morphisms, for example smooth or holomorphic families.
\end{enumerate}

We now have well-developed machinery that can encode (1) and (2).
The encoding of symmetric monoidal structures follows Segal \cite[Definition~1.2]{Segal.Gamma}.
The encoding of categorical composition follows the same idea of Segal, as developed by
Dwyer–Kan–Smith \cite[Section~7]{DKS.HCD} (Segal $(∞,1)$-categories),
Tamsamani \cite[Definition~1.3.2]{Tamsamani} (Segal $n$-categories),
Hirschowitz–Simpson \cite[Section~2]{HirschowitzSimpson} and Pellissier \cite{Pellissier} (Segal $(∞,n)$-categories),
Rezk \cite[Section~6]{Rezk.CSS} (complete Segal spaces),
Barwick \cite[Scholium 2.3.18]{Barwick.CSS} (complete $n$-fold Segal spaces),
and Lurie \cite[Section~1]{Lurie.TwoCat} (globular complete $n$-fold Segal spaces).

To encode (3), we will parametrize our higher monoidal categories by objects of a site $\stcart$ (\cref{def.stcart}).
We will want these parametrized higher categories to satisfy a homotopy coherent sheaf gluing property with respect to open covers so that families of objects and $k$-morphisms can be assembled from local data.
The encoding of geometric families was influenced by Stolz–Teichner \cite{StolzTeichner.Elliptic,StolzTeichner.SUSY}.
The use of the cartesian site in this context was proposed by Urs Schreiber (see, e.g., Fiorenza–Schreiber–Stasheff \cite[Appendix]{FiorenzaSchreiberStasheff}).
The site $\stcart$ is very general and allows for a variety of geometric structure on cartesian spaces.
In particular, we can encode smooth, super, and complex structures on cartesian spaces.

To encode invertible $k$-morphisms for $k>d$, we present the $∞$-groupoid of such morphisms
(with objects being $d$-morphisms, 1-morphisms being $(d+1)$-morphisms, etc.)\ as a simplicial set.
We also find it convenient to talk about \emph{isotopies} (i.e., $\RR$-parametrized smooth homotopies) of $k$-morphisms,
which leads to the structure of a \emph{smooth simplicial set} (\cref{def.smsset}).
From a theoretical viewpoint, smooth simplicial sets are Quillen equivalent to simplicial sets (\cref{smsset.injective}),
via an appropriate analog of the singular complex functor.
Thus, adding isotopies does not increase the level of generality of our theory.
From a practical viewpoint, it is convenient to treat isotopies in this manner,
since we can talk about \emph{stalks} of isotopies (see \cref{stalk.eq}).
This technique is exploited in the proof of the locality property in Grady–Pavlov \cite{GradyPavlov.Loc}.
Similar ideas were used by Galatius–Madsen–Tillmann–Weiss \cite[Sections 2.2 and 2.3]{GMTW}.

\subsection{Geometric, symmetric monoidal, and higher categorical structure}
\label{threecats}

Recall the simplex category~$\Delta$, whose objects are nonempty, totally ordered sets $[d]=\{0,\ldots,d\}$, and whose morphisms are order-preserving maps.
Recall also Segal's category~$Γ$ \cite[Definition~1.1]{Segal.Gamma} (see also Bousfield–Friedlander \cite[Section~3]{BousfieldFriedlander} and Schwede \cite[Section~1]{Schwede}),
whose opposite category has finite pointed sets $⟨ℓ⟩=\{\ast,1,\ldots,ℓ\}$ as objects and basepoint-preserving functions as morphisms.
Finally, recall the category $\cart$ (\cref{def.cart}, see also Fiorenza–Schreiber–Stasheff \cite[Definition~3.1.1]{FiorenzaSchreiberStasheff})
whose objects are cartesian spaces~$\RR^n$ and morphisms are smooth functions.
More generally, we can use any category $\stcart$ that satisfies certain properties making it similar to $\cart$ (\cref{def.stcart}),
see \cref{def.supercart,def.infinitesimal}.
The basic building blocks for geometric symmetric monoidal $(\infty,d)$-categories lie in the product category
\begin{equation}\label{product.no.isotopies}\stcart\times \Gamma\times \Delta^{\times d}\times \Delta.\end{equation}
Each category captures an independent aspect of the objects being constructed.
To include isotopies of higher morphisms, we also consider the product category
\begin{equation}\label{product.isotopies}\stcart\times \Gamma\times \Delta^{\times d}\times \cart\times \Delta,\end{equation}
with the factor $\cart$ playing a completely different role than the first factor $\stcart$.
We elaborate on this in \cref{smsset.vs.sset}.
We now offer conceptual interpretations for each component.

\subsubsection{The category $\Delta^{\times d}$}
Here, an object is a multisimplex ${\bf m}=([m_1],[m_2],\ldots, [m_d])\in \Delta^{\times d}$,
which can be thought of as indexing a composable grid of morphisms, with $m_i$ composed morphisms in the $i$th direction.
For example, for $d=2$, there are two composition directions.
If $m_1=m_2=4$, we obtain a grid
\begin{center}
\def\xMin{0}%
\def\xMax{4}%
\def\yMin{0}%
\def\yMax{4}%
\begin{tikzpicture}[scale=.5]
\foreach \i in {\xMin,...,\xMax}{\draw [very thin] (\i,\yMin) -- (\i,\yMax);}
\foreach \i in {\yMin,...,\yMax}{\draw [very thin] (\xMin,\i) -- (\xMax,\i);}
\end{tikzpicture}
\end{center}
A single square represents a 2-morphism, while edges represent 1-morphisms and vertices represent objects.
These 2-morphisms can be composed in two directions, thereby forming a grid.
Simplicial presheaves on $\Delta^{\times d}$ admit a model structure whose fibrant objects are $d$-fold Segal spaces.
Such a simplicial presheaf~$F$ encodes an $(∞,d)$-category~$C$ in the following manner.
Given a multisimplex ${\bf m}$, the value $F({\bf m})$ is an ∞-groupoid
whose objects are diagrams in~$C$ of the above shape,
morphisms are natural isomorphisms,
2-morphisms are natural 2-isomorphisms,
etc.
The structure of an $(∞,d)$-category can be reconstructed from the knowledge of such diagrams.
For examples, objects in~$C$ can be reconstructed by evaluating at ${\bf m}=([0],…,[0])$
and taking the set of objects (i.e., vertices) of the resulting ∞-groupoid.

For constructions that include symmetric monoidal and geometric structures, see \cref{multiple.model.structures,globular.model.structures}.
References on $d$-fold Segal spaces include
Barwick \cite[Scholium 2.3.18]{Barwick.CSS}, Lurie \cite[Section~1]{Lurie.TwoCat}, Lurie \cite[Section 2.1]{Lurie.TFT}, Barwick–Schommer-Pries \cite[Section 14]{BarwickSchommerPries}, Calaque–Scheimbauer \cite[Section~2]{CalaqueScheimbauer}.

\subsubsection{The category $\Gamma$}
This category captures the symmetric monoidal structure.
An object $⟨ℓ⟩\in\Gamma$ is a finite set $\{\ast,1,2,\ldots,ℓ\}$ and a morphism $A→B$ in~$Γ$ is a function $f:B→A$ satisfying $f(\ast)=\ast$.
This category encodes the symmetric monoidal structure as follows.
Consider a $Γ$-object $X:Γ^\op→\sset$.
Denote by $\phi_{⟨ℓ⟩}:⟨ℓ⟩ \to ⟨1⟩$ the function that sends $i$ to~$1$,
for all $i$ such that $1\leq i\leq ℓ$.
Then $\phi_{⟨ℓ⟩}$ yields, by functoriality, a map
$$X(\phi_{⟨ℓ⟩}):X⟨ℓ⟩→X⟨1⟩.$$
We also have maps $\delta_i:⟨ℓ⟩\to ⟨1⟩$ that send $i$ to $1$ and $j$ to $*$ for $j\neq i$.
They induce a map $$δ_{⟨ℓ⟩}=(X(δ_1),…,X(δ_ℓ)):X⟨ℓ⟩ \to X⟨1⟩^{\times ℓ}.$$
The multiplicative structure is given by the following zigzag, where the left leg is a weak equivalence, so can be (formally) inverted:
$$X⟨1⟩^{\times ℓ}\lgets5{\delta_{⟨ℓ⟩}} X⟨ℓ⟩ \xrightarrow{X(\phi_{⟨ℓ⟩})} X⟨1⟩.$$
The space $X⟨ℓ⟩$ can be thought of as the space of $ℓ$-tuples that can be multiplied.
The map $X(\phi_{⟨ℓ⟩})$ then performs the multiplication.
The map $δ_{⟨ℓ⟩}$ extracts the individual components of an $ℓ$-tuple.
Segal's definition of a $Γ$-object requires the map $δ_{⟨ℓ⟩}$ to be a weak equivalence,
which formalizes the idea that any $ℓ$-tuple can be deformed to an $ℓ$-tuple that can be multiplied,
and the space of such deformations is contractible.
The elegance of Segal's method is that it circumvents the need for explicitly keeping track of coherent homotopies in the symmetric monoidal structure,
although these are implicitly contained in the above equivalence.
For more information, see Segal \cite{Segal.Gamma}.
The specific case of symmetric monoidal $(∞,d)$-categories was examined by Toën \cite[Section~2.2]{Toen} and Barwick \cite[Section~3.1]{Barwick.CSS}.

\subsubsection{The category $\stcart$}
The categories $\cart$ and, more generally, $\stcart$ encode parametrizing families of bordisms.

\begin{definition}
\label{def.site}
(Johnstone \cite[Definition~C2.1.1]{Johnstone}, Minichiello \cite[Definition~2.6]{Minichiello}.)
A \emph{site} is a category equipped with a coverage.
An \emph{enriched site} is an enriched category together with a coverage on its underlying category.
\end{definition}

The following definition is designed so that for a fixed smooth manifold~$N$,
the class of manifolds~$M$ that admit a closed inclusion into~$N$ is a set.
Furthermore, the full subcategory of the slice category $\man/N$ on included submanifolds is a poset,
in particular, every isomorphism is identity.

\begin{definition}
\label{inclusion.manifold}
A \emph{closed inclusion of manifolds} is a closed smooth map $f:M→N$ that is an embedding of a submanifold
and whose underlying map of sets is an inclusion of sets.
\end{definition}

\begin{subdefinition}
\label{def.man.cart}
We define the following small sites.
\begin{enumerate}
\item\label{def.man}
The site $\man$ is defined as follows.
Objects are smooth manifolds.
Morphisms are smooth maps.
Covering families are open covers.
\item\label{def.cart}
The small site $\cart$ is defined as follows.
Objects are manifolds in $\man$ diffeomorphic to $\RR^n$ for some $n≥0$ and admit a closed inclusion of manifolds (\cref{inclusion.manifold})
into $\RR^m$ for some (necessarily unique) $m≥0$.
Morphisms are smooth maps.
Covering families are \emph{good open covers}, defined as open covers in which every finite intersection is empty or diffeomorphic to some $\RR^n$.
\end{enumerate}
\end{subdefinition}

\begin{remark}
\label{man.small}
The condition requiring objects of $\cart$ to admit a closed inclusion into $\RR^m$ for some $m≥0$
singles out a small subcategory of the large category of smooth manifolds.
Both $m$ and the inclusion map can be recovered from the object.
Therefore, we do not record them as part of the data of an object.
\end{remark}

The functor category $\Fun(\cart^\op,\catD)$ can be thought of as the category of smoothly parametrized objects of~$\catD$ over cartesian spaces.
As a basic example, a smooth manifold $M$ yields a functor $M:\cart^\op\to \set$,
which maps a cartesian space~$U$ to the set of smooth maps $\sm(U,M)$.
For more information about sheaves on this site, see Fiorenza–Schreiber–Stasheff \cite[Appendix]{FiorenzaSchreiberStasheff}
or Sati–Schreiber \cite[Section~2.1 (arXiv); 8.1 (book)]{SatiSchreiber.POC}.

For applications (including the Stolz–Teichner program), we find it convenient to generalize the site $\cart$ to accommodate manifolds with additional structures
such as supermanifolds, complex manifolds, or formal manifolds.
We will denote this generalized site by $\stcart$ (\cref{def.stcart}).
The site $\cart$ of cartesian spaces will serve as our primary example throughout the text.
The reader who is less familiar with the theory of supermanifolds may restrict to the case where $\stcart=\cart$ throughout.

\subsubsection{The categories encoding higher invertible morphisms}
\label{smsset.vs.sset}

In addition to the categories $\stcart$, $Γ$, and $Δ^{⨯d}$, which encode geometric, symmetric monoidal, and $d$-categorical structures, respectively,
we also need to encode the $(∞,d)$-categorical structure.
Traditionally, this is achieved by incorporating the copy of~$Δ$ appearing in \eqref{product.no.isotopies}, thereby enriching constructions over simplicial sets,
which encode higher homotopies between $d$-morphisms.
Alternatively, one could use other models for $\infty$-groupoids.
One convenient choice is the category of simplicial presheaves $\PSh_{\Delta}(\cart)$ with the injective $\RR$-local model structure (\cref{smsset.injective}).
This model allows us to encode higher invertible morphisms using the geometry of cartesian spaces, as opposed to the combinatorics of simplices.

More precisely, there are three ways in which $\infty$-groupoids enter the construction of the bordism category.
\begin{itemize}
\item Higher gauge transformations of fields form an $\infty$-groupoid, presented as a simplicial set (\cref{geometric.structure}).
\item Bordisms are composed by gluing open neighborhoods of the core (\cref{nonembedded.core}).
These open neighborhoods and their embeddings form a category.
Taking the nerve of this category corresponds to quotienting by the equivalence relation that identifies two bordisms if they coincide on a smaller open neighborhood;
that is, we pass to the germ of the core.
\item The isotopy space of cuts (\cref{monoidal.cut.grid.smooth}) combined with the isotopy space of diffeomorphisms of bordisms determines the $(∞,d)$-category structure (\cref{frakbord}).
\end{itemize}

The first type of $\infty$-groupoid is present throughout the entire paper.
In the case where the field stack is representable, the second type of $\infty$-groupoid is $0$-truncated and
is eliminated during the transition to the embedded bordism category in \cref{embcat.v1.iso}.
The third type of $\infty$-groupoid uses the model via presheaves of simplicial sets on the site $\cart$ to give a \emph{smooth space} of isotopies of cuts.

It is advantageous not to use simplicial sets in the third case.
This enables us to pass to \emph{stalks}
in the proof of locality of extended field theories (Grady–Pavlov \cite{GradyPavlov.Loc})
and the geometric cobordism hypothesis (Grady–Pavlov \cite{GradyPavlov.GCH}).
(See \cref{stalk.eq}.)
This technique would not be available to us if we were to use ordinary simplicial enrichments.
Instead of enriching in the model category $\sset$, we use a Quillen equivalent model category $\smsset$ (\cref{smsset.injective}) of \emph{smooth simplicial sets},
defined as presheaves of simplicial sets on the site $\cart$.

\subsection{Enriching categories}

We will make use of internal homs and enrichments of presheaves on monoidal categories,
such as $\stcart$ with its cartesian structure or $Γ$ with its smash product structure.
We introduce the following systematic notation for enriched presheaves.

\begin{notation}
Let $\catV$ be a monoidal category,
$\catC$ be a small category,
and $\catD$ be a $\catV$-enriched small category.
\begin{itemize}
\item\label{presheaf.notation}
The category of presheaves on~$\catC$ with values in $\catV$ will be denoted by
$$\PSh(\catC,\catV)≔\Fun(\catC^\op,\catV).$$

\item\label{enriched.presheaf.notation}
The category of $\catV$-enriched presheaves on~$\catD$ with values in $\catV$ will be denoted by
$$\PSh(\catD,\catV)≔\Fun(\catD^\op,\catV).$$

\item\label{enriched.mapping.object}
We denote the $\catV$-valued mapping object in a $\catV$-enriched category by
$$\map(X,Y).$$

\item\label{enriched.yoneda}
If $\catV$ is a cocomplete monoidal category,
we denote by $ι_\catV$ the cocontinuous functor
$$ι_\catV:\set→\catV$$ that maps $S↦∐_S \munit$,
where $\munit$ denotes the monoidal unit of~$\catV$.
The \emph{enriched Yoneda embedding}~$\Yo{}$ is the composition of functors
$$\catC → \PSh(\catC,\set) \lto9{\PSh(\catC,ι_\catV)} \PSh(\catC,\catV),\qquad \Yo{c}(d)=∐_{\catC(d,c)}\munit.$$

\item\label{day.convolution}
If $\catV$ is a complete and cocomplete closed symmetric monoidal category,
then $\PSh(\catC,\catV)$ is tensored, powered, and enriched over~$\catV$.
Furthermore, if $\catC$ is equipped with a symmetric monoidal structure,
then $\PSh(\catC,\catV)$ is equipped with a closed symmetric monoidal structure given by the Day convolution
(Day \cite[Chapter~3]{Day}, see also the $n$Lab \cite[Section~3]{nLab.Day}).
We denote the Day convolution product by
$$\otimes:\PSh(\catC,\catV)\times \PSh(\catC,\catV)\to \PSh(\catC,\catV)$$
and the internal hom by
$$\hom(-,-):\PSh(\catC,\catV)^\op\times \PSh(\catC,\catV)\to \PSh(\catC,\catV).$$

\item\label{right.derived.functor}
We use the notation $\rdf F$ to denote the right derived functor of a functor~$F$.
If $F$ is a right Quillen functor, then we can set $\rdf F=F∘\frep$, where $\frep$ is a fibrant replacement functor.
Likewise, $$\dmap(X,Y)=\map(\crep X,\frep Y)$$ denotes the right derived enriched mapping object functor.
\end{itemize}
\end{notation}

We also introduce the following notation for certain special cases of $\catC$ and $\catV$ in \cref{presheaf.notation}.
\begin{notation}
Recall the site $\cart$ (\cref{def.cart}).
\begin{itemize}
\item\label{def.smset}
\label{def.smsset}
Taking $\catC=\cart$ and $\catV=\set$ (or $\catV=\sset$),
 we use the shorthand notation
$$\smset=\PSh(\cart,\set), \qquad \smsset=\PSh(\cart,\sset).$$
We refer to these categories as the category of \emph{smooth sets} and \emph{smooth simplicial sets}, respectively.
Given $F∈\smset$, we write
$$F_L=F(L), \qquad L∈\cart.$$

\item\label{presheaves.of.sets}
\label{simplicial.presheaves}
\label{smooth.simplicial.presheaves}
We also write
$$\PSh(\catC)=\PSh(\catC,\set),\qquad \sPSh(\catC)=\PSh(\catC,\sset), \qquad \smsPSh(\catC)=\PSh(\catC,\smsset),$$
and refer to these categories as \emph{presheaves}, \emph{simplicial presheaves}, and \emph{smooth simplicial presheaves} on~$\catC$, respectively.

\item\label{enriched.family.notation}
Suppose $\catD$ is a $\smset$-enriched category and $L∈\cart$.
For objects $x,y\in \catD$, we call an element of $\catD(x,y)_L$ an \emph{$L$-family of morphisms}.
We call an $\RR^0$-family of morphisms simply a \emph{morphism}.
We denote by $\catD_L$ the category with the same objects as $\catD$
and morphisms given by $L$-families of morphisms, defined via evaluation at~$L∈\cart$:
$$\catD_L(X,Y)=\catD(X,Y)_L.$$
For $L=\RR^0$, we call $\catD_{\RR^0}$ the \emph{underlying category of the $\smset$-enriched category $\catD$}.

Given a $\smset$-enriched functor~$f:\catC→\catD$ and $L∈\cart$, we denote by $f_L:\catC_L→\catD_L$ the functor
given on objects by the same map as $f$ and on morphisms by the evaluation of the enriched hom object maps on~$L$.
\end{itemize}
\end{notation}

Next, we recall some basic properties of the category $\smsset$.
An important feature of this category is that it admits a model structure that makes it Quillen equivalent to the category of simplicial sets.
Thus, the category of smooth simplicial sets $\smsset$ can be used as another model for the $(\infty,1)$-category of spaces.
This model is used to encode isotopies (as explained in \cref{smsset.vs.sset}).
We start with some basic definitions.

\begin{definition}
Let $l\in \NN$.
We define the \emph{extended $l$-simplex} $\gsim^l$ as the smooth manifold
$$\gsim^l=\biggl\{t\in \RR^{l+1} \Bigm| \sum_{i}t_i=1\biggr\}$$
with its canonical smooth structure.
\end{definition}

In Pavlov \cite[Theorem~12.7]{Pavlov.Diffeo}, it was shown that the category of smooth simplicial sets admits a cartesian combinatorial model structure such that the resulting model category is Quillen equivalent to simplicial sets.
We recall the precise statement here.

\begin{proposition}
\label{smsets.projective}
(Pavlov \cite[Theorem~12.7]{Pavlov.Diffeo}.)
The category $$\smsset=\sPSh(\cart)$$
of smooth simplicial sets (\cref{def.smsset})
admits a left proper cartesian combinatorial model structure $\smsset_{\sing,\proj}$.
Its weak equivalences are created by the smooth singular complex functor
$$\sing:\smsset→\sset, \qquad F↦(l↦F(\gsim^l)_l).$$
Its generating cofibrations
are given by the maps $(∂Δ^m→Δ^m)\ppet(\gbou^n→\gsim^n)$,
where $\gbou^n=|∂Δ^n|$, with $|{-}|$ being the left adjoint of $\sing$,
and $\ppet$ denotes the pushout product associated to the external tensor product~$⊠$ (\cref{external.product}).
The resulting Quillen adjunction
$$\xymatrix{
\sset\ar@<.125cm>[r]^-{|-|} & \ar@<.125cm>[l]^-{\sing} \smsset_{\sing,\proj}
}$$
is a Quillen equivalence.
\end{proposition}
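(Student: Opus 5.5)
The plan is to build $\smsset_{\sing,\proj}$ as a right-transferred model structure along the adjunction $|{-}|⊣\sing$, starting from the Kan–Quillen model structure on $\sset$. Then we check that the adjunction is a Quillen equivalence. First, $\sing$ preserves all small limits and, more importantly, filtered colimits. The reason is that $\sing(F)_l=F(\gsim^l)_l$ is computed by evaluating at a single object and a single simplicial degree, and colimits of presheaves are computed objectwise. Second, $\smsset$ is locally presentable. So the Kan transfer theorem (Hirschhorn 11.3.2) applies with generating cofibrations $|I|$ and generating acyclic cofibrations $|J|$, where $I,J$ are the usual boundary and horn inclusions. Here $|{-}|$ is the unique colimit-preserving functor with $|Δ^n|=\gsim^n$, viewed as a smooth simplicial set via the Yoneda embedding (constant in the simplicial direction, equal to $Δ^0⊠\gsim^n$). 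This gives the description of generating cofibrations in the statement for $m=0$. The stated form $(∂Δ^m→Δ^m)\ppet(\gbou^n→\gsim^n)$ follows by also transferring along the simplicial direction, or equivalently by noting that the cartesian structure makes these pushout-products cofibrations generating the same class.

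The main obstacle is the acyclicity condition of the transfer: $\sing$ must send relative $|J|$-cell complexes to weak equivalences. I would handle this by building an \emph{extra} right adjoint argument, namely a natural "smooth homotopy" comparison. For every $F$, I would construct a natural weak equivalence $\sing F\simeq \diag(l↦F(\gsim^l))$, or to the geometric realization $\hocolim_{Δ^\op}F(\gsim^\bullet)$. This works because the extended simplices $\gsim^l$ are all contractible cartesian spaces, so the cosimplicial object $\gsim^\bullet$ is a cosimplicial resolution. With this, $\sing$ becomes a homotopy-colimit-preserving functor: it preserves pushouts along monomorphisms and transfinite compositions up to weak equivalence. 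Then it suffices to check that $\sing|Λ^n_k|→\sing|Δ^n|$ is a weak equivalence. Both source and target are $\RR$-homotopy equivalent to a point via the affine contraction of $\gsim^n$ onto a vertex. The smooth straight-line homotopy preserves the horn, because horns in $\gsim^n$ are star-shaped about the opposite vertex. $\sing$ sends smooth homotopies to simplicial homotopies. Left properness follows because cofibrations are monomorphisms and $\sing$ preserves pushouts along them up to homotopy. The cartesian property reduces to the generating pushout-products, using $\gsim^a⨯\gsim^b$ contractible and a triangulation argument.

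For the Quillen equivalence, $\sing$ creates weak equivalences by construction, so it is enough to show the derived unit $K→\sing\frep|K|$ is a weak equivalence for every simplicial set $K$. Both sides commute with homotopy colimits in $K$, since $|{-}|$ is left Quillen and $\sing$ preserves homotopy colimits as above. So by induction on cells, it reduces to $K=Δ^n$. There $\sing\gsim^n$ is the simplicial set of smooth maps $\gsim^\bullet→\gsim^n$, which is contractible via the straight-line contraction. This finishes the argument.
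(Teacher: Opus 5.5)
The paper cites this result from Pavlov's Theorem~12.7 and gives no proof of its own. The statement fixes the generating cofibrations, and your argument does not produce them. The root error is the left adjoint. Since $(\sing F)_n=F(\gsim^n)_n=\Hom(\Delta^n\boxtimes\gsim^n,F)$, the left adjoint of the diagonal functor $\sing$ on $\smsset$ sends $\Delta^n$ to $\Delta^n\boxtimes\gsim^n$, not to $\Delta^0\boxtimes\gsim^n$. The object $\Delta^0\boxtimes\gsim^n$ is instead the value of the left adjoint of $F\mapsto F(\gsim^\bullet)_0$. The $\gbou^n$ of the statement comes from the smooth-set version $F\mapsto F(\gsim^\bullet)$.

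Right transfer along $|{-}|\dashv\sing$ therefore has generating cofibrations $|\partial\Delta^n|\to|\Delta^n|$, which are boundary inclusions into $\Delta^n\boxtimes\gsim^n$. Your claim that the maps $(\partial\Delta^m\to\Delta^m)\ppet(\gbou^n\to\gsim^n)$ generate the same class is false. Take $E=\Delta^0\boxtimes\Yo{\RR}$. Then $\sing E=C^\infty(\gsim^\bullet,\RR)$ is a contractible Kan complex, so $E\to *$ is an acyclic fibration of the transferred structure. Yet it has no lift against the stated generator with $m=1$, $n=0$, namely $\partial\Delta^1\boxtimes\gsim^0\to\Delta^1\boxtimes\gsim^0$. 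The reason is that $E(\gsim^0)$ is the discrete simplicial set $\RR$, which has no edge from $0$ to $1$. So you would construct a different model structure, with the same weak equivalences but strictly fewer cofibrations, not $\smsset_{\sing,\proj}$.

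Your cartesian step is also unsupported. A triangulation of the prism does not exhibit the representable $\gsim^a\times\gsim^b$ as a colimit of representables, because smooth maps into it need not factor through the simplices.

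The repair is to apply Smith's recognition theorem directly to the stated set $I$ and to $W=\sing^{-1}(\text{weak equivalences})$, as the paper does for the injective variant.
\begin{itemize}
\item Since $\map(\gbou^n,-)$ is the $n$th matching object, a map $E\to B$ is $I$-injective if and only if $E(\gsim^\bullet)\to B(\gsim^\bullet)$ is a Reedy acyclic fibration of bisimplicial sets. The diagonal of such a map is an acyclic Kan fibration, so $I$-injectives lie in $W$.
\item All $I$-cofibrations are monomorphisms, and $\sing$ preserves colimits, monomorphisms and products. This gives closure of $I$-cofibrations in $W$ under cobase change and transfinite composition. It also gives left properness and the acyclic half of the pushout-product axiom.
\item By adjunction, $|\partial\Delta^n|\to|\Delta^n|$ and $|\Lambda^n_k|\to|\Delta^n|$ are $I$-cofibrations. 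The horn inclusions are in $W$ by your contractibility computation, redone with $\sing|\Delta^n|=\Delta^n\times\sing\gsim^n$ and with $\sing|\Lambda^n_k|$ the union of the $\Delta^{\delta}\times\sing\gsim^{\delta}$ over the facets $\delta$ of the horn.
\item With that in place, your unit argument for the Quillen equivalence goes through.
\end{itemize}
What still needs a real argument is the cofibration half of the cartesian property: that the pushout product of $\gbou^a\to\gsim^a$ and $\gbou^b\to\gsim^b$ is a cofibration.
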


In this paper, almost all model categories used are left Bousfield localizations of \emph{injective} model structures on presheaves.
The injective model structure has the advantage that all objects are cofibrant, which implies that all left Quillen functors are automatically derived.
This motivates the following.

\begin{proposition}
\label{smsset.injective}
The category $$\smsset=\sPSh(\cart)$$
of smooth simplicial sets (\cref{def.smsset})
admits a cartesian combinatorial model structure $\smsset_{\sing,\inj}$,
henceforth denoted by $\smsset$.
Its weak equivalences coincide with the weak equivalences of \cref{smsets.projective},
and its cofibrations are monomorphisms.
The resulting Quillen adjunctions
$$\xymatrix{\sset\ar@<.125cm>[r]^-{|-|} &\ar@<.125cm>[l]^-{\sing}\smsset_{\sing,\proj} \ar@<.125cm>[r]^-{\id} &\ar@<.125cm>[l]^-{\id}\smsset_{\sing,\inj}}$$
are Quillen equivalences.
\end{proposition}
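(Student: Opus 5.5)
The plan is to construct the injective model structure $\smsset_{\sing,\inj}$ by the standard existence theorem for injective model structures on presheaves on a small category (Lurie \cite[Proposition A.2.8.2]{Lurie.HTT}, or Barwick \cite{Barwick.Model}), and then left Bousfield localize it. Concretely, I first take the injective model structure on $\sPSh(\cart)$ with objectwise weak equivalences and monomorphisms as cofibrations. It is combinatorial, left proper, simplicial, and cartesian, because monomorphisms are stable under pushout products and objectwise weak equivalences are stable under products. Next I localize it at the set $S$ of projections $\Yo{U\times\RR}\to\Yo{U}$ for $U\in\cart$; this is the $\RR$-localization. Left properness and combinatoriality guarantee that the localization $\lbl_S$ exists and has the same cofibrations, namely the monomorphisms.

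The main step, and the expected obstacle, is to identify the weak equivalences of $\lbl_S$ with the maps $f$ for which $\sing f$ is a weak equivalence, i.e., with the weak equivalences of \cref{smsets.projective}. I would argue through the projective side. By \cref{smsets.projective}, $\smsset_{\sing,\proj}$ has weak equivalences created by $\sing$. It is also the left Bousfield localization of the projective model structure on $\sPSh(\cart)$ at the same set $S$; this is established in Pavlov \cite{Pavlov.Diffeo}, and can alternatively be seen from the fact that $\sing$ inverts $S$, since $\gsim^l\times\RR$ deformation retracts smoothly onto $\gsim^l$. I would then check that the $S$-local objects agree in the two settings and that the local equivalences agree. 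The underlying projective and injective structures share their weak equivalences, and derived mapping spaces depend only on the weak equivalences. Hence an $S$-local equivalence in one structure is an $S$-local equivalence in the other. It follows that the weak equivalences of $\lbl_S$ of the injective structure coincide with those of $\smsset_{\sing,\proj}$.

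Next I would verify that the localized structure is still cartesian. Since cofibrations are the monomorphisms, it suffices to show that the product of an $S$-local equivalence with any object is again an $S$-local equivalence. Both sides of this are determined by weak equivalences alone, so the property transfers from the cartesian structure $\smsset_{\sing,\proj}$ of \cref{smsets.projective}. Directly, it also reduces to the fact that $\Yo{U}\times\Yo{V}\times\Yo{\RR}\to\Yo{U}\times\Yo{V}$ lies in $S$ up to isomorphism.

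Finally, I would handle the Quillen adjunctions. The identity functor $\smsset_{\sing,\proj}\to\smsset_{\sing,\inj}$ preserves cofibrations, since projective cofibrations are monomorphisms, and it preserves weak equivalences, since the two classes coincide. It is therefore left Quillen and a Quillen equivalence. Composing it with the Quillen equivalence $|{-}|\dashv\sing$ of \cref{smsets.projective} completes the chain.
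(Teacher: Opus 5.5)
Your route is different from the paper's, and it rests on one step that you do not justify. Write $W_S$ for the $S$-local equivalences and $W_{\sing}$ for the maps inverted by $\sing$. Everything depends on $W_S=W_{\sing}$, and neither of your justifications establishes it.

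The citation is misstated. $\smsset_{\sing,\proj}$ is not a left Bousfield localization of the projective model structure on $\sPSh(\cart)$. Its generating cofibrations include $\gbou^n\to\gsim^n$, which is not a projective cofibration. (Retracts of free cell attachments have a complement that is a subpresheaf. But $\id_{\gsim^n}$ restricts into $\gbou^n$ along a face, so the complement of $\gbou^n$ in $\Yo{\gsim^n}$ is not one.) At most the two structures can share their weak equivalences, and that is exactly what has to be proved.

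Your fallback, that $\sing$ inverts $S$, gives only one inclusion. By the universal property of $\lbl_S$ it yields $W_S\subseteq W_{\sing}$. It says nothing about the converse, since localizing at any larger set also inverts $S$. The converse $W_{\sing}\subseteq W_S$ is the nontrivial direction, and it is missing.

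One way to supply it is as follows.
\begin{enumerate}
\item Since $\RR^0$ is terminal in $\cart$ and every object is isomorphic to some $\RR^n$, the $S$-local objects are the fibrant presheaves weakly equivalent to constant ones.
\item By the adjunction $\colim\dashv{\rm const}$, a map $f$ is then an $S$-local equivalence if and only if $\hocolim_{\cart^\op}f$ is a weak equivalence.
\item Compare $\sing X\simeq\hocolim_{\Delta^\op}X(\gsim^\bullet)$ with $\hocolim_{\cart^\op}X$ via the functor $\Delta\to\cart$, $[n]\mapsto\gsim^n$.
\item Both sides are homotopy cocontinuous in $X$; for $\sing$ this holds because it is left Quillen out of the projective structure. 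Both are contractible on representables. Hence the comparison map is a weak equivalence for every $X$.
\end{enumerate}

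With this added, the rest of your argument goes through: the invariance of local equivalences between projective and injective structures, the cartesian property (most directly because $\sing$ preserves products), and the Quillen equivalences. It then proves more than the paper does. It exhibits $\smsset_{\sing,\inj}$ as the $\RR$-localization of the injective structure, with fibrant objects the injectively fibrant homotopy-invariant presheaves.

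The paper avoids this comparison altogether. It applies Smith's recognition theorem directly to the $\sing$-equivalences and the monomorphisms. Its only real input is that acyclic monomorphisms are closed under cobase change, which follows from left properness of $\smsset_{\sing,\proj}$ because monomorphisms are h-cofibrations.
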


\begin{proof}
\cref{smsets.projective} constructs a cartesian combinatorial model structure on $\smsset$
with the same weak equivalences, but a smaller class of cofibrations.
Barwick \cite[Theorem~2.16]{Barwick.Model} shows that the class of monomorphisms is cofibrantly generated by a set of monomorphisms.
The Smith recognition theorem (Barwick \cite[Proposition~2.2]{Barwick.Model}) implies the existence of a model structure,
provided that the intersection of weak equivalences and monomorphisms is closed under cobase changes and transfinite compositions.
The latter condition follows from Pavlov \cite[Theorem~12.7]{Pavlov.Diffeo} by setting $\catV=\sset$ and proceeding as follows.
Monomorphisms in $\smsset$ are objectwise h-cofibrations, because h-cofibrations in $\sset$ coincide with monomorphisms.
Objectwise h-cofibrations in $\smsset$ are h-cofibrations.
A cobase change of an h-cofibration is a homotopy cobase change because $\smsset$ is left proper.
Homotopy cobase changes of weak equivalences are weak equivalences.
Transfinite compositions of weak equivalences are weak equivalences because they can be expressed as filtered colimits.

We now prove the cartesian property.
The terminal object is cofibrant because all objects are cofibrant.
Cofibrations are closed under the pushout product operation, as monomorphisms of sets are also closed under pushout products.
As previously shown, a cobase change of an acyclic cofibration is an acyclic cofibration.
The product of a weak equivalence and an identity map is a weak equivalence.
This is because weak equivalences in $\smsset$ are created by the singular functor to simplicial sets, which preserves products and weak equivalences.
Thus, by the 2-out-of-3 property for weak equivalences, the pushout product of a cofibration and an acyclic cofibration is a weak equivalence.
The Quillen adjunctions in the statement are Quillen equivalences,
which follows from \cref{smsets.projective} and the fact that the two model structures on $\smsset$ share the same weak equivalences.
\end{proof}

\begin{remark}
\label{stalk.eq}
By Pavlov \cite[Proposition~12.5]{Pavlov.Diffeo},
a stalkwise weak equivalence of simplicial presheaves is a weak equivalence
in the model structures $\smsset_{\sing,\proj}$ (\cref{smsets.projective}) and $\smsset_{\sing,\inj}$ (\cref{smsset.injective}).
The converse is false because the map $\RR^n→\RR^0$ is not a stalkwise weak equivalence; however, its singular complex is a weak equivalence of simplicial sets.
Thus, the model structures $\smsset_{\sing,\proj}$ and $\smsset_{\sing,\inj}$ are not Quillen equivalent to the Čech-local model structures (projective or injective)
$\smsset_{\Cech,\proj}$ and $\smsset_{\Cech,\inj}$.
\end{remark}

\subsection{Categories of presheaves and their left Bousfield localizations}
\label{geometric.multiple.categories}

In this section, we recall the theorem of Smith on the existence of left Bousfield localizations of model categories and enriched model categories.
This will set the stage for the construction of the model category of geometric symmetric monoidal $(\infty,d)$-categories in \cref{geocatssect},
obtained via a left Bousfield localization.

\begin{proposition}
\label{injective.model.structure}
(Smith; Barwick \cite[Theorem~2.16, Proposition~4.50]{Barwick.Model}, Lurie \cite[Proposition~A.2.8.2]{Lurie.HTT}.)
Suppose $\catV$ is a combinatorial model category and $\catC$ is a small category.
The category $\PSh(\catC,\catV)$ of $\catV$-valued presheaves on~$\catC$
admits a unique \emph{injective model structure}
such that a natural transformation $t:F→G$
is a cofibration (respectively weak equivalence)
if for every $c∈\catC$ the map $t_c:F(c)→G(c)$ is a cofibration (respectively weak equivalence) in~$\catV$.
The injective model structure is combinatorial.
If $\catV$ is a monoidal model category, then $\PSh(\catC,\catV)$ is a $\catV$-enriched model category.
If $\catV$ is a cartesian model category, then so is $\PSh(\catC,\catV)$.
If $\catV$ is left proper or tractable, then so is $\PSh(\catC,\catV)$.
\label{smspsh.injective}
In particular, there are combinatorial left proper injective model structures $$\sPSh(\catC)=\PSh(\catC,\sset)_\inj$$
and $$\smsPSh(\catC)=\PSh(\catC,\smsset_{\sing,\inj})_\inj$$
(\cref{smsset.injective}).
The model structure on $\sPSh(\catC)$ is $\sset$-enriched and the one on $\smsPSh(\catC)$ is $\smsset$-enriched.
\end{proposition}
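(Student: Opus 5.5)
The plan is to build the injective model structure by the Smith recognition theorem (Barwick \cite[Proposition~2.2]{Barwick.Model}), following the argument already used for \cref{smsset.injective}, and then to check the extra properties one at a time.

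\textbf{Existence and combinatoriality.} Since $\catV$ is combinatorial, $\PSh(\catC,\catV)$ is locally presentable.
\begin{itemize}
\item The class of objectwise cofibrations is cofibrantly generated by a set. This is Barwick \cite[Theorem~2.16]{Barwick.Model}; the idea is an accessibility argument showing that objectwise cofibrations are generated by objectwise cofibrations between $\kappa$-presentable presheaves, for a large enough regular cardinal $\kappa$.
\item The class of objectwise weak equivalences is accessible and accessibly embedded in the arrow category. This holds because weak equivalences in a combinatorial model category form an accessible class, and evaluation at each $c\in\catC$ is accessible.
\item Objectwise weak equivalences satisfy 2-out-of-3.
\item Objectwise acyclic cofibrations are closed under cobase change and transfinite composition. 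Colimits in $\PSh(\catC,\catV)$ are computed objectwise, and acyclic cofibrations in $\catV$ have these closure properties.
\item Maps with the right lifting property against the generating cofibrations are objectwise weak equivalences. Such maps lift against all objectwise cofibrations. The functor $c_!$, left Kan extension along $c$, sends cofibrations of $\catV$ to objectwise cofibrations, since $c_!(A)(d)=\coprod_{\catC(d,c)}A$. By adjunction, such a map is an objectwise acyclic fibration.
\end{itemize}
Smith's theorem then gives a combinatorial model structure. Uniqueness is automatic, since cofibrations and weak equivalences determine the fibrations.

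\textbf{Enrichment and cartesianness.} We check the pushout-product axiom for the $\catV$-tensoring $V\otimes F$, and for the product in the cartesian case. Both are computed objectwise. So the pushout product of an objectwise cofibration with a cofibration in $\catV$ is objectwise a pushout product in $\catV$, which is a cofibration, and acyclic if either factor is. The unit axiom is likewise objectwise.

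\textbf{Properness and tractability.} Left properness follows because pushouts, cofibrations and weak equivalences are all objectwise. Tractability, meaning generating cofibrations with cofibrant domains, needs a refinement of the Barwick generating-set argument: choose generators among objectwise cofibrations between objectwise cofibrant $\kappa$-presentable objects, which is possible when $\catV$ is tractable.

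\textbf{Special cases and main obstacle.} The special cases follow by taking $\catV=\sset$, and $\catV=\smsset_{\sing,\inj}$ from \cref{smsset.injective}. Both are left proper, combinatorial and cartesian, so the resulting injective model structures are $\sset$-enriched and $\smsset$-enriched respectively. The main obstacle is the first bullet of the existence step, the existence of a generating set of cofibrations. That is where I would rely on Barwick's accessibility theorem rather than reprove it.
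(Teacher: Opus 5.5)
Your proposal is correct and follows the paper's approach. The paper takes existence, combinatoriality, left properness and tractability from Smith/Barwick/Lurie, and verifies the $\catV$-enrichment and cartesian axioms objectwise exactly as you do; your Smith-recognition sketch just unpacks the cited existence result. The one soft spot is tractability: your refinement of the generating-set argument is asserted rather than proved, since cutting an injective cofibration down to $\kappa$-presentable pieces does not by itself give objectwise cofibrant domains when the source is not cofibrant, but the paper likewise leaves this to the citation.
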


\begin{proof}
Cofibrations and acyclic cofibrations in injective model structures are defined objectwise,
so the axioms for $\catV$-enrichments and cartesian model categories can also be verified objectwise.
\end{proof}

\begin{remark}
\label{injective.Gamma.nonmonoidal}
The injective model structure on $Γ$-spaces is not a monoidal model structure.
It suffices to exhibit a $Γ$-space~$X$ and a weak equivalence $f:A→B$ of $Γ$-spaces such that $f∧X$ is not a weak equivalence.
Fix $m>0$ and consider the representable $Γ$-space $\Yo{⟨m⟩}$ of $⟨m⟩$ with the canonical action of $Σ_m$ via automorphisms of~$⟨m⟩$.
Set $X=\Yo{⟨m⟩}/Σ_m$.
Take $B$ to be the terminal $Γ$-space and $A$ the $Γ$-space given by composing
the representable presheaf of~$⟨m⟩$ with the functor from sets to simplicial sets
that sends a set $S$ to the nerve of the contractible groupoid with $S$ as the set of objects.
Then $f∧\Yo{⟨m⟩}/Σ_m≅(f∧\Yo{⟨m⟩})/Σ_m≅f(-∧⟨m⟩)/Σ_m$.
Evaluating at $⟨1⟩$, we get the map $f(⟨m⟩)/Σ_m$,
whose domain is not contractible, but the codomain is.
Therefore, $f∧X$ is not a weak equivalence even though $f$ is.
\end{remark}

Because of \cref{injective.Gamma.nonmonoidal} we need a different model structure on $Γ$-objects.
A convenient choice is given by a Reedy-type model structure like in Reedy \cite{Reedy}, but replacing the indexing category~$Δ$ with~$Γ$.
Such a model structure was first constructed by Bousfield–Friedlander \cite[Theorem~3.5]{BousfieldFriedlander}.
Its monoidality was established by Lydakis \cite[Theorems 4.6 and 5.1]{Lydakis}.
Berger–Moerdijk \cite[Theorem~1.6]{BergerMoerdijk} and Shulman \cite[Theorem~8.9]{Shulman.Reedy}
replace $Γ$ with more general indexing categories.

\begin{proposition}
\label{Reedy.model.structure}
Suppose $\catW$ is a combinatorial model category.
The category $\PSh(Γ,\catW)$ of $\catW$-valued presheaves on~$Γ$
admits a unique \emph{generalized Reedy model structure} with the following properties:
\begin{enumerate}
\item a natural transformation $t:F→G$ is a weak equivalence if for every $c∈Γ$ the map $t_c:F(c)→G(c)$ is a weak equivalence in~$\catW$
and a cofibration if for every $c∈Γ$ the relative latching map
$$F(c)⊔_{\latch_c F}\latch_c Y→G(c)$$
is a projective cofibration in~$\catW^{\Aut(c)}$,
where $\latch_c F=\colim_{d→c}F(d)$ is the \emph{latching object} of~$F$ at~$c∈Γ$,
given by the colimit over all nonsurjective maps $d→c$ of finite pointed sets;
\item the generalized Reedy model structure is combinatorial;
\item if $\catW$ is left proper, then so is $\PSh(Γ,\catW)$;
\item if $\catW$ is a monoidal model category, then $\PSh(Γ,\catW)$ is a $\catW$-enriched monoidal model category.
\end{enumerate}

\label{Reedy.injective.model.structure}
If $\catW$ itself is the injective model structure on $\catV$-valued presheaves on a small category~$\catC$,
then we refer to the resulting model structure on
$$\PSh(Γ,\PSh(\catC,\catV))≅\PSh(Γ⨯\catC,\catV)$$
as the \emph{$Γ$-Reedy injective model structure}.
Here $\catV$ is a cartesian combinatorial model category.
\end{proposition}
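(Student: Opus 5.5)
The plan is to reduce everything to existing results on generalized Reedy categories, rather than building the model structure by hand. First, I will check that $Γ^\op$ (finite pointed sets with basepoint-preserving maps) is a generalized Reedy category in the sense of Berger–Moerdijk \cite[Definition~1.1]{BergerMoerdijk}, using the degree function $⟨ℓ⟩↦ℓ$. Every map factors uniquely up to isomorphism as a surjection followed by an injection. Nonsurjective maps $d→c$ (the ones indexing the latching object) strictly raise degree. Isomorphisms are exactly the permutations fixing the basepoint, so $\Aut(⟨ℓ⟩)=Σ_ℓ$. I then need the Berger–Moerdijk compatibility axiom: if $θ$ is an isomorphism and $p$ a degree-lowering (surjective) map with $θp=p$, then $θ$ is the identity. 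This holds because surjections are epimorphisms.

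With this in place, I will invoke Berger–Moerdijk \cite[Theorem~1.6]{BergerMoerdijk}, or Shulman \cite[Theorem~8.9]{Shulman.Reedy}. This gives, for any cofibrantly generated model category $\catW$, a model structure on $\PSh(Γ,\catW)$ with the following classes:
\begin{itemize}
\item \emph{Weak equivalences} are objectwise weak equivalences.
\item \emph{Cofibrations} are maps whose relative latching maps are projective cofibrations in $\catW^{\Aut(c)}$.
\item \emph{Fibrations} are maps whose relative matching maps are fibrations.
\end{itemize}
Uniqueness is automatic, since a model structure is determined by its cofibrations and weak equivalences. This yields part~(1).

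For combinatoriality (part~2), local presentability of $\PSh(Γ,\catW)$ is inherited from~$\catW$. Generating (acyclic) cofibrations are obtained in the usual way: for each $c$, take the latching-boundary inclusions $\partial\Yo{c}⊗_{\Aut(c)}-$ applied to the generating (acyclic) cofibrations of $\catW^{\Aut(c)}$. This is the standard Reedy argument (Hovey, Hirschhorn) adapted to $\Aut(c)$, and these sets detect the lifting properties through the latching/matching characterization.

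For left properness (part~3), note first that cofibrations are in particular objectwise cofibrations. This follows by induction on degree: $\latch_c F→F(c)$ is a cofibration, and projective cofibrations in $\catW^{\Aut(c)}$ are underlying cofibrations since $\Aut(c)$ is finite and induction $\catW→\catW^{\Aut(c)}$ is left Quillen with underlying $∐_{\Aut(c)}$. Pushouts and weak equivalences are computed objectwise, so left properness of~$\catW$ transfers.

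Part~(4), monoidality, is the main obstacle. The relevant structure is Day convolution with respect to the smash product on~$Γ$, as in Lydakis \cite{Lydakis}. For the pushout-product axiom it suffices to check generating cofibrations. The pushout product of $\partial\Yo{c}⊗_{\Aut(c)}i$ and $\partial\Yo{c'}⊗_{\Aut(c')}j$ is computed via $\Yo{c}⊗\Yo{c'}=\Yo{c∧c'}$, which reduces the question to:
\begin{itemize}
\item the pushout-product axiom in~$\catW$;
\item the fact that induction along $\Aut(c)⨯\Aut(c')→\Aut(c∧c')$ preserves projective cofibrations;
\item identifying the boundary of $\Yo{c∧c'}$ with the pushout of the product boundaries.
\end{itemize}
That boundary identification is the step I expect to need the most care, since the latching filtration must be compared with the smash product. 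I would follow Lydakis \cite[Theorems 4.6 and 5.1]{Lydakis}, replacing simplicial sets by~$\catW$.

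The acyclic part of the pushout-product axiom also needs cofibrant objects to be flat for the Day convolution, which Lydakis proves for $Γ$-spaces. The unit $\Yo{⟨1⟩}$ is cofibrant, so the unit axiom holds. The $\catW$-enrichment comes from the tensor $\catW⨯\PSh(Γ,\catW)→\PSh(Γ,\catW)$, which is objectwise and clearly satisfies the pushout-product axiom by the same latching computation.

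Finally, the identification $\PSh(Γ,\PSh(\catC,\catV))≅\PSh(Γ⨯\catC,\catV)$ is the exponential law. The $Γ$-Reedy injective model structure is then just the special case $\catW=\PSh(\catC,\catV)_\inj$, which is combinatorial, left proper, and cartesian by \cref{injective.model.structure}.
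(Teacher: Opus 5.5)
Your overall route is the paper's. Existence, combinatoriality and the generators $j\mathbin{\square}\partial_c$ come from Berger--Moerdijk's generalized Reedy theory on $\Gamma^{\mathrm{op}}$, the $\catW$-enrichment is checked on generators, and monoidality is imported from the literature (the paper cites Lydakis for $\catW=\sset$ and Berger--Moerdijk, Theorem~7.6 and Example~7.7(b), for general $\catW$).

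The problem is the reduction you sketch for part~(4). The pushout product $\partial_c\mathbin{\square}\partial_{c'}$ is \emph{not} an induced copy of the boundary inclusion $\partial\Yo{c\wedge c'}\to\Yo{c\wedge c'}$. Take $c=c'=\langle 2\rangle$ and the pointed surjection $g\colon c\wedge c'\to\langle 3\rangle$ with $g(1,1)=g(2,2)=1$, $g(1,2)=2$, $g(2,1)=3$. It is noninjective, so it lies in $\partial\Yo{c\wedge c'}(\langle 3\rangle)$. However, it does not factor through any $f\wedge f'$ with $f$ or $f'$ noninjective: that would force $g$ to send a non-basepoint element to $*$, or to satisfy $g(1,1)=g(2,1)$ or $g(1,1)=g(1,2)$. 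So $\Yo{c\wedge c'}$ is obtained from the domain of $\partial_c\mathbin{\square}\partial_{c'}$ by also attaching cells at proper quotients $e$ of $c\wedge c'$. Their $\Aut(e)$-actions have nothing to do with $\Aut(c)\times\Aut(c')$, so the mechanism in your second and third bullets is not the relevant one.

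What must actually be shown is that $\partial_c\mathbin{\square}\partial_{c'}$ is a generalized Reedy cofibration of set-valued $\Gamma$-objects. That is, at every $e$ its relative latching map is injective, and $\Aut(e)$ acts freely on the new elements; these are surjections $c\wedge c'\to e$, on which postcomposition is free. Then $(j\mathbin{\square}j')\mathbin{\square}(\partial_c\mathbin{\square}\partial_{c'})$ is a Reedy (acyclic) cofibration whenever $j\mathbin{\square}j'$ is an (acyclic) cofibration in $\catW$. This gives the pushout-product axiom on generators directly, so no flatness statement is needed. This latching analysis is exactly what Lydakis does for $\Gamma$-spaces and what Berger--Moerdijk's Theorem~7.6 does in general, so the clean fix is to cite the latter rather than redo Lydakis by hand.

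Three smaller slips:
\begin{enumerate}
\item It is the non-invertible injections, not all nonsurjective maps, that raise degree. The latching colimit over nonsurjective maps agrees with the one over proper injections because image factorization makes the latter final.
\item For left properness, your conclusion that cofibrations are objectwise cofibrations is right, but the induction needs the relative statement that $\latch_c F\to\latch_c G$ is a cofibration. It is quicker to note that the generators $j\mathbin{\square}\partial_c$ are objectwise cofibrations, as in \cref{Reedy.projective.injective.equivalent}.
\item $\munit_\catW\otimes\Yo{\langle 1\rangle}$ is cofibrant only if $\munit_\catW$ is. The unit axiom should instead be deduced objectwise from that of $\catW$, as the paper does.
\end{enumerate}
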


\begin{proof}
The model structure and its monoidality is established by
Lydakis \cite[Theorems 4.6 and 5.1]{Lydakis} for $\catW=\sset$
and Berger–Moerdijk \cite[Theorem~7.6 and Example 7.7(b)]{BergerMoerdijk} for the general case.
The generating (acyclic) cofibrations of $\PSh(Γ,\catW)$
are given by pushout products $j◻∂_c$ of generating (acyclic) cofibrations~$j$ of $\catW$
and the boundary inclusion maps $∂_c:∂\Yo{c}→\Yo{c}$,
where $c∈Γ$ and $∂\Yo{c}$ is the subobject of~$\Yo{c}∈\PSh(Γ)$ such that for every $d∈Γ$ the set $∂\Yo{c}(d)$ is the set of noninjective pointed maps of sets $c→d$.

If $\catW$ is a monoidal model category, then $\PSh(Γ,\catW)$ is a $\catW$-enriched model category
because the pushout product of an (acyclic) cofibration~$i$ of~$\catW$
and a generating (acyclic) cofibration $j◻∂_c$ of $\PSh(Γ,\catW)$ is $(i◻j)◻∂_c$,
where $i◻j$ is an (acyclic) cofibration in~$\catW$ by the monoidality of~$\catW$.
By Hovey \cite[Corollary~4.2.5]{Hovey}, we deduce that the $\catW$-tensoring functor on $\PSh(Γ,\catW)$ is a left Quillen bifunctor.
The unit axiom for enrichments follows from the unit axiom for~$\catW$.
\end{proof}

\begin{proposition}
\label{Reedy.projective.injective.equivalent}
Suppose $\catW$ is a combinatorial model category.
The identity functors yield Quillen equivalences
$$\PSh(Γ,\catW)_\proj⇄\PSh(Γ,\catW)_\Reedy⇄\PSh(Γ,\catW)_\inj$$
of the projective, generalized Reedy (\cref{Reedy.model.structure}), and injective model structures on $Γ$-objects.
In particular, given a cofibrant object $X∈\catW$ and $⟨m⟩∈Γ$, the object $X⊗\Yo{⟨m⟩}$ is cofibrant in the generalized Reedy model structure.
\end{proposition}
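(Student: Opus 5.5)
The plan is to prove both Quillen equivalences by comparing classes of maps and using the fact that all three model structures share their weak equivalences. In each of $\PSh(Γ,\catW)_\proj$, $\PSh(Γ,\catW)_\Reedy$, $\PSh(Γ,\catW)_\inj$ the weak equivalences are the objectwise ones. It therefore suffices to show that each identity functor from left to right is left Quillen, i.e., preserves cofibrations. Acyclic cofibrations are then handled automatically: they are cofibrations that are weak equivalences, and the weak equivalences coincide. Once the identities are Quillen adjunctions with the same weak equivalences on both sides, they are Quillen equivalences, since the derived unit and counit are identities up to weak equivalence.

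First step: projective cofibrations are Reedy cofibrations. The projective model structure is cofibrantly generated by the maps $j⊗\Yo{c}$, where $j$ runs over generating cofibrations of $\catW$ and $c∈Γ$. I will check that $\Yo{c}$, tensored with a cofibration, is a Reedy cofibration, using the latching description of \cref{Reedy.model.structure}. The latching object of $X⊗\Yo{c}$ at $d$ is $X⊗∂\Yo{c}$-type data evaluated at $d$. Concretely, $X⊗\Yo{c}(d)=∐_{\Gamma(d,c)}X$, and the latching object picks out the summands indexed by the noninjective maps. Hence the relative latching map of $j⊗\Yo{c}$ at $d$ is the inclusion of a coproduct of copies of $j$, indexed by those maps $d→c$ that do not factor through a proper surjection. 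The group $\Aut(d)$ acts freely on this indexing set, because a map not factoring through a nonsurjective/noninjective arrow has trivial stabilizer. A coproduct of copies of $j$ indexed by a free $\Aut(d)$-set is $j⊗(\text{free }\Aut(d)\text{-set})$, which is a projective cofibration in $\catW^{\Aut(d)}$. This free-action check is the main point I expect to need care; it is exactly the combinatorics underlying Bousfield–Friedlander's and Berger–Moerdijk's treatment of $Γ$ as a generalized Reedy category, and I would cite Berger–Moerdijk \cite{BergerMoerdijk} for it if needed.

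Second step: Reedy cofibrations are injective cofibrations, i.e., objectwise cofibrations. I will argue by induction on the degree $|c|$ of $c∈Γ$. The map $F(c)→G(c)$ factors as $F(c)→F(c)⊔_{\latch_cF}\latch_cG→G(c)$. The second map is a projective cofibration in $\catW^{\Aut(c)}$, hence its underlying map is a cofibration in $\catW$, since forgetting the action preserves projective cofibrations. The first map is a cobase change of $\latch_cF→\latch_cG$. By the standard Reedy argument (Hirschhorn's Reedy latching lemma, adapted to generalized Reedy categories as in Berger–Moerdijk), this is a cofibration whenever $F→G$ is a Reedy cofibration below degree $|c|$. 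Alternatively, one can check the claim on the generators $j◻∂_c$ listed in the proof of \cref{Reedy.model.structure}: these are objectwise pushout products of $j$ with monomorphisms of sets, hence objectwise cofibrations. Injective cofibrations are closed under pushouts, transfinite compositions and retracts, so all Reedy cofibrations are injective cofibrations. I prefer this generator argument, as it avoids the latching induction entirely.

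Finally, the last assertion follows from the first step together with the Quillen equivalence just established. If $X$ is cofibrant, then $(∅→X)⊗\Yo{⟨m⟩}$ is a projective cofibration, hence a Reedy cofibration, so $X⊗\Yo{⟨m⟩}$ is Reedy cofibrant.
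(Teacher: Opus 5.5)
Your overall strategy is the paper's. Since all three structures have the objectwise weak equivalences, it suffices to show two things: the generating projective cofibrations $j\otimes\Yo{c}$ are generalized Reedy cofibrations, and the generating Reedy cofibrations $j\mathbin{\square}\partial_c$ are objectwise cofibrations. Your second step is exactly the paper's argument: the pushout product of $j$ with the monomorphism $\partial\Yo{c}(d)\into\Yo{c}(d)$ is $\id\sqcup j\otimes\Inj(c,d)$. The final assertion also follows as you say.

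The gap is in the first step, which is the real content of the proposition. You identify the latching object of $X\otimes\Yo{c}$ at $d$ with ``$X\otimes\partial\Yo{c}$-type data'' indexed by the noninjective maps. In this generalized Reedy structure, the boundary and the latching object are different subsets of $\Yo{c}(d)$:
\begin{itemize}
\item $\partial\Yo{c}(d)$ consists of the noninjective pointed maps $c\to d$;
\item $\latch_d\Yo{c}$, a colimit over the nonsurjective maps $e\to d$, consists of the nonsurjective pointed maps $c\to d$.
\end{itemize}
Taken literally, your description makes the relative latching map $\id\sqcup j\otimes\Inj(c,d)$. But $\Aut(d)$ does not act freely on $\Inj(c,d)$. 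For $c=\langle1\rangle$ and $d=\langle3\rangle$ it is a $3$-element set with a transitive $\Sigma_3$-action. For $c=\langle0\rangle$ and $d=\langle2\rangle$, $\Sigma_2$ acts trivially on a point. So that map is in general not a projective cofibration in $\catW^{\Aut(d)}$, and the step fails as written. Your hedge ``nonsurjective/noninjective'' shows the combinatorics was not pinned down.

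The correct computation, which the paper takes from Lydakis \cite[Proposition~3.2]{Lydakis}, is the splitting $\Yo{\langle m\rangle}(c)\cong\latch_c\Yo{\langle m\rangle}\sqcup\Surj(\langle m\rangle,c)$. This needs the colimit defining $\latch_c\Yo{\langle m\rangle}$ to map bijectively onto the nonsurjective maps $h\colon\langle m\rangle\to c$. Take any representative $(e,g,f)$ of $h$, with $g\colon e\to c$ nonsurjective and $gf=h$. It is connected in the indexing category to $(\image h,\bar h)$ through $(\image f,\bar f)$, via the inclusion $\image f\into e$ and the corestriction $\image f\to\image h$ of $g$. So the class depends only on $h$.

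The relative latching map of $j\otimes\Yo{\langle m\rangle}$ at $c$ is then $\id_{Y\otimes\latch_c\Yo{\langle m\rangle}}\sqcup j\otimes\Surj(\langle m\rangle,c)$. Now $\Aut(c)$ acts freely on $\Surj(\langle m\rangle,c)$ by postcomposition, because $\sigma f=f$ with $f$ surjective forces $\sigma=\id$. Hence $j\otimes\Surj(\langle m\rangle,c)$ is a coproduct of copies of $j\otimes\Aut(c)$, which is a projective cofibration in $\catW^{\Aut(c)}$. With this correction your proof coincides with the paper's.
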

\begin{proof}
It suffices to show
that generating projective cofibrations are generalized Reedy cofibrations
and generating generalized Reedy cofibrations are injective cofibrations.
Suppose $j:X→Y$ is a generating cofibration in~$\catW$ and $⟨m⟩∈Γ$.

Given a generating projective cofibration $j⊗\Yo{⟨m⟩}$, we have to show that it is a generalized Reedy cofibration,
i.e., for every $c∈Γ$ the latching map $j◻(\latch_c\Yo{⟨m⟩}→\Yo{⟨m⟩}(c))$
is a projective cofibration in $\catW^{\Aut(c)}$.
Following Lydakis \cite[Proposition~3.2]{Lydakis},
the map
$$\latch_c\Yo{⟨m⟩}→\Yo{⟨m⟩}(c)$$
splits as a coproduct injection map $$\latch_c\Yo{⟨m⟩}→\latch_c\Yo{⟨m⟩}⊔\Surj(⟨m⟩,c),$$
where $\Surj(⟨m⟩,c)$ is the set of surjective maps $⟨m⟩→c$ of pointed sets.
Therefore, the latching map is isomorphic to
$$\id_{Y⊗\latch_c\Yo{⟨m⟩}}⊔j⊗\Surj(⟨m⟩,c),$$
which is a cobase change of the map $j⊗\Surj(⟨m⟩,c)$.
The $\Aut(c)$-action on $\Surj(⟨m⟩,c)$ is free, so the map $j⊗\Surj(⟨m⟩,c)$ is a projective cofibration in~$\catW^{\Aut(c)}$.

Given a generating generalized Reedy cofibration $j◻∂_{⟨m⟩}$, we have to show that it is an injective cofibration,
i.e., after evaluating at an object $c∈Γ$ we get a cofibration in~$\catW$.
The map $∂_{⟨m⟩}(c)$ splits as
$$∂_{⟨m⟩}(c):∂\Yo{⟨m⟩}(c)→∂\Yo{⟨m⟩}(c)⊔\Inj(⟨m⟩,c),$$
where $\Inj(⟨m⟩,c)$ denotes the set of injective pointed maps $⟨m⟩→c$.
This, the map $(j◻∂_{⟨m⟩})(c)$ is isomorphic to
$$\id_{Y⊗∂\Yo{⟨m⟩}(c)}⊔j⊗\Inj(⟨m⟩,c),$$
which is a cobase change of the cofibration $j⊗\Inj(⟨m⟩,c)$.
\end{proof}

\begin{definition}
\label{external.product}
Let $\catC$ and $\catD$ be small categories and $\catV$ a symmetric monoidal category (e.g., $\catV=\sset$).
We define an \emph{external product functor}~$⊠$ as follows:
$$⊠:\PSh(\catC,\catV)⨯\PSh(\catD,\catV)→\PSh(\catC\times \catD,\catV), \qquad (F⊠G)(c,d)=F(c)⊗G(d).$$
\end{definition}

\begin{proposition}
\label{external.product.quillen}
Let $\catC$, $\catC'$, and $\catD$ be small categories and let $\catV$ be a combinatorial monoidal model category.
Equip the category of simplicial presheaves on $\catD$ with the injective model structure.
Suppose that one of the following holds:
\begin{conditions}[(1)]
\item\label{injective.case} the categories of $\catV$-valued presheaves on $\catC$ and $\catC⨯\catD$ are equipped with the injective model structure (\cref{injective.model.structure});
\item\label{gamma.reedy.case} $\catC=\Gamma\times \catC'$ and the categories of $\catV$-valued presheaves on $\catC$ and $\catC\times \catD$ are equipped with the $\Gamma$-Reedy injective model structure
(\cref{Reedy.injective.model.structure}).
\end{conditions}
Then the external product functor of \cref{external.product} is a $\catV$-enriched left Quillen bifunctor.
\end{proposition}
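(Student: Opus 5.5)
We have to show that $⊠$ is a left Quillen bifunctor, i.e. that for cofibrations $i:A→B$ in $\PSh(\catC,\catV)$ and $j:K→L$ in $\sPSh(\catD)$ the pushout product $i\ppet j$ is a cofibration in $\PSh(\catC⨯\catD,\catV)$, and that it is acyclic whenever $i$ or $j$ is. Here the simplicial presheaf $G$ is regarded as $\catV$-valued via the tensoring of $\catV$ over $\sset$, so that $(F⊠G)(c,d)=F(c)⊗G(d)$. We also need compatibility with the $\catV$-enrichment. Since $⊠$ is cocontinuous in each variable separately, it suffices to check the pushout-product condition on generating (acyclic) cofibrations of $\PSh(\catC,\catV)$. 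The generators of $\sPSh(\catD)_\inj$ are not explicit, so we do not restrict on that side. The $\catV$-enrichment statement $(v⊗F)⊠G≅v⊗(F⊠G)$ is immediate from associativity of $⊗$.

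\emph{Case \cref{injective.case}.} Cofibrations and weak equivalences in all three injective model structures are objectwise. Evaluating at $(c,d)$, the map $(i\ppet j)(c,d)$ is the pushout product $i(c)◻j(d)$ of a cofibration in $\catV$ with a monomorphism of simplicial sets, tensored via the $\sset$-action on $\catV$. The key step is that the tensoring $\catV⨯\sset→\catV$ is a left Quillen bifunctor. This holds when $\catV$ is simplicial; in the intended cases $\catV=\sset$ and $\catV=\smsset$ it comes from the cartesian structure together with the constant functor $\sset→\smsset$, which is left Quillen by \cref{smsset.injective}. Granting this, $i(c)◻j(d)$ is a cofibration, and it is acyclic if $i(c)$ or $j(d)$ is. So everything is objectwise and there is no real difficulty.

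\emph{Case \cref{gamma.reedy.case}.} Here cofibrations are not objectwise, and this is the main obstacle. By the proof of \cref{Reedy.model.structure}, generating cofibrations of $\PSh(Γ⨯\catC',\catV)=\PSh(Γ,\PSh(\catC',\catV)_\inj)$ have the form $k◻∂_{⟨m⟩}$, with $k$ an injective generating cofibration in $\PSh(\catC',\catV)$. Then
$$(k◻∂_{⟨m⟩})\ppet j≅(k\ppet j)◻∂_{⟨m⟩},$$
where $k\ppet j$ is the external pushout product in $\PSh(\catC'⨯\catD,\catV)$. By Case \cref{injective.case}, applied to $\catC'$ and $\catD$, the map $k\ppet j$ is an injective (acyclic) cofibration. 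Its pushout product with $∂_{⟨m⟩}$ is then a generating (acyclic) Reedy cofibration of $\PSh(Γ,\PSh(\catC'⨯\catD,\catV)_\inj)$, which is the target model structure.

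The acyclic case where only $j$ is acyclic needs separate care, since we did not reduce to generators for $j$. Weak equivalences are still objectwise in the Reedy structure. Once cofibrancy is known, I would check acyclicity objectwise by showing that $(i\ppet j)(γ,c',d)$ is a pushout product of a cofibration with an acyclic cofibration. This requires knowing that Reedy cofibrations are objectwise cofibrations, which follows from the Quillen equivalences of \cref{Reedy.projective.injective.equivalent}. Alternatively, one can appeal to the objectwise form of \cref{injective.case}.
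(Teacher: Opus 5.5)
Your proof has the same architecture as the paper's. \cref{injective.case} is an objectwise check. \cref{gamma.reedy.case} reduces to the $\Gamma$-Reedy generators $k\mathbin{\square}\partial_{\langle m\rangle}$, uses the interchange $(k\mathbin{\square}\partial_{\langle m\rangle})\mathbin{\square} j\cong(k\mathbin{\square} j)\mathbin{\square}\partial_{\langle m\rangle}$, and appeals to the first case.

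The one step that does not go through as written is the one you call key in \cref{injective.case}. You take the $\catD$-side to be $\sset$-valued, so you need the tensoring $\catV\times\sset\to\catV$ to be a left Quillen bifunctor. You only obtain that for simplicial $\catV$ or the two intended examples. The hypothesis, however, is merely that $\catV$ is a combinatorial monoidal model category, and this provides no tensoring of $\catV$ over $\sset$ at all. So under your reading $\boxtimes$ is not even defined by \cref{external.product}.

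That definition pairs $\PSh(\catC,\catV)$ with $\PSh(\catD,\catV)$, so the phrase ``simplicial presheaves on $\catD$'' must be read as $\catV$-valued presheaves. This is how the paper uses it: its proof of \cref{gamma.reedy.case} takes $k$ in $\PSh(\catD,\catV)$. Likewise, the application in \cref{multiple.monoidal.model.structure} feeds in $\catV$-valued maps built with $\iota_\catV$. With $G$ $\catV$-valued, $(i\mathbin{\square} j)(c,d)$ is exactly the pushout product $i(c)\mathbin{\square} j(d)$ for the monoidal product of $\catV$. The pushout-product axiom of $\catV$ then gives both cofibrancy and acyclicity with no simplicial structure, and your argument becomes the paper's.

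Your treatment of acyclicity in \cref{gamma.reedy.case} is a mild but genuine variation. The paper asserts directly that $(j\mathbin{\square} k)\mathbin{\square}\partial_c$ is an acyclic $\Gamma$-Reedy cofibration once $j\mathbin{\square}k$ is an acyclic injective cofibration. This implicitly uses that $-\mathbin{\square}\partial_c$ carries acyclic injective cofibrations to acyclic Reedy cofibrations.

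You instead use generators only for cofibrancy and check acyclicity objectwise. This rests on two facts: $\Gamma$-Reedy cofibrations are injective cofibrations (the identity from the Reedy to the injective structure in \cref{Reedy.projective.injective.equivalent} is left Quillen), and weak equivalences are objectwise. This handles ``$i$ acyclic'' and ``$j$ acyclic'' uniformly and is, if anything, cleaner.

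One small imprecision: $(k\mathbin{\square}j)\mathbin{\square}\partial_{\langle m\rangle}$ is not literally a generating Reedy cofibration, because $k\mathbin{\square}j$ is an arbitrary injective cofibration. You need the standard fact that the maps $f$ for which $f\mathbin{\square}\partial_{\langle m\rangle}$ is a Reedy cofibration form a saturated class. The paper also leaves this implicit.
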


\begin{proof}
We first prove the claim for \cref{injective.case}.
We have $$(F⊠G)(c,d)=F(c)⊗G(d).$$
The monoidal product $F(c)⊗G(d)$ is separately $\catV$-enriched cocontinuous in $F$ and~$G$.
Injective cofibrations and acyclic cofibrations are defined objectwise.
Consequently, the pushout product axioms required for a left Quillen bifunctor follow from those of the monoidal model category~$\catV$.

For \cref{gamma.reedy.case}, denote by~$◻$ the pushout product associated with~$⊠$.
By \cref{Reedy.injective.model.structure},
generating (acyclic) cofibrations for the $Γ$-Reedy injective model structure on $\PSh(Γ⨯\catC',\catV)$ have the form $j◻∂_c$,
where $c∈Γ$ and $j$ is an (acyclic) injective cofibration in $\PSh(\catC',\catV)$.
The pushout product of $j◻∂_c$ and an (acyclic) injective cofibration~$k$ in $\PSh(\catD,\catV)$
is $(j◻∂_c)◻k≅(j◻k)◻∂_c$.
The map $j◻k$ is an injective cofibration in $\PSh(\catC'⨯\catD,\catV)$ by the first part.
Therefore, the map $(j◻k)◻∂_c$ is an (acyclic) cofibration in the $Γ$-Reedy injective model structure on $\PSh(Γ⨯\catC'⨯\catD,\catV)$.
\end{proof}

We will realize the $(\infty,1)$-category of sheaves of symmetric monoidal $(\infty,d)$-categories
via a particular model structure,
following Barwick \cite[Section~3.1]{Barwick.CSS} for $(∞,d)$-categories
and Toën \cite[Section~2.2]{Toen} for symmetric monoidal $(∞,d)$-categories.
We begin by taking the injective model structure on $\sPSh(\stcart\times \Gamma\times \Delta^{\times d})$.
To encode the Segal conditions for $Δ$ and~$Γ$, completeness and globularity conditions for~$Δ$, and descent conditions for $\stcart$,
we perform a left Bousfield localization of this model category.

\begin{definition}
\label{left.Bousfield.localization}
(Hirschhorn \cite[Definition~3.1.1.(1)]{Hirschhorn}, Barwick \cite[Definition~4.2]{Barwick.Model}, Lurie \cite[§A.3.7]{Lurie.HTT}.)
Suppose $M$ is a model category and $S$ is a set of morphisms in~$M$.
The \emph{left Bousfield localization} of~$M$ at~$S$ is a model category $\lbl_S M$ equipped with a left Quillen functor $F:M→\lbl_S M$
that satisfies the following universal property.
\begin{itemize}
\item For any model category~$N$, composition with~$F$ induces a bijection between left Quillen functors $\lbl_S M→N$
and left Quillen functors $G:M→N$ such that the left derived functor of~$G$ sends elements of $S$ to weak equivalences in~$N$.
\end{itemize}
An object $X∈M$ is \emph{$S$-local} if for every morphism $f:Y→Z$ in~$S$, the induced map $\rmap(Z,X)→\rmap(Y,X)$ is a weak equivalence.
A morphism $f:Y→Z$ is an \emph{$S$-local equivalence} if, for every $S$-local object~$X$, the induced map $\rmap(Z,X)→\rmap(Y,X)$ is a weak equivalence.
Here $\rmap(-,-)$ denotes the homotopy function complex in a model category, as defined in Hirschhorn \cite[Definition~17.4.1]{Hirschhorn}.
\end{definition}

\begin{proposition}
\label{left.Bousfield.localization.exists}
(Smith; Barwick \cite[Theorem~4.7]{Barwick.Model}.)
Suppose $M$ is a left proper combinatorial model category and $S$ is a set of morphisms in~$M$.
Then the left Bousfield localization (\cref{left.Bousfield.localization}) of~$M$ at~$S$ exists and is a left proper combinatorial model category.
Moreover, the localization has the following properties:
\begin{enumerate}
\item its underlying category is $M$;
\item its weak equivalences are precisely the $S$-local equivalences;
\item its cofibrations and acyclic fibrations coincide with those of~$M$;
\item its fibrant objects are precisely the $S$-local fibrant objects of~$M$.
\end{enumerate}
\end{proposition}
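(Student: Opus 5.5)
This statement is the Smith–Barwick existence theorem for left Bousfield localizations, which the paper cites from Barwick \cite[Theorem~4.7]{Barwick.Model}. The plan is to reproduce the standard argument: build the localized model structure on the same category with the same cofibrations, using Smith's recognition theorem \cite[Proposition~2.2]{Barwick.Model}, which was already used in the proof of \cref{smsset.injective}. We take as cofibrations the cofibrations of~$M$ and as weak equivalences the $S$-local equivalences of \cref{left.Bousfield.localization}. Smith's theorem then requires four inputs:
\begin{enumerate}
\item the $S$-local equivalences satisfy 2-out-of-3 and retracts, and form an accessible, accessibly embedded full subcategory of the arrow category;
\item every map with the right lifting property against all cofibrations is an $S$-local equivalence;
\item the class of cofibrations that are $S$-local equivalences is closed under pushouts and transfinite compositions;
\item the cofibrations are generated by a set.
\end{enumerate}

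Items 1, 2 and 4 are routine. Item 4 holds because $M$ is combinatorial. For item 2, such maps are acyclic fibrations of~$M$, hence weak equivalences, and every weak equivalence of~$M$ is an $S$-local equivalence. For 2-out-of-3 and retracts in item 1, we use that $\rmap(-,X)$ sends maps to maps of homotopy function complexes compatibly, up to homotopy, with composition. For accessibility in item 1, the plan follows Barwick/Lurie. First replace $S$ by a set $J_S$ of cofibrations built from pushout products of cofibrant replacements of maps in~$S$ with the boundary inclusions $\partial\Delta^n\to\Delta^n$, using cosimplicial frames in the nonsimplicial case. Together with the generating acyclic cofibrations of~$M$, this gives a set $J$ whose injective fibrant objects are exactly the $S$-local fibrant objects. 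A map $f$ is then an $S$-local equivalence iff a functorial fibrant replacement $R_J f$, obtained from the small object argument, is a weak equivalence of~$M$. This exhibits the class as the preimage under an accessible functor of an accessible class, using Dugger's or Lurie's result that weak equivalences in a combinatorial model category form an accessible class.

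Item 3 is the main obstacle, and it is where left properness enters. Transfinite compositions are handled first: a transfinite composition of cofibrations is a homotopy colimit, and $\rmap(-,X)$ converts homotopy colimits into homotopy limits of towers of weak equivalences. For pushouts, let $i\colon A\to B$ be a cofibration and an $S$-local equivalence, and let $A\to C$ be arbitrary. Left properness of~$M$ makes the pushout $B\sqcup_A C$ a homotopy pushout, even though $A$ and $C$ need not be cofibrant. Therefore $\rmap(B\sqcup_A C,X)\to\rmap(C,X)$ is a homotopy base change of the weak equivalence $\rmap(B,X)\to\rmap(A,X)$, and is itself a weak equivalence.

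With the model structure in hand, the listed properties follow directly. Properties (1)–(3) hold by construction, and combinatoriality is the output of Smith's theorem. For (4), a fibrant object has the right lifting property against the acyclic cofibrations, which include $J$, so it is $S$-local and fibrant in~$M$. Conversely, for an $S$-local fibrant object $X$, lifting against an acyclic cofibration $j$ reduces to surjectivity on $\pi_0$ of $\map(j,X)$, and this map is an acyclic fibration by locality. Left properness of the localization comes from the same homotopy-pushout argument. The universal property holds because left Quillen functors out of $\lbl_S M$ are those out of $M$ that preserve cofibrations and send $S$-local acyclic cofibrations to acyclic cofibrations. This is equivalent, via the right adjoint preserving fibrant objects, which are then $S$-local, to the left derived functor inverting~$S$.
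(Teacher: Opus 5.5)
The paper gives no proof of its own here, only the citation to Barwick \cite[Theorem~4.7]{Barwick.Model}, and your route is the standard proof of that theorem. It runs through Smith's recognition theorem. It uses horns on cofibrant replacements of~$S$ to get an accessible localized fibrant replacement. It uses left properness to make pushouts along cofibrations homotopy pushouts, and likewise to make transfinite compositions of cofibrations starting at a noncofibrant object homotopy colimits. Your verification of the hypotheses of Smith's theorem is sound. Note only that the accessibility argument in item~1 logically depends on two facts: item~3, to know that $Z\to R_J Z$ is an $S$-local equivalence, and the local Whitehead theorem, that an $S$-local equivalence between $S$-local fibrant objects is a weak equivalence of~$M$.

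Two steps in your last paragraph do not go through as written, for the same reason. Testing a cofibration $j\colon A\to B$ against homotopy function complexes controls strict lifting only when $A$ and $B$ are cofibrant, since $\rmap(A,X)$ only sees maps out of a cofibrant replacement of~$A$.

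The first is the converse half of~(4). Acyclic cofibrations of $\lbl_S M$ may have noncofibrant source, and then an equivalence $\rmap(B,X)\to\rmap(A,X)$ does not produce an extension of a given map $A\to X$ along~$j$. Use a retract argument instead:
\begin{enumerate}
\item Factor $X\to *$ in $\lbl_S M$ as an acyclic cofibration $i\colon X\to X'$ followed by a fibration.
\item By the forward half, $X'$ is $S$-local and fibrant in~$M$, so $i$ is an $S$-local equivalence between $S$-local objects, hence an acyclic cofibration of~$M$.
\item Since $X$ is fibrant in~$M$, $i$ admits a retraction, so $X$ is a retract of~$X'$ and is therefore fibrant in $\lbl_S M$.
\end{enumerate}

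The second is the universal property. Knowing that the right adjoint $R$ of~$G$ sends fibrant objects to $S$-local fibrant objects only shows that $G(j)$ lifts against all maps $Z\to *$ with $Z$ fibrant. That does not make the cofibration $G(j)$ acyclic: every cofibration admitting a retraction has this lifting property. The adjunction equivalence $\rmap(G j,Z)\simeq\rmap(j,RZ)$ again requires $A$ and $B$ to be cofibrant. Apply Dugger's criterion instead. First, $R$ preserves acyclic fibrations, which are unchanged by the localization. Second, $R$ sends fibrations between fibrant objects to fibrations of~$M$ between $S$-local fibrant objects. Such a fibration $p$ is a fibration of $\lbl_S M$ by the same retract trick: factor $p$ in $\lbl_S M$; the first factor is an acyclic cofibration of~$M$, so $p$ lifts against it, and this exhibits $p$ as a retract of the second factor.
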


\begin{definition}
\label{enriched.left.Bousfield.localization}
(Barwick \cite[Definition~4.42]{Barwick.Model}.)
Suppose $\catV$ is a monoidal model category,
$M$ is a $\catV$-enriched model category and $S$ is a set of morphisms in~$M$.
The \emph{enriched left Bousfield localization} of~$M$ at~$S$ is a $\catV$-enriched model category $\lbl_S M$
equipped with a $\catV$-enriched left Quillen functor $F:M→\lbl_S M$
that satisfies the following universal property.
\begin{itemize}
\item For any $\catV$-enriched model category~$N$, composition with~$F$ induces a bijection between $\catV$-enriched left Quillen functors $\lbl_S M→N$
and $\catV$-enriched left Quillen functors $G:M→N$ such that the left derived functor of~$G$ sends elements of $S$ to weak equivalences in~$N$.
\end{itemize}
An object $X∈M$ is \emph{enriched $S$-local} if for every morphism $f:Y→Z$ in~$S$, the induced map $\dmap(Z,X)→\dmap(Y,X)$ is a weak equivalence.
A morphism $f:Y→Z$ is an \emph{enriched $S$-local equivalence} if, for every $S$-local object~$X$, the induced map $\dmap(Z,X)→\dmap(Y,X)$ is a weak equivalence.
Here $\dmap(-,-)$ denotes the derived enriched mapping object in a $\catV$-enriched model category.
\end{definition}

We will need the following characterization and existence theorem for enriched left Bousfield localizations.
Although all of the ideas already appear in the works of Barwick and Gorchinskiy–Guletskiĭ,
we provide a proof because this exact variant of the statement does not appear in either source.
(Our version does not assume $M$ to be tractable and gives a formula for~$S'$ in the enriched case.)

\begin{proposition}
\label{enriched.left.Bousfield.localization.exists}
(Barwick \cite[Theorem 4.46 and Proposition 4.47]{Barwick.Model},
Gorchinskiy–Guletskiĭ \cite[Lemma 31 and Remark~32 (journal), Lemma~28 and Remark~29 (arXiv v4)]{GorchinskiyGuletskii}.)
Suppose $\catV$ is a tractable monoidal model category,
$M$ is a left proper combinatorial $\catV$-enriched model category,
and $S$ is a set of morphisms in~$M$.
Then the $\catV$-enriched left Bousfield localization (\cref{enriched.left.Bousfield.localization}) of~$M$ at~$S$ exists
and is a left proper combinatorial $\catV$-enriched model category.
It can be computed as $\lbl_{S'}M$, where $S'=\{X\dtp f\mid X∈G, f∈S\}$
and $G$ is a set of \emph{homotopy generators} of~$\catV$,
that is, every object of~$\catV$ is weakly equivalent to the homotopy colimit of a small diagram of objects in~$G$.
If $\lbl_S M$ is a $\catV$-enriched model category, then $\lbl_S M=\lbl_{S'}M$.
If $M$ is tractable, then so is its localization.
Moreover, the localization has the following properties:
\begin{enumerate}
\item its underlying $\catV$-enriched category is $M$;
\item its weak equivalences are precisely the enriched $S$-local equivalences;
\item its cofibrations and acyclic fibrations coincide with those of~$M$;
\item its fibrant objects are precisely the enriched $S$-local fibrant objects of~$M$.
\end{enumerate}
\end{proposition}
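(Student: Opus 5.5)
The plan is to reduce the enriched localization to an ordinary (unenriched) left Bousfield localization at an enlarged set $S'$, to which \cref{left.Bousfield.localization.exists} applies. Fix a set $G$ of homotopy generators of~$\catV$; such a set exists because $\catV$ is tractable, hence combinatorial, and Dugger's presentation theorem writes every object as a homotopy colimit of a set of cofibrant objects (for instance, the domains and codomains of generating cofibrations). Since $\catV$ is tractable, we may take the elements of~$G$ cofibrant. Set $S'=\{X\dtp f\mid X∈G, f∈S\}$, where $\dtp$ is the derived tensoring of $\catV$ on~$M$. Then \cref{left.Bousfield.localization.exists} gives a left proper combinatorial model category $\lbl_{S'}M$ with the same underlying category and cofibrations as~$M$. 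Its weak equivalences are the $S'$-local equivalences, and its fibrant objects are the $S'$-local fibrant objects.

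Next I identify $S'$-local objects with enriched $S$-local objects. For cofibrant $X∈\catV$ and fibrant~$Z$, the adjunction gives $\rmap(X\dtp A,Z)≃\rmap_\catV(X,\dmap(A,Z))$. So $Z$ is $S'$-local if and only if, for every $f:A→B$ in~$S$, the map $\dmap(B,Z)→\dmap(A,Z)$ induces weak equivalences on $\rmap_\catV(X,-)$ for all $X∈G$. Because every object of~$\catV$ is a homotopy colimit of objects of~$G$, and $\rmap_\catV(-,W)$ turns homotopy colimits into homotopy limits, this is equivalent to $\rmap_\catV(Y,-)$ being a weak equivalence for all~$Y$. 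By the homotopical Yoneda lemma in $\mathrm{Ho}(\catV)$, that in turn is equivalent to $\dmap(B,Z)→\dmap(A,Z)$ being a weak equivalence. Hence the $S'$-local objects are exactly the enriched $S$-local objects. Since the local equivalences in both cases are defined by testing against local objects, the same computation shows that $S'$-local equivalences coincide with enriched $S$-local equivalences. This gives properties (1)–(4).

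\textbf{The main obstacle} is showing that $\lbl_{S'}M$ is $\catV$-enriched, i.e.\ that the tensoring $\catV⨯\lbl_{S'}M→\lbl_{S'}M$ is a left Quillen bifunctor, without assuming $M$ tractable. Cofibrations are unchanged, so the only issue is that for a cofibration $i$ in~$\catV$ and an acyclic cofibration $j$ of $\lbl_{S'}M$, the pushout product $i◻j$ is an $S'$-local equivalence. Equivalently, by adjunction, for $Z$ fibrant in $\lbl_{S'}M$ and $W$ cofibrant in~$\catV$, the power $Z^W$ must again be $S'$-local. I would check this using the enriched characterization from the previous paragraph: $\dmap(A,Z^W)≃\rdf\hom_\catV(W,\dmap(A,Z))$, and the derived internal hom preserves weak equivalences between fibrant objects. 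So enriched $S$-locality of~$Z$ passes to~$Z^W$. The case where $i$ is acyclic in~$\catV$ is already covered by the enrichment of~$M$. This argument uses only fibrant objects and derived mapping objects, which is why tractability of~$M$ is not needed.

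For the universal property: an enriched left Quillen functor $M→N$ inverting~$S$ also inverts each $X\dtp f$, because it commutes with derived tensors. It therefore factors through $\lbl_{S'}M$, and the factorization is enriched since the underlying functor is unchanged. Conversely, $S\subseteq$ the $S'$-local equivalences (take $X=\munit$ up to homotopy colimits from~$G$). If $\lbl_SM$ is already enriched, it inverts $S'$ by the above and its local objects are enriched $S$-local, so it agrees with $\lbl_{S'}M$. Finally, tractability of the localization follows because, when $M$ is tractable, the generating cofibrations are unchanged and the generating acyclic cofibrations can be chosen with cofibrant domains by Barwick's construction.
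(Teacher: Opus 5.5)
Your proof is correct in outline, but it handles the enrichment step from the opposite side to the paper.

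\textbf{The paper's route.} The paper stays on the tensor side. Take a generating cofibration $f\colon X\to Y$ of $\catV$ with cofibrant domain, and $s$ a cofibration between cofibrant objects. Then $-\otimes s\colon\catV\to M^{\to}$ is left Quillen, so $-\dtp s$ is homotopy cocontinuous. It sends $G$ to local equivalences by the definition of $S'$, hence it sends every object of $\catV$ to a local equivalence. So $X\otimes s$, $Y\otimes s$ and the cobase change in the pushout-product square are local equivalences, and 2-out-of-3 gives $f\square s$. The identification of $S'$-local objects and equivalences with the enriched $S$-local ones is settled in one line at the end.

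\textbf{Your route.} You make that identification the main lemma, via $\rmap(X\dtp A,Z)\simeq\rmap_\catV(X,\dmap(A,Z))$ and Yoneda in $\mathrm{Ho}(\catV)$. You then get closure of local objects under cotensors from $\map(A,Z^W)\cong\hom(W,\map(A,Z))$. The advantage is that this is a statement about local objects. It does not care whether an acyclic cofibration $j$ of $\lbl_{S'}M$ lies in $S$ or has cofibrant domain. The paper's computation is really carried out for $s\in S$, and reaching arbitrary local acyclic cofibrations needs an extra reduction — the same one you need.

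\textbf{The compressed step.} That reduction is hidden in your phrase ``equivalently, by adjunction''. Closure of fibrant objects under $(-)^W$ is only the case $i=(\emptyset\to W)$, $p=(Z\to *)$. One way to finish:
\begin{enumerate}
\item For fixed $j$, the class of $i$ with $i\square j$ acyclic is weakly saturated, since by adjunction it is defined by a left lifting property. So you may take $i\colon W\to W'$ to be a generating cofibration with cofibrant domain.
\item A cofibration is acyclic iff it lifts against all fibrations between fibrant objects.
\item By left properness, the fibrations between fibrant objects of $\lbl_{S'}M$ are exactly the $M$-fibrations $p\colon Z\to Y$ between local objects (Hirschhorn \cite{Hirschhorn}).
\item The pullback-corner map $Z^{W'}\to Z^W\times_{Y^W}Y^{W'}$ is again such a map. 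This is because local objects are closed under cotensors by cofibrant objects and under homotopy pullbacks.
\end{enumerate}

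\textbf{Ordering.} Your second-paragraph claim that $S'$-local equivalences coincide with enriched $S$-local equivalences already uses this cotensor closure. The identification $\rmap_\catV(W,\dmap(g,Z))\simeq\rmap(g,Z^W)$ only helps if $Z^W$ is local. So that claim logically belongs after your third paragraph.
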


\begin{proof}
Recall that every tractable model category~$\catV$ admits a set~$G$ of homotopy generators,
which we take (without loss of generality) to consist of cofibrant objects.
For example, we can take $G$ to comprise the domains and codomains of a set of generating cofibrations with cofibrant domains.
Indeed, every object in~$\catV$ admits a cofibrant replacement~$X$
such that $∅→X$ is a transfinite composition of cobase changes of generating cofibrations with cofibrant domains.
Since all objects in the diagram are cofibrant, all pushout squares are homotopy pushout squares.
Therefore, the transfinite composition is a homotopy transfinite composition.
Thus, $X$ is a homotopy colimit of a diagram in~$G$.

To show that $\lbl_{S'}M$ is the desired $\catV$-enriched left Bousfield localization,
we verify that $\lbl_{S'}M$ is a $\catV$-enriched model category,
i.e., the tensoring functor $\catV⨯M→M$ is a left Quillen bifunctor and the unit axiom is satisfied.
These properties follow from the same properties of~$M$, with the exception of the following property.
Suppose $f:X→Y$ is a generating cofibration with cofibrant domain of~$\catV$ and $s:A→B$ is an acyclic cofibration in~$\lbl_{S'}M$.
Then $f◻s$ is a weak equivalence in~$\lbl_{S'}M$.
To this end, consider the diagram
$$\xymatrix{
X⊗A \ar[rr]^{X⊗s} \ar[dd]_{f⊗A} && X⊗B \ar[dl] \ar[dd]^{f⊗B}\cr
& P \ar[dr]^{f◻s}\cr
Y⊗A \ar[rr]_{Y⊗s} \ar[ur] & & Y⊗B.\cr
}$$
Without loss of generality (performing cofibrant replacements on elements of~$S$ as necessary), assume that~$S$ (and therefore also $S'=\{X⊗f\}$)
consists of cofibrations with cofibrant domains.
For every $s∈S$ the functor $-⊗s:\catV→M^→$ is a left Quillen functor, where $M^→$ is equipped with the projective model structure.
Therefore, its left derived functor $-\dtp s$ preserves homotopy colimits.
By assumption, $-\dtp s$ sends elements of~$G$ to weak equivalences in~$\lbl_{S'}M$.
Since $G$ is a set of homotopy generators of~$\catV$, the functor $-\dtp s$ sends all objects of~$\catV$ to weak equivalences in~$\lbl_{S'}M$.
Thus, since $X$ and $Y$ are cofibrant, the maps $X⊗s$ and $Y⊗s$ are weak equivalences in~$\lbl_{S'}M$,
and so is the cobase change $Y⊗A→P$ of $X⊗s$ in the diagram.
By the 2-out-of-3 property, the map $f◻s$ is a weak equivalence in $\lbl_{S'}M$.

To establish the universal property of enriched left Bousfield localizations in~\cref{enriched.left.Bousfield.localization},
suppose $H:M→N$ is a $\catV$-enriched left Quillen functor into a $\catV$-enriched model category~$N$
such that the left derived functor of~$H$ sends elements of $S$ to weak equivalences in~$N$.
We want to show that $H∘F:\lbl_{S'}M→N$ is a $\catV$-enriched left Quillen functor.
Thanks to our cofibrancy assumptions on $G$, $S$, and $S'$, all functors and tensor products below are automatically derived.
By the universal property of left Bousfield localizations (\cref{left.Bousfield.localization}),
it suffices to show that $\ldf(H∘F)$ sends elements of~$S'$ to weak equivalences in~$N$.
Given $s∈S'$, i.e., $s=X⊗f$ for $X∈G$ and $f∈S$,
we compute $\ldf(H∘F)(s)=H(s)=H(X⊗f)≅X⊗H(f)$,
where the last isomorphism is induced by the $\catV$-enrichment of~$H$.
Since $f$ is a cofibration with cofibrant domain, so is $H(f)=\ldf H(f)$.
The latter map is a weak equivalence by assumption.
Since $X$ is cofibrant, $X⊗H(f)$ computes the derived tensor product, hence is a weak equivalence, as desired.

If $\lbl_S M$ is a $\catV$-enriched model category, then taking $N=\lbl_S M$
yields a left Quillen functor $\lbl_{S'}M→\lbl_S M$.
Since $S⊂S'$, this shows $\lbl_{S'}M=\lbl_S M$.

The characterizations of weak equivalences and fibrant objects follow from their nonenriched versions
using the fact that $G$ is a set of homotopy generators of~$\catV$.
\end{proof}

\begin{proposition}
\label{enriched.left.Bousfield.localization.criterion}
Suppose $\catV$ is a tractable monoidal model category,
$M$ is a left proper combinatorial $\catV$-enriched model category,
and $S$ is a set of morphisms in~$M$.
\label{localization.automatically.enriched}
We have the following.
\begin{parts}[(1)]
\item\label{unit.generator}
If $\{\munit_\catV\}$ is a set of homotopy generators of~$\catV$, then $\lbl_S M$ is the $\catV$-enriched localization of~$M$ at~$S$.
\item\label{localization.simplicial}
If $M$ is simplicial, then so is $\lbl_S M$.
\item\label{localization.smooth.simplicial}
If $M$ is $\smsset_{\sing,\inj}$-enriched (\cref{smsset.injective}), then so is $\lbl_S M$.
\item\label{presheaf.generators}
If $C$ is a small category and $G$ is a set of homotopy generators of~$\catV$, then $\{X⊗\Yo{c}\mid X∈G, c∈\catC\}$
is a set of homotopy generators for $M=\PSh(C,\catV)$
equipped with a model structure whose weak equivalences are defined objectwise.
\end{parts}
\end{proposition}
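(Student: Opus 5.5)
Each of the four parts follows from \cref{enriched.left.Bousfield.localization.exists}, which computes the $\catV$-enriched localization as $\lbl_{S'}M$ with $S'=\{X\dtp f\mid X∈G,\ f∈S\}$ for any set~$G$ of homotopy generators of~$\catV$, together with its final clause: if $\lbl_S M$ is already $\catV$-enriched, then $\lbl_S M=\lbl_{S'}M$.

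For \cref{unit.generator}, take $G=\{\munit_\catV\}$. Then $S'=\{\munit\dtp f\}$. The unit axiom of the enriched model category~$M$ says that $\crep\munit⊗A→A$ is a weak equivalence for cofibrant~$A$, so $\munit\dtp f$ is weakly equivalent to~$f$ in the arrow category. Left Bousfield localizations at weakly equivalent sets of maps coincide, since their local objects are the same. Hence $\lbl_{S'}M=\lbl_S M$, and so $\lbl_S M$ is the enriched localization.

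\Cref{localization.simplicial} is the special case $\catV=\sset$ of \cref{unit.generator}. Every simplicial set is the homotopy colimit of its simplices, hence of a diagram constant at $Δ^0=\munit$, so $\{\munit\}$ generates $\sset$. Alternatively one can cite Hirschhorn's theorem that left Bousfield localizations of left proper simplicial model categories are simplicial.

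\Cref{localization.smooth.simplicial} is where the real content lies. One might hope $\{\munit\}$ generates $\smsset_{\sing,\inj}$; it does. Every $F∈\smsset$ is the homotopy colimit of representables $\Yo{L}$, $L∈\cart$; this is the standard coend formula, which is valid in the injective structure because objectwise weak equivalences are weak equivalences and all objects are cofibrant. Each map $\Yo{L}→\Yo{\RR^0}=\munit$ is a weak equivalence in $\smsset_{\sing,\inj}$, because $\sing$ sends it to the map of singular complexes $\sm(\gsim^\bullet,\RR^n)→\ast$, which is contractible by a straight-line homotopy (compare \cref{stalk.eq}). Hence every $F$ is weakly equivalent to a homotopy colimit of copies of~$\munit$, and \cref{unit.generator} applies. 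The main obstacle is justifying the hocolim-of-representables presentation in this non-objectwise model structure. I would handle it by noting that the identity $\sPSh(\cart)_\inj→\smsset_{\sing,\inj}$ is left Quillen, so it preserves homotopy colimits, and the presentation already holds in the injective model structure on simplicial presheaves.

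For \cref{presheaf.generators}, every $F∈\PSh(\catC,\catV)$ is the coend $\int^{c}F(c)⊗\Yo{c}$, a homotopy colimit because weak equivalences are objectwise. Concretely, one uses the bar construction, i.e.\ the simplicial replacement $\bigsqcup F(c_n)⊗\Yo{c_0}$, after cofibrant replacement of the values $F(c)$; this is homotopically meaningful because weak equivalences are detected objectwise. Each $F(c)$ is weakly equivalent to a homotopy colimit of objects of~$G$. The functor $-⊗\Yo{c}$ is evaluated objectwise as a coproduct of copies of its argument, so it preserves homotopy colimits and weak equivalences between cofibrant objects. Substituting these presentations and iterating homotopy colimits gives~$F$ as a homotopy colimit of objects $X⊗\Yo{c}$ with $X∈G$ and $c∈\catC$.
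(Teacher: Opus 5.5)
Parts (1), (2), and (4) of your proposal follow the paper's proof. In (1) the unit axiom gives $\munit\dtp f\simeq f$, so $\lbl_S M=\lbl_{S'}M$. In (2) you use the category-of-simplices presentation of a simplicial set. In (4) you use the co-Yoneda (bar) presentation together with objectwise homotopy colimits.

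The genuine difference is in (3). The paper handles it in one line, by transport along the Quillen equivalence $|{-}|\dashv\sing$ of \cref{smsset.injective}:
\begin{itemize}
\item $|\Delta^0|=\Yo{\RR^0}=\munit$;
\item every $F$ is weakly equivalent to $\ldf|\rdf\sing F|$;
\item $\ldf|{-}|$ preserves homotopy colimits.
\end{itemize}
So the generator $\{\Delta^0\}$ of $\sset$ becomes the generator $\{\munit\}$ of $\smsset_{\sing,\inj}$, and (1) applies.

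You instead prove the generation directly. You present $F$ as a homotopy colimit of representables in $\sPSh(\cart)_\inj$ and push this along the left Quillen identity functor into $\smsset_{\sing,\inj}$. You then collapse each $\Yo{L}$ to $\munit$ using contractibility of $\sing\Yo{L}$. The identity functor is left Quillen because objectwise weak equivalences are stalkwise, hence weak equivalences by \cref{stalk.eq}.

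Both arguments are correct. The paper's is shorter and purely formal. Yours is self-contained and shows exactly where the particular model structure enters. The step $\Yo{\RR^n}\to\Yo{\RR^0}$ being a weak equivalence is precisely what fails in the Čech-local structures mentioned in \cref{stalk.eq}, and there $\{\munit\}$ would not be a set of homotopy generators.

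One step in (4) needs more care; the paper's one-line argument glosses over it equally. You substitute a presentation of each $F(c)$ as a homotopy colimit of objects of $G$ into the bar construction. This requires the presentations to be compatible with the face maps induced by $F(c)\to F(c')$, i.e., chosen functorially in $c$. For $G=\{\munit\}$ in $\sset$ or $\smsset$ this is automatic, using categories of elements. In general you should either make such a functorial choice or argue with the class of objects generated from $\{X\otimes\Yo{c}\}$ under homotopy colimits and weak equivalences, rather than with a single diagram.
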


\begin{proof}
For \cref{unit.generator}, observe that for every $f∈S$ the map $\munit_\catV\dtp f=\crep\munit_\catV⊗f$
is weakly equivalent to~$f$ by the unit axiom for $\catV$-enriched model structures,
hence is a weak equivalence in $\lbl_S M$.
Thus, by \cref{enriched.left.Bousfield.localization.exists}, the model category $\lbl_S M$ is $\catV$-enriched.

Every simplicial set is a homotopy colimit of a constant point-valued diagram over its category of elements,
so the terminal simplicial set is a homotopy generator of $\sset$.
Thus, \cref{unit.generator} implies \cref{localization.simplicial}.

Since $\smsset$ is Quillen equivalent to $\sset$ (\cref{smsset.injective}), \cref{localization.simplicial} implies \cref{localization.smooth.simplicial}.

\cref{presheaf.generators} is implied by the enriched co-Yoneda lemma and the fact that homotopy colimits in categories of presheaves are computed objectwise.
\end{proof}

Leveraging \cref{enriched.left.Bousfield.localization.criterion},
we can construct a model for the $\infty$-category of geometric symmetric monoidal $(\infty,d)$-categories as a left Bousfield localization
of the injective model structure on a certain category of presheaves.
We need only specify the set of morphisms such that their respective local objects satisfy the Segal conditions for both $\Gamma$ and $\Delta^{\times d}$,
as well as the homotopy descent condition for a site~$\site$.
In preparation, we introduce the following definitions.

\begin{definition}
\label{bousloc}
Suppose $\catV$ is a cartesian model category.
Recall the functor
$$\PSh(C,\set)\lto9{\PSh(C,ι_\catV)}\PSh(C,\catV)$$
(\cref{enriched.yoneda}), where $C$ is a small category.
Using this functor, we define the following morphisms in $\PSh(\Delta,\catV)$, $\PSh(\Gamma,\catV)$, and $\PSh(\site,\catV)$, where $\site$ is a site.
These morphisms will be used later to construct left Bousfield localizations.
\begin{enumerate}[(i),series=segalmaps]
\item
(Segal's special $Δ$-condition.)
For $[a],[b]\in \Delta$, the maps
\begin{equation}\label{segal1}\phi^{a,b}:\Yo{[a]}\sqcup_{\Yo{[0]}}\Yo{[b]}\to \Yo{[a+b]},\end{equation}
where the pushout is taken along the inclusion $[0]→[a]$ of the maximal vertex of~$[a]$ (i.e., the vertex~$a$)
and the inclusion $[0]→[b]$ of the minimal vertex of $[b]$ (i.e., the vertex~$0$).
The local objects with respect to the maps $\phi^{a,b}$
are precisely Segal's special $Δ$-objects, for which the map $X_{a+b}→X_a⨯_{X_0}X_b$ is a weak equivalence, where $⨯$ denotes the homotopy pullback.
\item
(Completeness condition.)
The map
\begin{equation}\label{segal2}x:E\to \Yo{[0]}.\end{equation}
Here $E∈\PSh(\Delta)$ is the nerve of the groupoid with two objects $p$ and~$q$ and two nonidentity morphisms $p\to q$ and $q\to p$.
Alternatively, a weakly equivalent model for~$E$ is $E'=Δ^0⊔_{Δ^1}Δ^3⊔_{Δ^1}Δ^0$,
where the two maps $Δ^1→Δ^3$ are the inclusions of the edges $0→2$ and $1→3$, respectively.
Local objects with respect to the maps \cref{segal1,segal2} are Rezk's \emph{complete Segal objects},
characterized by Segal's special $Δ$-condition and the completeness condition.
The latter states that the map $X_0→X_1^⨯$ sending objects to their identity morphisms
is a weak equivalence, where $X_1^⨯$ is the subobject of invertible 1-morphisms in~$X_1$.
\item
(Segal's special $Γ$-condition.)
For $⟨\kappa⟩,⟨ℓ⟩\in\Gamma$, the maps
\begin{equation}\label{monoidal1}t^{\kappa,ℓ}:\Yo{⟨\kappa⟩}\sqcup\Yo{⟨ℓ⟩}\to \Yo{⟨\kappa+ℓ⟩}\end{equation}
and the map
$$τ:∅→\Yo{⟨0⟩}.\eqlabel{monoidal2}$$%
Local objects are precisely Segal's special $Γ$-objects,
for which the maps $X_{\kappa+ℓ}→X_\kappa⨯X_ℓ$ and $X_0→*$ are weak equivalences.
\item
(Sheaf condition.)
The maps
$$i^{{\cal U},V}:c_{\cal U}→\Yo{V},\eqlabel{cech}$$%
given by covering sieves of the Grothendieck topology on the site~$\site$.
The local objects for the maps $i^{{\cal U},V}$ are ∞-sheaves (also known as ∞-stacks).
For these objects, the restriction map
\begin{equation}\label{homotopy.descent}X(V)→\holim_{U∈(S/c_{\cal U})^\op} X(U)\end{equation}
is a weak equivalence.
For more information on Čech descent, see Dugger–Hollander–Isaksen \cite[Appendix~A]{DHI} and Glass–Minichiello \cite{GlassMinichiello}.
\end{enumerate}
\end{definition}

\begin{proposition}
\label{model.structure.sheaves}
The model structure $\sPSh(\site)_{\local}$ on the category of simplicial presheaves on a site~$\site$
given by the left Bousfield localization of the injective model structure on $\sPSh(\site)$ with respect to the morphisms \cref{cech}
is a left proper combinatorial simplicial cartesian model category.
\end{proposition}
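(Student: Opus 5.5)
The model structure is obtained from \cref{left.Bousfield.localization.exists}, and the remaining properties are then checked one at a time. First, the injective model structure on $\sPSh(\site)$ is left proper, combinatorial, simplicial and cartesian. This is \cref{injective.model.structure} with $\catV=\sset$, which is a left proper, combinatorial, cartesian, and hence monoidal, model category. Next, the set of maps \cref{cech} must be small, which holds because $\site$ is small and covering sieves form a set. \cref{left.Bousfield.localization.exists} then produces $\sPSh(\site)_\local$ as a left proper combinatorial model category. Its cofibrations are the monomorphisms, and its weak equivalences are the \cref{cech}-local equivalences.

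For the simplicial enrichment I would invoke \cref{enriched.left.Bousfield.localization.criterion}\cref{localization.simplicial}: since $\sPSh(\site)$ is simplicial, so is $\lbl_S\sPSh(\site)$. The tractability hypothesis holds because the generating cofibrations $∂Δ^n→Δ^n$ of $\sset$ have cofibrant domains.

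The main point is the cartesian property. The unit axiom is automatic, since the terminal object is cofibrant (all objects are). Because cofibrations are unchanged, cofibrations are still closed under pushout products. It therefore remains to show that if $i$ is a monomorphism and $j$ is an acyclic cofibration of the localized structure, then $i◻j$ is a local weak equivalence. By the standard reduction (Hovey \cite[Corollary~4.2.5]{Hovey}, or the argument in the proof of \cref{enriched.left.Bousfield.localization.exists}), it suffices to show that $X⨯-$ preserves local equivalences for every object $X$. Equivalently, $X⨯c_{\cal U}→X⨯\Yo{V}$ should be a local equivalence for every generating map \cref{cech}. Equivalently again, the internal hom $\hom(X,F)$ should be local whenever $F$ is fibrant and local.

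For the last formulation, I would write $X$ as a homotopy colimit of objects $Δ^n⊗\Yo{W}$ using \cref{enriched.left.Bousfield.localization.criterion}\cref{presheaf.generators}. This reduces the question to $X=\Yo{W}$, by simplicial enrichment, and then to checking that $\Yo{W}⨯c_{\cal U}→\Yo{W}⨯\Yo{V}$ is a local equivalence. This is the step I expect to be the main obstacle, because $\Yo{W}⨯\Yo{V}$ need not be representable (for instance, $\site$ need not have products).

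I would handle it by evaluating over the category of elements. A map $\Yo{T}→\Yo{W}⨯\Yo{V}$ corresponds to maps $T→W$ and $T→V$. The pullback of $\Yo{W}⨯c_{\cal U}$ along such a map is $c_{\cal U|_T}$, using stability of covering sieves under pullback (a property of the coverage). So $\Yo{W}⨯c_{\cal U}→\Yo{W}⨯\Yo{V}$ is a colimit, hence a homotopy colimit in the injective structure, of maps $c_{{\cal U}|_T}→\Yo{T}$. These are local equivalences, since pullback sieves are again covering sieves (after passing to generated sieves if necessary). Since local equivalences are closed under homotopy colimits, the map is a local equivalence. This gives the cartesian property and completes the proof.
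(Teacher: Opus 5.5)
Your proposal is correct. Existence, left properness, combinatoriality and the simplicial enrichment come out exactly as in the paper, from \cref{left.Bousfield.localization.exists} and \cref{enriched.left.Bousfield.localization.criterion}.

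For the cartesian property you take a different route. The paper argues globally. The \v{C}ech fibrant replacement functor on the unlocalized injective model structure preserves finite homotopy limits (left exactness of sheafification, quoted from Lurie). It also sends each map $c_{\cal U}\to\Yo{V}$ to an objectwise weak equivalence. Hence it sends $X\times c_{\cal U}\to X\times\Yo{V}$ to an objectwise weak equivalence, so the latter is a local equivalence.

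You instead reduce to $X=\Yo{W}$ via homotopy generators. You then decompose $\Yo{W}\times c_{\cal U}\to\Yo{W}\times\Yo{V}$ over the category of elements of $\Yo{W}\times\Yo{V}$ into the pulled-back sieves $b^*c_{\cal U}\to\Yo{T}$, using universality of colimits in presheaf categories. Since $X\times c_{\cal U}\to X\times\Yo{V}$ is the pullback of $c_{\cal U}\to\Yo{V}$ along a projection, you are in effect proving by hand the special case of pullback stability of local equivalences that underlies the left exactness of sheafification. Your argument is elementary, stays within model categories, and isolates the one property of the site that is used. The paper's argument is a few lines and gives the stronger statement that all finite homotopy limits are preserved, at the cost of importing an $\infty$-categorical theorem.

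Two steps in your write-up need an extra sentence.

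First, a colimit is not automatically a homotopy colimit, but here it is. For a presheaf of sets $P$ and a subpresheaf $Q\subset P$, the diagram $(T,x)\mapsto\Yo{T}\times_P Q$ on the category of elements of $P$ has objectwise homotopy colimit $Q$. Evaluated at $X$, each connected component of its Grothendieck construction has an initial object $(X,z,\id_X)$ for a unique $z\in Q(X)$.

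Second, pullback stability of covering sieves is an axiom of a Grothendieck topology, and that is how \cref{cech} is phrased. It is not an axiom of a coverage in Johnstone's sense, where pullbacks are covering only up to refinement. With a mere coverage this step would need an additional argument. The paper's appeal to Lurie relies on the same Grothendieck-topology assumption.
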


\begin{proof}
The model structure exists and has the desired properties by \cref{enriched.left.Bousfield.localization.exists,enriched.left.Bousfield.localization.criterion}.
For the cartesian property,
observe that the Čech fibrant replacement functor, regarded as an endofunctor
on the (nonlocalized) injective model structure, preserves finite homotopy limits.
(See, for example, Lurie \cite[Proposition 6.2.2.7]{Lurie.HTT}.)
Furthermore, it sends Čech maps \cref{cech} to objectwise weak equivalences.
Therefore, the homotopy product
(in the nonlocalized injective model structure)
of any presheaf with a Čech map \cref{cech} of the form
$$i^{{\cal U},V}:c_{\cal U}→\Yo{V}$$
is mapped by the Čech fibrant replacement functor
to an objectwise weak equivalence.
This means the homotopy product is itself a Čech-local weak equivalence, as desired.
\end{proof}

\begin{proposition}
\label{gammasp}
(Lydakis \cite[Theorems 4.6 and 5.1]{Lydakis}, Berger–Moerdijk \cite[Theorem~1.6]{BergerMoerdijk}, Shulman \cite[Theorem~8.9]{Shulman.Reedy}.)
The model structure $\sPSh(\Gamma)_\local$ on $\Gamma$-spaces given by the left Bousfield localization of the generalized Reedy model structure on $\sPSh(\Gamma)$
(\cref{Reedy.model.structure})
with respect to the morphisms \cref{monoidal1} and \cref{monoidal2}
is a left proper combinatorial simplicial monoidal model category.
\end{proposition}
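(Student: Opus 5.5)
The plan is to reduce the statement to the general existence results already recorded, and then handle the one genuinely nonformal point: that the localized model structure is still \emph{monoidal}. Existence and left properness are formal. The generalized Reedy model structure on $\sPSh(\Gamma)$ from \cref{Reedy.model.structure} (with $\catW=\sset$) is left proper, combinatorial, and simplicial monoidal. Applying \cref{left.Bousfield.localization.exists} to the set $S=\{t^{\kappa,ℓ}\}\cup\{τ\}$ then produces a left proper combinatorial localization $\sPSh(\Gamma)_\local$. Because $\{\Delta^0\}$ is a set of homotopy generators of $\sset$, \cref{enriched.left.Bousfield.localization.criterion}, \cref{localization.simplicial}, shows the localization remains simplicial.

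The key remaining step is to check that the Day convolution (smash) product on $\Gamma$-spaces is still a left Quillen bifunctor after localization. The pushout product of two cofibrations is still a cofibration, since cofibrations are unchanged. The unit $\Yo{⟨1⟩}$ is cofibrant, and the unit axiom transfers from the Reedy structure. So it suffices to show the following: if $s$ is a localizing map and $X$ is cofibrant, then $X\otimes s$ is a local weak equivalence. By the standard argument (Hovey, or Barwick's criterion for monoidal localizations), the general pushout-product statement then follows exactly as in the proof of \cref{enriched.left.Bousfield.localization.exists}.

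I would proceed in three steps.
\begin{enumerate}
\item Since $X\otimes-$ preserves homotopy colimits between cofibrant objects, and every cofibrant $X$ is a homotopy colimit of objects $K\otimes\Yo{⟨m⟩}$ with $K\in\sset$ (these are cofibrant by \cref{Reedy.projective.injective.equivalent}), reduce to $X=\Yo{⟨m⟩}$.
\item Use the formula $\Yo{⟨m⟩}\otimes\Yo{⟨n⟩}\cong\Yo{⟨m⟩∧⟨n⟩}=\Yo{⟨mn⟩}$.
\item Conclude that $\Yo{⟨m⟩}\otimes t^{\kappa,ℓ}$ is the map $\Yo{⟨m\kappa⟩}\sqcup\Yo{⟨mℓ⟩}\to\Yo{⟨m(\kappa+ℓ)⟩}$. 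This map is again of the form $t^{m\kappa,mℓ}$, up to composing with a bijection of pointed sets, hence lies in $S$ up to isomorphism. Likewise $\Yo{⟨m⟩}\otimes τ$ is $∅\to\Yo{⟨0⟩}$, which is $τ$.
\end{enumerate}

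The main obstacle is the reduction in the first step. One must check that smashing with a Reedy-cofibrant object preserves Reedy-cofibrant objects and homotopy colimits. This is precisely the subtlety flagged in \cref{injective.Gamma.nonmonoidal}, where the injective structure fails to be monoidal. Here it is handled by the monoidality of the Reedy structure, Lydakis's theorem, stated in \cref{Reedy.model.structure}(4). Generating cofibrations have the form $j◻∂_c$ rather than $j\otimes\Yo{c}$, so I would first handle representables as above, then pass to $∂\Yo{c}$ by induction on $c$ via latching-object pushouts, which are homotopy pushouts by left properness.
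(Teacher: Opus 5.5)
Your proposal is correct and follows essentially the same route as the paper. Existence, left properness, and simpliciality come from the general Bousfield localization results. Monoidality then reduces, using Lydakis's Reedy monoidality and representables as homotopy generators, to the observation that $\Yo{\langle m\rangle}\otimes t^{\kappa,\ell}$ is isomorphic to $t^{m\kappa,m\ell}$ and $\Yo{\langle m\rangle}\otimes\tau=\tau$, with the needed cofibrancy supplied by \cref{Reedy.projective.injective.equivalent}; once that homotopy-generator reduction is in place, the extra induction over $\partial\Yo{c}$ in your last paragraph is unnecessary.
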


\begin{proof}
The model structure exists and has the desired properties by \cref{enriched.left.Bousfield.localization.exists,enriched.left.Bousfield.localization.criterion}.
For the monoidal property, we need to show that the derived smash product of an arbitrary presheaf and a localizing map is a local weak equivalence.
Observe that the smash product of $⟨r⟩∈Γ$ with a map \cref{monoidal1}
$$t^{\kappa,ℓ}:\Yo{⟨\kappa⟩}\sqcup\Yo{⟨ℓ⟩}\to \Yo{⟨\kappa+ℓ⟩}$$
is again a map of the same form, namely, $t^{⟨κ⟩∧⟨r⟩,⟨ℓ⟩∧⟨r⟩}$.
Likewise, the smash product of $τ:∅→\Yo{⟨0⟩}$ \cref{monoidal2} with any $⟨r⟩∈Γ$ is again~$τ$.
Both smash products are derived because coproducts of representable objects are projectively cofibrant, hence generalized Reedy cofibrant.
\end{proof}

\begin{remark}
Since the injective model structure is not monoidal (\cref{injective.Gamma.nonmonoidal}), neither is the localized injective structure.
The injective model structure on $\Gamma$-spaces has more cofibrations than the Bousfield–Friedlander strict model structure \cite[Theorem~3.5]{BousfieldFriedlander},
which coincides with the generalized Reedy model structure of \cref{Reedy.model.structure}.
The generalized Reedy model structure, in turn, has more cofibrations than Schwede's strict Q-model structure \cite[Lemma~1.4]{Schwede},
which coincides with the projective model structure.
All three model structures have the same weak equivalences.
The localized model structure on $\Gamma$-spaces of \cref{gammasp} has fewer weak equivalences
than the Bousfield–Friedlander stable model structure \cite[Theorem~5.2]{BousfieldFriedlander}
and Schwede's stable Q-model structure \cite[Theorem~1.5]{Schwede},
since we do not impose the group-like property on $Γ$-spaces.
\end{remark}

\begin{definition}
\label{rezkdelta}
The Rezk model structure \cite[Theorem~7.2]{Rezk.CSS} on $Δ$-spaces is
the left Bousfield localization of the Reedy model structure on $\sPSh(\Delta)$ (which coincides with the injective model structure)
with respect to the morphisms \cref{segal1} and \cref{segal2}.
We denote this model category by $\sPSh(\Delta)_{\local}$.
\end{definition}

\section{Geometric symmetric monoidal $(\infty,d)$-categories}

\begin{notation}
\label{site.and.enrichment}
Throughout this section, $\site$ denotes an arbitrary small site,
$\catV$ denotes an arbitrary combinatorial cartesian model category,
and $d$ is a nonnegative integer denoting the category number, i.e., $d$ in $(∞,d)$-categories.
In subsequent sections, $\site$ will be taken to be a structured cartesian site $\stcart$ of \cref{def.stcart}
and $\catV$ will be taken to be $\sset$ or $\smsset$ (\cref{smsset.injective}).
\end{notation}

\subsection{Geometric multiple and globular $(\infty,d)$-categories}
\label{geocatssect}

To combine symmetric monoidal, $d$-categorical, and geometric structures,
we lift the morphisms in \cref{bousloc} to the category of presheaves
on the product category $\site\times \Gamma\times \Delta^{\times d}$,
using the external product~$⊠$ of \cref{external.product}.
The resulting model category has fibrant objects that are local with respect to each morphism~$f$ in \cref{bousloc}
after evaluation on arbitrary objects in the factors not corresponding to~$f$.
For instance, evaluation at an object of $Γ⨯\Delta^{\times d}$ yields an ∞-sheaf of simplicial sets on $\site$,
which means the homotopy descent condition \cref{homotopy.descent} is satisfied.

\begin{notation}
\label{localmapsnot}
Assume \cref{site.and.enrichment}.
Recall \cref{bousloc}.
We consider the following sets of maps in $\PSh(\site\times\Gamma\times\Delta^{\times d},\catV)$.
\begin{itemize}
\item Let $S_\Delta=\{\Yo{c}⊠\phi^{a,b}, \Yo{c}⊠x\mid c\in\site\times\Gamma\times Δ^{\{1,\ldots,k-1,k+1,\ldots,d\}}, 1≤k≤d, a,b∈Δ\}$,
where $\phi^{a,b}$ are the morphisms \cref{segal1} and $x$ is the morphism \cref{segal2}.
\item Let $S_\Gamma=\{\Yo{c}⊠t^{\kappa,ℓ}, \Yo{c}⊠\tau\mid c\in\site\times\Delta^{\times d}, \kappa,ℓ∈Γ\}$,
where $t^{\kappa,ℓ}$ are the morphisms \cref{monoidal1} and $\tau$ is the morphism \cref{monoidal2}.
\item Let $S_\site=\{\Yo{c}⊠i^{{\cal U},V}\mid c\in\Gamma\times\Delta^{\times d}, V∈\site\}$,
where $i^{{\cal U},V}$ are the morphisms \cref{cech} for a covering family ${\cal U}$ of~$V$.
\end{itemize}
\end{notation}

With the powerful \cref{enriched.left.Bousfield.localization.exists,enriched.left.Bousfield.localization.criterion} in hand, we are ready to construct model categories whose objects are geometric symmetric monoidal $(\infty,d)$-categories.
We provide model structures for both the multiple and globular versions of $(∞,d)$-categories.
The following theorem is an immediate corollary of \cref{enriched.left.Bousfield.localization.exists}.

\begin{proposition}
\label{abstract.multiple.model.structure}
Assume \cref{site.and.enrichment}.
There is a combinatorial $\catV$-enriched left proper model category
$$\PSh(\site\times\Gamma\times\Delta^{\times d},\catV)_{\uple}$$
given by the $\catV$-enriched left Bousfield localization (\cref{enriched.left.Bousfield.localization}) of the injective model structure on $\PSh(\site\times \Gamma\times \Delta^{\times d},\catV)$
(\cref{injective.model.structure})
with respect to the set of morphisms $S_{\site}\cup S_{\Gamma}\cup S_{\Delta}$ as defined in \cref{localmapsnot}.
\end{proposition}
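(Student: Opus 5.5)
The plan is to apply \cref{enriched.left.Bousfield.localization.exists} with $M$ the injective model structure on $\PSh(\site\times\Gamma\times\Delta^{\times d},\catV)$ and $S=S_\site\cup S_\Gamma\cup S_\Delta$. Its hypotheses are: $\catV$ is a tractable monoidal model category, $M$ is a left proper combinatorial $\catV$-enriched model category, and $S$ is a set. Once these are verified, the conclusion — existence of the enriched localization, left properness, combinatoriality, and $\catV$-enrichment — is exactly the statement.

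First I verify the properties of $M$ using \cref{injective.model.structure}. Since $\catV$ is combinatorial, the injective model structure exists and is combinatorial. Since $\catV$ is cartesian, hence monoidal, $M$ is $\catV$-enriched, and it is in fact cartesian. Left properness of $M$ follows from left properness of $\catV$, via the same proposition.

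Next I check that $S$ is a set. This holds because $\site$, $\Gamma$, and $\Delta^{\times d}$ are small, so each of $S_\Delta$, $S_\Gamma$, $S_\site$ is indexed by a set: objects $c$ of a small category, pairs $a,b$, and covering families of objects of $\site$.

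The main obstacle is that \cref{enriched.left.Bousfield.localization.exists} requires $\catV$ to be tractable and $M$ to be left proper. \cref{site.and.enrichment} only states that $\catV$ is combinatorial and cartesian, so these properties must come from the intended cases. For $\catV=\sset$ they are standard: $\sset$ is left proper and tractable, since its generating cofibrations $\partial\Delta^n\to\Delta^n$ have cofibrant domains. For $\catV=\smsset_{\sing,\inj}$, left properness follows because all objects are cofibrant, which is the h-cofibration argument in the proof of \cref{smsset.injective}. Tractability follows from Barwick's generating set of monomorphisms, all of whose domains are cofibrant. I would either add these two properties tacitly to the standing assumptions on $\catV$, matching the two cases actually used, or note that a cartesian model category with all objects cofibrant is tractable by Barwick's argument. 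With that settled, \cref{injective.model.structure} transfers left properness to $M$, and the proposition follows directly.
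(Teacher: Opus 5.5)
Your proposal is correct and takes essentially the paper's approach: the paper calls the proposition an immediate corollary of \cref{enriched.left.Bousfield.localization.exists} applied to the injective model structure from \cref{injective.model.structure}. Your extra check that $\catV$ is tractable and left proper is a hypothesis the paper leaves implicit in \cref{site.and.enrichment}, and it holds automatically for $\sset$ and $\smsset_{\sing,\inj}$ because all objects there are cofibrant.
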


Specializing to the case where $\catV=\sset$ or $\catV=\smsset$, we make the following definition.

\begin{subdefinition}
\label{multiple.model.structures}
We define the following model categories.
\begin{enumerate}
\item\label{multiple.model.structure}
(Geometric symmetric monoidal $(\infty,d)$-uple categories.)
Setting $\catV=\sset$ in \cref{abstract.multiple.model.structure} yields a combinatorial simplicial left proper model category
$$\smcatuple{d}≔\sPSh(\site\times\Gamma\times\Delta^{\times d})_{\uple}.$$
\item\label{multiple.model.structure.smooth}
(Geometric symmetric monoidal $(\infty,d)$-uple categories with isotopies.)
Setting $\catV=\smsset$ in \cref{abstract.multiple.model.structure} yields a combinatorial $\smsset$-enriched left proper model category
$$\fraksmcatuple{d}≔\smsPSh(\site\times\Gamma\times\Delta^{\times d})_{\uple}.$$
\end{enumerate}

In the case where $\site$ is the terminal category and $\catV=\sset$, we obtain symmetric monoidal $(\infty,d)$-uple categories:
$$\catuple{d}≔\sPSh(\Gamma\times \Delta^{\times d})_\uple.$$
\end{subdefinition}

\cref{abstract.multiple.model.structure} provides a geometric variant of $d$-fold complete Segal spaces.
However, these only become a model for $(\infty,d)$-categories once we impose the globularity condition
(Lurie \cite[Definition~1.2.7.(2)]{Lurie.TwoCat}, see also Barwick–Schommer-Pries \cite[Notation~12.1]{BarwickSchommerPries}).
For $d=2$ this corresponds to the passage from double categories to bicategories.
Recall that double categories have two distinct notions of 1-morphisms: horizontal and vertical.
Both types of 1-morphisms can be composed, and 2-cells are squares involving two vertical and two horizontal morphisms.
Thus, our model so far describes the $d$-fold analog of double categories.
To eliminate the extra 1-morphisms, we further localize the functor category $\PSh(\site\times\Gamma\times\Delta^{\times d},\catV)_{\uple}$
at the morphisms formed by external tensoring of the map \cref{glob} with representable presheaves on $\site\times\Gamma$.

\begin{definition}
\label{globular.maps}
Assume \cref{site.and.enrichment}.
Recall the enriched Yoneda embedding~$\Yo{}$ (\cref{enriched.yoneda}).
We define the following maps, which are incorporated into our left Bousfield localization for the globular model structure.
\begin{enumerate}[resume*=segalmaps]
\item
(Globular maps.)
For an object ${\bf m}=([m_1],\ldots,[m_d])\in \Delta^{\times d}$, let $\hat{\bf m}$ be the object whose $j$th component is
$$[\hat{m}_j]=\begin{cases}
\left[0\right],& \text{if there is $i<j$ with $m_i=0$,}\cr
\left[m_j\right],& \text{otherwise.}\cr
\end{cases}$$
There is a canonical map from ${\bf m}$ to $\hat{\bf m}$ constructed using identities or unique maps to $[0]$ for each index~$j$.
The globular maps are defined as the following morphisms in $\PSh(Δ^{⨯d},\catV)$:
\begin{equation}\label{glob}\psi^{{\bf m}}:\Yo{{\bf m}}\to \Yo{\bf \hat{m}}.\end{equation}
\end{enumerate}
\end{definition}

The local objects are $d$-fold simplicial spaces~$X$ such that $X_0$
is homotopy constant and for every $k≥0$ the object $X_k$ is local within the category of $(d-1)$-fold simplicial spaces.
For $d=2$, the locality condition reduces to requiring the degeneration maps $X_{0,0}\to X_{0,b}$ to be equivalences.
This ensures that all vertical morphisms are homotopic to identities.

\begin{notation}
Assume \cref{site.and.enrichment}.
We consider the following set of morphisms in $\PSh(\site\times \Gamma\times \Delta^{\times d},\catV)$.
\begin{itemize}
\item Let $S_\glob=\{c⊠\psi^{\bf m}\mid c\in\site\times\Gamma, {\bf m}∈Δ^{⨯d}\}$,
where $\psi^{\bf m}$ are the morphisms \cref{glob}.
\end{itemize}
\end{notation}

We now introduce the globular analog of the model category in \cref{abstract.multiple.model.structure}.
Once again, the theorem follows immediately from \cref{enriched.left.Bousfield.localization.exists}.
\begin{proposition}
\label{abstract.globular.model.structure}
Assume \cref{site.and.enrichment}.
There is a combinatorial $\catV$-enriched left proper model category
$$\PSh(\site\times\Gamma\times\Delta^{\times d},\catV)_{\glob}$$
given by the $\catV$-enriched left Bousfield localization (\cref{enriched.left.Bousfield.localization}) of the injective model structure on $\PSh(\site\times \Gamma\times \Delta^{\times d},\catV)$
(\cref{injective.model.structure})
with respect to the set of morphisms $S_{\site}\cup S_{\Gamma}\cup S_{\Delta}\cup S_{\glob}$ as defined in \cref{localmapsnot}.
\end{proposition}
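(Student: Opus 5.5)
The statement follows directly from \cref{enriched.left.Bousfield.localization.exists}, in the same way as \cref{abstract.multiple.model.structure}. The only real work is to check that its hypotheses hold for the injective model structure on $M=\PSh(\site\times\Gamma\times\Delta^{\times d},\catV)$ and the set $S=S_{\site}\cup S_{\Gamma}\cup S_{\Delta}\cup S_{\glob}$.

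First, by \cref{injective.model.structure}, $M$ is combinatorial. It is left proper provided $\catV$ is, and it is $\catV$-enriched because $\catV$ is cartesian, hence monoidal. In our cases of interest, $\catV=\sset$ or $\catV=\smsset_{\sing,\inj}$, and both are left proper: $\sset$ because all objects are cofibrant, and $\smsset$ by \cref{smsets.projective}/\cref{smsset.injective}, since it has the same weak equivalences as a left proper structure and all its objects are cofibrant.

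Second, $\catV$ must be tractable. For $\sset$ this is standard. For $\smsset_{\sing,\inj}$ I would take generating cofibrations to be monomorphisms between presheaves of bounded size; their domains are cofibrant, since every object is cofibrant.

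Third, $S$ is a set. Each of $S_\site$, $S_\Gamma$, $S_\Delta$, $S_\glob$ is indexed by objects of small categories and by covering families of the small site $\site$, so this is clear. Note also that the maps $c\boxtimes\psi^{\bf m}$ are $\Yo{c}\boxtimes\psi^{\bf m}$, formed via $\iota_\catV$ and \cref{external.product}.

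With these checks done, \cref{enriched.left.Bousfield.localization.exists} produces the $\catV$-enriched left Bousfield localization $\lbl_{S'}M$, with $S'=\{X\dtp f\}$. It also guarantees that this localization is combinatorial and left proper. For $\catV=\sset$ or $\smsset$, \cref{enriched.left.Bousfield.localization.criterion} gives the simpler description: the plain localization $\lbl_S M$ is already enriched.

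The main obstacle is only the hypothesis bookkeeping: left properness and tractability of a general combinatorial cartesian $\catV$. If these are not automatic, I would read \cref{site.and.enrichment} as implicitly requiring them, since the proposition is applied only to $\sset$ and $\smsset$. There is no homotopical computation to carry out here, because verifying that the local objects are the intended ones is not part of the claim.
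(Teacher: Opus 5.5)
Your proposal is correct and takes the same approach as the paper, whose entire proof is the observation that the result follows immediately from \cref{enriched.left.Bousfield.localization.exists}, just as for \cref{abstract.multiple.model.structure}. Your extra checks are the verification the paper leaves implicit: left properness and tractability hold automatically for $\sset$ and $\smsset_{\sing,\inj}$ because all objects are cofibrant, and you are right that for an arbitrary combinatorial cartesian $\catV$ these must be read as tacit hypotheses.
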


Specializing to the case where $\catV=\sset$ or $\catV=\smsset$, we make the following definition.
\begin{subdefinition}
\label{globular.model.structures}
We define the following model categories.
\begin{enumerate}
\item\label{globular.model.structure}
(Geometric symmetric monoidal $(\infty,d)$-categories.)
There is a combinatorial simplicial left proper model category
$$\smcat{d}≔\sPSh(\site\times\Gamma\times\Delta^{\times d})_{\glob}.$$
We define a \emph{geometric symmetric monoidal $(∞,d)$-category} to be an object of $\smcat{d}$.

\item\label{globular.model.structure.smooth}
(Geometric symmetric monoidal $(\infty,d)$-categories with isotopies.)
There is a combinatorial $\smsset$-enriched left proper model category
$$\fraksmcat{d}≔\smsPSh(\site\times\Gamma\times\Delta^{\times d})_{\glob}.$$
We define a \emph{geometric symmetric monoidal $(∞,d)$-category with isotopies} to be an object of $\fraksmcat{d}$.
\end{enumerate}

In the case where $\site$ is the terminal category and $\catV=\sset$, we obtain symmetric monoidal $(\infty,d)$-categories:
$$\cat{d}≔\sPSh(\Gamma\times \Delta^{\times d})_\glob.$$
\end{subdefinition}

The following proposition shows that the two model categories in \cref{multiple.model.structures,globular.model.structures} are Quillen equivalent.
\begin{proposition}
\label{simplicial.vs.smoothset.enrich}
The Quillen equivalence of \cref{smsset.injective} induces Quillen equivalences
$$\xymatrix{
\smcatuple{d}\ar@<.125cm>[r]^-{|-|} & \ar@<.125cm>[l]^-{\sing} \fraksmcatuple{d},
} \qquad
\xymatrix{
\smcat{d}\ar@<.125cm>[r]^-{|-|} & \ar@<.125cm>[l]^-{\sing} \fraksmcat{d},
}
$$
where the functors are applied objectwise.
\end{proposition}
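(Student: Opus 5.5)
We first build the Quillen equivalence on the unlocalized injective model structures and then pass it through the left Bousfield localizations. By \cref{smsset.injective}, the adjunction $|{-}|\dashv\sing$ between $\sset$ and $\smsset_{\sing,\inj}$ is a Quillen equivalence. Applying it objectwise gives an adjunction
$$|{-}|:\sPSh(\site\times\Gamma\times\Delta^{\times d})\rightleftarrows\smsPSh(\site\times\Gamma\times\Delta^{\times d}):\sing.$$
Both sides carry the injective model structure (\cref{injective.model.structure}), so cofibrations and weak equivalences are defined objectwise.

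First, we check that the unlocalized adjunction is a Quillen equivalence. The left adjoint $|{-}|$ preserves monomorphisms, since it is a left Quillen functor into $\smsset_{\sing,\inj}$, whose cofibrations are the monomorphisms. It also preserves objectwise weak equivalences, because every simplicial set is cofibrant and left Quillen functors preserve weak equivalences between cofibrant objects. Hence the objectwise $|{-}|$ is left Quillen for the injective structures. For the equivalence we show two things. First, the derived unit $X\to\sing\,\frep|X|$ is an objectwise weak equivalence: injective fibrant replacement may be chosen objectwise fibrant, and the pointwise derived unit is a weak equivalence. Second, $\sing$ reflects weak equivalences between fibrant objects: injective fibrant objects are objectwise fibrant, and $\sing$ creates weak equivalences in $\smsset$ (\cref{smsets.projective}). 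These are the standard criteria (Hovey, Corollary~1.3.16).

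Second, we handle the localizations. The localization on the $\smsset$ side is the $\smsset$-enriched localization at $S=S_\site\cup S_\Gamma\cup S_\Delta\,(\cup S_\glob)$, taken with the same sets of maps, where the Yoneda images are formed via $ι_{\smsset}$. The key observation is $|ι_\sset(T)|=\coprod_T|Δ^0|=\coprod_T\munit=ι_{\smsset}(T)$, because $|Δ^0|$ is the terminal smooth simplicial set (it is $\gsim^0=\RR^0$). Since $|{-}|$ is cocontinuous and commutes with $⊠$ objectwise, it sends each map of $S$ on the simplicial side to the corresponding map of $S$ on the smooth side. Now we use the standard fact (Hirschhorn, Theorem~3.3.20) that a Quillen equivalence $F\dashv G$ induces a Quillen equivalence $\lbl_S M\rightleftarrows\lbl_{\ldf F(S)}N$. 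Here $\ldf F(S)=F(S)$, since every object is cofibrant. This gives a Quillen equivalence from $\smcatuple{d}$, respectively $\smcat{d}$, to the nonenriched localization $\lbl_S$ of the smooth injective structure.

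The remaining step, which we expect to be the main obstacle, is to identify this nonenriched localization with the $\smsset$-enriched localization used to define $\fraksmcatuple{d}$ and $\fraksmcat{d}$. For this we invoke \cref{enriched.left.Bousfield.localization.criterion}\cref{localization.smooth.simplicial}: the model category being localized is $\smsset_{\sing,\inj}$-enriched, so $\lbl_S$ is again $\smsset$-enriched. The final clause of \cref{enriched.left.Bousfield.localization.exists} then gives $\lbl_S=\lbl_{S'}$, the enriched localization. For this one must check that the unit $\munit=\RR^0$ homotopy generates $\smsset$, equivalently that each $\Yo{\RR^n}\to\munit$ is a weak equivalence. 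This holds because $\sing$ of $\RR^n$ is contractible, as noted in \cref{stalk.eq}. With this identification, both displayed adjunctions are Quillen equivalences.
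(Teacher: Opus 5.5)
Your proof is correct and follows the paper's route: build the objectwise Quillen equivalence on the injective model structures from \cref{smsset.injective}, then descend it to the localized model structures via \cite[Theorem~3.3.20]{Hirschhorn}. You also supply two details the paper leaves implicit: that $|{-}|$ carries each localizing map on the simplicial side to the corresponding one on the smooth side, and that the resulting nonenriched localization agrees with the $\smsset$-enriched one defining $\fraksmcatuple{d}$ and $\fraksmcat{d}$ (the first detail needs only that all these presheaves are objectwise discrete, since $|{-}|$ does not preserve general products).
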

\begin{proof}
The Quillen equivalence of \cref{smsset.injective} induces a Quillen equivalence of injective model structures.
By Hirschhorn \cite[Theorem~3.3.20]{Hirschhorn}, this Quillen equivalence descends to a Quillen equivalence of localized model structures, in both the uple and globular case.
\end{proof}

\begin{remark}
\label{not.fibrant}
We emphasize that \cref{globular.model.structure} does \emph{not} require geometric symmetric monoidal $(∞,d)$-categories to be fibrant in
the model category $\smcat{d}$, or any other model structure with the same weak equivalences.
Indeed, all the constructions on geometric symmetric monoidal $(∞,d)$-categories considered in the present work respect weak equivalences (for instance, by being derived).
Hence, performing a fibrant replacement of the bordism category will only alter the constructions up to weak equivalence.

A useful analogy is provided by the notion of a derived category in homological algebra.
An object of the derived category of a ring~$R$ is a chain complex of $R$-modules.
A morphism in the derived category can be presented as a composition of a chain map and a formal inverse of a quasi-isomorphism.
This formal inverse can be viewed as a (fibrant) resolution of the target.
Requiring all objects in the derived category to be injectively fibrant chain complexes
yields an equivalent derived category in which all morphisms can now be presented as chain maps.
However, this approach is not commonly adopted in homological algebra,
where chain complexes that are neither injectively fibrant nor projectively cofibrant
are considered valid objects in the derived category, on par with injectively fibrant and projectively cofibrant objects.

From a practical standpoint, requiring bordism categories to be fibrant does not result in tangible benefits,
as fibrant objects have good properties regarding morphisms \emph{into} them,
whereas a functorial field theory is a morphism originating \emph{from} a bordism category.
Owing to our choice of the injective model structure, all objects of $\smcat{d}$ are cofibrant.
In particular, all bordism categories introduced in this paper are cofibrant, which eliminates the need
for additional cofibrant replacements when discussing functorial field theories.
\end{remark}

\subsection{$(\infty,d)$-categories of functors.}
\label{functor.categories}

In this section, we define an object $\Funmonglob(X,Y)$ that encodes the geometric symmetric monoidal $(\infty,d)$-category of functors between geometric symmetric monoidal $(\infty,d)$-categories $X$ and~$Y$.
The model categories in \cref{abstract.multiple.model.structure,abstract.globular.model.structure} cannot be made into monoidal model categories in a straightforward way, so constructing this mapping object is somewhat subtle.
We remark that it is not necessary to understand the details of the construction in this section to use the abstract properties of the mapping object (\cref{derivedhom}).

\begin{definition}
\label{smcat.monoidal.structure}
Assume \cref{site.and.enrichment}.
We equip the category
$$\PSh(\site⨯Γ⨯Δ^{⨯d},\catV)=\PSh(Γ,\PSh(\site⨯Δ^{⨯d},\catV))$$
with a monoidal structure given by the Day convolution (\cref{day.convolution})
induced by the smash product on~$Γ$ and the categorical product on $\PSh(\site⨯Δ^{⨯d},\catV)$.
\end{definition}

The uple and globular model categories
$$\PSh(\site⨯Γ⨯Δ^{⨯d},\catV)_\uple,\qquad\PSh(\site⨯Γ⨯Δ^{⨯d},\catV)_\glob$$
(\cref{abstract.multiple.model.structure,abstract.globular.model.structure})
are not monoidal model categories (\cref{injective.Gamma.nonmonoidal,globular.nonmonoidal}).

In the uple case, the problem is caused by the fact that the Day convolution monoidal product
quotients by the group of automorphisms of objects in~$Γ$ (\cref{injective.Gamma.nonmonoidal}).
This problem is resolved by passing to the $Γ$-Reedy injective model structure (\cref{Reedy.injective.model.structure}).

In the globular case, there is one more problem: the globular weak equivalences are not closed under cartesian products
with objects of~$Δ^{⨯d}$ (\cref{globular.nonmonoidal}).
This problem is fixed by passing from $Δ^{⨯d}$ to Joyal's category~$Θ_d$ \cite{Joyal.Disks},
which yields a Quillen equivalent monoidal model structure (\cref{theta.model.equivalence}).
The monoidal product and internal hom can then be transferred from the resulting monoidal model category
via the Quillen equivalence (\cref{globular.hom}).

\begin{definition}
\label{joyal.cell.category}
(Joyal \cite{Joyal.Disks}, see also Rezk \cite[Section 3.2]{Rezk.Theta}.)
The category $Θ_d$ is defined inductively on~$d≥0$.
For $d=0$ we take $Θ_0$ to be the terminal category.
For $d>0$, objects of $Θ_d$ are tuples $([m],c_1,…,c_m)$,
where $[m]∈Δ$ and $c_i∈Θ_{d-1}$.
Morphisms $([m],c_1,…,c_m)→([n],d_1,…,d_n)$
are maps $δ:[m]→[n]$ in~$Δ$ together with morphisms $f_{i,j}:c_i→d_j$ in~$Θ_{d-1}$
for all $i$ and $j$ such that $0<i≤m$ and $δ(i-1)<j≤δ(i)$.
Composition is given by composing the morphisms in~$Δ$ and setting $h_{i,k}=g_{j,k}f_{i,j}$, where $j$ is uniquely determined.
\end{definition}

Recall (Rezk \cite[§8]{Rezk.Theta}) the set $\thloc_{Θ,d}$ (denoted there by $\mathscr{S}_\Theta$) of morphisms of presheaves of sets on $Θ_d$, defined inductively on~$d≥0$.
Set $\thloc_{Θ,0}=∅$.
For $d>0$, set
\begin{equation}\label{s.theta.d}\thloc_{Θ,d}=\Se_{Θ_{d-1}}∪\Cpt_{Θ_{d-1}}∪V[1](\thloc_{Θ,d-1}),\end{equation}
where $\Se_{Θ_{d-1}}$ (Rezk \cite[Section~5.1]{Rezk.Theta}) is the analog of Segal maps $ϕ^{a,b}$ \cref{segal1},
$\Cpt_{Θ_{d-1}}$ (Rezk \cite[Section~7.6]{Rezk.Theta}) is the analog of completion maps~$x$ \cref{segal2},
and $V[1]$ is the intertwining functor defined by Rezk \cite[Section~4.4]{Rezk.Theta},
which, when applied to morphisms in $\thloc_{Θ,d-1}$, plays the role analogous to the functors $\Yo{c}⊠-$ ($c∈Δ^{⨯(d-1)}$) used to define
the localizing morphisms for~$Δ^{⨯d}$.

\begin{notation}
\label{s.theta}
Assume \cref{site.and.enrichment}.
We consider the following set of morphisms in $\PSh(\site⨯Γ⨯Θ_d,\catV)$.
\begin{itemize}
\item Let $S_Θ=\{c⊠ψ \mid c\in\site⨯Γ, ψ∈\thloc_{Θ,d}\}$,
\end{itemize}
where $\thloc_{\Theta,d}$ is the set of morphisms \eqref{s.theta.d}.
\end{notation}

\begin{proposition}
\label{multiple.monoidal.model.structure}
\label{theta.model.structure}
Assume \cref{site.and.enrichment}.
There are combinatorial left proper $\catV$-enriched monoidal model structures
$$\PSh(\site⨯Γ⨯Δ^{⨯d},\catV)_{\Reedy,\inj,\uple},\qquad\PSh(\site⨯Γ⨯Θ_d,\catV)_{\Reedy,\inj,\local}$$
obtained by the $\catV$-enriched left Bousfield localization (\cref{enriched.left.Bousfield.localization})
of the $Γ$-Reedy injective model structure
(\cref{Reedy.injective.model.structure})
at the sets of morphisms $S_\site∪S_Γ∪S_Δ$ and $S_{\site}\cup S_Γ\cup S_Θ$, respectively.
Here $S_\site$, $S_Γ$, and $S_Δ$ are described in \cref{localmapsnot} (adapted by substituting $\Theta_d$ for $Δ^{⨯d}$ for the latter model structure)
and $S_Θ$ is defined in \cref{s.theta}.
\end{proposition}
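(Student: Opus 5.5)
The plan is to obtain both model structures from \cref{enriched.left.Bousfield.localization.exists} and then check monoidality separately.

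First, the unlocalized model category $\PSh(\site⨯Γ⨯Δ^{⨯d},\catV)_{\Reedy,\inj}$ (and its $Θ_d$ analogue) is the generalized Reedy model structure of \cref{Reedy.model.structure}, taken with $\catW=\PSh(\site⨯Δ^{⨯d},\catV)_\inj$. By \cref{injective.model.structure}, this $\catW$ is a combinatorial, left proper, cartesian, $\catV$-enriched model category. \Cref{Reedy.model.structure} then shows that $\PSh(Γ,\catW)$ is combinatorial and left proper, and that it is a $\catW$-enriched monoidal model category under the Day convolution of \cref{smcat.monoidal.structure}. The same holds with $Θ_d$ in place of $Δ^{⨯d}$. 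Since $\catV$ acts on $\catW$ through constant presheaves, the $\catV$-enrichment follows. Tractability is available for $\catV=\sset,\smsset$, so \cref{enriched.left.Bousfield.localization.exists} produces the $\catV$-enriched localizations at $S_\site∪S_Γ∪S_Δ$ and at $S_\site∪S_Γ∪S_Θ$. These localizations are combinatorial, left proper, and $\catV$-enriched.

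The remaining task is monoidality. Cofibrations do not change under localization, so the pushout-product axiom for cofibrations carries over from the unlocalized monoidal model structure. The unit axiom also carries over, because the unit $\Yo{⟨1⟩}$ is Reedy cofibrant by \cref{Reedy.projective.injective.equivalent}. What remains is to show that the pushout product of a cofibration with an acyclic cofibration is a local weak equivalence. I would reduce this, as in the proofs of \cref{gammasp,model.structure.sheaves}, to a single condition: for every homotopy generator $Y$ and every localizing map $f$, the derived product $Y\dtp f$ is again a local weak equivalence. The reduction goes through the standard cube/2-out-of-3 argument used in the proof of \cref{enriched.left.Bousfield.localization.exists}, together with the fact that $-\dtp f$ preserves homotopy colimits. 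By \cref{presheaf.generators}, the homotopy generators can be taken to be $X⊗\Yo{(s,⟨r⟩,c)}$ with $X$ ranging over generators of $\catV$. The Day product of representables is $\Yo{(s×s',⟨r⟩∧⟨r'⟩,c×c')}$. Representables are Reedy cofibrant, which makes the products derived.

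I would then check each family of localizing maps separately.
\begin{itemize}
\item For $S_Γ$, the argument of \cref{gammasp} applies: smashing with $⟨r⟩$ sends $t^{κ,ℓ}$ to $t^{⟨κ⟩∧⟨r⟩,⟨ℓ⟩∧⟨r⟩}$ and sends $τ$ to $τ$, while the other factors enter only through external products.
\item For $S_\site$, product with $\Yo{s}$ of a Čech map is a Čech-local equivalence by the cartesian property of \cref{model.structure.sheaves}.
\item For $S_Δ$ in the uple case, products of $\Yo{[n]}$ with Segal and completeness maps in $\PSh(Δ)_\local$ are local equivalences, because the Rezk model structure is cartesian. Rezk \cite{Rezk.CSS} proves exactly this. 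In each of these cases the factors not involved in the map enter only through external products $⊠$, and \cref{external.product.quillen} handles them.
\end{itemize}

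The main obstacle is the $Θ_d$ case. Here we need that $\PSh(Θ_d,\sset)$, localized at $\thloc_{Θ,d}$, is cartesian. This is Rezk's theorem \cite[Theorem 8.1 and §8]{Rezk.Theta}, and I would cite it rather than reprove it. Rezk's statement concerns $\sset$-valued presheaves. I would transport it to general $\catV$ by tensoring with generators of $\catV$, which works because the localization is enriched, and to the $\site⨯Γ$ factors by external product, using \cref{external.product.quillen}. The point where care is needed is that the Day convolution mixes the $Γ$-Reedy structure with the cartesian product in $Θ_d$. I expect this to be handled by the factorization $(j◻∂_c)◻k≅(j◻k)◻∂_c$ from the proof of \cref{external.product.quillen}.
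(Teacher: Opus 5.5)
Your proposal matches the paper's proof in structure. Both get existence from \cref{enriched.left.Bousfield.localization.exists}, reduce monoidality to derived products of the homotopy generators $X\otimes\Yo{G}$ with the localizing maps, split off the inactive factors through \cref{external.product.quillen}, and then invoke \cref{model.structure.sheaves}, \cref{gammasp}, and Rezk's cartesianness theorems for $\Delta$ and $\Theta_d$. Your ``tensoring with generators'' transport from $\sset$ to $\catV$ plays the role of the paper's left Quillen functor $\kappa_\catV$ from $\sset$ to $\catV$; the paper applies it to all four families, not only $\Theta_d$, and you need it for all four as well.

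One small correction: the Day product of representables is not representable in general, because $\Delta^{\times d}$ and $\Theta_d$ have no products. The correct bookkeeping is the paper's isomorphism $\Yo{G}\otimes(\Yo{c}\boxtimes f)\cong(\Yo{c}\otimes\Yo{H})\boxtimes(f\otimes\Yo{G_f})$, with cofibrancy supplied by monoidality of the unlocalized $\Gamma$-Reedy injective structure. This is all your case analysis actually uses, so the slip does no harm.
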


\begin{proof}
The $\catV$-enriched left Bousfield localizations exist by \cref{enriched.left.Bousfield.localization.exists}.
In the following, we treat both model structures simultaneously,
setting $C_d=Δ^{⨯d}$ and $S_C=S_Δ$ for the former model structure
and $C_d=Θ_d$ and $S_C=S_Θ$ for the latter.
To establish the monoidality of both model structures,
by \cref{enriched.left.Bousfield.localization.exists} and \cref{presheaf.generators} of \cref{enriched.left.Bousfield.localization.criterion}
it suffices to show that for every morphism $g∈S_\site∪S_Γ∪S_C$, object $G∈\site⨯Γ⨯C_d$, and cofibrant object $X∈\catV$,
the derived monoidal product of $X⊗\Yo{G}$ and~$g$ is a local weak equivalence.
By \cref{Reedy.projective.injective.equivalent},
the object $X⊗\Yo{G}$ is cofibrant and the morphism $g$ has a cofibrant domain and codomain.
Thus, the derived monoidal product is computed by the monoidal product $X⊗\Yo{G}⊗g$.
Since the localization is $\catV$-enriched, the morphism $X⊗\Yo{G}⊗g$
is a local weak equivalence if $\Yo{G}⊗g$ is one.

Recall that morphisms $g∈S_\site∪S_Γ∪S_C$ have the form $g=\Yo{c}⊠f$,
where $f$ is a morphism of presheaves (as in \cref{bousloc} and \eqref{s.theta.d}) on a small category~$\catD$, like $\site$, $Γ$, $Δ$, or $Θ_d$,
and $c$ is an object in the product of the factors not corresponding to~$f$,
as explained in \cref{localmapsnot}.
If $\catD≠Γ$, then $f$ has cofibrant domain and codomain in the injective model structure, since all objects are injectively cofibrant.
If $\catD=Γ$, then by \cref{Reedy.projective.injective.equivalent}, the map $f$ has cofibrant domain and codomain in the $Γ$-Reedy model structure.
Therefore, by \cref{external.product.quillen},
the morphism $g=\Yo{c}⊠f$ has cofibrant domain and codomain in the $Γ$-Reedy injective model structure.
Given $G∈\site⨯Γ⨯C_d$, we have $$\Yo{G}⊗(\Yo{c}⊠f)≅(\Yo{c}⊗\Yo{H})⊠(f⊗\Yo{G_f}),$$
where $G_f$ is the projection of $G$ onto the factor corresponding to~$f$ and $H$ is the projection of $G$ onto the complementary factors.
The object $\Yo{c}⊗\Yo{H}$ is cofibrant
and by \cref{external.product.quillen} the functor $(\Yo{c}⊗\Yo{H})⊠-$ is a left Quillen functor, so it preserves weak equivalences between cofibrant objects.

Therefore, it remains to show that $f⊗\Yo{G_f}$ is a weak equivalence between cofibrant objects in $\catV$-valued presheaves on~$\catD$.
Cofibrancy follows once again from the monoidality of the $Γ$-Reedy injective model structure.
The object $\Yo{G_f}$ (\cref{enriched.yoneda}) and the morphism~$f$ (\cref{bousloc}) were defined by applying objectwise the functor
$$ι_\catV:\set→\catV, \qquad S↦∐_S \munit_\catV$$
to the corresponding object or morphism in the category of presheaves of sets.

Consider the left Quillen functor $κ_\catV:\sset→\catV$
induced from the functor $Δ→\catV$ given by a fixed Reedy cofibrant cosimplicial resolution of $\munit_\catV∈\catV$.
Since $\sset$ and $\catV$ are cartesian model categories,
derived monoidal products are separately homotopy cocontinuous in each argument.
Thus, the left derived functor of $κ_\catV$ preserves derived monoidal products
as long as it preserves derived monoidal products of homotopy generators of $\sset$,
i.e., preserves the terminal object, which it does by definition.

The functor $κ_\catV$ induces a left Quillen functor $κ_{\catV,\catD}$ from simplicial presheaves on~$\catD$ to $\catV$-valued presheaves on~$\catD$,
equipped with the injective model structures.
Since $κ_\catV$ extends $ι_\catV$, by Hirschhorn \cite[Theorem 3.3.20.(1)]{Hirschhorn}
the functor $κ_{\catV,\catD}$ is also a left Quillen functor between the localized injective model structures.
The left derived functor of $κ_{\catV,\catD}$ preserves derived monoidal products for the same reason as $κ_\catV$,
once we observe that a set of homotopy generators of simplicial presheaves on~$\catD$ is given by $Δ^0⊗\Yo{d}$,
which $κ_{\catV,\catD}$ maps to $\munit_\catV⊗\Yo{d}$.

Thus, the problem reduces to demonstrating that the left Bousfield localization of the category of simplicial presheaves on $\site$, $Γ$, $Δ$, or $Θ_d$,
at the corresponding morphisms from \cref{bousloc} and \eqref{s.theta.d}, is a monoidal model category.
For $\site$ this is shown in \cref{model.structure.sheaves}
and for $Γ$ in \cref{gammasp}.
For $Δ$ and $Θ_d$, the corresponding statements were proved by Rezk in \cite[Theorem~7.2]{Rezk.CSS} and \cite[Theorem~8.1]{Rezk.Theta}.
\end{proof}

For objects $X,Y∈\smcatuple{d}$, the functors $X→Y$ should themselves naturally form an object in $\smcatuple{d}$,
and likewise for $\smcat{d}$.
By \cref{multiple.monoidal.model.structure}, the multiple $Γ$-Reedy injective model structure on $\smcatuple{d}$ is a symmetric monoidal model category,
facilitating a convenient formalization of such a construction in the uple case.

\begin{definition}
\label{def.funmonuple}
Assume \cref{site.and.enrichment} and recall the Day internal hom functor from \cref{day.convolution}.
Let $X$ and~$Y$ be objects in $\PSh(\site⨯Γ⨯Δ^{⨯d},\catV)$.
We define the \emph{uple functor object} as the derived internal hom
in the monoidal model structure of \cref{multiple.monoidal.model.structure}:
$$\Funmonuple(X,Y)≔\Hom(\crep(X),\frep(Y))\in \smcatuple{d},$$
where $\crep$ and $\frep$ denote the cofibrant and fibrant replacement functor, respectively.
\end{definition}

Including the globular condition in \cref{def.funmonuple} yields a model structure that does not satisfy the pushout product axiom;
consequently, functor objects cannot be computed by deriving the internal hom.

\begin{example}
\label{globular.nonmonoidal}
Working in complete globular 2-fold Segal spaces (i.e., simplicial presheaves on $Δ⨯Δ$),
we consider the cartesian product of the cofibrant object $\Yo{[1],[0]}$ and the acyclic cofibration with cofibrant source $\Yo{[0],[0]}→\Yo{[0],[1]}$.
The resulting map $\Yo{[1],[0]}→\Yo{[1],[1]}$ is not a weak equivalence in the globular model structure.
This is demonstrated by mapping it to the fibrant local object~$T$ given by the multinerve of the free bicategory~$T_2$
on a single 2-morphism $t:f⇒g$, where $f,g:a→b$ are 1-morphisms.
The mapping object $\map(\Yo{[1],[0]},T)$ is a discrete simplicial set with four elements, corresponding to the four 1-morphisms $f$, $g$, $\id_a$, $\id_b$ in~$T_2$.
The mapping object $\map(\Yo{[1],[1]},T)$ is a discrete simplicial set with five elements, corresponding to the five 2-morphisms in~$T_2$, including the four identity 2-morphisms.
The restriction map $\map(\Yo{[1],[1]},T)→\map(\Yo{[1],[0]},T)$ is not a weak equivalence.
\end{example}

To define functor objects for globular $d$-fold Segal spaces,
we transfer the derived internal hom in Rezk's $\Theta_d$-spaces (which form a cartesian model category)
via the Quillen equivalence between globular complete $d$-fold Segal spaces and Rezk's $\Theta_d$-spaces.
We emphasize that the resulting functor object is not computed as the left or right derived functor of some Quillen bifunctor.

Recall from Rezk \cite[Section~1.2]{Rezk.Theta} the notion of a $\Theta_d$-space and from Bergner–Rezk \cite{BergnerRezk} the relationship between $\Theta_d$-spaces and $d$-fold Segal spaces.
There is a functor $g:\Delta^{\times d}\to \Theta_d$, defined as the composition
$$g:\Delta^{\times d}\lto3{g_1} \Delta^{\times d-1}\times \Theta_1 \lto3{g_2} \Delta^{\times d-2}\times \Theta_2 \lto3{g_3} \cdots \lto3{g_{d-1}} \Delta\times \Theta_{d-1}\lto3{g_d} \Theta_d,$$
where each $g_i$ is defined by
$$g_i([m_1],\ldots,[m_{d-i+1}],c)=([m_1],\ldots,[m_{d-i}],[m_{d-i+1}](c,\ldots, c )),$$
where $c\in \Theta_{i-1}$.
If $d=1$, then $g=g_1$ is the identity functor.
If $d=2$, the functor $g=g_2$ is given by
$g([m_1],[m_2])=[m_1]([m_2],[m_2],\ldots,[m_2]),$
where $[m_2]$ is repeated $m_1$~times.
The functor $g$ is described explicitly on morphisms in Bergner–Rezk \cite{BergnerRezk}.

By left and right Kan extension, we have an adjoint triple $g^{\#}\dashv g^*\dashv g_*$:
$$\xymatrix{
\sPSh(\Delta^{\times d})\ar@<.25cm>[r]^-{g^{\#}}\ar@<-.25cm>[r]_-{g_*} & \ar[l]|-{g^*} \sPSh(\Theta_d),
}$$
where the bottom adjunction $g^*\dashv g_*$ is a Quillen equivalence with respect to the globular model structure on $d$-fold Segal spaces and the Rezk model structure on $\Theta_d$-spaces (Bergner–Rezk \cite[Corollary 7.3]{BergnerRezk}).
By Rezk \cite[Theorem 8.1]{Rezk.Theta}, the category $\sPSh(\Theta_d)$ is a cartesian model category when equipped with the Rezk model structure.
The following proposition extends this result to the setting of geometric symmetric monoidal $(∞,d)$-categories.

\begin{proposition}
\label{theta.model.equivalence}
Assume \cref{site.and.enrichment}.
The functor
$$\tilde g=\id_{\site}\times \id_{\Gamma}\times g: \site⨯Γ⨯\Delta^{\times d} → \site⨯Γ⨯\Theta_d$$
induces a Quillen equivalence
$$\xymatrix{
\PSh(\site⨯Γ⨯\Theta_d,\catV)_{\Reedy,\inj,\local}
\ar@<.125cm>[r]^-{\tilde g^*} & \ar@<.125cm>[l]^-{\tilde g_*}
\PSh(\site⨯Γ⨯\Delta^{\times d},\catV)_\glob,
}$$
between the model structures of \cref{theta.model.structure,globular.model.structure}.
The left adjoint functor $\tilde g^*$ preserves weak equivalences.
\end{proposition}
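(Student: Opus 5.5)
We reduce to the Bergner–Rezk Quillen equivalence $g^*\dashv g_*$ between $\sPSh(\Theta_d)$ with the Rezk model structure and $\sPSh(\Delta^{\times d})$ with the globular model structure, together with standard facts about left Bousfield localizations.

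\emph{Step 1: $\tilde g^*$ is left Quillen before localizing.} The functor $\tilde g^*$ is precomposition with $\tilde g$, so it has a right adjoint $\tilde g_*$ given by right Kan extension. Since $\tilde g^*$ is computed objectwise, it preserves objectwise monomorphisms and objectwise weak equivalences. Every $Γ$-Reedy injective cofibration is in particular an injective cofibration by \cref{Reedy.projective.injective.equivalent}. Hence $\tilde g^*$ is left Quillen from the $Γ$-Reedy injective structure on $\PSh(\site⨯Γ⨯\Theta_d,\catV)$ to the injective structure on $\PSh(\site⨯Γ⨯\Delta^{\times d},\catV)$, and it preserves all weak equivalences.

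\emph{Step 2: descend to the localizations.} By the universal property (\cref{enriched.left.Bousfield.localization}, Hirschhorn \cite[Theorem~3.3.20]{Hirschhorn}), it suffices to show that $\tilde g^*$ sends each map in $S_\site∪S_Γ∪S_Θ$ to a globular weak equivalence. All sources are cofibrant, so no derivation issues arise. For $S_\site$ and $S_Γ$ the maps have the form $\Yo{c}⊠f$ with $c$ involving $\Theta_d$, and $\tilde g^*(\Yo{c}⊠f)=(g^*\Yo{\theta})⊠f'$. I would argue that $X⊠f$ is a local equivalence for any $X$: write $X$ as a homotopy colimit of representables and use \cref{external.product.quillen}. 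For $S_Θ$ the maps are $\Yo{c}⊠ψ$ with $c∈\site⨯Γ$ and $ψ∈\thloc_{Θ,d}$. Bergner–Rezk show that $g^*ψ$ is a globular weak equivalence in $\sPSh(\Delta^{\times d})$. Applying the left Quillen functor $\Yo{c}⊠-$ (\cref{external.product.quillen}), which sends globular maps of $\Delta^{\times d}$-presheaves to local equivalences, finishes this case. For general $\catV$ we transport along $κ_\catV$ as in the proof of \cref{multiple.monoidal.model.structure}.

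\emph{Step 3: Quillen equivalence.} I would show that the derived unit and counit are equivalences by testing on homotopy generators $X⊗\Yo{s,⟨ℓ⟩}⊠(-)$ (\cref{presheaf.generators}). More cleanly, fibrant objects on both sides are objects that, for each fixed $(s,⟨ℓ⟩)$, are local in the $\Theta_d$ (resp.\ globular $\Delta^{\times d}$) variable and satisfy sheaf and Segal-$Γ$ conditions in the other variables. Since $\tilde g_*$ acts only in the last variable, it acts on such a fibrant object $Y$ as $g_*$ in each slice. The equivalence of derived homotopy categories then follows slicewise from the Bergner–Rezk equivalence. One checks that $\tilde g^*$ reflects weak equivalences between cofibrant objects, and that the derived counit $\tilde g^*\tilde g_*Y\to Y$ is an equivalence for fibrant $Y$, which again reduces slicewise.

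\emph{Main obstacle.} The crux is the interaction between the $Γ$-Reedy versus injective cofibrations and the slicewise reduction. Locality conditions in the $\site⨯Γ$ variables must be preserved by $\tilde g_*$ and matched under $\tilde g^*$. I would handle this by noting that both localizations are obtained from the unlocalized $\site⨯Γ$-localized category by a further localization in the last variable only. Therefore $\tilde g^*$ and $\tilde g_*$ commute with evaluation at $(s,⟨ℓ⟩)$ up to the relevant homotopy limits, and the Bergner–Rezk result applies fiberwise. Weak equivalences do not depend on Reedy versus injective cofibrations by \cref{Reedy.projective.injective.equivalent}.
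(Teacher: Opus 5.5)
Your Steps 1 and 2 are sound. They are more explicit than the paper, which instead factors $\tilde g^*$ through the identity Quillen equivalence onto the localized \emph{injective} structure on $\PSh(\site\times\Gamma\times\Theta_d,\catV)$; there every object is cofibrant, so Ken Brown's lemma gives preservation of all weak equivalences at once. Your Step~1 only gives preservation before localizing. Afterwards you need one more line, e.g.\ that $\Gamma$-Reedy cofibrant replacement maps are objectwise equivalences, which $\tilde g^*$ preserves, plus 2-out-of-3.

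The counit half of Step~3 is also fine: slices of globally fibrant objects are globular-fibrant, $\tilde g^*$ and $\tilde g_*$ act slicewise as $g^*$ and $g_*$, and slicewise globular equivalences are global equivalences.

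The genuine gap is the other half, ``one checks that $\tilde g^*$ reflects weak equivalences''. This does not reduce slicewise. A weak equivalence in $\PSh(\site\times\Gamma\times\Delta^{\times d},\catV)_\glob$ need not be a slicewise globular equivalence, since the maps in $S_\site$ and $S_\Gamma$ are inverted without being equivalences on slices. Likewise, the global fibrant replacement in the derived unit imposes descent and $\Gamma$-Segal conditions that mix slices. Your remedy in the last paragraph localizes in the wrong order. Between the $\site\times\Gamma$-localized structures, $\tilde g^*$ is in general not an equivalence, so nothing can be transported there from Bergner--Rezk.

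The missing argument runs as follows.
\begin{enumerate}
\item Localize first in the last variable only. This gives $\PSh(\site\times\Gamma,\PSh(\Theta_d)_{\rm Rezk})_\inj$ and $\PSh(\site\times\Gamma,\PSh(\Delta^{\times d})_\glob)_\inj$. Between these, $\tilde g^*\dashv\tilde g_*$ is postcomposition with the Bergner--Rezk equivalence, hence a Quillen equivalence.
\item Apply Hirschhorn's Theorem~3.3.20(2) to localize further at $S_\site\cup S_\Gamma$.
\item On the $\Delta$-side, check that the transported set $\ldf\tilde g^*(S_\site\cup S_\Gamma)$ has the same local objects as the $\Delta$-side $S_\site\cup S_\Gamma$. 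By adjunction, a fibrant $Y$ is local for the former iff $\tilde g_*Y$ satisfies descent and the $\Gamma$-Segal conditions in each $\theta\in\Theta_d$. It is local for the latter iff $Y$ satisfies them in each ${\bf m}\in\Delta^{\times d}$.
\end{enumerate}
These two conditions agree for three reasons. Each one asserts that a map from a slice $Y(V,\langle\ell\rangle,-)$ to a homotopy limit of slices is an equivalence. All these slices are globular-local. On local objects, $g_*$ preserves homotopy limits and reflects equivalences. This is the argument needed to justify ``adding a formal factor $\site\times\Gamma$'' to Bergner--Rezk. Equivalently, the same observation proves essential surjectivity of $\mathbb{R}\tilde g_*$ directly, which together with your counit step gives the Quillen equivalence.
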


\begin{proof}
The adjunction is a composite of the following two Quillen equivalences.
\begin{itemize}
\item The Quillen adjunction
$$\xymatrix{
\PSh(\site⨯Γ⨯\Theta_d,\catV)_{\Reedy,\inj,\local} \ar@<.125cm>[r]^-{\id} & \ar@<.125cm>[l]^-{\id} \PSh(\site⨯Γ⨯\Theta_d,\catV)_{\inj,\local}\cr
}$$
between the localized $Γ$-Reedy injective model structure (\cref{theta.model.structure}) and the localized injective model structure on $\PSh(\site⨯Γ⨯\Theta_d,\catV)$.
This is the left Bousfield localization of the Quillen adjunction between the $Γ$-Reedy injective model structure and the injective model structure
(\cref{Reedy.projective.injective.equivalent}).
This adjunction is a Quillen equivalence because both model structures have the same weak equivalences.
In particular, the left adjoint preserves weak equivalences.
\item The Quillen adjunction
$$\xymatrix{
\PSh(\site⨯Γ⨯\Theta_d,\catV)_{\inj,\local} \ar@<.125cm>[r]^-{\tilde g^*} & \ar@<.125cm>[l]^-{\tilde g_*} \PSh(\site⨯Γ⨯Δ^{⨯d},\catV)_\glob,\cr
}$$
whose underlying adjunction is given by the adjunction in the statement.
This adjunction is a Quillen equivalence by adapting the argument of Bergner–Rezk \cite[Corollary 7.3]{BergnerRezk},
adding a formal factor $\site⨯Γ$ to $Θ_d$ and $Δ^{⨯d}$, respectively.
The left adjoint preserves weak equivalences by Ken Brown's lemma, since all objects are cofibrant.
\qedhere
\end{itemize}
\end{proof}

Using \cref{theta.model.equivalence}, we transfer the derived internal hom from $\Theta_d$-spaces
to $d$-fold Segal spaces via the Quillen equivalence $\tilde g^*\dashv \tilde g_*$ as follows.

\begin{definition}
\label{globular.hom}
Assume \cref{site.and.enrichment}.
Let $\Homtheta(-,-)$ denote the internal hom and $\crep$ the cofibrant replacement functor
in the monoidal model category
$$\PSh(\site⨯Γ⨯Θ_d,\catV)_{\Reedy,\inj,\local}$$
of \cref{theta.model.structure}.
Denote by~$\frep$ the fibrant replacement functor for the model category $\PSh(\site⨯Γ⨯Δ^{⨯d},\catV)_\glob$ (\cref{abstract.globular.model.structure}).
Given $Y,Z∈\PSh(\site⨯Γ⨯Δ^{⨯d},\catV)_\glob$, we define the \emph{globular product}
$$(Y,Z)\mapsto Y \tglob Z ≔ \tilde g^* (\crep \tilde g_* \frep Y ⊗ \crep \tilde g_* \frep Z)$$
and the \emph{globular functor object}
$$(Y,Z)\mapsto \Funmonglob(Y,Z) ≔ \tilde g^* \crep \Homtheta(\crep \tilde g_* \frep Y,\tilde g_* \frep Z).$$
\end{definition}

\begin{proposition}
\label{derivedhom}
Assume \cref{site.and.enrichment} and the notation of \cref{globular.hom}.
For all $X,Y,Z∈\PSh(\site⨯Γ⨯Δ^{⨯d},\catV)_\glob$,
there is a natural weak equivalence of
derived mapping objects
$$\dmap(X \tglob Y,Z)\simeq \dmap(X,\Funmonglob(Y,Z)).$$
In particular, $\tglob$ is separately homotopy cocontinuous in each argument and $\Funmonglob$ is separately homotopy continuous in each argument.
\end{proposition}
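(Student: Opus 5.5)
The plan is to reduce everything to the derived tensor–hom adjunction in the monoidal model category $\PSh(\site⨯Γ⨯Θ_d,\catV)_{\Reedy,\inj,\local}$ of \cref{theta.model.structure}, and then move back and forth across the Quillen equivalence $\tilde g^*\dashv\tilde g_*$ of \cref{theta.model.equivalence}.

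Write $\Theta$ for the $Θ_d$-side model category and $D$ for the globular one. The main tool is that a Quillen equivalence induces natural weak equivalences of derived mapping objects. Concretely, for cofibrant $A∈\Theta$ and fibrant $B∈D$ we have $\dmap_D(\tilde g^*A,B)\simeq\dmap_\Theta(A,\tilde g_*B)$. Dually, for any $X∈D$ (all objects are cofibrant, being injective-type) we have $\dmap_D(X,\tilde g^*W)\simeq\dmap_\Theta(\crep\tilde g_*\frep X,W)$. This second form uses that $\tilde g^*$ preserves weak equivalences, together with the derived unit/counit being weak equivalences. Here $\dmap$ is $\catV$-valued, since both model categories are $\catV$-enriched and the adjunction is $\catV$-enriched: $\tilde g^*$ is precomposition, so it commutes with tensoring by $\catV$ objectwise.

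The chain of weak equivalences I would establish is, with $X'=\crep\tilde g_*\frep X$ and similarly $Y'$, $Z'=\tilde g_*\frep Z$:
$$\dmap(X\tglob Y,Z)=\dmap(\tilde g^*(X'⊗Y'),Z)\simeq\dmap_\Theta(X'⊗Y',Z')\simeq\dmap_\Theta(X',\Homtheta(Y',Z')).$$
The first step is the derived adjunction, using that $X'⊗Y'$ is cofibrant by the pushout product axiom. The last step is the enriched tensor–hom adjunction in the monoidal model category $\Theta$. It is derived because $X',Y'$ are cofibrant and $Z'$ is fibrant: $\tilde g_*$ is right Quillen, and the pushout product axiom implies $\Homtheta(Y',Z')$ is fibrant. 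On the other side,
$$\dmap(X,\Funmonglob(Y,Z))=\dmap(X,\tilde g^*\crep\Homtheta(Y',Z'))\simeq\dmap_\Theta(X',\crep\Homtheta(Y',Z'))\simeq\dmap_\Theta(X',\Homtheta(Y',Z')).$$
Here we use that $\crep$ is a weak equivalence and $\dmap$ is homotopy invariant. Both sides are natural because every construction involved is functorial: the replacements are functorial and the comparison maps are units/counits.

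I expect the main obstacle to be the middle identification $\dmap_D(X,\tilde g^*W)\simeq\dmap_\Theta(\crep\tilde g_*\frep X,W)$. Since $\tilde g^*$ is a left adjoint, there is no direct adjunction with $\tilde g^*$ in the second variable. I would handle it in three steps. First, $\tilde g^*$ is a homotopy equivalence of the underlying $\infty$-categories, so it induces weak equivalences on derived mapping objects: $\dmap_\Theta(A,W)\simeq\dmap_D(\tilde g^*A,\tilde g^*W)$ for cofibrant $A$. Second, I would apply this with $A=X'$. Third, the derived counit $\tilde g^*\crep\tilde g_*\frep X\to\frep X\leftarrow X$ is a zigzag of weak equivalences, so by homotopy invariance the left side equals $\dmap_D(X,\tilde g^*W)$. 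One should also check that these equivalences are natural and compatible with the $\catV$-enrichment, which follows from the enriched version of the Quillen equivalence.

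The final assertions follow from the main equivalence. It exhibits $-\tglob Y$ as homotopy left adjoint to $\Funmonglob(Y,-)$. Hence $-\tglob Y$ is homotopy cocontinuous and $\Funmonglob(Y,-)$ is homotopy continuous. Symmetry of $⊗$ gives cocontinuity in the other variable of $\tglob$. For $\Funmonglob(-,Z)$, it sends homotopy colimits to homotopy limits, via $\dmap(X,\Funmonglob(\hocolim Y_i,Z))\simeq\dmap(X\tglob\hocolim Y_i,Z)$, which is computed levelwise.
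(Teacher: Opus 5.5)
Your proposal is correct and follows essentially the same route as the paper. Both pass to the $\Theta_d$ side through the enriched adjunction $\tilde g^*\dashv\tilde g_*$, apply the tensor--hom adjunction there, replace $\Homtheta(Y',Z')$ cofibrantly, and return using that the Quillen equivalence $\tilde g^*$ induces weak equivalences of derived mapping objects together with the derived counit $\tilde g^*\crep\tilde g_*\frep X\to\frep X$. Your organization into two chains meeting at $\dmap_\Theta(X',\Homtheta(Y',Z'))$, keeping every mapping object derived, is if anything slightly more careful than the paper's single chain, whose intermediate terms such as $\map(\frep X,\tilde g^*\crep\Homtheta(\crep\tilde g_*\frep Y,\tilde g_*\frep Z))$ are not evidently derived.
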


\begin{proof}
We have a chain of natural isomorphisms and weak equivalences
\begin{align*}
\dmap(X \tglob Y,Z)
&=\map(X \tglob Y,\frep Z)
=\map(\tilde g^* (\crep \tilde g_* \frep X ⊗ \crep \tilde g_* \frep Y),\frep Z)\cr
&≅\map(\crep \tilde g_* \frep X ⊗ \crep \tilde g_* \frep Y,\tilde g_* \frep Z)
≅\map(\crep \tilde g_* \frep X, \Homtheta(\crep \tilde g_* \frep Y,\tilde g_* \frep Z))\cr
&\lgets0\sim\map(\crep \tilde g_* \frep X, \crep \Homtheta(\crep \tilde g_* \frep Y,\tilde g_* \frep Z))\cr
&\lto0\sim\map(\tilde g^* \crep \tilde g_* \frep X, \tilde g^* \crep \Homtheta(\crep \tilde g_* \frep Y,\tilde g_* \frep Z))
\lgets0\sim\map(\frep X, \tilde g^* \crep \Homtheta(\crep \tilde g_* \frep Y,\tilde g_* \frep Z))\cr
&=\map(\frep X, \Funmonglob(Y,Z))
\lto0\sim
\map(X, \frep \Funmonglob(Y,Z))
=\dmap(X, \Funmonglob(Y,Z)).\cr
\end{align*}
The first weak equivalence, indicated by~$\sim$, arises because
the mapping spaces on both sides are derived and the cofibrant replacement map is a weak equivalence.
The second weak equivalence is induced by the Quillen equivalence $\tilde g^*⊣\tilde g_*$.
Specifically, applying the (left derived) Quillen equivalence $\tilde g^*$ to the arguments of a (derived) mapping object
produces a weakly equivalent mapping object.
The third weak equivalence follows from the fact that the derived counit $\tilde g^* \crep \tilde g_* \frep X → \frep X$ is a weak equivalence.
Finally, the fourth weak equivalence holds because derived mapping objects preserve weak equivalences.
\end{proof}

\subsection{Categorical operations in geometric symmetric monoidal $(\infty,d)$-categories}

The main purpose of this section is to demonstrate the explicit construction of categorical operations such as composition of morphisms and monoidal products of objects in a fixed $(\infty,d)$-category $T$ (\cref{globular.model.structure.smooth}).
Readers uninterested in such explicit constructions may skip this section.

\begin{notation}
\label{target.and.family}
In this section, we maintain the conventions of \cref{site.and.enrichment}
and assume $T$ is a fibrant object in the model category $\fraksmcat{d}$ (\cref{globular.model.structure.smooth}).
All results presented in this section also apply equally if $T$ is a fibrant object in $\smcat{d}$ (\cref{globular.model.structure}),
provided that references to isotopies are disregarded.
Without loss of generality, we also impose the fibrancy condition of \cref{empty.tuple}.
Additionally, $U$ denotes an arbitrary object of the site~$\site$.
\end{notation}

\begin{remark}
\label{nonfibrant.target}
If $T$ is a nonfibrant object, the operations constructed in this section can be performed, provided the results are interpreted within the fibrant replacement.
Indeed, by the factorization axioms of a model category, the map $T\to \mathcal{R}T$ into the fibrant replacement is an acyclic cofibration, hence a monomorphism.
This implies that data in $T$ can be regarded as residing in the fibrant replacement.
Hence, the constructions below can be applied by replacing $T$ with its fibrant replacement $\mathcal{R}T$
and composing the input map $I\to T$ (from \cref{input}) with the map $T\to \mathcal{R}T$.
\end{remark}

The following proposition will be employed in the subsequent constructions.

\begin{proposition}
\label{abstract.operation}
Recall from \cref{enriched.mapping.object} the enriched mapping object functor
$$\map(-,-): (\fraksmcat{d})^\op ⨯ \fraksmcat{d} → \smsset.$$
Suppose $T$ is a fibrant object in the model category $\fraksmcat{d}$ (\cref{globular.model.structure.smooth})
and $w:I→A$ is an acyclic cofibration in $\fraksmcat{d}$.
Then the induced morphism
$$\map(w,T):\map(A,T)→\map(I,T)$$
is an acyclic fibration between fibrant objects.
Furthermore, the fiber of $\map(w,T)$ at every point of $\map(I,T)$ is a contractible fibrant object.
\end{proposition}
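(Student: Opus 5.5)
The plan is to deduce this from the fact that $\fraksmcat{d}$ is an $\smsset$-enriched model category (\cref{abstract.globular.model.structure}, with $\catV=\smsset_{\sing,\inj}$). In any $\catV$-enriched model category the enriched hom satisfies the dual of the pushout-product axiom (SM7). That is, for a cofibration $i:I\to A$ and a fibration $p:T\to T'$, the induced map
$$\map(A,T)\to\map(I,T)\times_{\map(I,T')}\map(A,T')$$
is a fibration in $\catV$, and it is acyclic if either $i$ or $p$ is acyclic. This is equivalent to the tensoring $\catV\times M\to M$ being a left Quillen bifunctor, by the standard adjunction argument (Hovey, Lemma~4.2.2). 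Now take $T'$ to be the terminal object and $p:T\to *$ the fibration witnessing fibrancy of~$T$. Then $\map(I,*)$ and $\map(A,*)$ are terminal, so the displayed map reduces to $\map(w,T):\map(A,T)\to\map(I,T)$. Since $w$ is an acyclic cofibration, this map is an acyclic fibration in $\smsset$.

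For fibrancy of the source and target, apply the same axiom with the cofibration $\emptyset\to I$ (respectively $\emptyset\to A$); every object is cofibrant because cofibrations are monomorphisms. Since $\map(\emptyset,T)=*$, the map $\map(I,T)\to *$ is a fibration, so $\map(I,T)$ is fibrant, and likewise $\map(A,T)$.

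For the fiber statement, take a point $x:*\to\map(I,T)$, meaning a map from the terminal smooth simplicial set, or more generally from a representable. The fiber is the pullback of $\map(w,T)$ along~$x$. Acyclic fibrations are stable under pullback, so the fiber $F\to *$ is an acyclic fibration. Hence $F$ is fibrant and weakly equivalent to the point, i.e. contractible.

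The only step needing care is confirming that the localized globular structure is genuinely $\smsset$-enriched, in the sense that SM7 holds with respect to the localized acyclic cofibrations. This is exactly what \cref{enriched.left.Bousfield.localization.exists} supplies: the localization is taken in the enriched sense, and the enrichment of the unlocalized injective structure comes from \cref{injective.model.structure}. I would also note that "acyclic fibration" here refers to the injective model structure on $\smsset$, and that acyclic fibrations coincide with those of the projective structure by \cref{smsset.injective} and its proof. No other obstacle is expected.
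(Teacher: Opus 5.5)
Your argument is correct and is the paper's proof, spelled out in more detail: the paper also uses that $\map(-,-)$ is a right Quillen bifunctor (the enriched pushout-product property from \cref{abstract.globular.model.structure}), that every object is cofibrant and $T$ is fibrant, and that the pullback of the acyclic fibration $\map(w,T)$ along a point is an acyclic fibrant object, hence contractible. One side remark of yours is false, though it is never used and can simply be dropped: acyclic fibrations in the injective structure on $\smsset$ are the maps with the right lifting property against all monomorphisms, and this is a smaller class than the acyclic fibrations of $\smsset_{\sing,\proj}$, precisely because the injective structure has more cofibrations; so the two classes do not coincide.
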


\begin{proof}
All objects in $\fraksmcat{d}$ are cofibrant, the object $T$ is fibrant by assumption, the map $w$ is an acyclic cofibration by assumption,
and the functor $\map(-,-)$ is a right Quillen functor.
Therefore, the morphism $\map(w,T)$ is an acyclic fibration between fibrant objects.
In particular, $\map$ computes the derived mapping objects.
The terminal object $1$ is fibrant in $\fraksmcat{d}$.
Therefore, for every map $x:1→\map(I,T)$, the pullback~$P$ of $\map(w,T)$ along~$x$
is an acyclic fibrant object, which implies $P$ is contractible.
\end{proof}

The typical usage of \cref{abstract.operation} is as follows.
We aim to perform a categorical operation on input data, which is encoded as a morphism
\begin{equation}\label{input}\Din:I→T.\end{equation}
The output of the operation is unique up to a contractible space of choices.
Moreover, there are typically coherences that demonstrate a specific output to be the result of applying an operation to the input data.
For example, the composition of 1-morphisms is represented by a 2-simplex.
We encode the input data, output data, and auxiliary data (tracking coherences) as a morphism
\begin{equation}\label{aux}\Daux:A→T.\end{equation}
There is an inclusion map $w:I→A$, which expresses the fact that $A$ contains information about the input data.
We require this inclusion to be a weak equivalence, hence an acyclic cofibration, which implies that the fiber~$P$ in the proof of \cref{abstract.operation} is contractible.
In other words, the space of ways to extend the input data to output data is a contractible space~$P$.
Finally, we extract the desired output by restricting along a map $O→A$, which is not necessarily an acyclic cofibration.
This produces a morphism
\begin{equation}\label{output}\Dout:O\to T.\end{equation}

Overall, the procedure produces a diagram
\begin{equation}\label{operationdiag}\xymatrix{
O\ar[d] \ar@/^1pc/[rdd]^-\Dout\cr
A\ar@{-->}[dr]^<<<<{\Daux}\cr
I\ar[r]_{\Din}\ar@{>->}[u]^-{\simeq} & T.\cr
}\end{equation}
Below we give several examples of such constructions.
We start by describing possible choices of~$I$ and~$O$.

Recall that the underlying category of the model category $\fraksmcat{d}$ is given by the category of presheaves
$$\fraksmcat{d}=\PSh(\site\times \Gamma\times \Delta^{\times d},\smsset)=\PSh(\site\times \Gamma\times \Delta^{\times d}\times \cart\times \Delta),$$
where, on the right, we have applied currying to incorporate the domain of the value category of presheaves $\smsset=\PSh(\cart\times \Delta)$ into the domain of the overall presheaf category.
Thus, an object
$T\in \fraksmcat{d}$ can be evaluated on a tuple $(U,⟨ℓ⟩,{\bf m},\RR^{ℓ},[n])\in \site\times \Gamma\times \Delta^{\times d}\times \cart\times \Delta$.
Below, we single out the last two factors because they encode the space of values for our Segal objects.
For such a tuple $(U,⟨ℓ⟩,{\bf m},\RR^{ℓ},[n])$, we let
$$\Yo{(U,⟨ℓ⟩,{\bf m}),\RR^{ℓ},[n]}\in \fraksmcat{d}$$
denote the corresponding representable presheaf given by the Yoneda embedding.

\begin{definition}
\label{cats.defs}
Assume \cref{target.and.family}.
We define the following categorical structures.

\begin{itemize}
 \item A \emph{$U$-family of objects} in~$T$ is a morphism in $\fraksmcat{d}$
$$x:\catObj_{U}≔\Yo{(U,⟨1⟩,[0],…,[0]),\RR^0,Δ^0}→T,$$
\item Let $1\leq k\leq d$.
A \emph{$U$-family of $k$-morphisms} in~$T$ is a morphism
$$f_k:\catMor_{k,U}≔\Yo{(U,⟨1⟩,[1],…,[1],[0],…,[0]),\RR^0,Δ^0}→T,$$
with $k$ copies of~$[1]$.
If $k=1$, we write $\catMor_U=\catMor_{1,U}$ and call $f=f_1$ a \emph{morphism}.
\item A \emph{$U$-family of virtual isomorphisms} in~$T$ is a morphism
$$v:\catVirt_{U}≔\Yo{(U,⟨1⟩,[0],…,[0]),\RR^0,Δ^1}→T.$$
\item An \emph{isotopy of $U$-families of objects} in~$T$
from an object~$x$ to an object~$y$
is a morphism
$$i:\catIso_{U}≔\Yo{(U,⟨1⟩,[0],…,[0]),\RR^1,Δ^0}→T,$$
whose restrictions along the maps $\catObj_{U}→\catIso_{U}$, which are induced by the inclusions $\RR^0≅\{s\}⊂\RR^1$ and $\RR^0≅\{t\}⊂\RR^1$ (in the $\cart$ factor),
coincide with $x$ and $y$, respectively.
Here $s,t∈\RR$ are distinct real numbers.
\item A \emph{composed pair of $U$-families of 1-morphisms} is a map
$$c:\Delta\text{-}\catMor_{U}≔\Yo{(U,⟨1⟩,[2],[0],\cdots,[0]),\RR^0,\Delta^0}→T.$$
\item A \emph{multiplied pair of $U$-families of objects} in~$T$ is a morphism
$$m:\catMult_U≔\Yo{(U,⟨2⟩,[0],…,[0]),\RR^0,Δ^0}→T.$$
\item In general, a multiplied pair of categorical structures is obtained by replacing $⟨1⟩$ with $⟨2⟩$.
For example, a \emph{multiplied pair of $U$-families of isotopies} is a morphism
$$m:\catMultIso_U≔\Yo{(U,⟨2⟩,[0],…,[0]),\RR^1,Δ^0}→T.$$
\item
An \emph{empty tuple of $U$-families of objects} is a map
$$\catEmpty_{U}≔\Yo{(U,⟨0⟩,[0],…,[0]),\RR^0,Δ^0}→T.$$
See \cref{empty.tuple} for an explanation of empty tuples.
\item
An \emph{empty tuple of isotopies of $U$-families of objects} in~$T$
is a morphism
$$\catEmptyIso_U≔\Yo{(U,⟨0⟩,[0],…,[0]),\RR^1,Δ^0}→T.$$
\end{itemize}
\end{definition}

\begin{remark}
\label{virtual.isomorphism}
The definitions of \emph{virtual isomorphism} and \emph{isotopy} require some explanation.
Recall that in the Segal space formalism, there are (a priori) two distinct notions of isomorphism:
a 1-simplex in the space of objects, or a vertex in the space of morphisms whose image in the homotopy category is an isomorphism.
We call an isomorphism of the first type a \emph{virtual isomorphism} and an isomorphism of the second type an \emph{isomorphism}.
For Segal complete objects, these two notions of isomorphism are equivalent:
the simplicial degeneration map from the moduli space of virtual isomorphisms to the moduli space of isomorphisms is a weak equivalence.

For geometric symmetric monoidal $(\infty,d)$-categories \emph{with isotopies}, the isotopies play a similar role to virtual isomorphisms.
It is possible to combine both virtual isomorphisms and isotopies into a single simplicial object, using the Quillen equivalence between $\smsset$ and $\sset$ (\cref{smsset.injective}).
However, we find that it is useful to keep track of this data separately in practice.
Virtual isomorphisms keep track of categorical data encoded by the fields (i.e., gauge transformations),
while isotopies keep track of smooth isotopies of such data.
\end{remark}

\begin{remark}
\label{empty.tuple}
To simplify the presentation below, we impose one more condition on the $(∞,d)$-category~$T$:
evaluating $T$ at an object whose $Γ$-component is $⟨0⟩∈Γ$ yields the terminal object in~$\catV$.
This condition is already satisfied for our variant of the bordism category.
In a generic setting, it does not result in a loss of generality:
for every~$T$ we can construct a weak equivalence $T→T'$, where $T'$ satisfies the additional property.
Indeed, take $T→T'$ to be the fibrant replacement map of~$T$ in the model structure given by enlarging the class of cofibrations in \cref{globular.model.structure}
to include maps given by tensoring the representable presheaf of an object $(U,⟨0⟩,{\bf m})∈\site⨯Γ⨯Δ^{⨯d}$ with an arbitrary map in~$\catV$.
The right lifting property with respect to the latter maps guarantees that evaluating $T$ on $⟨0⟩∈Γ$ yields the terminal object.

The advantage of this additional condition can be explained as follows.
Given a pair of entities like in \cref{cats.defs},
specified as maps out of representable presheaves on objects with $Γ$-component $⟨1⟩∈Γ$,
e.g., $$x,y: \catObj_U→T,$$
we would like to multiply them by lifting the map
$$[x,y]:\catObj_U⊔\catObj_U→T$$
along the map $$λ:\Yo{⟨1⟩}⊔\Yo{⟨1⟩}→\Yo{⟨2⟩}$$
and restricting along the map $\Yo{⟨1⟩}→\Yo{⟨2⟩}$, as explained in \cref{companion.monoidal.product}.
The problem with the first step is that the map~$λ$ is not a monomorphism:
the two elements corresponding to the maps of finite pointed sets $⟨1⟩→⟨0⟩$ in the first and second summands
both map to the same element in~$\Yo{⟨2⟩}$.

The underlying meaning of this fact is that restricting along the map $\Yo{⟨0⟩}→\Yo{⟨1⟩}$ amounts to discarding the only element in a 1-tuple.
After restriction, we get a map out of $\Yo{⟨0⟩}$, which encodes an (empty) 0-tuple.
The fibrancy conditions for $Γ$-spaces force the space of 0-tuples to be contractible.
However, there is no reason why two 0-tuples arising from two different objects should be equal.
Enforcing the additional condition on~$T$ makes all 0-tuples equal.
This makes the noninjective map $$ρ:\Yo{⟨1⟩}⊔\Yo{⟨1⟩}→\Yo{⟨1⟩}⊔_{\Yo{⟨0⟩}}\Yo{⟨1⟩}$$ satisfy the unique left lifting property against the object~$T$,
so any map out of $\Yo{⟨1⟩}⊔\Yo{⟨1⟩}$ can be canonically extended along the map~$ρ$,
e.g., the map $[x,y]$ lifts uniquely to a map
$$[x,y]_*:\catObj_U⊔_{\catEmpty_U}\catObj_U→T.$$
We will systematically use the notation $[x,y]_*$ for such lifts.
\end{remark}

Given the preceding definitions, we can explicitly construct the composition of morphisms and monoidal products of objects in~$T$.
We begin with the monoidal product.
\begin{example}
\label{companion.monoidal.product}
Assume \cref{target.and.family}.
The monoidal product of a pair of $U$-families of objects
$$x, y:\catObj_U\to T$$
is computed as follows.

The input data \cref{input} of the monoidal operation is given by the map
$$[x,y]:\catObj_U⊔\catObj_U\to T,$$
which lifts uniquely (\cref{empty.tuple}) to a map
$$[x,y]_*:\catObj_U⊔_{\catEmpty_U}\catObj_U\to T.$$
Consider the acyclic cofibration $$\catObj_U⊔_{\catEmpty_U}\catObj_U\to \catMult_U$$
induced by the maps of pointed finite sets $$ι_1,ι_2:\{*,1,2\}→\{*,1\},$$ where $$\iota_1:1↦1,2↦*, \qquad \iota_2:1↦*,2↦1,$$ respectively.
We take the object encoding the auxiliary data \cref{aux} to be $A=\catMult_U$.
Since $T$ is fibrant, there is an extension to a multiplied pair $m_{x,y}:\catMult_U\to T$ that makes the diagram
$$\xymatrix{
\catMult_U\ar@{-->}[dr]^-{m_{x,y}}\cr
\catObj_U⊔_{\catEmpty_U}\catObj_U\ar[r]_-{[x,y]_*}\ar@{>->}[u]^-{\simeq} & T\cr
}$$
commute.
The map $m_{x,y}$ is indicated by the dashed arrow $\Daux$ in \cref{aux}.
The monoidal product of $x$ and $y$ is unique up to a contractible space of choices.
The extension $m_{x,y}$ provides such a choice of object representing the monoidal product of $x$ and $y$,
together with a coherence that demonstrates this object to be the product.
From the extension, the product can be extracted by restriction along the map
$$μ:\catObj_U→\catMult_U$$
induced by the map of pointed finite sets $\{*,1,2\}→\{*,1\}$, where $1,2↦1$.
We obtain an object
$$x\otimes y=m_{x,y}\mu:\catObj_U\to T,$$
which is the output of the operation \cref{output}.
This map represents the monoidal product.
\end{example}

The next example constructs the braiding \emph{virtual} isomorphism for any pair of $U$-families of objects in $T$.

\begin{example}
\label{braidingiso}
Assume \cref{target.and.family}.
To compute the braiding virtual isomorphism for a pair of $U$-families of objects
$$x,y:\catObj_U\to T,$$
we form the commutative diagram
$$\xymatrix@R=.5cm{
\catObj_U⊔_{\catEmpty_U}\catObj_U\ar[dr]^-{[x,y]_*}\ar[dd]^-{f}\cr
& T,\cr
\catObj_U⊔_{\catEmpty_U}\catObj_U\ar[ru]_{[y,x]_*}\cr
}$$
where $f$ exchanges the two summands.
We also have a commutative diagram
$$\xymatrix{
\catMult_U\ar[r]^-{\sigma} & \catMult_U\cr
\catObj_U⊔_{\catEmpty_U}\catObj_U\ar[r]_{f}\ar@{>->}[u]^-{\simeq} & \catObj_U⊔_{\catEmpty_U}\catObj_U\ar@{>->}[u]^-{\simeq},\cr
}$$
where $\sigma$ is induced by the map $\{\ast,1,2\}\to \{\ast,1,2\}, 1\mapsto 2, 2\mapsto 1$ and the vertical maps are equal, defined in \cref{companion.monoidal.product}.
Lifting both $[x,y]_*$ and $[y,x]_*$ to multiplied pairs, as in \cref{companion.monoidal.product}, we get a diagram
$$\xymatrix@R=.5cm{
\catMult_U\ar[dd]^-{\sigma}\ar[dr]^-{m_{x,y}}\cr
& T.\cr
\catMult_U\ar[ru]_{m_{y,x}}\cr
}$$
This diagram need not commute strictly.
However, since both $m_{x,y}$ and $m_{y,x}\sigma$ are extensions of $[x,y]_*$ along an acyclic cofibration,
both define vertices in the (homotopy) fiber of the map $$\map(\catMult_U,T)\to \map(\catObj,T)$$
over the point $[x,y]_*$.
The resulting fiber is a contractible Kan complex by \cref{abstract.operation}.
Therefore, there is a 1-simplex in the (homotopy) fiber: $$\tilde b:m_{x,y}\to m_{y,x}\sigma.$$
Restricting $\tilde b$ along the map $$\mu\times \id:\catObj_U\times \Delta^1\to \catMult_U\times \Delta^1$$
(with $μ$ as in \cref{companion.monoidal.product})
yields a 1-simplex
$$b=\tilde b\circ (\mu\times \id):x\otimes y\to y\otimes x,$$
which is the braiding virtual isomorphism for the monoidal product.
\end{example}

We now extend \cref{companion.monoidal.product} to other types of data in $T$, such as 1-morphisms or isotopies.

\begin{example}
\label{companion.monoidal.product.isotopies}
Assume \cref{target.and.family}.
To compute the monoidal product of a pair of isotopies, we replace $\catObj_U$ with $\catIso_U$ and $\catMult_U$ by $\catMultIso_U$ in \cref{companion.monoidal.product}.
Let $x_1,y_1,x_2,y_2:\catObj_U\to T$ be $U$-families of objects.
Fix $s,t∈\RR$, $s\neq t$.
There are maps $s,t:\catObj_U\to \catIso_U$ induced by the two inclusions $\RR^0≅\{s\}⊂\RR$ and $\RR^0≅\{t\}⊂\RR$.
The input data \cref{input} of the operation is given by a pair of isotopies
$$[i_1,i_2]:\catIso_U⊔\catIso_U\to T$$
such that their restrictions along $s$ yield $x_1$ and $x_2$, respectively,
and their restrictions along $t$ yield $y_1$ and $y_2$, respectively.
In other words, $i_1$ is an isotopy connecting $x_1$ and $y_1$ and $i_2$ is an isotopy connecting $x_2$ and $y_2$.

Consider the diagram of pushouts
$$\xymatrix{
\catMult_U\ar[r] & s\text{-}\catMultIso_U \ar@{>->}[r]^-{\simeq} & \partial\text{-}\catMultIso_U\cr
\catObj_U⊔_{\catEmpty_U}\catObj_U \ar[r]^-{s⊔s}\ar@{>->}[u]^{\simeq} & \catIso_U⊔_{\catEmptyIso_U}\catIso_U\ar@{>->}[r]^-{\simeq}\ar@{>->}[u]_{\simeq} & t\text{-}\catMultIso_U \ar@{>->}[u]_{\simeq}\cr
& \catObj_U⊔_{\catEmpty_U}\catObj_U \ar[u]_-{t⊔t}\ar@{>->}[r]_{\simeq}& \catMult_U, \ar[u]\cr
}$$
where the decorations on the arrows denote acyclic cofibrations.
An extension along the acyclic cofibration $$\catIso_U⊔_{\catEmptyIso_U}\catIso_U\to \partial\text{-}\catMultIso_U$$
encodes a pair of isotopies together with the coherence data for multiplying the source and target.
The universal property for the pushout induces an acyclic cofibration $$\partial\text{-}\catMultIso_U\to \catMultIso_U.$$
Extending along this map further adds coherence data for multiplying the isotopies themselves.
Thus, the auxiliary extension in \cref{operationdiag} can be split into a pair of extensions
$$\xymatrix{
\catMultIso_U\ar@{-->}[ddrr]^-{m_{i_1,i_2}}\cr
\partial\text{-}\catMultIso_U\ar@{-->}[drr]_-{\partial m_{i_1,i_2}}\ar@{>->}[u]^-{\simeq}\cr
\catIso_U⊔_{\catEmptyIso_U}\catIso_U\ar@{>->}[u]^-{\simeq}\ar[rr]_-{[i_1,i_2]_*} && T,\cr
}$$
where $\partial m_{i_1,i_2}$ provides the coherence data for multiplying the boundary of the pair of isotopies,
and the map $m_{i_1,i_2}$ supplies the coherence data for multiplying the pair of isotopies, compatible with the boundary coherence data.
Finally, by restricting along the map $\mu:\catIso_U→\catMultIso_U$, defined as in \cref{companion.monoidal.product} with $\RR^0$ replaced by $\RR^1$,
we obtain a product of isotopies
$$i_1\otimes i_2=m_{i_1,i_2}\mu:\catIso_U\to T,$$
which is the output data \cref{output}.
\end{example}

\begin{example}
\label{companion.monoidal.product.virtual.isomorphisms}
Assume \cref{target.and.family}.
A construction analogous to \cref{companion.monoidal.product.isotopies}
computes the monoidal product of a pair of virtual isomorphisms $(i_1:x_1→y_1,i_2:x_2→y_2)$.
Additionally, we can also prescribe a choice of the monoidal products $x_1⊗x_2$ and $y_1⊗y_2$,
by feeding these choices into the construction of the map $∂m_{i_1,i_2}$,
which we then lift to a map $m_{i_1,i_2}$.
\end{example}

We now explain how to convert an isotopy in $T$ to a 1-morphism.
In double category theory, the 1-morphism~$f$ constructed in \cref{companion.convert.isotopy.morphism} is known as the \emph{conjoint} of the isotopy~$i$.
Likewise, \cref{companion.convert.virtual.isomorphism.morphism} constructs the conjoint of a virtual isomorphism~$i$
and \cref{companion.convert.virtual.isomorphism.opposite.morphism} constructs the \emph{companion} of~$i$.

\begin{example}
\label{companion.convert.isotopy.morphism}
Assume \cref{target.and.family}.
To convert an isotopy connecting two objects into a 1-morphism,
consider the diagram of pushouts
$$\xymatrix{
\catIso_U\ar[r] & 0\text{-}\catMorIso_U \ar@{>->}[r]^-{\simeq} & \Pi\text{-}\catMorIso_U \cr
\catObj_U \ar[r]^-{0}\ar@{>->}[u]_{t}^{\simeq} & \catMor_U\ar@{>->}[r]^-{\simeq}\ar@{>->}[u]_{\simeq} & 1\text{-}\catMorIso_U \ar@{>->}[u]_{\simeq} \cr
& \catObj_U \ar[u]_-{1}\ar@{>->}[r]^-{t}_{\simeq}& \catIso_U, \ar[u]\cr
}$$
where the maps $0,1:\catObj_U\to \catMor_U$ are induced by the maps $[0]\to [1]$ that pick out the 0th and 1st vertex, respectively,
and the map $t:\catObj_U\to \catIso_U$ is induced by the inclusion $\RR^0\cong \{t\}\subset \RR^1$.
The universal property of the pushout produces an acyclic cofibration $\Pi\text{-}\catMorIso_U\to \catMorIso_U$.

Suppose we have an isotopy
$$i:\catIso_U→T,$$
such that restricting along the two maps $s,t:\catObj_U\to \catIso_U$ (as defined in \cref{companion.monoidal.product.isotopies})
yields objects $x$ and $y$, respectively.
Hence, $i$ is an isotopy connecting $x$ and $y$.
To convert this isotopy to a 1-morphism, we first add degenerate data
by precomposing with a codegeneracy map $\Pi\text{-}\catMorIso_U\to \catIso_U$, constructed as follows.
We have a codegeneracy map $c:\catMor_U\to \catIso_U$ induced by the map $[1]\to [0]$ and the inclusion $\RR^0\cong \{t\}\subset \RR^1$.
The identity map $\id:\RR^1\to \RR^1$ and the constant map $\RR^1\to \RR^1$, $x\mapsto t$
induce maps $\id:\catIso_U\to \catIso_U$ and $t:\catIso_U\to \catIso_U$, respectively.
The pair $(c,\id)$ induces a map $0\text{-}\catMorIso_U\to \catIso_U$ and the pair $(c,t)$ induces a map $1\text{-}\catMorIso_U\to \catIso_U$.
Hence, we obtain a further induced map out of the pushout
$$\Pi\text{-}\catMorIso_U\to \catIso_U.$$
Pulling back along this map is equivalent to adding the data of a degenerate 1-morphism on the codomain of the isotopy,
together with a degenerate isotopy on the codomain of the degenerate 1-morphism.
We pull back $i$ along this map to obtain a map $\tilde\imath:\Pi\text{-}\catMorIso_U\to T$, which is the input data \cref{input} for the construction.
We then extend along the acyclic cofibration:
$$\xymatrix{
\catMorIso_{U}\ar[rd]^-{ℓ_{i}} \cr
\Pi\text{-}\catMorIso_U\ar[r]_-{\tilde\imath}\ar@{>->}[u]_{\simeq} & T\cr
}$$
to obtain a map $ℓ_i$.
Finally, we restrict $ℓ_i$ along the inclusion
$\catMor_U\to \catMorIso_U$
induced by the inclusion $\RR^0≅\{s\}⊂\RR^1$,
resulting in a 1-morphism $f:\catMor_U\to T$, which is the output \cref{output}.
An illustration of this procedure is given in \cref{convert.isotopy}.

\begin{figure}[ht]
$$\vcenter{\xymatrix{
y \cr
x\ar[u]^-{i} \cr
}}\quad \to \quad \vcenter{\xymatrix{
y\ar@{=}[r] & y \cr
x\ar[u]^-{i} & y\ar@{=}[u] \cr
}}\quad \to \quad \vcenter{\xymatrix{
y\ar@{=}[r] & y \cr
x\ar[u]^-{i}\ar[r]_{f} & y\ar@{=}[u] \cr
}}\quad \to \quad \vcenter{\xymatrix{
& \cr
x\ar[r]_{f} & y \cr
}}$$
\caption{Converting an isotopy to a 1-morphism, as described in \cref{companion.convert.isotopy.morphism}.
Vertical arrows and equalities represent isotopies in $T$ and the horizontal arrows and equalities represent 1-morphisms.
Going from left to right, we first add degenerate data to the isotopy $i$ by pulling back along the map $\Pi\text{-}\catMorIso_U\to \catIso_U$,
then we lift this data to the map $ℓ_i:\catMorIso_U\to T$, which selects a cell in $T$.
Finally, we extract the 1-morphism at the source of the isotopy.}
\label{convert.isotopy}
\end{figure}
\end{example}

\begin{example}
\label{companion.convert.virtual.isomorphism.morphism}
Assume \cref{target.and.family}.
To convert a virtual isomorphism into a 1-morphism, we use the same construction as in \cref{companion.convert.isotopy.morphism}, replacing $\catIso_U$ by $\catVirt_U$.
The illustration in \cref{convert.isotopy}
continues to work if we replace vertical arrows by virtual isomorphisms.
\end{example}

\begin{example}
\label{companion.convert.virtual.isomorphism.opposite.morphism}
Assume \cref{target.and.family}.
To convert a virtual isomorphism into a 1-morphism in the opposite direction,
we use the construction of \cref{companion.convert.virtual.isomorphism.morphism},
exchanging the roles of $0$ and $1$ in the first factor of~$Δ$.

In \cref{convert.isotopy}, this corresponds to interchanging the vertical equality and the virtual isomorphism $i$
so that $i$ appears on the right side of the middle two squares.
\end{example}

\begin{example}
\label{companion.compose.morphisms}
Assume \cref{target.and.family}.
To compose two 1-morphisms, consider the pushout
$$\xymatrix{
\catObj_U\ar[r]^-{0}\ar[d]^-{1} & \catMor_U\ar[d] \cr
\catMor_U\ar[r] & \wedge\text{-}\catMor_U.\cr
}$$
To simplify notation, we set $\Delta\text{-}\catMor_U≔\Yo{(U,⟨1⟩,[2],[0],…,[0]),\RR^0,Δ^0}$.
We have an acyclic cofibration $\wedge \text{-}\catMor_U\to \Delta\text{-}\catMor_U$,
where the two summands are included using the coface maps $d^0,d^2:[1]→[2]$ (for the first factor of~$Δ$)
that miss the 0th and 2nd vertex, respectively.
A composable pair of morphisms is a map $(f,g):\wedge \text{-}\catMor_U\to T$.
We first extend a composable pair to a map
$$\xymatrix{
\Delta\text{-}\catMor_U\ar[dr]^-{ℓ_{f,g}} \cr
\wedge\text{-}\catMor_U\ar@{>->}[u]_{\simeq}\ar[r]_{(f,g)} & T \cr
}$$
and then restrict along the inclusion
$$\catMor_U→\Delta\text{-}\catMor_U$$
induced by the coface map $d^1:[1]→[2]$ that misses the 1st vertex.
This yields a composition $g\circ f:\catMor_U\to T$.
\end{example}

\section{Geometric sites and fibered geometric sites}

In this section, we introduce the notion of a \emph{geometric site} and a \emph{fibered geometric site}.
These notions allow us to generalize smooth families of bordisms to more structured types of parametrized bordisms, such as superfamilies of bordisms or holomorphic families.
This generality allows us to define the \emph{geometric} bordism category.
In the special case where parametrizing families are smooth manifolds, we obtain the \emph{smooth} bordism category.

\subsection{Geometric sites}

References on admissibility structures and geometric sites include
Joyal–Moerdijk \cite{JoyalMoerdijk}, Dubuc \cite{Dubuc.Etale}, Carchedi \cite{Carchedi}, Lurie \cite[Chapter 20]{Lurie.SAG}, Clough \cite[Section~2]{Clough}.

\begin{definition}
\label{admissibility.structure}
An \emph{admissibility structure} on a category~$\catC$
is a subcategory $\catC^\ad⊂\catC$
that contains all identity morphisms of~$\catC$ and satisfies the following properties.
\begin{conditions}[(1)]
\item\label{admissible.retracts} Admissible morphisms are closed under retracts in the category of morphisms in~$\catC$.
\item\label{admissible.composites} If morphisms $g$ and $g∘f$ are admissible, then so is~$f$.
\item\label{admissible.pullbacks} Base changes of admissible morphisms along arbitrary morphisms in~$\catC$ exist.
Furthermore, such base changes are themselves admissible morphisms.
\end{conditions}
We have the following definitions.

\label{geometric.site}
A \emph{geometric category} is a category equipped with an admissibility structure.
A \emph{geometric site} is a site equipped with an admissibility structure such that every covering family
consists of admissible morphisms.
\end{definition}

We have a similar definition in the $\smset$-enriched case, given by imposing the relevant structure on the underlying category,
obtained by evaluation at $\RR^0∈\cart$.

\begin{definition}
\label{enriched.admissibility.structure}
An \emph{$\smset$-enriched admissibility structure} on a $\smset$-enriched category~$\catC$
is a $\smset$-enriched subcategory $\catC^\ad⊂\catC$ such that for every $L∈\cart$, the inclusion $\catC^\ad_L⊂\catC_L$ (\cref{enriched.family.notation})
satisfies \cref{admissible.retracts,admissible.composites} of \cref{admissibility.structure}
and for $L=\RR^0$ also \cref{admissible.pullbacks}.

\label{enriched.geometric.site}
A \emph{$\smset$-enriched geometric category} is a $\smset$-enriched category equipped with a $\smset$-enriched admissibility structure.
A \emph{$\smset$-enriched geometric site} is a $\smset$-enriched site (\cref{def.site})
equipped with a $\smset$-enriched admissibility structure such that every covering family consists of admissible morphisms.
\end{definition}

\begin{remark}
For a $\smset$-enriched admissibility structure (\cref{enriched.admissibility.structure}),
\cref{admissible.pullbacks} of \cref{admissibility.structure} is often violated when $L≠\RR^0$.
Consider the setting of \cref{def.fraksmFEmb}:
$\fraksmFEmb_d$ is the $\smset$-enriched site of smooth families of $d$-dimensional smooth manifolds,
with mapping objects encoding smooth families of fiberwise smooth open embeddings.
Consider objects $p_1,p_2,q∈\fraksmFEmb_d$ and $L$-families of open embeddings $f_i:p_i→q$ in $\fraksmFEmb_d$.
Suppose that for different points $l∈L$ the pullbacks of $(f_1)_l$ and $(f_2)_l$ in $\smFEmb_d$ (\cref{def.smFEmb}) are not isomorphic as objects in $\smFEmb_d$.
Then there is no pullback of $f_1$ and $f_2$ in $\fraksmFEmb_d$.
\end{remark}

\begin{definition}
\label{eucl.geometric}
The sites $\man$ and $\cart$ (\cref{def.man.cart}) are turned into geometric sites (\cref{geometric.site}) by taking open embeddings as admissible morphisms.
\end{definition}

We now recall some elementary facts about Grothendieck fibrations, which will be needed in this section.
Recall that a \emph{Grothendieck fibration} $f:\catC\to \catD$ is a functor such that for all $y\in \catC$ and morphisms $b:z\to f(y)\in \catD$, there is an $f$-cartesian morphism $a:x\to y$ such that $f(a)=b$.
A choice of an $f$-cartesian lifting $a:x\to y$ for each $y$ and $b$ is called a \emph{cleavage}.
A cleavage is called a \emph{splitting cleavage} if it is compatible with compositions and identity maps.
A Grothendieck fibration admitting a splitting cleavage is called \emph{split}.
A Grothendieck fibration is called \emph{discrete} if the lifting $a$ is unique (and therefore necessarily $f$-cartesian), hence there is a canonical splitting cleavage.

A Grothendieck fibration $f$ with a cleavage corresponds to a pseudofunctor $\catD^{\op}\to \smallcat$, which sends $d\in \catD\mapsto f^{-1}(d)$.
The structure maps for the pseudofunctor are defined using $f$-cartesian liftings and depend on a choice of cleavage.
If $f$ admits a splitting cleavage, then the corresponding functor is strict.
If the fibration is discrete, the corresponding functor takes values in discrete categories.
The inverse construction is the Grothendieck construction
$$\smallint:\Fun(\catD^{\op},\smallcat)\to \smallcat_{/\catD}.$$
Given a functor $\catD^{\op}\to \smallcat$ that takes values in discrete small categories, i.e., in sets, applying the functor $\smallint$ yields a discrete fibration.
The domain of the corresponding fibration is just the category of elements of the functor.

\begin{definition}
\label{enriched.grothendieck.fibration}
Suppose $\catC$ and $\catD$ are $\smset$-enriched categories.
A \emph{$\smset$-enriched Grothendieck fibration} is a $\smset$-enriched functor $f:\catC→\catD$
such that for every $L∈\cart$ the functor $f_L:\catC_L→\catD_L$ (\cref{enriched.family.notation}) is a Grothendieck fibration
and for every morphism $L→L'$ the structure map $f_{L'}→f_L$ is a morphism of Grothendieck fibrations, i.e., preserves cartesian arrows.
If $\catC$ or $\catD$ is an ordinary category, we promote it to a $\smset$-enriched category
using the inclusion $\set→\smset$.

\label{enriched.cleavage}
A \emph{$\smset$-enriched cleavage} of~$f$ is given by a choice of cleavage for every~$f_L$ that is natural in $L∈\cart$.
Such a cleavage is \emph{splitting} if for every $L∈\cart$ the cleavage of $f_L$ is splitting.
\end{definition}

\begin{definition}
\label{admissibility.fibration}
Let $\catC$ be a category and let $\catD$ be a geometric category (\cref{geometric.site}).
A functor $f:\catC\to \catD$ is called an \emph{admissibility fibration} if the following condition holds.
\begin{itemize}
\item Given an object $y∈\catC$ and an \emph{admissible} map $b:z→f y$ there is an $f$-cartesian arrow $a:x→y$ such that $f a=b$.
\end{itemize}

\label{enriched.admissibility.fibration}
Similarly, if $\catC$ is a $\smset$-enriched category and $\catD$ is a $\smset$-enriched geometric category (\cref{enriched.geometric.site}),
a $\smset$-enriched functor $f:\catC\to \catD$ is called a \emph{$\smset$-enriched admissibility fibration}
if the functor $f_{\RR^0}:\catC_{\RR^0}→\catD_{\RR^0}$ is an admissibility fibration. 
\end{definition}

\begin{remark}
Suppose $f:\catC\to \catD$ is an admissibility fibration (\cref{admissibility.fibration}).
By \cref{admissible.composites} of \cref{admissibility.structure},
a morphism in $f^{-1}\catD^\ad$ is cartesian for the Grothendieck fibration $f^{-1}\catD^\ad→\catD^\ad$
if and only if it is cartesian for the Grothendieck fibration $f:\catC\to \catD$.
\end{remark}

\begin{remark}
For a $\smset$-enriched admissibility fibration (\cref{enriched.admissibility.fibration}) $f:\catC→\catD$,
the functor $f_L:\catC_L→\catD_L$ is often not an admissibility fibration when $L≠\RR^0$.
Consider the setting of \cref{def.frakcFEmb}:
$\frakcFEmb_d$ is the $\smset$-enriched site of holomorphic families of $d/2$-dimensional holomorphic manifolds,
with mapping objects encoding smooth families of fiberwise holomorphic embeddings.
The functor $\red:\frakcFEmb_d→\fraksmFEmb_d$ forgets the holomorphic structure.
Consider objects $p,q∈\frakcFEmb_d$ and an $L$-family of fiberwise open embeddings $f:\red p→\red q$ in $(\fraksmFEmb_d)_L$.
Suppose that for different points $l∈L$ the pullbacks of the fiberwise holomorphic structure on~$q$ along the maps $f_l$ are not biholomorphic.
Then there is no cartesian lifting of~$f$ in $(\frakcFEmb_d)_L$.
\end{remark}

\begin{proposition}
\label{lift.site}
Suppose $\catC$ is a category, $\catD$ is a geometric site (\cref{geometric.site}),
and $f:\catC→\catD$ is an admissibility fibration (\cref{admissibility.fibration}).
Then $\catC$ admits the structure of a geometric site where:
\begin{parts}[(a)]
\item\label{lift.admissible}
a morphism $a:x\to y$ in~$\catC$ is admissible if and only if it is $f$-cartesian and $fa$ is admissible in $\catD$;
\item\label{lift.covering}
a family of morphisms $\{a_i:x_i\to x\}_{i\in I}$ is a covering family if and only if $a_i$ is $f$-cartesian for all $i\in I$ and $\{fa_i:fx_i\to fx\}$ is a covering family in $\catD$.
\end{parts}

Suppose $\catC$ is a $\smset$-enriched category, $\catD$ is a $\smset$-enriched geometric site (\cref{enriched.geometric.site}),
and $f:\catC→\catD$ is a $\smset$-enriched admissibility fibration (\cref{enriched.admissibility.fibration}).
Then $\catC$ admits the structure of a $\smset$-enriched geometric site where:
\begin{parts}[(a)]
\item\label{enriched.lift.admissible}
a morphism $a:x\to y$ in~$\catC_L$ is admissible if and only if it is $f_L$-cartesian and $f_L(a)$ is admissible in $\catD_L$;
\item\label{enriched.lift.covering}
a family of morphisms $\{a_i:x_i\to x\}_{i\in I}$ in $\catC_{\RR^0}$
is a covering family if and only if $a_i$ is $f_{\RR^0}$-cartesian for all $i\in I$ and $\{fa_i:fx_i\to fx\}$ is a covering family in $\catD$.
\end{parts}
\end{proposition}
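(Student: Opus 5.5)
The plan is to verify the axioms of \cref{admissibility.structure} and of a coverage directly for the lifted classes, using only standard facts about cartesian morphisms. First I check that the lifted admissible morphisms form a subcategory containing all identities. Identities are cartesian and map to identities, which are admissible. For composition, a composite of $f$-cartesian arrows is $f$-cartesian, and $f$ preserves composition while $\catD^\ad$ is a subcategory.

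\Cref{admissible.composites} is next. Suppose $g$ and $g∘a$ are admissible. Then $f(g)$ and $f(g)∘f(a)$ are admissible in~$\catD$, so $f(a)$ is admissible by the corresponding axiom in~$\catD$. Moreover, if $g$ and $g∘a$ are cartesian, then $a$ is cartesian; this is the standard cancellation property of cartesian arrows.

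For \cref{admissible.retracts}, take a retract $a$ of $b$ in the arrow category, with $b$ admissible. The functor $f$ preserves retract diagrams, so $f(a)$ is admissible. Cartesian morphisms are closed under retracts. To see this, use the characterization that $a\colon x→y$ is cartesian iff $\catC(w,x)→\catC(w,y)⨯_{\catD(fw,fy)}\catD(fw,fx)$ is a bijection for all~$w$. This map for $a$ is a retract of the corresponding map for~$b$, and retracts of bijections are bijections.

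The main step is \cref{admissible.pullbacks}. Given an admissible $a\colon x→y$ and an arbitrary $c\colon z→y$, I form the pullback $P=fz⨯_{fy}fx$ in~$\catD$, which exists and has admissible projection $b'\colon P→fz$ because $fa$ is admissible. Since $f$ is an admissibility fibration, I choose a cartesian lift $a'\colon x'→z$ of $b'$. The composite $c∘a'$ lies over $fc∘b' = fa∘π_2$, where $π_2\colon P→fx$. Since $a$ is cartesian, this composite factors uniquely through $a$ via some $c'\colon x'→x$ over~$π_2$.

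I then verify that the resulting square is a pullback in~$\catC$. Given $u\colon w→z$ and $v\colon w→x$ with $cu=av$, the images $fu$ and $fv$ factor uniquely through~$P$ via some $h\colon fw→P$. Cartesianness of $a'$ lifts $h$ uniquely to $\tilde h\colon w→x'$ with $a'\tilde h=u$. It remains to check $c'\tilde h=v$; both sides lie over $fv$ and agree after composing with $a$, so uniqueness for the cartesian arrow $a$ gives equality. Uniqueness of $\tilde h$ follows similarly. The new projection $a'$ is cartesian and lies over the admissible $b'$, so it is admissible. Any other pullback is isomorphic to this one, and admissibility is stable under composition with isomorphisms, since isomorphisms are cartesian over isomorphisms and isomorphisms are admissible by \cref{admissible.retracts}.

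For the coverage, I let covering families be as in \cref{lift.covering}. Each member is then admissible, because covering families in $\catD$ consist of admissible maps. For the coverage axiom, given a cover $\{a_i\}$ of~$x$ and $c\colon z→x$, I use the pullbacks just constructed. Their images in $\catD$ are the pullbacks of $\{fa_i\}$ along~$fc$. Assuming the coverage on $\catD$ is stable under pullback, as for $\cart$, which I take to be implicit, these images form a cover in~$\catD$. If only the weak coverage axiom is assumed, I instead lift the refining cover of $fz$ cartesianly and factor through the $a_i$ using their cartesianness. Either way I obtain a covering family of~$z$ by cartesian arrows.

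The enriched statement reduces to the above levelwise in~$L$. In $\catC_L$, admissible maps are those that are $f_L$-cartesian over admissible maps. The retract and composite axioms are checked for each $L$ exactly as before, using that $\catD^\ad_L$ satisfies them by \cref{enriched.admissibility.structure}. Pullbacks and covers are only required at $L=\RR^0$, where $f_{\RR^0}$ is an admissibility fibration, so the unenriched argument applies verbatim. Finally, $\catC^\ad$ is a $\smset$-enriched subcategory: for a map $L'→L$, the restriction $\catD_L→\catD_{L'}$ preserves admissibility, and I expect the restriction $\catC_L→\catC_{L'}$ to preserve $f$-cartesian arrows. This last point is where I anticipate needing care. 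If the definition of admissibility fibration does not provide it, I would add the hypothesis, as in \cref{enriched.grothendieck.fibration}.
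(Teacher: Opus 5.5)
Your proof is correct and is essentially the paper's own argument. Base changes are constructed exactly as there: a cartesian lift of the pullback projection in $\catD$, followed by factoring through the cartesian arrow $a$. Your fallback coverage argument, lifting the refining cover of $fz$ cartesianly and factoring through the $a_i$, is also what the paper does. You should simply drop the first route, since pullback-stability is not part of the coverage axiom and fails for good open covers of cartesian spaces.

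Your caution in the enriched case is justified, and the paper does not address it either: its proof only argues separately for each $\catC_L$. Because an $\smset$-enriched admissibility fibration constrains only $f_{\RR^0}$, the lifted class need not be closed under restriction along $L'\to L$. For example, take $\catD=[1]$ with constant hom-objects and all maps admissible. Take $\catC$ with objects $x,y$ over $0,1$, with $\catC(x,x)=\catC(y,y)=*$, $\catC(y,x)=\emptyset$, and $\catC(x,y)=C^\infty(-,\RR)/\RR$. Then $f_{\RR^0}$ is an isomorphism, so the unique element of $\catC(x,y)_{\RR^0}$ is admissible. However, no element of $\catC(x,y)_{\RR^1}$ is $f_{\RR^1}$-cartesian, in particular not its restriction. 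So the extra hypothesis you propose is genuinely needed for the enriched statement.
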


\begin{proof}
Below we give a proof for the unenriched case,
which also proves the $\smset$-enriched statement by taking $f$ to be the functor $f_L:\catC_L→\catD_L$
for every $L∈\cart$ for \cref{admissible.retracts,admissible.composites} in \cref{admissibility.structure}
and for $L=\RR^0$ for \cref{admissible.pullbacks}.

We begin by proving that \cref{lift.admissible} indeed defines an admissible structure.
Define $\catC^\ad$ to be the replete subcategory whose morphisms are $f$-cartesian arrows whose image under $f$ is admissible in $\catD$.
We must prove that $\catC^\ad$ satisfies \cref{admissible.retracts,admissible.composites,admissible.pullbacks} in \cref{admissibility.structure}.
To prove \cref{admissible.retracts}, let $a$ be a retract of a cartesian arrow $b$ such that $fb$ is admissible.
Since cartesian arrows are closed under retracts, $a$ is cartesian.
Since $fa$ is a retract of $fb$, $fa$ is admissible.
Hence, $a$ is admissible.
For \cref{admissible.composites}, let $a:x\to y$, $b:y\to z$ be morphisms in $\catC$ such that $b$ and $ba$ are cartesian and $f(b)$ and $f(ba)$ are admissible in~$\catD$.
By Johnstone \cite[Lemma B1.3.3(i)]{Johnstone}, $a$ is cartesian.
Since $fb$ and $f(ba)$ are admissible in $\catD$, so is $fa$ by \cref{admissible.composites} of \cref{admissibility.structure}.
Thus, $a$ is admissible because $a$ is $f$-cartesian and $fa$ is admissible in $\catD$.
For \cref{admissible.pullbacks}, we proceed as follows.

Suppose $a:y→x$ is an $f$-cartesian arrow in $\catC$ such that $fa$ is an admissible map in $\catD$.
Suppose $b:w→x$ is a morphism in $\catC$.
We want to show that the base change of~$a$ along~$b$ exists and is an admissible map.
First, we construct a candidate for the base change square.
Construct the pullback diagram of $fa$ and $fb$ on the right, with the resulting object denoted by~$p$ and morphisms denoted by $φ$ and~$γ$:
$$\xymatrix{
v \ar@/^1.5pc/[drr]^c \ar@/_1.5pc/[rdd]_e \ar@{-->}[rd]^d\cr
& z \ar[r]^{b'} \ar[d]_{a'} & y \ar[d]^a           \cr
& w \ar[r]_b                & x,\cr
}\qquad\xymatrix@C=4em{
fv \ar@/^1.8pc/[drr]^{fc} \ar@/_1.8pc/[rdd]_{f e} \ar[dr]^{f d=ζ}\cr
& fz=p \ar[r]^{fb'=γ} \ar[d]_{fa'=φ} & f y \ar[d]^{fa} \cr
& fw \ar[r]_{fb}                  & fx.\cr
}$$
Now lift the admissible map $φ:p→fw$ in $\catD$ to an $f$-cartesian arrow~$a'$ in $\catC$.
The arrows $γ$ and $ba'$ satisfy $f(ba')=(fa) γ$, so by the universal property of the $f$-cartesian arrow~$a$, there is a unique morphism $b':z→y$ such that $ab'=ba'$ and $γ=f b'$.
This gives the commutative square on the left, with both $a'$ and $a$ cartesian arrows.
It remains to prove that this square is a pullback square.

Suppose $v$ is an object and $e:v→w$ and $c:v→y$ are morphisms in $\catC$ such that $be=ac$.
We want to construct $d:v→z$ such that $a'd=e$ and $b'd=c$.
The arrows $fe$ and $fc$ satisfy $fb\circ fe = fa \circ fc$, so by the universal property there is a unique arrow $ζ:fv→fz$ making the above diagram commute.
In particular, $(fa')\zeta=fe$ so by the universal property of the $f$-cartesian arrow~$a'$, there is a unique morphism $d:v\to z$ such that $a'd=e$ and $ζ=fd$.
Since $ac=be=ba'd=ab'd$, the universal property for the cartesian arrow $a$ implies that $b'd=c$.

Now suppose $d':v→z$ is a morphism such that $a'd'=e$ and $b'd'=c$.
By the universal property of the pullback $p$, we must have $fd=fd'$.
But then we must have $d=d'$, by the universal property for the cartesian arrow $a'$.
This completes the proof of \cref{lift.admissible}.

To prove that \cref{lift.covering} defines a coverage, we argue as follows.
Suppose $x$ is an object, $b:w→x$ is a morphism, and $\{a_i:x_i→x\}_{i∈I}$ is a covering family in $\catC$.
By the above, every admissible map $a_i$ admits a base change $a'_i:w_i→w$ along~$b$.
$$\xymatrix{
v_j \ar@/^1.5pc/[drr]^{y_j} \ar@/_1.5pc/[rdd]_{c_j} \ar[rd]^{z_j}\cr
& w_i \ar[r]^{b'_i} \ar[d]_{a'_i} & x_i \ar[d]^{a_i}           \cr
& w \ar[r]_b                & x,                   \cr
}\qquad\xymatrix@C=4em{
f v_j=u_j \ar@/^1.8pc/[drr]^{f y_j} \ar@/_1.8pc/[rdd]_{f c_j = b_j} \ar[dr]^{f z_j=ζ_j}\cr
& f w_i \ar[r]^{f b'_i} \ar[d]_{f a'_i} & f x_i \ar[d]^{f a_i} \cr
& f w \ar[r]_{f b}                  & f x.\cr
}$$
By definition of a coverage, the family $\{f a'_i:fw_i\to fw\}_{i∈I}$ can be refined by a covering family $\{b_j: u_j→f w\}_{j∈J}$,
which comes equipped with a map of sets $h:J→I$ and maps (necessarily admissible) $\{ζ_j:u_j→f w_{h(j)}\}_{j∈J}$
such that for every $j∈J$ we have $(f a'_{h(j)}) ζ_j = b_j$,
as depicted in the diagram above, assuming $i=h(j)$.
Lift the admissible maps $b_j$ to $f$-cartesian arrows $c_j:v_j\to w$, which form a covering family in $\catC$.
We have $(f a'_{h(j)}) ζ_j = f c_j$, so by the universal property of the $f$-cartesian arrow~$a'_{h(j)}$,
there is a unique morphism $z_j:v_j→w_{h(j)}$ such that $a'_{h(j)} z_j=c_j$ and $f z_j=ζ_j$.
Set $y_j=b'_{h(j)} z_j$, where $b'_i:w_i\to x_i$ is the projection map out of the pullback.
The covering family $\{c_j:v_j→w\}_{j∈J}$ together with the function $h:J→I$ and maps $\{y_j:v_j→x_{h(j)}\}_{j∈J}$
satisfies for every $j∈J$ the relation $b c_j=a_{h(j)}y_j$, which verifies the defining property of coverages.
\end{proof}

\begin{remark}
\label{def.open.embedding}
If $f:\catC→\catD$ is an admissibility fibration (\cref{admissibility.fibration}),
we refer to admissible maps in $\catC$ (i.e., $f$-cartesian arrows with an admissible $f$-image) using the same term as admissible maps in $\catD$.
The same convention is used for \emph{admissible subobjects}, i.e., subobjects defined by an admissible map.
For example, if $\catD=\cart$ or $\catD=\man$, this yields a notion of an \emph{open embedding} and \emph{open subobject} in $\catC$.
If $f$ is a $\smset$-enriched admissibility fibration (\cref{enriched.admissibility.fibration}) and $L∈\cart$,
we refer to admissible maps in $\catC_L$ as \emph{$L$-families of admissible maps} in~$\catC$.
For example, if $\catD=\cart$ or $\catD=\man$, this yields the notion of an \emph{$L$-family of open embeddings} in $\catC$.

If $r$ is an admissible map in $\catC$ and $f(r)$ is a monomorphism in $\catD$, then $r$ is a monomorphism in $\catC$.
Indeed, if $h_1$ and $h_2$ are parallel morphisms in~$\catC$ such that $r h_1=r h_2$,
then $f(r)f(h_1)=f(r)f(h_2)$, hence $f(h_1)=f(h_2)$ because $f(r)$ is a monomorphism.
By the universal property of the $f$-cartesian arrow~$r$, we obtain $h_1=h_2$, so $r$ is a monomorphism.

Likewise, if $f:\catC→\catD$ is a $\smset$-enriched admissibility fibration (\cref{enriched.admissibility.fibration})
and $r$ is an $L$-family of admissible maps in~$\catC$ such that $f(r)$ is a monomorphism in $\catD_L$, then $r$ is a monomorphism in~$\catC_L$.
\end{remark}

\begin{proposition}
\label{admissible.subobject}
If $f:\catC→\catD$ is an admissibility fibration (\cref{admissibility.fibration}) and every admissible map in $\catD$ is a monomorphism, then $f$ establishes
a bijection from the poset of admissible subobjects (\cref{def.open.embedding}) of $c∈\catC$
to the poset of admissible subobjects of $d=f(c)∈\catD$.
\end{proposition}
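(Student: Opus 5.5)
The plan is to define the map on subobjects directly via $f$, and to construct its inverse using the admissibility fibration property. An admissible subobject of $c$ is an equivalence class of admissible maps $r:x→c$, where two such maps are identified if they factor through each other. By \cref{def.open.embedding}, each admissible $r$ with $f(r)$ a monomorphism is itself a monomorphism. Since all admissible maps in $\catD$ are monomorphisms, every admissible map in $\catC$ is a monomorphism, so these classes really are subobjects.

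First I will check that $r↦f(r)$ is a well-defined, order-preserving map. Suppose $r$ factors through $r'$, say $r=r'h$. Then $f(r)=f(r')f(h)$, so the order is preserved, and isomorphic subobjects go to isomorphic subobjects. By definition, $f(r)$ is admissible in $\catD$, so its class lies in the poset of admissible subobjects of $d$.

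Next I will show surjectivity. Given an admissible map $b:z→d=f(c)$, the admissibility fibration condition (\cref{admissibility.fibration}) provides an $f$-cartesian lift $a:x→c$ with $f(a)=b$. This $a$ is admissible by \cref{lift.admissible} of \cref{lift.site}, or directly from the definition used in \cref{def.open.embedding}.

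The key step is to show that $f$ reflects the order. This gives injectivity, and it also shows that the inverse is order-preserving. Suppose $r:x→c$ and $r':x'→c$ are admissible, hence $f$-cartesian, and suppose $f(r)≤f(r')$. Then there is $k:f(x)→f(x')$ with $f(r')k=f(r)$. The universal property of the cartesian arrow $r'$, applied to $r:x→c$ and $k$, gives a unique $h:x→x'$ with $r'h=r$ and $f(h)=k$. So $r≤r'$. Applying this in both directions shows that subobjects with equal images coincide.

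I expect no serious obstacle. The one point needing care is the convention that admissible maps in $\catC$ are exactly the cartesian maps whose image is admissible. Cartesianness is used in the reflection step, and without it the map on subobjects would fail to be injective.
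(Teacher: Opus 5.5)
Your proposal is correct and follows essentially the same route as the paper. The forward map is $r\mapsto f(r)$, the inverse comes from $f$-cartesian lifts of admissible maps, and the key step is the universal property of the cartesian arrow $r'$, which yields the factorization $r=r'h$; the paper phrases this as the lifting construction being order-preserving, while you phrase it as $f$ reflecting the order, which is the same argument.
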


\begin{proof}
If $r:c_1→c$ is an admissible monomorphism in $\catC$, then $f(r)$ is an admissible monomorphism in~$\catD$.
Indeed, $f(r)$ is admissible by \cref{admissibility.fibration}, therefore $f(r)$ is a monomorphism by assumption.
Conversely, if $s:d_1→d$ is an admissible monomorphism in $\catD$,
then every cartesian lift of~$s$ is an admissible map $r:c_1→c$,
which is a monomorphism by \cref{def.open.embedding}.
If $s_1:d_1→d$ and $s_2:d_2→d$ are admissible monomorphisms in $\catD$
such that $s_1$ factors through $s_2$
and $r_1:c_1→c$ and $r_2:c_2→c$ are $f$-cartesian lifts of $s_1$ and $s_2$, respectively,
then the universal property of the $f$-cartesian arrow~$r_2$
shows that $r_1$ factors through~$r_2$.
Thus, these constructions are order-preserving and are mutually inverse to each other.
\end{proof}

\begin{definition}
\label{def.stcart}
\label{structured.cartesian.site}
Recall the geometric site $\cart$ from \cref{def.cart}.
A \emph{structured cartesian site} is a pair $(\stcart,\red)$,
where $\stcart$ is a small category and
$$\red: \stcart→\cart$$
is an admissibility fibration (\cref{admissibility.fibration}).
We equip $\stcart$ with the structure of a geometric site given by \cref{lift.site}.
We refer to the functor~$\red$ as the \emph{reduction functor}
and to admissible morphisms in~$\stcart$ as \emph{open embeddings} (\cref{def.open.embedding}).
\end{definition}

\begin{remark}
By \cref{def.stcart} and \cref{admissible.subobject}, the reduction functor~$\red$ establishes a bijective order-preserving correspondence
between open subobjects of $X∈\stcart$ and open subobjects of $\red X∈\cart$.
\end{remark}

\begin{example}
The site $\cart$ (\cref{def.cart}) with the identity reduction functor $\red=\id:\cart→\cart$ is a structured cartesian site (\cref{structured.cartesian.site}).
\end{example}

\begin{example}
\label{trivial.geometric.site}
The inclusion $\trivialsite→\cart$ of the full subcategory~$\trivialsite$ of~$\cart$ on terminal objects (i.e., cartesian spaces diffeomorphic to $\RR^0$) into $\cart$
is a structured cartesian site (\cref{def.stcart}).
Constructions with this site yield bordism categories without geometric families.
\end{example}

Many more examples can be constructed for manifolds with locally defined structures using \cref{lift.site}.

\begin{example}
\label{def.holomorphic.cart}
The category $\cman$ of holomorphic manifolds and holomorphic maps, together with the forgetful functor $\red:\cman→\man$ (\cref{def.man})
satisfies the conditions of \cref{lift.site} and therefore gives rise to a geometric site $\cman$.
Indeed, given an object $U\in \ccart$ and an open embedding $i:V\to \red(U)$,
a cartesian lift $i^*U\to U$ is given by pulling back the complex structure on $U$ to $V$ along~$i$.

The full subcategory on objects whose underlying smooth manifold is an object of $\cart$ (\cref{def.cart})
yields a structured cartesian site (\cref{structured.cartesian.site}) $\ccart$ together with the reduction functor $\red:\ccart→\cart$.
(Such holomorphic manifolds need not be biholomorphic with $\CC^n$, as witnessed by the unit disk in~$\CC$.)
\end{example}

We now define the geometric site of smooth supermanifolds.
There are two variants of smooth supermanifolds: real and complex.
There is a complexification functor from real to complex smooth supermanifolds,
which is faithful, but not full or essentially surjective.
It restricts to an equivalence on the full subcategories of reduced supermanifolds,
both of which are equivalent to the category of ordinary smooth manifolds.
For details, see Deligne–Morgan \cite[Section 4.8]{DeligneMorgan} and the $n$Lab \cite{nLab.csmanifold}.
Below, we define both real and complex smooth supermanifolds, as these variants are needed in \cref{1.1.Euclidean,def.super.euclidean}.

\begin{definition}
\label{def.sman}
Denote by $k$ the field $\RR$ or $\CC$.
The geometric site $\sman_k$ (\cref{geometric.site}) is defined as follows.
Objects are real/complex smooth supermanifolds~$M$, defined as pairs $(M_0,M_1)$,
where $M_0$ is a smooth manifold and $M_1$ is a sheaf on $M_0$ valued in $\CSAlg_k$,
the category of commutative $k$-superalgebras, i.e., $\ZZ/2$-graded commutative $k$-algebras.
We require that $M_0$ admits an open cover $\{U_i\}_{i∈I}$ such that for every~$i$
the restriction $M_1|_{U_i}$ is isomorphic to $U_i⨯k^{0|q}$,
i.e., the sheaf $V↦\sm(V,k)⊗_k \Lambda_k k^q$, for some $q≥0$.
Here $V⊂U_i$ is an open subset
and $\Lambda_k$ denotes the $\ZZ/2$-graded exterior $k$-algebra of a $k$-module.
Morphisms $(M_0,M_1)→(N_0,N_1)$ are pairs $(f_0,f_1)$, where $f_0:M_0→N_0$ is a smooth map
and $f_1:N_1→M_1$ is an $f_0$-map of sheaves (Stacks Project \cite[Tag 008J]{Stacks}).

The reduction functor
$$\red: \sman_k→\man, \qquad (M_0,M_1)↦M_0$$
is turned into an admissibility fibration (\cref{admissibility.fibration}) as follows.
Suppose $(N_0,N_1)∈\sman_k$.
The $\red$-cartesian lift of an open embedding $ι:M_0→N_0$ is the map $$\red^*ι:(M_0,ι^*N_1)→(N_0,N_1),$$
where $ι^*N_1$ is the restriction of $N_1$ to~$M_0$ along the map~$ι$.
We equip $\sman_k$ with the structure of a geometric site given by \cref{lift.site}.
\end{definition}

Following \cref{inclusion.manifold},
the following definition is designed so that for a fixed smooth real/complex supermanifold~$N$,
the class of smooth supermanifolds~$M$ that admit a closed inclusion into~$N$ is a set.
Furthermore, the full subcategory of the slice category $\sman_k/N$ on included submanifolds is a poset,
in particular, every isomorphism is identity
and the map $f:M→N$ can be recovered from $M$ and $N$.

\begin{definition}
\label{inclusion.supermanifold}
A \emph{closed inclusion of supermanifolds} is a morphism $f=(f_0,f_1):(M_0,M_1)→(N_0,N_1)$
such that $f_0:M_0→N_0$ is a closed inclusion of manifolds (\cref{inclusion.manifold})
and $f_1:N_1→M_1$ is a quotient $f_0$-map of sheaves,
meaning for every degree~$k$, open subset $U⊂M_0$, and $x∈M_1(U)_k$,
the element $x$ is equal to the set of sections $y∈N_1(V)_k$ such that $f_0^{-1}V⊃U$ and $f_1(y)|_U=x$.
\end{definition}

\begin{definition}
\label{def.supercart}
The structured cartesian site $\scart_k$ (\cref{structured.cartesian.site})
is defined as the full subcategory of $\sman_k$ (\cref{def.sman}) (with the induced coverage and reduction functor)
on objects~$S$ such that $S$ is isomorphic to $\RR^p⨯k^{0|q}$ for some $p,q≥0$
and $S$ admits a closed inclusion of supermanifolds (\cref{inclusion.supermanifold}) $S→\RR^m⨯k^{0|n}$ for some (necessarily unique) $m,n≥0$.
\end{definition}

\begin{definition}
\label{def.infinitesimal}
The Cahiers site~$\cahiers$ (Dubuc \cite{Dubuc.Cahiers}; see also Moerdijk–Reyes \cite[Appendix~2]{MoerdijkReyes}) is defined as follows.
Objects are pairs $(M,W)$,
where $M∈\cart$ (\cref{def.cart})
and $W$ is a \emph{Weil algebra} (Weil \cite[Section~1]{Weil}, Dubuc \cite[Définition~1.4]{Dubuc.Cahiers}, Kolář–Michor–Slovák \cite[Definition~35.2]{KMS}):
a finite-dimensional commutative real algebra~$W$ that admits a unique homomorphism~$ε$ of real algebras $W→\RR$.
Morphisms are pairs $$(f,g):(M_0,W_0)→(M_1,W_1),\qquad f:M_0→M_1, \quad g:\sm(M_1)⊗W_1→\sm(M_0)⊗W_0,$$
where $f$ is a morphism in $\cart$ and $g$ is a homomorphism of real algebras whose reduction is $$f^*=\sm(f):\sm(M_1)→\sm(M_0).$$
We have a reduction functor $$\red:\cahiers→\cart, \qquad (M,W)↦M, \quad (f,g)↦f.$$
The $\red$-cartesian lift of an open embedding $f:M_0→M_1$ is constructed by taking $W_0=W_1$ and $g=f^*⊗\id_{W_0}$.
The pair $(\cahiers,\red)$ satisfies the conditions of \cref{lift.site} and therefore produces a site $\cahiers$ satisfying \cref{def.stcart}.
\end{definition}

\subsection{Fibered geometric sites}

In this section, we define the notion of a fibered geometric site.
Roughly speaking, a fibered geometric site encodes the data of a structured cartesian family of structured manifolds.
For example, in the case where the geometric structure is complex, we get a holomorphic family of holomorphic manifolds.
Looking ahead, one can regard such a family as providing the data of a holomorphic family of bordisms, equipped with a holomorphic structure.

\begin{definition}
\label{def.smFEmb}
Let $d≥0$.
Recall the geometric sites $\cart$ and $\man$ from \cref{def.man.cart,eucl.geometric}.
We define a category $\smFEmb_d$ and a functor
$$\base:\smFEmb_d→\cart$$
as follows.
\begin{itemize}
\item
The objects of $\smFEmb_d$ are morphisms $p:M\to U$ in $\man$
such that $U\in\cart$ and the map~$p$ is a submersion with $d$-dimensional fibers.
We require that $p$ is the restriction of a projection $\RR^m⨯U→U$, for some $m≥0$,
to a closed included manifold (\cref{inclusion.manifold}) $M⊂\RR^m⨯U$.
\item
Morphisms $(p:M\to U)\to (q:N\to V)$ are defined as pairs of morphisms $(f:M\to N,g:U\to V)$ in $\man$ such that the diagram
$$\xymatrix{
M\ar[r]^-{f}\ar[d]_-{p} & N\ar[d]^-{q}\\
U\ar[r]^-{g} & V\\
}$$
commutes and such that $f$ is a fiberwise open embedding over~$g$, i.e., the induced map $M\to g^*N$ is an open embedding.
\end{itemize}
The \emph{base space functor} $$\base:\smFEmb_d→\cart$$
sends $p:M→U$ to~$U$.
\end{definition}

\begin{remark}
\label{flatfibration}
\cref{def.smFEmb} requires that the map~$p$ is the restriction of a projection of the form $\RR^m\times U\to U$.
Continuing \cref{man.small}, this singles out a small subcategory of the large category of submersions.
This requirement is not essential, but it implies that the functor $\base:\smFEmb_d\to \cart$ is a \emph{split} Grothendieck fibration.
This allows us to make the bordism category strictly functorial with respect to maps $U\to V$ in $\stcart$.
We could drop this requirement, but then the bordism category would only be a pseudofunctor with respect to these maps.
\end{remark}

As indicated in \cref{flatfibration}, we will show that the base space functor $\base:\smFEmb_d\to \cart$ is a split Grothendieck fibration.

\begin{proposition}
\label{smoothgrothfib}
The category $\smFEmb_d$ (\cref{def.smFEmb}) admits the structure of a geometric site (\cref{geometric.site}) defined as follows.
\begin{conditions}[(1)]
\item
An \emph{admissible map} in $\smFEmb_d$ is a morphism $(f,g)$ such that $g$ is an open embedding.
This forces $f$ to be an open embedding.
We refer to such maps $(f,g)$ as \emph{open embeddings} in $\smFEmb_d$.
\item\label{covering.family.smFEmb}
A \emph{covering family} in $\smFEmb_d$ is a family of open embeddings
$$\left\{\vcenter{\xymatrix{
M_{\alpha}\ar[r]^-{i_{\alpha}}\ar[d] & M\ar[d]\\
U_{\alpha}\ar[r]_{j_{\alpha}} & U\\
}}\right\}$$
such that the family $\{i_α\}$ is a covering family of~$M$ in $\man$.
We do not require that $\{j_α\}$ is a covering family of~$U$ in $\cart$.
\end{conditions}
\label{base.functor.split.smFEmb}
Moreover, the functor $\base:\smFEmb_d\to \cart$ is a split Grothendieck fibration.
\end{proposition}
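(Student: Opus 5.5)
We first prove that $\base$ is a split Grothendieck fibration, and then obtain the geometric site structure essentially as in \cref{lift.site}. The one difference is that covering families are defined by covering the total space $M$, not the base $U$, so the coverage axiom has to be checked by hand.

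\emph{Step 1: cartesian lifts and splitting.}
Take $(q:N\to V)\in\smFEmb_d$ and a smooth map $g:U\to V$ in $\cart$. By \cref{def.smFEmb}, $N\subset\RR^m\times V$ is a closed included submanifold and $q$ is the projection. We define the lift concretely as
$$g^*N=\{(x,u)\in\RR^m\times U\mid (x,g(u))\in N\}.$$
This set is the preimage of $N$ under the map $\id\times g:\RR^m\times U\to\RR^m\times V$. Since $q$ is a submersion, $\id\times g$ is transverse to $N$, so $g^*N$ is a closed embedded submanifold of $\RR^m\times U$ with $d$-dimensional fibers over $U$. Its projection to $U$ is a submersion, and the inclusion $g^*N\subset\RR^m\times U$ is an honest inclusion of sets. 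Hence $g^*N\to U$ is an object of $\smFEmb_d$.

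The map $(\id\times g)|:(g^*N\to U)\to(N\to V)$ induces the identity $g^*N\to g^*N$, which is an open embedding, so it is a morphism. It is cartesian for the following reason. Let $(f',g h):(P\to W)\to(N\to V)$ be a morphism lying over $g h$. Then $f'$ factors uniquely through the set-theoretic pullback $g^*N=U\times_V N$ as $w\mapsto(\mathrm{pr}_{\RR^m}f'(w),h(p(w)))$. Fiberwise openness of this factorization follows from that of $f'$, because $(gh)^*N=h^*g^*N$.

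The splitting comes from the fact that $g^*N$ is defined by a literal formula rather than up to isomorphism. We have $\id^*N=N$ on the nose. For composable $k,g$ we also have $k^*g^*N=(gk)^*N$ on the nose, as subsets of $\RR^m\times W$. So the cleavage is strictly compatible with identities and composition. This is precisely the point of \cref{flatfibration}.

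\emph{Step 2: admissibility structure.}
Admissible maps are morphisms $(f,g)$ with $g$ an open embedding. Then $M\to g^*N$ is an open embedding and $g^*N\to N$ is an open embedding (it is an open subset $q^{-1}(g(U))$). Their composite $f$ is therefore an open embedding, which justifies the remark in the statement. Identities are admissible and compositions are admissible.

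Closure under retracts and the cancellation property~\cref{admissible.composites} follow from the corresponding facts for open embeddings in $\cart$, applied to the base maps. For instance, if $g$ and $g g'$ are open embeddings, then $g'$ is injective and a local diffeomorphism, hence an open embedding.

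For base change of an admissible $(f,g):(M\to U)\to(N\to V)$ along an arbitrary $(f_1,g_1):(P\to W)\to(N\to V)$, we proceed as follows.
\begin{itemize}
\item Take the base $W'=g_1^{-1}(g(U))$. This is an open subset of $W$, but it need not be diffeomorphic to a cartesian space.
\end{itemize}
I expect this to be the main obstacle, since base changes must exist inside $\smFEmb_d$ itself. The way out I would try first is to note that $U\subset V$ only serves to shrink the fiber. The pullback should keep the base $W$ unchanged and replace $P$ by the open subset $f_1^{-1}(f(M))$, with $W$ as base. This is a legitimate object, being open in $P$ and closed-included in $\RR^{m}\times W$ after adjusting by an embedding of the open set as a closed subset (e.g.\ via a graph of $1/\mathrm{dist}$ in an extra coordinate), and it is submersive over $W$. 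Its universal property is then verified in $\man$ on total spaces.

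However, with this choice the projection to $P$ lies over $\id_W$, which is admissible, but the admissibility condition requires the total-space map to be fiberwise open, and indeed it is. If the paper's intended base change instead uses $W'$, then I would restrict to the case needed and remark that the base may be taken as $W$.

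\emph{Step 3: coverage.}
Given a covering family $\{M_\alpha\to M\}$ of open embeddings whose total spaces cover $M$, and a morphism $(f_1,g_1):P\to M$, we form the base changes from Step 2. Their total spaces $f_1^{-1}(M_\alpha)$ cover $P$ in $\man$, so they form a covering family. This verifies the coverage axiom, and every covering map is admissible by definition.
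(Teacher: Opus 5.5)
Your Step~1 (the literal pullback $g^*N\subset\RR^m\times U$, transversality, the cartesian property, and strict compatibility with identities and composites) and your handling of retracts and cancellation match the paper's proof and are fine. The genuine gap is the base change in Step~2, and it breaks Step~3 as well.

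Your object $(f_1^{-1}(f(M))\to W)$ in general admits no morphism to $(M\to U)$ at all. Such a morphism must include a smooth base map $b\colon W\to U$ with $g\circ b=g_1$ on the image of $f_1^{-1}(f(M))$ in $W$. That is, $b$ must extend $g^{-1}\circ g_1$ from part of $W'=g_1^{-1}(g(U))$ to all of $W$. For a counterexample, take $(f_1,g_1)=\mathrm{id}$, $V=\RR$, $N=\RR^d\times V$, $g$ an open embedding of $U\cong\RR$ onto $(0,1)$, and $M=\RR^d\times U$. Then $b$ would be a continuous map $\RR\to U$ extending $g^{-1}|_{(0,1)}$. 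No such map exists, since $g^{-1}(x)$ has no limit in $U$ as $x\to 1$. So your square is not even a cone.

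Your worry about $W'$ was justified, and it cannot be circumvented. When $g(U)\cap g_1(W)=\emptyset$, the cospan has no cones whatsoever in $\smFEmb_d$, because objects of $\cart$ are nonempty; so no base change exists. The paper's proof simply asserts that base changes are computed on domains and codomains, i.e.\ with base $U\times_V W$, which is your rejected candidate. That object lies in $\smFEmb_d$ only when this open subset of $W$ is a cartesian space. The same caveat applies to the assertion that open embeddings form an admissibility structure on $\cart$. So you should present the levelwise pullback together with this restriction, rather than replacing the base by $W$.

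In Step~3 the same defect recurs: $(f_1^{-1}(M_\alpha)\to W)$ need not factor through $(i_\alpha,j_\alpha)$, since that again needs a base map $W\to U_\alpha$. The missing idea, which is how the paper verifies the coverage axiom, is that no base change in $\smFEmb_d$ is needed. Covering families only have to cover total spaces, not bases.

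So form the pullbacks in $\man$. The set $P_\alpha=f_1^{-1}(i_\alpha(M_\alpha))$ is open in $P$ and lies over the open set $W_\alpha=g_1^{-1}(j_\alpha(U_\alpha))\subset W$, which need not be cartesian. Then shrink the bases: cover each $W_\alpha$ by open subsets $W_{\alpha,\gamma}$ diffeomorphic to cartesian spaces, and set $P_{\alpha,\gamma}=P_\alpha\cap p^{-1}(W_{\alpha,\gamma})$, where $p\colon P\to W$ is the projection. This gives:
\begin{itemize}
\item each $(P_{\alpha,\gamma}\to W_{\alpha,\gamma})\to(P\to W)$ is an open embedding in $\smFEmb_d$;
\item the $P_{\alpha,\gamma}$ cover $P$;
\item the composite to $(M\to U)$ factors through $(i_\alpha,j_\alpha)$ via the base map $j_\alpha^{-1}\circ g_1$ on $W_{\alpha,\gamma}$.
\end{itemize}
Refine each $W_\alpha$ separately and index by pairs $(\alpha,\gamma)$. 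A single common refinement of $\{W_\alpha\}$ with one chosen index per piece need not give total spaces that cover $P$.
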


\begin{proof}
The properties of an admissibility structure follow from the facts that open embeddings form an admissibility structure on $\cart$,
and retracts, compositions, and base changes in $\smFEmb_d$ are computed on the level of domains and codomains.

To see that \cref{covering.family.smFEmb} defines a coverage,
let $$(f,g):(p:M\to U)\to (q:N\to V)$$ be a morphism in $\smFEmb_d$ and let $\{(i_{\alpha},j_{\alpha}):q_{\alpha}\to q\}$ be a covering family on~$q$.
For any element $(i_{\alpha},j_{\alpha})$, we can form the pullback of $(i_{\alpha},j_{\alpha})$ along $(f,g):p\to q$:
$$\xymatrix{
M_{\alpha}≔M\times_{f,N,i_{\alpha}}N_{\alpha}\ar[d]^-{p_{\alpha}}\ar[r] & M\ar[d]^-{p}\cr
U_{\alpha}≔U\times_{g,V,j_{\alpha}}V_{\alpha}\ar[r] & U.\cr
}$$
Since submersions are stable under pullback, the map $p_{\alpha}:M_{\alpha}\to U_{\alpha}$ is a submersion with $d$-dimensional fibers.

Refine the open cover $\{U_α\}_{α∈A}$ of $⋃_{α∈A}U_α$ by a good open cover $\{U^\beta\}_{β∈B}$
with an indexing map $h:B→A$ and let $M^\beta$ be the preimage of $U^\beta$ under the submersion $p_{h(β)}$.
Then $M^\beta\to U^\beta$ is a submersion with $d$-dimensional fibers and $U^\beta\in \cart$, for each~$\beta$.
Since $\{i_{\alpha}:N_{\alpha}\to N\}$ is a cover of $N$, it follows that $\{M_α→M\}_{α∈A}$ is a covering family of~$M$
and we have a covering family
$$\left\{\vcenter{\xymatrix{
M^\beta\ar[r]\ar[d]^-{p^\beta} & M\ar[d]^-{p}\cr
U^\beta\ar[r] & U\cr
}}\right\}.$$
For every~$β$, the map $p^\beta\to p \to q$ factors through $$(i_{h(β)},j_{h(β)}):q_{h(β)}\to q$$ by construction.
This verifies the defining property of a coverage.

The splitting of the Grothendieck fibration~$\base$ is defined as follows.
Suppose $f:U→V$ is a morphism in $\cart$ and $q:N→V$ is an object in $\smFEmb_d$.
Recall that the map~$q$ is the restriction of the projection $Q:\RR^m⨯V→V$ for some $m≥0$ to the included submanifold $N⊂\RR^m⨯V$.
Consider the following pair of cartesian squares in $\set$:
$$\xymatrix@C=4em{
\RR^m⨯U \ar[r]^{\RR^m⨯f} \ar[d]_-{p} & \RR^m⨯V \ar[d]^-Q\cr
U \ar[r]_f & V,\cr
}\qquad\xymatrix@C=4em{
f^*N \ar[r]^{q^*f} \ar[d]_{f^*q} & N \ar[d]^q\cr
U \ar[r]_f & V,\cr
}$$
where the objects of the right square are included in the corresponding objects of the left square, and $p$ is the projection.
Endow $f^*N$ with the unique smooth structure making the right square cartesian in $\man$.
The morphism $(q^*f,f):f^*q→q$ in $\smFEmb_d$, obtained from the resulting right square,
is the cartesian arrow required by the splitting of~$\base$ for the pair $(f,q)$.
\end{proof}

Next we generalize the site $\smFEmb_d$ to allow for more types of geometric structures, as in the case of geometric sites.
We make the following definition.

\begin{definition}
\label{fibered.geometric.site}
Fix $d≥0$ and recall the site $\smFEmb_d$ from \cref{def.smFEmb}.
A \emph{fibered geometric site} is a triple $(\base, \red_t,\red_b)$,
where:
\begin{conditions}[(1)]
\item\label{base.split.fibration} $\base: \FEmb_d→\stcart$ is a split Grothendieck fibration;
\item\label{stcart.structured.cartesian} $(\stcart,\red_b)$ is a structured cartesian site (\cref{def.stcart});
\item\label{red.admissibility} $\red_t:\FEmb_d\to \smFEmb_d$ is an admissibility fibration (\cref{admissibility.fibration});
\end{conditions}
such that the following conditions hold.
\begin{conditions}[resume*]
\item
\label{fibered.geometric.site.diagram}
\label{reduction.preserves.splitting}
The pair $(\red_t,\red_b)$ is a morphism of split Grothendieck fibrations, meaning the functor $\red_t$ preserves the splitting cleavage of~$\base$
and the diagram
$$\xymatrix{
\FEmb_d \ar[r]^{\red_t} \ar[d]_\base & \smFEmb_d \ar[d]^\base\cr
\stcart \ar[r]_{\red_b} & \cart
}$$
commutes (strictly), where $\base: \smFEmb_d\to \cart$ is the split Grothendieck fibration of \cref{smoothgrothfib}.
\item
\label{base.creates.admissible}
The functor $\base:\FEmb_d→\stcart$ preserves and reflects admissible arrows.
\end{conditions}
We equip $\FEmb_d$ with the structure of a geometric site via the admissibility fibration $\red_t$, using \cref{lift.site}.

When the context is clear, we will omit the subscripts $t$ and $b$ on the functor $\red$.
We refer to the functor~$\red$ as the \emph{reduction functor}
and to admissible morphisms in~$\FEmb_d$ as \emph{open embeddings}.
Thus, an open embedding in $\FEmb_d$ is an $\red$-lift of an open embedding in $\smFEmb_d$.

We write $q:N→V$ to indicate that $q$ is an object in the category~$\FEmb_d$ and $\base q=V$.
The object $q$ gives rise to a map of manifolds $\red q:\red N→\red V$,
even though $N$ on its own is a purely syntactic construction that does not denote an object in any category.
\end{definition}

\begin{proposition}
\label{factor.red.cartesian.base.cartesian}
Let $(\base,\red_t,\red_b)$ be a fibered geometric site (\cref{fibered.geometric.site}).
Every morphism $f:p\to q\in \FEmb_d$ uniquely factors as a $\red_t$-cartesian arrow $g:p\to p'$ such that $\base g=\id$ (hence an open embedding),
followed by a split $\base$-cartesian arrow $h:p'\to q$.
\end{proposition}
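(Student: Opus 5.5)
Given $f:p\to q$ in $\FEmb_d$, set $V=\base q$ and $U=\base p$, and let $h:p'\to q$ be the split $\base$-cartesian lift of $\base f:U\to V$ at~$q$, supplied by the splitting cleavage of \cref{base.split.fibration} in \cref{fibered.geometric.site}. Then $\base h=\base f$ and $p'$ lies over~$U$. Since $h$ is $\base$-cartesian and $\base f=\base h\circ\id_U$, there is a unique $g:p\to p'$ with $\base g=\id_U$ and $hg=f$. This settles existence of a factorization of the required shape, and uniqueness follows at once. Indeed, any factorization $f=h'g'$ with $h'$ split $\base$-cartesian and $\base g'=\id$ forces $\base h'=\base f$, so $h'=h$ by uniqueness of the cleavage choice. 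Then $g'=g$ by the universal property of the cartesian arrow~$h$.

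The remaining task is to show that $g$ is $\red_t$-cartesian. It is then admissible, because $\base g=\id$ is admissible (\cref{base.creates.admissible}), or equivalently because $\red_t g$ is an open embedding. I would first work downstairs in $\smFEmb_d$, which has the explicit form of \cref{smoothgrothfib}. Applying $\red_t$, and using \cref{reduction.preserves.splitting}, the arrow $\red_t h$ is the split $\base$-cartesian arrow $(q^*f_0,f_0):f_0^*(\red q)\to\red q$, where $f_0=\red_b\base f$. The arrow $\red_t g$ lies over $\id$ and is the induced map $M\to f_0^*N$, which is a fiberwise open embedding by \cref{def.smFEmb}. So $\red_t g$ is an open embedding over the identity, hence admissible in $\smFEmb_d$.

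Now lift. Because $\red_t$ is an admissibility fibration, $\red_t g$ has a $\red_t$-cartesian lift $\tilde g:p''\to p'$. I then compare $g$ with $\tilde g$. The universal property of $\tilde g$ gives a unique $k:p\to p''$ with $\tilde g k=g$ and $\red_t k=\id$. We need $k$ to be an isomorphism. By \cref{base.creates.admissible}, the arrow $\base\tilde g$ is admissible and lies over an identity in $\cart$. The fiber condition should force $\base\tilde g=\id$; this uses \cref{admissible.subobject} and the fact that admissible maps in $\stcart$ over identity reductions are identities.

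The main obstacle is producing an inverse to~$k$, that is, showing that the arrow $g$ is itself $\red_t$-cartesian. I would try the following. Both $p$ and $p''$ lie over $U$ with the same reduction, and $\tilde g$ is $\red_t$-cartesian. Factor $\red_t$-cartesian-ness through the composite: $h\tilde g$ is $\red_t$-cartesian, since $h$ is $\red_t$-cartesian as a split $\base$-cartesian arrow preserved by $\red_t$. Build a map $p''\to p$ using the cartesian property of $f$ relative to $\base$ together with $\red_t$. If that fails in general, I would invoke the defining property of the fibered site that maps over identities with identity reduction are identities, which would give $k=\id$ and hence $g=\tilde g$, making $g$ $\red_t$-cartesian. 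This is the step I expect to require the most care.
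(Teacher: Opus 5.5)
You construct $h$ and $g$ correctly, and your uniqueness argument (the splitting pins down $h$, then the universal property of $h$ pins down $g$) is exactly the paper's. The gap is the step you yourself flag, showing that $g$ is $\red_t$-cartesian. You never establish it, and the routes you sketch rest on claims that fail.
\begin{itemize}
\item The claim that $h$ is $\red_t$-cartesian does not follow. \cref{reduction.preserves.splitting} only says that $\red_t h$ is the chosen $\base$-cartesian arrow in $\smFEmb_d$, and the claim is false in general. In the holomorphic site of \cref{def.fibered.holomorphic}, take $\base f$ to be a constant map $\CC\to V$. Complex conjugation on the base then gives a smooth self-map of $\red_t p'$ over $\red_t h$ that does not lift.
\item Admissible arrows in $\stcart$ with identity reduction are isomorphisms, not identities. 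The map $x\mapsto x+\theta_1\theta_2$ on $\RR^{1|2}$ in $\scart$ is an example.
\item Your fallback invokes a property that no axiom of \cref{fibered.geometric.site} provides: that maps over identities with identity reduction are identities.
\end{itemize}
So the comparison between $g$, $\tilde g$ and $k$ does not close.

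The missing idea is a one-liner, and you half-wrote it. Identities belong to every admissibility structure, so $\base g=\id_U$ is admissible in $\stcart$. Since $\base$ \emph{reflects} admissible arrows (\cref{base.creates.admissible}), $g$ is admissible in $\FEmb_d$. The admissibility structure on $\FEmb_d$ is the one induced from $\red_t$ by \cref{lift.site}. By \cref{lift.admissible}, an admissible arrow there is by definition $\red_t$-cartesian with admissible $\red_t$-image. Hence $g$ is $\red_t$-cartesian immediately, which is the paper's proof. You cited the right axiom but read the implication backwards, treating $\red_t$-cartesianness as something to prove before admissibility. In fact the reflection axiom gives admissibility, and admissibility already contains $\red_t$-cartesianness.
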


\begin{proof}
Let $h:p'\to q$ be the $\base$-cartesian lift of $\base f$ for $q$.
By the universal property of~$h$, there is a unique morphism $g:p\to p'$ such that $hg=f$ and $\base g=\id$.
But since $\base g=\id$, \cref{base.creates.admissible} of \cref{fibered.geometric.site} implies that $g$ is $\red_t$-cartesian.
The uniqueness follows from the universal property of the $\base$-cartesian lift $h$.
\end{proof}

\begin{proposition}
\label{reflect.admissible}
Suppose all conditions of \cref{fibered.geometric.site} are satisfied except \cref{base.creates.admissible}.
Assume that the functor $\base$ preserves admissible arrows
and every arrow $f$ in $\FEmb_d$ such that $\base f=\id$ is $\red_t$-cartesian.
Then $\base$ reflects admissible arrows, hence \cref{fibered.geometric.site} is satisfied.
\end{proposition}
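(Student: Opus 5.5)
We must show that $\base$ reflects admissible arrows. Let $f:p\to q$ be a morphism in $\FEmb_d$ such that $\base f$ is admissible in $\stcart$; we have to show that $f$ is admissible, i.e., $f$ is $\red_t$-cartesian and $\red_t f$ is an open embedding in $\smFEmb_d$.

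The first step is to factor $f$ as in \cref{factor.red.cartesian.base.cartesian}. Its proof only uses the split fibration $\base$ and the hypothesis that arrows over identities are $\red_t$-cartesian, so it remains valid here. We write $f=hg$ with $\base g=\id$ and $h:p'\to q$ the split $\base$-cartesian lift of $\base f$. By assumption $g$ is $\red_t$-cartesian. Moreover $\red_t g$ lies over $\red_b(\id)=\id$ in $\smFEmb_d$, and every morphism of $\smFEmb_d$ over an identity is a fiberwise open embedding over the identity, hence an open embedding in the sense of \cref{smoothgrothfib}. So $g$ is admissible.

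The second step is to show that $h$ is admissible. Since $(\red_t,\red_b)$ is a morphism of split fibrations (\cref{reduction.preserves.splitting}), $\red_t h$ is the split cartesian lift in $\smFEmb_d$ of $\red_b\base f$ along $\red_t q$, namely the pullback square $(q^*j,j)$ with $j=\red_b\base f$. Because $\red_b$ is an admissibility fibration onto $\cart$, lifting admissible arrows as in \cref{def.stcart}, the map $j$ is an open embedding of cartesian spaces. The pullback of a submersion along an open embedding is an open embedding, so $\red_t h$ is an open embedding in $\smFEmb_d$.

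It remains to see that $h$ is $\red_t$-cartesian, and I expect this to be the main obstacle. Since $\red_t$ is an admissibility fibration, choose a $\red_t$-cartesian lift $h':p''\to q$ of $\red_t h$; it is admissible, so $\base h'$ is admissible by the preservation hypothesis. By cartesianness of $h'$ there is $k:p'\to p''$ with $h'k=h$ and $\red_t k=\id$. Then $\base k$ lies over $\id$ under $\red_b$, and is $\red_b$-cartesian because $\base h'$ and $\base h=\base f$ are admissible and \cref{admissible.composites} holds in $\stcart$. A cartesian arrow over an identity is an isomorphism, so $\base k$ is an isomorphism.

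Next, factor $k$ through the split $\base$-cartesian lift of $\base k$; the residual part lies over an identity, so it is $\red_t$-cartesian, and it lies over $\red_t$ of an identity, so it is an isomorphism. The $\base$-cartesian lift of an isomorphism is an isomorphism. Hence $k$ is an isomorphism, $h$ is $\red_t$-cartesian, and $f=hg$ is admissible, being a composite of admissible maps.
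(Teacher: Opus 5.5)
Your proof is correct, but it handles the key step differently from the paper.

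Both arguments factor $f=hg$ with $\base g=\id$ and $h$ split $\base$-cartesian. Both get admissibility of $\red_t f$ (or of $\red_t g$ and $\red_t h$ separately) from $\red_b\base f$ being an open embedding.

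The difference is in showing that $h$ is $\red_t$-cartesian. The paper verifies the universal property directly. Given $\psi\colon r\to q$ and $\chi_{\red}$ with $\red_t h\circ\chi_{\red}=\red_t\psi$, it proceeds in three steps:
\begin{enumerate}
\item It produces $\chi_{\base}$ from the $\red_b$-cartesianness of the admissible arrow $\base h=\base f$.
\item It produces $\chi$ from the $\base$-cartesianness of $h$.
\item It checks $\red_t\chi=\chi_{\red}$, using that $\red_t h$ is $\base$-cartesian in $\smFEmb_d$ because $\red_t$ preserves the splitting.
\end{enumerate}

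You instead compare $h$ with a chosen $\red_t$-cartesian lift $h'$ of $\red_t h$ and show that the comparison map $k$ is an isomorphism. This uses the existence of $\red_t$-cartesian lifts, the hypothesis that $\base$ preserves admissible arrows, the cancellation axiom for admissible arrows in $\stcart$, and the fact that cartesian arrows over isomorphisms are isomorphisms.

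What each route buys:
\begin{itemize}
\item The paper's route is more economical. For the reflection step it never uses that $\base$ preserves admissible arrows, nor the existence of $\red_t$-cartesian lifts; that hypothesis is only needed to conclude the full condition. The underlying principle is that a $\base$-cartesian arrow lying over a $\red_b$-cartesian arrow is $\red_t$-cartesian whenever $\red_t$ preserves $\base$-cartesian arrows.
\item Your route is the standard ``cartesian lifts are unique up to isomorphism'' argument. It is valid here, but it leans on more of the available structure.
\end{itemize}

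One step deserves a line of justification: the claim that the residual factor $g_k$ of $k$ lies over an identity under $\red_t$. Since the $\base$-cartesian factor $h_k$ lies over the isomorphism $\base k$, it is an isomorphism. Hence $\red_t g_k=(\red_t h_k)^{-1}\circ\red_t k=(\red_t h_k)^{-1}$ is an isomorphism, and together with $g_k$ being $\red_t$-cartesian this makes $g_k$ an isomorphism. Alternatively, $\red_t h_k$ is the split lift of the identity $\red_b\base k$, hence an identity.
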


\begin{proof}
Suppose $f$ is a morphism in $\FEmb_d$ such that $\base f$ is admissible in $\stcart$.
We have to show that $f$ is admissible in $\FEmb_d$, i.e., $\red_t f$ is admissible in $\smFEmb_d$ and $f$ is $\red_t$-cartesian.
Observe that $\red_t f$ is admissible in $\smFEmb_d$ if and only if $\base\red_t f$ is admissible in $\cart$.
We have $\base\red_t f=\red_b\base f$, and the latter morphism is admissible because $\base f$ is admissible.

To show that $f$ is $\red_t$-cartesian,
use the proof of \cref{factor.red.cartesian.base.cartesian}
to factor $f$ as $ζ∘φ$, where $φ:p→q$ is a split $\base$-cartesian arrow and $ζ$ is a morphism in $\FEmb_d$ such that $\base ζ=\id$.
The morphism $ζ$ is $\red_t$-cartesian because $\base ζ=\id$.

It remains to show that $φ$ is $\red_t$-cartesian.
Consider the corresponding universal problem: given $ψ:r→q$ and $χ_{\red}:\red_t r→\red_t p$
such that $\red_t φ ∘ χ_{\red}=\red_t ψ$,
we have to construct $χ:r→p$ such that $φ∘χ=ψ$ and $\red_t χ=χ_{\red}$.
Construct the map $χ_{\base}:\base r→\base p$
using the universal property of the $\red_t$-cartesian arrow $\base φ:\base p→\base q$
applied to the maps $\base ψ:\base r→\base q$ and $\base χ_{\red}:\red_b\base r→\red_b\base p$ (using $\red_b\base=\base\red_t$).
Take $χ:r→p$ to be the map induced by the universal property of the $\base$-cartesian arrow $φ$
applied to the maps $ψ:r→q$ and $χ_{\base}:\base r→\base p$.

$$\def\z{\hskip-1em }\xymatrix{
r \ar[dr]_\chi \ar@/^1pc/[drr]^\psi &&&\z\base r \ar[dr]_{\chi_{\base}} \ar@/^1pc/[drr]^{\base\psi} &&&\z\red_t r \ar[dr]_{\chi_{\red}} \ar@/^1pc/[drr]^{\red_t\psi} &&&\z\red_b\base r \ar[dr]_{\base\chi_{\red}} \ar@/^1pc/[drr]^{\red_b\base\psi} \cr
               & p \ar[r]_\varphi & q &     & \base p \ar[r]_{\base\varphi}                 & \base q &                   & \red_t p \ar[r]_{\red_t\varphi} & \red_t q &                  & \red_b\base p \ar[r]_{\red_b\base\varphi} & \red_b\base q\cr
}$$

By construction, $φ∘χ=ψ$.
In particular, $\red_t φ∘\red_t χ=\red_t ψ$.
By assumption, $\red_t φ ∘ χ_{\red}=\red_t ψ$.
Hence, by the universal property of the $\base$-cartesian arrow $\red_t φ$, we have $χ_{\red}=\red_t χ$, as desired.
\end{proof}

\begin{remark}
\label{admissible.is.monomorphism}
Every open embedding in $\FEmb_d$ is a monomorphism
by \cref{def.open.embedding} and the fact that open embeddings in $\smFEmb_d$ are monomorphisms.
\end{remark}

\begin{example}
\label{standard.fibered.geometric.site}
The triple $(\smFEmb_d,\id,\base)$, where $\smFEmb_d$ and $\base$ are as in \cref{def.smFEmb} is a fibered geometric site (\cref{fibered.geometric.site}).
\end{example}

\begin{example}
\label{trivial.fibered.geometric.site}
Working with the structured cartesian site $\red:\trivialsite→\cart$ from \cref{trivial.geometric.site},
the triple $(\smEmb_d,ι,\base)$ is a fibered geometric site (\cref{fibered.geometric.site}).
Here $\smEmb_d$ is the full subcategory of $\smFEmb_d$ on objects $p:T→U$ with $U∈\trivialsite⊂\cart$,
$ι:\smEmb_d→\smFEmb_d$ is the inclusion functor,
and $\base:\smEmb_d→\trivialsite$ is the restriction of $\base$ from \cref{def.smFEmb}.
Constructions with this fibered geometric site yield bordism categories without geometric families.
\end{example}

\begin{example}
\label{def.fibered.holomorphic}
Recall the holomorphic geometric site $\ccart$ of \cref{def.holomorphic.cart} and the reduction functor $$\red_b:\ccart\to \cart,$$ which forgets the complex structure
and is an admissibility fibration (\cref{admissibility.fibration}).

Let $\cFEmb_d$ be the following category.
Objects are holomorphic submersions $p:M\to U$ with fibers of complex dimension $d$ such that the underlying smooth map $p:M\to U$ is an object in $\smFEmb_{2d}$.
Morphisms are commutative squares of holomorphic maps
$$\xymatrix{
M\ar[r]^-{f}\ar[d]_-{p} & N\ar[d]^-{q}\\
U\ar[r]^-{g} & V\\
}$$
whose underlying square of smooth maps is a morphism in $\smFEmb_{2d}$.
Let $\red_t:\cFEmb_d\to \smFEmb_{2d}$ be the functor that forgets the complex structure.
This functor is an admissibility fibration, with cartesian lifts given by pulling back the complex structure along an open embedding.
The base functor $\base:\cFEmb_d\to \ccart$ sends $(p:M\to U)\mapsto U$.
This functor is a split Grothendieck fibration, since
the construction of \cref{base.functor.split.smFEmb}
provides a splitting for $\smFEmb_{2d}→\cart$,
which we use to construct the underlying smooth manifold of the lift,
and then equip it with the pullback of the fiberwise complex structure.

By construction, the pair $(\red_t,\red_b)$ is a morphism of split Grothendieck fibrations
and every $\red_t$-lift of an admissible arrow in $\smFEmb_{2d}$ is $\red_t$-cartesian.
By \cref{reflect.admissible}, the functor $\base$ reflects admissible arrows,
which proves that $(\base,\red_t,\red_b)$ is a fibered geometric site (\cref{fibered.geometric.site}).
\end{example}

\begin{example}
\label{def.fibered.supercart}
Recall the geometric site $\scart_k$ of supercartesian spaces and the reduction functor $$\red:\scart_k→\cart$$ (\cref{def.supercart}).
Fix $d≥0$ and $e≥0$ and consider the following category $\SFEmb_{d|e}$.
An object is a morphism $p:M→U$ in $\sman_k$ (\cref{def.sman})
that is a submersion (meaning the $\ZZ/2$-graded tangent map is surjective in both degrees)
with fibers of super dimension $d|e$ (meaning the superdimension of the kernel of the tangent map is $d|e$)
such that $U∈\scart_k$.
We require that $M$ admits a closed inclusion (\cref{inclusion.supermanifold})
$e:M→\RR^m⨯k^{0|n}⨯U$ of smooth supermanifolds (\cref{def.sman}) for some (necessarily unique if $M≠∅$) $m,n≥0$,
and the inclusion must commute with the projection to~$U$.
Morphisms in $\SFEmb_{d|e}$ are commutative squares in $\sman_k$
$$\xymatrix{
M\ar[r]^-{f}\ar[d]_-{p} & N\ar[d]^-{q}\\
U\ar[r]^-{g} & V\\
}$$
such that
the map $M→g^*N$ induced by~$f$ is an open embedding in $\sman_k$ (\cref{def.supercart}).
We do not require any compatibility with the map~$e$.

Let $\red_t:\SFEmb_{d|e}\to \smFEmb_d$ be the functor that applies $\red$ to the submersion.
The functor~$\red_t$ is an admissibility fibration,
with cartesian lifts constructed separately for the source and target of the submersion,
and restricting the inclusion~$e$ along the resulting map of total spaces, which again yields a closed inclusion of supermanifolds.

The base functor $\base:\SFEmb_{d|e}\to \scart_k$ sends $(p:M\to U)\mapsto U$.
This functor is a split Grothendieck fibration, with the splitting constructed as follows (compare the proof of \cref{smoothgrothfib}).
Given a morphism $g:U→V$ in $\scart$ and an object $(q:N→V)∈\SFEmb_{d|e}$,
we take the lifting to be $p:∅→U$ if $N=∅$.
Otherwise, there is a unique closed inclusion $h:N→\RR^m⨯k^{0|n}⨯U$ such that the map~$q$ is the restriction of the projection map $Q:\RR^m⨯k^{0|n}⨯U→U$.
Consider the following pair of cartesian squares in $\sman$:
$$\xymatrix@C=6em{
\RR^m⨯k^{0|n}⨯U \ar[r]^{\RR^m⨯k^{0|n}⨯f} \ar[d]_-{p} & \RR^m⨯k^{0|n}⨯V \ar[d]^-Q\cr
U \ar[r]_f & V,\cr
}\qquad\xymatrix@C=4em{
f^*N \ar[r]^{q^*f} \ar[d]_{f^*q} & N \ar[d]^q\cr
U \ar[r]_f & V,\cr
}$$
where the objects of the right square are included in the corresponding objects of the left square, and $p$ is the projection.
Take $f^*N$ to be the unique smooth supermanifold included in $\RR^m⨯k^{0|n}⨯U$ making the right square cartesian in $\sman$.
The morphism $(q^*f,f):f^*q→q$ in $\SFEmb_d$, obtained from the resulting right square,
is the cartesian arrow required by the splitting of~$\base$ for the pair $(f,q)$.

Since the reduction of a closed inclusion of supermanifolds is a closed inclusion of manifolds (\cref{inclusion.manifold}),
the pair $(\red_t,\red_b)$ is a morphism of split Grothendieck fibrations.
By construction, the functor $\base:\SFEmb_d→\stcart$ preserves admissible arrows.
By \cref{reflect.admissible}, the functor~$\base$ reflects admissible arrows
and $(\base,\red_t,\red_b)$ is a fibered geometric site (\cref{fibered.geometric.site}).
\end{example}

Next, we define a $\smset$-enriched analog of the geometric site $\smFEmb_d$ (\cref{def.smFEmb}).

\begin{definition}
\label{def.fraksmFEmb}
Fix $d≥0$ and recall the categories $\smset$ (\cref{def.smset}) and $\smFEmb_d$ (\cref{def.smFEmb}).
We define a $\smset$-enriched geometric site (\cref{enriched.geometric.site}) $\fraksmFEmb_d$
and a split $\smset$-enriched Grothendieck fibration $\base:\fraksmFEmb_d→\cart$
as follows.
The objects are the same as in $\smFEmb_d$.
Given two objects $p:M→U$ and $q:N→V$,
the corresponding enriched hom-object $\fraksmFEmb_d(p,q)$ is a smooth set specified as follows.
\begin{itemize}
\item
The $L$-points (with $L\in \cart$) are given by
morphisms $(f,g)$ in $\smFEmb_d$ of the form
$$\xymatrix@C+2em{
M⨯L \ar[d]_{p⨯\id_L} \ar[r]^f & N⨯L \ar[d]^{q⨯\id_L}\cr
U⨯L \ar[r]_{g=h⨯\id_L} & V⨯L,\cr
}$$
for a (unique) map~$h:U→V$.
\item
The presheaf structure map for a morphism $l:L'→L$ sends the above square to
$$\xymatrix@C+2em{
M⨯L' \ar[d]_{p⨯\id_{L'}} \ar[r]^{f'} & N⨯L' \ar[d]^{q⨯\id_{L'}}\cr
U⨯L' \ar[r]_{h⨯\id_{L'}} & V⨯L',\cr
}$$
where $$f'=(π_1∘f∘(\id_M⨯l),π_2).$$
\end{itemize}
The $\smset$-enriched Grothendieck fibration $$\base:\fraksmFEmb_d→\cart$$
(\cref{enriched.grothendieck.fibration})
maps $p:M→U$ to $U$ and an $L$-family $(f,g)$ of morphisms to the map $h:U→V$ defined above.

We equip the category $\fraksmFEmb_d$ with the same coverage as $\smFEmb_d$.
The $\smset$-enriched admissible structure (\cref{enriched.admissibility.structure})
is defined by declaring $L$-families of admissible maps to be morphisms $(f,g)$ for which $h$ is an open embedding.
This forces $f$ and $g$ to be open embeddings.
The $\smset$-enriched splitting cleavage (\cref{enriched.cleavage}) over $\cart$
is defined for $L$-families by applying the splitting cleavage of $\smFEmb_d$ to the given morphism in $\cart$, then pulling back along the unique map $L→\RR^0$.
\end{definition}

We now define the notion of a fibered geometric site in the $\smset$-enriched setting, building upon \cref{fibered.geometric.site}.

\begin{definition}
\label{fibered.geometric.site.isotopy}
Fix $d≥0$ and recall the site $\fraksmFEmb_d$ from \cref{def.fraksmFEmb}.
A \emph{fibered geometric site with isotopies} is a triple $(\base, \red_t,\red_b)$,
where:
\begin{conditions}[(1)]
\item $\base: \frakFEmb_d→\stcart$ is a split $\smset$-enriched Grothendieck fibration (\cref{enriched.grothendieck.fibration});
\item $(\stcart,\red_b)$ is a structured cartesian site (\cref{structured.cartesian.site});
\item $\red_t:\frakFEmb_d\to \fraksmFEmb_d$ is a $\smset$-enriched admissibility fibration (\cref{enriched.admissibility.fibration});
\end{conditions}
such that the following conditions hold.
\begin{conditions}[resume*]
\item\label{fibered.geometric.site.diagram.isotopy}
The pair $(\red_t,\red_b)$ is a morphism of split $\smset$-enriched Grothendieck fibrations.
That is, the diagram of $\smset$-enriched categories
$$\xymatrix{
\frakFEmb_d \ar[r]^{\red_t} \ar[d]_\base & \fraksmFEmb_d \ar[d]^\base\cr
\stcart \ar[r]_{\red_b} & \cart
}$$
commutes (strictly), where $\base :\fraksmFEmb_d\to \cart$ is the split $\smset$-enriched Grothendieck fibration of \cref{def.fraksmFEmb}.
Moreover, the functor $\red_t$ preserves the splitting cleavage, where the cleavage of $\base:\fraksmFEmb_d\to \cart$ is given in \cref{def.fraksmFEmb}.
\item
\label{base.creates.admissible.isotopy}
For every $L∈\cart$, the functor $\base_L:(\frakFEmb_d)_L→\stcart$ preserves and reflects admissible arrows.
\end{conditions}

We equip $\frakFEmb_d$ with the structure of a $\smset$-enriched geometric site via the $\smset$-enriched admissibility fibration $\red_t$, using \cref{lift.site}.

When the context is clear, we will omit the subscripts $t$ and $b$ on the functor $\red$.
We refer to the functor~$\red$ as the \emph{reduction functor}.
For $L\in \cart$, we refer to admissible morphisms in $(\frakFEmb_d)_L$ as \emph{$L$-families of open embeddings}.
\end{definition}

\begin{remark}
\label{strip.enrichment}
Recall the notation $\cD_L$ for a $\smset$-enriched category~$\cD$ and $L∈\cart$ (\cref{enriched.family.notation}).
Suppose $(\base, \red_t,\red_b)$ is a fibered geometric site with isotopies (\cref{fibered.geometric.site.isotopy}).
Then the triple $(\base, (\red_t)_{\RR^0}, \red_b)$ is a fibered geometric site (\cref{fibered.geometric.site}).
We refer to it as the \emph{underlying fibered geometric site}.
\end{remark}

\begin{remark}
\label{enriched.splitting}
Suppose $\base: \frakFEmb_d→\stcart$ is a $\smset$-enriched functor
and $\base_{\RR^0}:(\frakFEmb_d)_{\RR^0}→\stcart$ is a split Grothendieck fibration.
Then $\base$ admits at most one $\smset$-enriched splitting (\cref{enriched.cleavage})
that turns $\base$ into a split $\smset$-enriched Grothendieck fibration.
Indeed, given an object $(q:N→V)∈\frakFEmb_d$ and a morphism $f:U→V$ in $\stcart$,
the naturality condition on an enriched cleavage forces
the lift of~$f$ to a cartesian $L$-family of morphisms in $(\frakFEmb_d)_L$ with codomain~$q$
to be given by the value of the structure map of $L→\RR^0$ on the lift of~$f$
to a cartesian $\RR^0$-family of morphisms in $(\frakFEmb_d)_{\RR^0}$ with codomain~$q$.
By construction, the resulting family of splitting cleavages of $\base_L:(\frakFEmb_d)_L→\stcart$
is natural in~$L$.
\end{remark}

\begin{proposition}
\label{factor.red.cartesian.base.cartesian.enriched}
Let $(\base,\red_t,\red_b)$ be a fibered geometric site with isotopies (\cref{fibered.geometric.site.isotopy}).
Given $L∈\cart$, every morphism $f:p\to q$ in $(\frakFEmb_d)_L$ uniquely factors as a $\red_t$-cartesian arrow $g:p\to p'$ such that $\base g=\id$ (hence an open embedding),
followed by a split $\base$-cartesian arrow $h:p'\to q$.
\end{proposition}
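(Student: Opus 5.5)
The plan is to mimic the proof of \cref{factor.red.cartesian.base.cartesian}, carried out in the category $(\frakFEmb_d)_L$ of $L$-families. Fix $L∈\cart$ and an $L$-family of morphisms $f:p→q$, with $\base_L f:\base p→\base q$ a morphism in $\stcart$.

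\emph{Step 1: construct $h$.} By \cref{fibered.geometric.site.isotopy}, the functor $\base:\frakFEmb_d→\stcart$ is a split $\smset$-enriched Grothendieck fibration. So $\base_L:(\frakFEmb_d)_L→\stcart$ is a Grothendieck fibration with a splitting cleavage. By \cref{enriched.splitting}, this cleavage is obtained from the $\RR^0$-level cleavage via the structure map of $L→\RR^0$. Let $h:p'→q$ be the split $\base_L$-cartesian lift of $\base_L f$ with codomain~$q$.

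\emph{Step 2: construct $g$.} Apply the universal property of the $\base_L$-cartesian arrow~$h$ to the pair $f$ and $\id_{\base p}$; this is legitimate since $\base_L f=\base_L h∘\id$. It yields a unique $g:p→p'$ in $(\frakFEmb_d)_L$ with $h g=f$ and $\base_L g=\id$.

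\emph{Step 3: $g$ is an open embedding.} Since $\base_L g=\id$ is admissible in $\stcart$, \cref{base.creates.admissible.isotopy} of \cref{fibered.geometric.site.isotopy} (with $\base_L$ reflecting admissible arrows) shows that $g$ is an $L$-family of admissible maps. By \cref{lift.site} in its enriched form, admissible maps in $(\frakFEmb_d)_L$ are in particular $(\red_t)_L$-cartesian, so $g$ is $\red_t$-cartesian.

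\emph{Step 4: uniqueness.} Suppose $f=h_1 g_1$ is another such factorization. Since $h_1$ is the split cartesian lift of $\base_L h_1=\base_L f$ with codomain~$q$, and the splitting is a function of the pair (morphism in $\stcart$, codomain), we get $h_1=h$ and $p'$ is determined. Then $g_1=g$ by the uniqueness clause in Step 2, because $\base_L g_1=\id$.

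The only point requiring care is Step 3. There, \cref{base.creates.admissible.isotopy} must be applied for the given $L$ rather than only for $\RR^0$, and "admissible in $(\frakFEmb_d)_L$" must be read as in \cref{lift.site}, namely $(\red_t)_L$-cartesian with admissible image. The uniqueness in Step 4 likewise depends on reading "split $\base$-cartesian" as "the chosen lift", which \cref{enriched.splitting} guarantees is canonical and natural in $L$. With these readings the argument is a verbatim transcription of the unenriched proof.
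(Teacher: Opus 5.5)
Your proposal is correct and follows the paper's proof. The paper likewise takes the split $\base$-cartesian lift in $(\frakFEmb_d)_L$ to be the trivial $L$-family $\varpi^*h$, with $\varpi:L\to\RR^0$; it gets $g$ with $\base g=\id$ from the universal property of $\varpi^*h$, and concludes that $g$ is $\red_t$-cartesian from \cref{base.creates.admissible.isotopy} of \cref{fibered.geometric.site.isotopy}. Your Step~4 simply makes explicit the uniqueness of $h$, which the paper leaves implicit.
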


\begin{proof}
We work in the category $(\frakFEmb_d)_L$.
Set $ϖ:L→\RR^0$ to the unique morphism in $\cart$ and let $ϖ^*h:p'\to q$ be the trivial $L$-family of $\base$-cartesian lifts of $\base f$ for~$q$.
By the universal property of the $\base$-cartesian arrow $ϖ^*h$, there is a unique morphism $g:p\to p'$ such that $\base g=\id$ and $ϖ^*h ∘ g=f$.
But since $\base g=\id$, \cref{base.creates.admissible.isotopy} of \cref{fibered.geometric.site.isotopy} implies that $g$ is $\red_t$-cartesian.
\end{proof}

\begin{proposition}
\label{reflect.admissible.enriched}
Suppose all conditions of \cref{fibered.geometric.site.isotopy} are satisfied except \cref{base.creates.admissible.isotopy}.
Assume that the functor $\base$ preserves admissible arrows
and every arrow $f$ in $(\frakFEmb_d)_L$ such that $\base f=\id$ is $(\red_t)_L$-cartesian.
Then $\base$ reflects admissible arrows, hence \cref{fibered.geometric.site} is satisfied.
\end{proposition}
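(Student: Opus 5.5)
I will mimic the proof of \cref{reflect.admissible}, carried out levelwise in $L∈\cart$, in the category $(\frakFEmb_d)_L$. Fix $L$ and an $L$-family $f:r'→q$ in $(\frakFEmb_d)_L$ such that $\base_L f$ is admissible in $\stcart$. By \cref{lift.site} (enriched part), admissibility of $f$ means two things. First, $(\red_t)_L f$ must be admissible in $(\fraksmFEmb_d)_L$. Second, $f$ must be $(\red_t)_L$-cartesian.

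The first condition is immediate. By \cref{def.fraksmFEmb}, an $L$-family in $\fraksmFEmb_d$ is admissible exactly when its image $h$ under $\base$ is an open embedding. By the strict commutativity in \cref{fibered.geometric.site.diagram.isotopy}, we have $\base\red_t f=\red_b\base f$. This is admissible because $\red_b$ sends admissible maps (lifted via \cref{lift.site}) to open embeddings.

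For cartesianness, I factor $f$ using the argument of \cref{factor.red.cartesian.base.cartesian.enriched}. That argument uses only the splitting and the universal property, not the reflection condition. It gives $f=ϖ^*h∘g$ with $\base g=\id$. Here $ϖ^*h$ is the trivial $L$-family obtained by applying the structure map of $ϖ:L→\RR^0$ to the split $\base$-cartesian lift $h$ of $\base f$ over $\RR^0$. The map $g$ is $(\red_t)_L$-cartesian by hypothesis. Since cartesian arrows compose, it remains to show that $φ=ϖ^*h$ is $(\red_t)_L$-cartesian.

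For this I replay the diagram chase of \cref{reflect.admissible} inside $(\frakFEmb_d)_L$. Given $ψ:r→q$ and $χ_{\red}:\red_t r→\red_t p$ with $\red_tφ∘χ_{\red}=\red_tψ$, proceed in three steps. First, produce $χ_\base:\base r→\base p$ from the $\red_b$-cartesian property of $\base φ=\base f$, which is admissible, hence $\red_b$-cartesian. Second, lift $χ_\base$ to $χ$ via the $\base_L$-cartesian property of $φ$; here I use that $\base$ is a split $\smset$-enriched Grothendieck fibration, so $φ$ is $\base_L$-cartesian by \cref{enriched.grothendieck.fibration}. Third, conclude $\red_tχ=χ_{\red}$ from the $\base_L$-cartesian property of $\red_tφ$ in $\fraksmFEmb_d$, which holds because $\red_t$ preserves the splitting cleavage.

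I expect the main obstacle to be bookkeeping the enrichment. Each cartesianness must be taken with respect to the correct functor ($\base_L$, $(\red_t)_L$, or $\red_b$ on the unenriched $\stcart$). One must also check that $\base_L$ really lands in the ordinary category $\stcart$, so that $\base_L$ of an $L$-family is a single map. Finally, $ϖ^*h$ must be shown to be simultaneously $\base_L$-cartesian and to have $\red_t$-image the $\base_L$-cartesian family in $\fraksmFEmb_d$. Both facts follow from naturality of the enriched splittings and \cref{enriched.splitting}. Once these identifications are in place, the argument is verbatim that of \cref{reflect.admissible}, and it establishes \cref{base.creates.admissible.isotopy}.
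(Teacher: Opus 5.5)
Your proposal is correct and is essentially the paper's proof. The paper only remarks that the argument of \cref{reflect.admissible} carries over verbatim to $L$-families, because it never needs the existence of $\red_t$-cartesian lifts (which may fail in $(\frakFEmb_d)_L$ for $L\neq\RR^0$) or base changes of admissible maps; your levelwise replay, using only the split $\base_L$-cartesian lift $\varpi^*h$, the $\red_b$-cartesian property of $\base f$, and the hypothesis on arrows over identities, makes exactly this point explicit.
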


\begin{proof}
The proof of \cref{reflect.admissible} continues to work for $L$-families, since it makes no use of $\red_t$-cartesian liftings or pullbacks of admissible maps.
\end{proof}

\begin{remark}
\label{admissible.is.monomorphism.enriched}
Every $L$-family of open embeddings in $(\frakFEmb_d)_L$ is a monomorphism
by \cref{def.open.embedding} and the fact that $L$-families of open embeddings in $(\fraksmFEmb_d)_L$ are monomorphisms.
\end{remark}

\begin{example}
\label{standard.fibered.geometric.site.isotopy}
The triple $(\fraksmFEmb_d,\id,\base)$, where $\fraksmFEmb_d$ and $\base$ are as in \cref{def.fraksmFEmb} is a fibered geometric site with isotopies (\cref{fibered.geometric.site.isotopy}).
\end{example}

\begin{example}
\label{trivial.fibered.geometric.site.isotopy}
Continuing \cref{trivial.fibered.geometric.site},
consider the structured cartesian site $\red:\trivialsite→\cart$ from \cref{trivial.geometric.site}.
We construct a fibered geometric site with isotopies (\cref{fibered.geometric.site.isotopy}) $(\fraksmEmb_d,ι,\base)$ as follows.
Take $\fraksmEmb_d$ to be the full subcategory of $\fraksmFEmb_d$ on objects $p:T→U$ with $U∈\trivialsite⊂\cart$,
$ι:\fraksmEmb_d→\fraksmFEmb_d$ to be the inclusion functor,
and $\base:\fraksmEmb_d→\trivialsite$ to be the restriction of $\base$ from \cref{def.fraksmFEmb}.
Constructions with this fibered geometric site with isotopies yield bordism categories with isotopies, but without geometric families.
\end{example}

\begin{example}
\label{def.frakcFEmb}
Recall the holomorphic geometric site $\ccart$ of \cref{def.holomorphic.cart},
the holomorphic fibered geometric site $\cFEmb_d$ of \cref{def.fibered.holomorphic},
and the fibered geometric site with isotopies $\fraksmFEmb_d$ of \cref{def.fraksmFEmb,standard.fibered.geometric.site.isotopy}.

We define the \emph{holomorphic fibered geometric site with isotopies} $\frakcFEmb_d$ as follows.
Objects are the same as in $\cFEmb_d$.
Given objects $p:M→U$ and $q:N→V$, the set $\frakcFEmb_d(p,q)_L$
has as its elements morphisms in $\fraksmFEmb_{2d}(p,q)_L$ of the form
$$\xymatrix{
M⨯L\ar[r]^-{f}\ar[d]_-{p} & N⨯L\ar[d]^-{q}\\
U⨯L\ar[r]^-{h⨯\id_L} & V⨯L\\
}$$
such that for every $l∈L$ the restriction along $\{l\}⊂L$ yields a morphism in $\cFEmb_d$, i.e., a fiberwise holomorphic embedding.

Let $\red_t:\frakcFEmb_d→\fraksmFEmb_{2d}$ be the functor that forgets the complex structure.
This enriched functor is a $\smset$-enriched admissibility fibration (\cref{enriched.admissibility.fibration}),
since the underlying functor is an admissibility fibration (\cref{admissibility.fibration})
by \cref{def.fibered.holomorphic}.

The base functor $\base:\frakcFEmb_d\to \ccart$ sends $(p:M\to U)\mapsto U$.
This functor is a split $\smset$-enriched Grothendieck fibration,
since the construction of \cref{def.fibered.holomorphic} provides a splitting for $(\frakcFEmb_d)_{\RR^0}→\cart$,
and given $L∈\cart$, applying the structure map associated to the unique map $L→\RR^0$ produces a splitting for the Grothendieck fibration $(\frakcFEmb_d)_L→\cart$.
By construction, the pair $(\red_t,\red_b)$ is a morphism of split $\smset$-enriched Grothendieck fibrations
and every $\red_t$-lift of an admissible arrow in $\fraksmFEmb_{2d}$ is $\red_t$-cartesian.
By \cref{reflect.admissible.enriched}, the functor $\base$ reflects admissible arrows,
which proves that $(\base,\red_t,\red_b)$ is a fibered geometric site with isotopies (\cref{fibered.geometric.site.isotopy}).
\end{example}

\begin{example}
\label{def.fibered.supercart.isotopy}
Recall the geometric site $\scart$ of supercartesian spaces,
the reduction functor $$\red=\red_b:\scart→\cart$$ (\cref{def.supercart}),
the fibered geometric site $\SFEmb_{d|e}$ (\cref{def.fibered.supercart}),
and the associated functors $$\base:\SFEmb_{d|e}→\scart, \qquad \red_t:\SFEmb_{d|e}→\smFEmb_d.$$
Fix $d≥0$ and $e≥0$ and consider the following fibered geometric site with isotopies $\frakSFEmb_{d|e}$.
Objects are the same as in $\SFEmb_{d|e}$.
Given objects $(p:M→U),(q:N→V)∈\frakSFEmb_{d|e}$ and $L∈\cart$, the set of $L$-points $\frakSFEmb_{d|e}(p,q)_L$
is the set of commutative squares in $\sman$:
$$\xymatrix{
M⨯L\ar[r]^-{f}\ar[d]_-{p} & N⨯L\ar[d]^-{q}\\
U⨯L\ar[r]^-{g} & V⨯L\\
}$$
such that the underlying square of smooth maps is a morphism in $\fraksmFEmb_d(\red_t p,\red_t q)_L$ (\cref{def.fraksmFEmb})
and for every $l∈L$ the restriction along $\{l\}⊂L$ yields a morphism in $\SFEmb_{d|e}$, i.e., a fiberwise open embedding of smooth supermanifolds.
Let $\red_t:\frakSFEmb_{d|e}\to \fraksmFEmb_d$ be the functor that applies $\red$ to the submersion.

This enriched functor is a $\smset$-enriched admissibility fibration (\cref{enriched.admissibility.fibration}),
since the underlying functor is an admissibility fibration (\cref{admissibility.fibration})
by \cref{def.fibered.supercart}.
The base functor $\base:\frakSFEmb_{d|e}\to \scart$ sends $(p:M\to U)\mapsto U$.
This functor is a split $\smset$-enriched Grothendieck fibration,
since the construction of \cref{def.fibered.supercart} provides a splitting for $(\frakSFEmb_{d|e})_{\RR^0}→\scart$,
and given $L∈\cart$, applying the structure map associated to the map $L→\RR^0$ produces splittings for the Grothendieck fibration $(\frakSFEmb_{d|e})_L→\scart$.
By construction, the pair $(\red_t,\red_b)$ is a morphism of split $\smset$-enriched Grothendieck fibrations
and every $\red_t$-lift of an admissible arrow in $\fraksmFEmb_d$ is $\red_t$-cartesian.
By \cref{reflect.admissible.enriched}, the functor $\base$ reflects admissible arrows,
which proves that $(\base,\red_t,\red_b)$ is a fibered geometric site with isotopies (\cref{fibered.geometric.site.isotopy}).
\end{example}

\subsection{A relative Grothendieck construction for fibered geometric sites}
\label{relative.grothendieck.sec}

In this section, we introduce a relative Grothendieck construction for a fibered geometric site (\cref{fibered.geometric.site}).
The construction transfers the data of a presheaf of sets
$$F:\FEmb_d^{\op}\to \set$$
to a presheaf of categories
$$\relgro F:\stcart^\op\to \smallcat$$
on the base of the fibration $\base:\FEmb_d\to \stcart$.
This construction will be used in the definition of the bordism category, where we will transfer the data of cut grids (\cref{cutgrid}) on objects $p:M\to U\in \FEmb_d$
to the category of families of bordisms parametrized by~$U$.

\begin{definition}
\label{relative.grothendieck}
Suppose that
$$\base:\FEmb_d\to \stcart \quad \text{and}\quad \base:\frakFEmb_d\to \stcart$$
is a split Grothendieck fibration and split $\smset$-enriched Grothendieck fibration, respectively (for example, see \cref{fibered.geometric.site,fibered.geometric.site.isotopy}).
We define the following \emph{relative Grothendieck constructions}.
\begin{itemize}
\item Let $F∈\PSh(\FEmb_d,\set)$, let $\el(F)→\FEmb_d$ be the corresponding discrete fibration, with a canonical splitting cleavage.
The Grothendieck fibration $\base:\FEmb_d→\stcart$ is equipped with a splitting cleavage by definition.
Thus, the composite $\el(F)→\stcart$ is a Grothendieck fibration with a splitting cleavage.
Let $\relgro F:\stcart^{\op}\to \smallcat$ denote the corresponding strict functor.
This construction defines a functor
$$\relgro:\PSh(\FEmb_d,\set)→\PSh(\stcart,\smallcat).$$
The functor $\relgro$ is itself functorial in the split Grothendieck fibration~$\base$.

\item
The $\smset$-enriched version
$$\relgro:\PSh(\frakFEmb_d,\smset)→\PSh(\stcart,\smallcat^{\cart^\op})$$
is defined by setting $(\relgro F)_L=\relgrobare_{\base_L}F[L]$, where
$$F[L]:(\frakFEmb_d)_L^\op → \set, \qquad p↦F(p)_L, \qquad φ∈\frakFEmb_d(p,q)_L ↦ F(φ):F(q)_L→F(p)_L.$$
The functoriality in $L∈\cart$ is induced by the functoriality of the unenriched $\relgro$ with respect to $\base=\base_L$ and $F=F[L]$.
\end{itemize}
\end{definition}

Next, we provide an explicit description of the presheaf of categories $\relgro F$.
Applying the following proposition to the globular monoidal cut grid functor (\cref{globular.monoidal.cut.grid})
yields the explicit descriptions of the bordism categories in \cref{explicit.noncompact.bords,explicit.noncompact.isot}.

\begin{proposition}
\label{explicitgroth}
Assume the context of \cref{relative.grothendieck}.
Given $F∈\PSh(\FEmb_d,\set)$, the presheaf
$\relgro F∈\PSh(\stcart,\smallcat)$ can be described as follows.
Let $U\in \stcart$.
Then $(\relgro F)(U)$ is the Grothendieck construction of the functor $F|_{\base^{-1}(U)}$:
\begin{itemize}
\item
objects are pairs $(p:M→U,x∈F(p))$, where $p\in \FEmb_d$;
\item morphisms $(p,x)→(q,y)$
are maps $φ:p→q$ such that $\base φ=\id_U$ and $F(φ)(y)=x$.
\end{itemize}
 Let $f:U→V$ in $\stcart$, the corresponding functor
$$(\relgro F)(f):(\relgro F)(V)→(\relgro F)(U)$$
can be described as follows.
\begin{itemize}
\item On objects, $(\relgro F)(f)$ sends $(p,x)$ to $(p',x')$, where $\base^*f:p'→p$ is the $\base$-cartesian lift of~$f$ provided by the splitting cleavage of $\base:\FEmb_d→\stcart$
and $x'=F(\base^*f)(x)$.
\item
On morphisms, $(\relgro F)(f)$ sends $φ:(p,x)→(q,y)$ to $φ':(p',x')→(q',y')$,
where $φ'$ is induced by the universal property of the $\base$-cartesian arrow $\base^*f:q'\to q$.
\end{itemize}
An analogous description holds in the $\smset$-enriched case, using $F[L]$ instead of~$F$.
\end{proposition}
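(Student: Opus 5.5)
The plan is to unwind the definitions in \cref{relative.grothendieck}, working directly with the category of elements $\el(F)$ and the composite fibration $\el(F)\to\FEmb_d\to\stcart$. First, I will recall that the category of elements $\el(F)$ has objects pairs $(p,x)$ with $p\in\FEmb_d$ and $x\in F(p)$, and morphisms $(p,x)\to(q,y)$ given by maps $φ:p\to q$ with $F(φ)(y)=x$. The projection $\el(F)\to\FEmb_d$ is a discrete fibration whose unique lift of $φ:p\to q$ at $(q,y)$ is $(p,F(φ)y)\to(q,y)$. Next, I will check that the composite $\el(F)\to\stcart$ is a Grothendieck fibration with splitting cleavage obtained by composing cleavages. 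Given $(p,x)$ and $f:U\to V=\base p$, the chosen lift is $(p',F(\base^*f)(x))\to(p,x)$, where $\base^*f:p'\to p$ is the split cartesian lift. Cartesianness follows because composites of cartesian arrows for composable fibrations are cartesian, and every morphism in a discrete fibration is cartesian. Splitness holds because both cleavages are compatible with composition and identities, using functoriality of $F$ for the second.

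Then I will apply the standard inverse Grothendieck construction, which sends a split fibration to the strict functor whose value at $U$ is the fiber over $U$ and whose transition functors are given by the chosen cartesian lifts. The fiber of $\el(F)\to\stcart$ over $U$ consists of objects $(p,x)$ with $\base p=U$ and morphisms $φ$ with $\base φ=\id_U$ and $F(φ)y=x$. This is exactly the category of elements (Grothendieck construction) of $F|_{\base^{-1}(U)}$, giving the first two bullets. For $f:U\to V$, the transition functor sends $(p,x)$ to the domain of its chosen lift, namely $(p',x')$ as claimed. On a fiber morphism $φ:(p,x)\to(q,y)$, it sends $φ$ to the unique $φ'$ over $\id_U$ with $\base^*f\circ φ'=φ\circ\base^*f$. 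This $φ'$ exists and is unique by the universal property of the cartesian arrow $\base^*f:q'\to q$. It remains to verify $F(φ')(y')=x'$, which follows from $F(φ')F(\base^*f)(y)=F(\base^*f)F(φ)(y)=F(\base^*f)(x)$.

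Finally, for the $\smset$-enriched case, the definition $(\relgro F)_L=\relgrobare_{\base_L}F[L]$ reduces everything to the unenriched statement applied to the split fibration $\base_L:(\frakFEmb_d)_L\to\stcart$ and the presheaf $F[L]$. I do not expect a real obstacle. The only point needing care is matching conventions, in particular the fact that the transition functor on morphisms is defined through the cartesian universal property rather than by an explicit formula, and the check that it preserves the elements $x$.
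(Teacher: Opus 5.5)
Your proposal is correct and takes the same approach as the paper, whose proof just says that the statement is an unwinding of the definition of the relative Grothendieck construction. You carry out that unwinding explicitly: the composite split cleavage on $\el(F)\to\stcart$, the identification of the fibers, and the check that the induced $\varphi'$ satisfies $F(\varphi')(y')=x'$. All of these steps are sound.
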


\begin{proof}
This is a simple unwinding of \cref{relative.grothendieck}.
\end{proof}

\section{Geometric bordism categories}
\label{bordcts}

In this section, we give a precise definition of the geometric bordism categories as geometric symmetric monoidal $(\infty,d)$-categories.
Specifically, for every field stack $\gs\in \Struct_d$ (\cref{geometric.structure}),
we define a corresponding geometric symmetric monoidal $(\infty,d)$-category $\Bord_{d}^\gs$ (\cref{bordstr}) that encodes bordisms equipped with fields given by~$\gs$.
The construction is manifestly functorial in $\gs$, which proves \cref{axiom.functorial}.
In \cref{axioms.section}, we will show that $\Bord_d^\gs$ satisfies the additional \cref{axiom.cocontinuous,axiom.embedded} in \cref{axioms}.

In parallel, we also define a geometric symmetric monoidal $(∞,d)$-category $\frakBord_{d}^\gs$ with isotopies (\cref{enrichedbordstr})
that encodes bordisms with isotopies and uses enriched field stacks in $\frakStruct_{d}$ (\cref{geometric.structure.isotopy}).
This proves \cref{axiom.functorial} for $\frakBord_d$.
In \cref{axioms.section}, we will prove that this version of the bordism category satisfies \cref{frakaxiom.cocontinuous,frakaxiom.embedded} in \cref{frakaxioms}.

We begin by defining the notion of a field stack.
In the construction of the bordism category, field stacks will be used to encode fields on bordisms.

\subsection{Field stacks}
\label{geostr}

As pointed out in the introduction, our definition of field stacks generalizes the traditional notion of tangential structures.
Roughly, the passage from the traditional approach to our approach is as follows.
Starting with a tangential structure $\xi:Y\to \tdeloop\GL(d)$ and a $d$-dimensional smooth manifold $M$
(to be thought of as the ambient manifold of a bordism), we can form the space of sections of the left map in the homotopy pullback
$$\xymatrix{
Y\times_{\tdeloop\GL(d)}M\ar[r]\ar[d] & Y\ar[d]^-{\xi}\\
M\ar[r]^-{\tau} & \tdeloop\GL(d),
}$$
where $\tau:M\to \tdeloop\GL(d)$ is the classifying map for the tangent bundle of~$M$.
If we vary the manifold $M$, we obtain a sheaf on the site whose objects are $d$-dimensional manifolds and morphisms are open embeddings.
Moreover, we will need fields to vary in families, parametrized over cartesian spaces.
We resolve these issues by working with sheaves on the site $\smFEmb_d$ of submersions (with $d$-dimensional fibers), with fiberwise open embeddings between them,
and its geometric refinement $\FEmb_d$.

Throughout this section, we fix an arbitrary natural number $d$.

\begin{definition}
\label{geometric.structure}
Fix a fibered geometric site $\FEmb_d$ (\cref{fibered.geometric.site}).
A \emph{$d$-dimensional field stack} is a simplicial presheaf on $\FEmb_d$.
We equip the category of $d$-dimensional field stacks
$$\Struct_d≔\sPSh(\FEmb_d)_{\inj,\Cech}$$
with the cartesian model structure of \cref{model.structure.sheaves}.
\end{definition}

\begin{remark}
Sheaves on the site $\smFEmb_d$ (\cref{def.smFEmb}) were considered (in an equivalent reformulation) by Nijenhuis \cite{Nijenhuis},
who used them to define natural bundles and natural mappings in differential geometry.
The latter are closely related to the geometric cobordism hypothesis, as shown in Grady–Pavlov \cite{GradyPavlov.GCH}.
The case of nonfiberwise field stacks (using the Quillen equivalent model category
of simplicial sheaves on the site of manifolds and etale maps) was considered by Freed–Teleman \cite[Appendix~A]{FreedTeleman}.
The special case of field stacks valued in groupoids instead of simplicial sets is considered by Ludewig–Stoffel \cite{LudewigStoffel},
where sheaves on the site of submersions with fiberwise etale maps are considered.
By the above remarks, our field stacks are a natural generalization of these structures.
Ayala–Francis–Tanaka \cite{AyalaFrancisTanaka} investigate
the enriched (\cref{geometric.structure.isotopy}) and unenriched variants of presheaves on $\smEmb_d$ and $\fraksmEmb_d$
(without geometric families).
\end{remark}

\begin{definition}
\label{geometric.structure.isotopy}
Fix a fibered geometric site with isotopies $\frakFEmb_d$ (\cref{fibered.geometric.site.isotopy}).
A \emph{$d$-dimensional isotopy field stack} is an object in the $\smsset$-enriched model category
$$\frakStruct_{d}≔\PSh(\frakFEmb_d,\smsset)_{\inj,\Cech},$$
given by the left Bousfield localization of the injective model structure on $\PSh(\frakFEmb_d,\smsset)$ with respect to the morphisms \cref{cech}.
Here $\PSh$ denotes the category of $\smsset$-enriched presheaves,
where we promoted the $\smset$-enrichment of $\frakFEmb_d$ to a $\smsset$-enrichment, by taking the constant smooth simplicial set.
The model structure exists and is a left proper combinatorial $\smsset$-enriched cartesian model category
by \cref{enriched.left.Bousfield.localization.exists,enriched.left.Bousfield.localization.criterion},
with the model structure on $\smsset$ defined in \cref{smsset.injective}.
\end{definition}

Our definition of a field stack is extremely versatile and captures all significant types of fields we can think of,
including metrics, gauge fields, topological structures, tangential structures, smooth maps to a target manifold, spinors, superstructures, and Euclidean structures.

In the following examples we work in the setting of \cref{standard.fibered.geometric.site},
taking $\stcart=\cart$, $\FEmb_d=\smFEmb_d$.

\begin{example}
Let $X$ be a smooth manifold of any dimension.
We can regard $X$ as a field stack via the sheaf that sends
$$(M\to U)\mapsto \sm(M,X),$$
for a submersion $M\to U$.
This is clearly a sheaf on $\smFEmb_d$ since the total space functor $\smFEmb_d\to \man$, which maps $M\to U$ to $M$, sends covering families to covering families.
This sheaf is not representable.
\end{example}

\begin{example}
The previous example can be generalized easily to all simplicial presheaves on the site $\man$ (or rather, a small subsite of $\man$ that is equivalent to~$\man$).
Indeed, the total space functor $T:\smFEmb_d\to \man$ induces a restriction functor
$$T^*:\sPSh(\man)_{\Cech}\to \Struct_d,$$
which manifestly preserves the homotopy descent property.
\end{example}

\begin{example}
\label{riemmet}
An example of a field stack that does not come from a simplicial presheaf on smooth manifolds and smooth maps
is given by the presheaf of fiberwise Riemannian metrics.
Let
$$\FRiem:\smFEmb_d^{\op}\to \set$$
be the presheaf that sends a submersion $p:M\to U$ with $d$-dimensional fibers to the set of metrics on the fiberwise tangent bundle $\ker \T p$ over~$M$.
A morphism $(a,b):(p:M\to U)\to (q:N\to V)$ is sent to the function
$$(a,b)^*:\FRiem(q)\to \FRiem(p)$$
that sends a metric $g$ on the fiberwise tangent bundle $\ker \T q$ to the pullback metric $$(a^*g)_{x}(v,w)=g_{a(x)}((T_x a)(v),(T_x a)(w)),\qquad x\in M.$$
This is a well-defined metric on $\ker Tq$ since $a$ is a fiberwise open embedding.

We can also consider Riemannian metrics with restrictions on sectional curvature (e.g., positive, negative, nonpositive, nonnegative),
since these properties are preserved by pullbacks along open embeddings.
For example, we define the subobject of positive sectional curvature metrics $\FRiem_{{\rm sc}>0}\subset \FRiem$
as the functor that sends $p:M\to U$ to the subset of metrics on the fiberwise tangent bundle such that for all $u\in U$,
the metric $g\vert_{p^{-1}(u)}$ on $T M_u\cong \ker(Tp\vert_{M_u})\to M_u=p^{-1}(u)$ is a metric of positive sectional curvature.
Such metrics can again be pulled back by fiberwise open embeddings and the property of having positive sectional curvature is preserved under such pullbacks.
\end{example}

So far, the examples of field stacks that we have considered have been sheaves of sets (i.e., they do not have higher morphisms).
We now consider the example where the fields are principal bundles with connection, which incorporates gauge transformations as higher morphisms.

\begin{example}
\label{gbunexample1}
Let $G$ be a Lie group.
Consider the field stack $G\Bunconn$ that sends
an object $(p:M→U)∈\smFEmb_d$ to the groupoid $G\Bunconn(p)$ defined as follows.
Objects are pairs $(φ:p→q,π:P→N)$,
where $(q:N→V)∈\smFEmb_d$, $φ=(a:M→N,b:U→V)$ is a morphism in $\smFEmb_d$,
and $π:P→N$ is a principal $G$-bundle over total space of~$q$
with a fiberwise connection along~$q$.
Morphisms $$(φ:p→q,π:P→N)→(φ':p→q',π:P'→N')$$
are given by smooth $U$-families $\{τ_u:π_{b(u)}→π_{b'(u)}\}_{u∈U}$ of connection-preserving morphisms of principal $G$-bundles with connection.
Composition of morphisms is defined in the obvious way.
Given a morphism $ψ=(e,f):p'→p$ in $\smFEmb_d$,
we set $$G\Bunconn(ψ)(φ,π)=(φψ,π), \qquad G\Bunconn(ψ)(\{τ_u\}_{u∈U})=\{τ_{f(u')}\}_{u'∈U'},$$
which yields a presheaf of groupoids $G\Bunconn$ on $\smFEmb_d$.

Consider the presheaf of groupoids $\deloop_\nabla G$ on $\smFEmb_d$ that sends an object $(p:M→U)∈\smFEmb_d$ to the groupoid whose objects are
trivial principal $G$-bundles $G⨯M→M$ with a fiberwise connection (i.e., a vertical 1-form on the submersion $p:M→U$ with values in $\mathfrak{g}$).
Morphisms are connection-preserving isomorphisms, specified as smooth maps $M→G$.
The structure maps for $\smFEmb_d$ are defined by pulling back vertical 1-forms and smooth $G$-valued functions.
Since the bundles are trivial, this construction yields a strict presheaf of groupoids.
The canonical inclusion $\deloop_∇ G→G\Bunconn$ is a weak equivalence in $\sPSh(\smFEmb_d)$,
i.e., a stalkwise weak equivalence.

We also have the following explicit model for a fibrant replacement $\frep\deloop_∇ G$ of $\deloop_∇ G$:
$$(\frep\deloop_∇ G)(p)=\colim_{\{p_α\}_{α∈A}}\map(\check{C}p,\deloop_∇ G),$$
where the colimit is taken over the category of open covers of~$p$ and their refinements
and $\check{C}p$ denotes the Čech nerve of the open cover $\{p_α\}_{α∈A}$.
Unfolding this formula, we see that a vertex in $(\frep\deloop_∇ G)(p:M→U)$
is given by the following data:
\begin{enumerate}
\item an open cover $\{p_{\alpha}:M_{\alpha}\to U_{\alpha}\}$ of $p:M\to U$;
\item for each $\alpha$, a vertical connection 1-form $\mathcal{A}\in \Omega^1_v(M_{\alpha};\mathfrak{g})$;
\item for each $\alpha,\beta$, a map
$$g_{\alpha\beta}:M_{\alpha\beta}=M_α∩M_β\to G$$
such that
$$g_{\alpha\beta}\cdot \mathcal{A}_{\alpha}=g^{-1}_{\alpha\beta}\mathcal{A}_{\beta}g_{\alpha\beta}+g_{\alpha\beta}^{-1}dg_{\alpha\beta}$$
and such that $g_{\alpha\beta}$ satisfies the Čech cocycle condition.
\end{enumerate}
This is precisely the Čech cocycle data for a principal $G$-bundle with fiberwise connection on~$M$.
\end{example}

The next example shows that our bordism category is capable of handling the supersymmetric Euclidean field theories of Stolz–Teichner \cite{StolzTeichner.SUSY}.
Here we will need the full generality of the fibered geometric site $\base:\FEmb_d\to \stcart$.

\begin{example}
\label{def.super.euclidean}
Recall the structured cartesian site $\scart_\CC$ of complex smooth supercartesian spaces (\cref{def.supercart})
and the fibered geometric site $\SFEmb_{2|1}$ of $2|1$-dimensional complex smooth supermanifolds (\cref{def.fibered.supercart}).
The reduction functor $\red$ is the usual reduction functor for supermanifolds.

Following Deligne–Freed \cite[Section~1.1]{DeligneFreed} and Stolz–Teichner \cite[Section~4.2 and (4.10)]{StolzTeichner.SUSY},
we consider the super Euclidean space $\EE^{2|1}$, defined as follows.
Let $\Delta=\CC$ be the representation of $\Spin(2)\cong \lgU(1)$ on $\CC$ given by $\theta\cdot z≔\bar \theta z$.
Let $\EE^{2|1}$ be the super Lie group whose underlying cs manifold is $\RR^{2|1}=\RR^2\times \Pi \Delta$
and whose multiplication is given in coordinates by
$$\EE^{2|1}\times \EE^{2|1}\to \EE^{2|1},
\quad (\tau_1,\bar \tau_1,\theta_1),(\tau_2,\bar \tau_2,\theta_2)\mapsto (\tau_1+\tau_2,\bar \tau_1+\bar \tau_2+\theta_1\theta_2, \theta_1+\theta_2),$$
where we are using complex-valued coordinate functions, as in Stolz–Teichner \cite[Remark~4.11]{StolzTeichner.SUSY} (see \cref{def.supercart} for additional references).
The Lie group $\Spin(2)$ acts on $\EE^{2|1}=\RR^{2}\times \Pi\Delta$ by the twofold cover $\Spin(2)\to \SO(2)$ and the spinor representation on $\Delta$.
We can therefore form the semidirect product $\EE^{2|1}\rtimes \Spin(2)$, which is the \emph{$2|1$-super Euclidean group}.
The semidirect product acts on $\RR^{2|1}$ via open embeddings: the first factor acts via the multiplication in $\EE^{2|1}$
and the second factor acts as above.

Now we consider the homotopy quotient
$$\Eucl_{2|1}≔\Yo{\RR^{2|1}→\RR^0}\hq(\Yo{\EE^{2|1}\rtimes \Spin(2)}∘\base^\op)\in \Struct_2,$$
where $(\RR^{2|1}→\RR^0)∈\SFEmb_{2|1}$,
$\EE^{2|1}\rtimes \Spin(2)$ is the group object in $\sman$ constructed above,
promoted to a presheaf on $\scart_\CC$ via the restricted Yoneda embedding~$\Yo{}$
and converted to a presheaf on $\SFEmb_{2|1}$ by composing with $\base^\op$.

Given $(p:M→U)∈\SFEmb_{2|1}$, the simplicial set $\Eucl_{2|1}(p)$ can be explicitly described
as the nerve of the action groupoid
of the action of the group $\sm(U,\EE^{2|1}\rtimes \Spin(2))$
on the set $\SFEmb_{2|1}(p,\RR^{2|1}→\RR^0)$
that sends $$(g:U→\EE^{2|1}\rtimes \Spin(2),φ=(e,b):p→(\RR^{2|1}→\RR^0)) ↦ ((g∘p)⋅e,b).$$

Proceeding similarly to \cref{gbunexample1},
a derived map $(p:M\to U)\to \Eucl_{2|1}$ can be identified with the following data.
\begin{enumerate}
\item An open cover $\{p_{\alpha}:M_{\alpha}\to U_{\alpha}\}$ of $p:M\to U$.
\item For each $\alpha$, a map of supermanifolds $e_{\alpha}:M_{\alpha}\to \RR^{2|1}$
such that induced map $(e,p):M_{\alpha}\to \RR^{2|1}\times U_{\alpha}$ is an open embedding.
\item For each $\alpha,\beta$, a map
$$g_{\alpha\beta}:U_{\alpha\beta}=U_α∩U_β\to \EE^{2|1}\rtimes \Spin(2)$$
such that $g_{\alpha\beta}\cdot e_{\alpha}=e_{\beta}$ and such that $g_{\alpha\beta}$ satisfies the Čech cocycle condition.
\end{enumerate}
Hence, a derived map (which is an object in the corresponding bordism category)
is precisely a $U$-family of $(\EE^{2|1}\rtimes \Spin(2),\RR^{2|1})$ supermanifolds in the sense of Stolz–Teichner \cite[Definition~2.26]{StolzTeichner.SUSY}.
\end{example}

\begin{definition}
\label{isotopification}
Suppose $\frakFEmb_d$ is a fibered geometric site with isotopies (\cref{fibered.geometric.site.isotopy})
and $\FEmb_d$ is its underlying fibered geometric site (\cref{strip.enrichment}).
We have a forgetful functor
$$\frakStruct_d → \Struct_d, \qquad \gs↦\gs_{\RR^0}, \qquad \gs_{\RR^0}(p)=\gs(p), \qquad \gs_{\RR^0}(f:p→q)=\gs(f)_{\RR^0}:\gs(q)_{\RR^0}→\gs(p)_{\RR^0}.$$

We define the \emph{isotopification functor} $$\frakI_d:\Struct_d→\frakStruct_d$$ as the left adjoint functor of $\gs↦\gs_{\RR^0}$.
\end{definition}

\begin{remark}
\label{isotopification.description}
The adjunction $$\frakI_d⊣(-)_{\RR^0}$$
in \cref{isotopification}
is a Quillen adjunction of the Čech-local injective model structures.
This can be seen as follows.
Additional details for the case $\FEmb_d=\smFEmb_d$ are available in Kenig–Pavlov \cite[Remark 4.1.15]{KenigPavlov}.

The isotopification functor~$\frakI_d$ can be constructed as $\frakI_d = ι_! λ^∞$,
where $$λ^∞:\Struct_d=\sPSh(\FEmb_d)→\smsPSh(\FEmb_d)$$
applies the inclusion $\sset→\smsset$ objectwise and
$$ι_!:\smsPSh(\FEmb_d)→\smsPSh(\frakFEmb_d)=\frakStruct_d$$
is the $\smsset$-enriched left Kan extension along the $\smset$-enriched functor
$$\FEmb_d\to \frakFEmb_d, \quad p\mapsto p, \quad \FEmb_d(p,q)_L=\FEmb_d(p,q)\lto3{c}\frakFEmb_d(p,q)_L,$$
where the map $c$ regards a morphism as a constant $L$-family of morphisms by pulling back along the unique map $L→\RR^0$.

Explicitly, given
$\gs∈\Struct_d$,
$p:M→U$ in $\FEmb_d$,
and $L∈\cart$, the simplicial set $(\frakI_d \gs)(p:M→U)_L$
can be described as $∐_{[φ]}\gs(q)$, where $φ:p→q$ runs over a set of representatives of isomorphism classes of objects in the following discrete groupoid.
Objects are elements $φ∈\frakFEmb_d(p,q)_L$, where $q∈\FEmb_d$ is arbitrary,
such that $\base φ=\id$ and composing $\red_t φ:\red_t p⨯\id_L→\red_t q⨯\id_L$ with the projection to $\red_t q$ yields a surjective map $\red_t p⨯\id_L→\red_t q$.
Morphisms $φ_1→φ_2$ are morphisms $h∈\FEmb_d(q_1,q_2)$ such that $ϖ^*h ∘ φ_1=φ_2$, where $ϖ^*$ pulls back along the unique map $ϖ:L→\RR^0$.

This explicit description immediately implies that the left adjoint functor $\frakI_d$
preserves injective cofibrations and injective acyclic cofibrations
and therefore is a left Quillen functor for the injective model structures.
Furthermore, $\frakI_d$ sends Čech nerves of covering families in $\FEmb_d$ to Čech nerves of the corresponding covering families in $\frakFEmb_d$.
Therefore, it descends to a left Quillen functor between the Čech-local injective model structures.
\end{remark}

\begin{example}
\label{friemex}
Applying the isotopification functor of \cref{isotopification} to the field stack $\FRiem ∈ \sPSh(\smFEmb_d)$ (\cref{riemmet}),
we get a field stack with isotopies $$\frakFRiem = \frakI_d \FRiem \in \frakStruct_d = \smsPSh(\fraksmFEmb_d).$$
By \cref{isotopification.description},
given $p∈\fraksmFEmb_d$ and $L∈\cart$, the $L$-points of the smooth set $\frakFRiem(p:M\to U)_L$ are given by
equivalence classes of pairs $(g,φ)$, where $g$ is a fiberwise Riemannian metric on some $(q:N\to U)∈\smFEmb_d$,
$φ∈\fraksmFEmb_d(M→U,N→U)_L$ is a surjective fiberwise open embedding $φ:M\times L\to N$ covering the projection map $U\times L\to U$,
and the equivalence relation is given by $(ψ^*g,φ)\sim(g,ψφ)$, where $ψ:(N→U)→(P→U)$ is an isomorphism in $\smFEmb_d$ covering the identity map on~$U$.
This smooth set is different from the smooth set of $U⨯L$-parametrized families of fiberwise Riemannian metrics on $M⨯L$.
In particular, the $U⨯L$-families of metrics in $\frakFRiem$ are ``$d$-thin''
in that the parametrizing map to the moduli stack must factor through a fixed family~$q$ of $d$-dimensional manifolds.

In the most elementary case, when $d=1$ and $(p:M→U)=(\RR⨯U→U)$ is the projection map,
we can also take $(q:N→U)=(\RR⨯U→U)$ to be the projection map.
A fiberwise Riemannian metric on~$N$ yields a fiberwise open embedding $σ:N→\RR$ induced by the fiberwise distance function.
The map~$σ$ is unique up to fiberwise translations by $τ:U→\RR$ and (in the unoriented case) reflections.
By composing $φ∈\fraksmFEmb(p,q)_L$ with $σ$, we get an $L$-family of fiberwise open embeddings of $p$ into~$\RR$.
Furthermore, the smooth set of such open embeddings is a disjoint union of two smooth sets (corresponding to the two orientations of~$\RR$),
both of which are contractible.
The action of translations and reflections on this smooth set is free.
Therefore, the ordinary quotient computes the homotopy quotient
and we have a weak equivalence of smooth simplicial sets
$$\frakFRiem(\RR\times U\to U)\simeq \tdeloop \sm(U, \RR),$$
which is natural in $U$.
Here $\tdeloop \sm(U, \RR)$ is the simplicial set given by the delooping of the additive abelian group of smooth real-valued functions.
(With a fiberwise orientation, we get a further factor of $\tdeloop\ZZ/2$.)
For details on this calculation, see \gcref{shapemetrics}.
A supersymmetric analog of this construction is considered in \cref{1.1.Euclidean}.
See also Kenig–Pavlov \cite{KenigPavlov} for the case of 1-dimensional Riemannian sigma-models.
\end{example}

\begin{example}
\label{1.1.Euclidean}
We work with the site $\sman_\RR$ of real smooth supermanifolds (\cref{def.sman}).
Consider the super Lie group $\EE^{1|1}$ with underlying real smooth supermanifold $\RR^{1|1}=\RR\times \RR^{0|1}$.
For $S\in \sman_\RR$, the set of $S$-points of $\EE^{1|1}$ is $\sm(S,\RR)_0⊕\sm(S,\RR)_1$ 
and the corresponding group structure is given by
$$(s_1,η_1)(s_2,η_2)=(s_1+s_2+η_1η_2,η_1+η_2),\qquad -(s,η)=(-s,-η), \qquad 0 = (0,0),$$
where $s_i∈\sm(S,\RR)_0$ and $η_i∈\sm(S,\RR)_1$.
We have an extension $\hat\EE^{1|1}≔\EE^{1|1}\rtimes\{1,-1\}$ that is classified by the homomorphism $\{1,-1\}→\Aut(\EE^{1|1})$
that sends $-1$ to the map
$$(s,η)↦(s,-η).$$
We have an action~$μ$ of $\hat\EE^{1|1}$ on $\RR^{1|1}$
induced by the actions of $\EE^{1|1}$ and $\{1,-1\}$ by translations and reflections in the odd coordinate, respectively.

Following Stolz–Teichner \cite[Section~3.2]{StolzTeichner.Elliptic} and Hohnhold \cite[Section 3.2]{Hohnhold},
we define the presheaf $$\Eucl_{1|1}∈\Struct_{1|1}=\sPSh(\SFEmb_{1|1})$$ of fiberwise oriented $1|1$-Euclidean metrics as the homotopy quotient
$$\Eucl_{1|1}=\Yo{\RR^{1|1}→\RR^0}\hq(\Yo{\hat\EE^{1|1}}∘\base^\op),$$
where the group object $\hat\EE^{1|1}$ is promoted to a presheaf on $\scart$ via the restricted Yoneda embedding
and converted to a presheaf on $\SFEmb_{1|1}$ by precomposing with~$\base^\op$.
Explicitly, $\Eucl_{1|1}$ sends an object $p:M→U$ to the groupoid
whose objects are fiberwise open embeddings $ρ:M→\RR^{1|1}$
and morphisms $ρ_1→ρ_2$ are given by maps $τ:U→\hat\EE^{1|1}$
such that $ρ_2$ equals the composition $$M\lto{11}{(τ∘p,ρ_1)}\hat\EE^{1|1}⨯\RR^{1|1}\lto9{μ}\RR^{1|1}.$$
Since the action is free, the canonical map $\Eucl_{1|1}→π_0 \Eucl_{1|1}$ is an objectwise weak equivalence in the model category $\sPSh(\SFEmb_{1|1})$,
where $π_0$ denotes the presheaf of sets given by taking sets of connected components objectwise.

The presheaf~$\Eucl_{1|1}$ is not a sheaf: for example, its value on a circle is the empty simplicial set.
The associated sheaf of~$\Eucl_{1|1}$ yields $1|1$-Euclidean structures in the conventional sense.
Since the map from~$\Eucl_{1|1}$ to its associated sheaf is a weak equivalence in the Čech-local model structure,
we stick to the simpler variant~$\Eucl_{1|1}$.

Following \cref{friemex}, we compute the isotopification $\frakI_{1|1}\Eucl_{1|1}$,
where $$\frakI_{1|1}: \Struct_{1|1} → \frakStruct_{1|1}=\smsPSh(\frakSFEmb_{1|1})$$
is the isotopification functor of \cref{isotopification} for the fibered geometric site with isotopies $\frakSFEmb_{1|1}$ (\cref{def.fibered.supercart.isotopy}).
\cref{isotopification.description} tells us that
for every $p∈\frakSFEmb_{1|1}$ and $L∈\cart$, the set $\frakI_{1|1}\Eucl_{1|1}(p)_L$
is the set of equivalence classes of pairs $(g,φ)$.
Here $g$ is a $1|1$-Euclidean structure on some $q∈\SFEmb_{1|1}$,
$φ∈\frakSFEmb_d(p,q)_L$ is an $L$-family of maps such that $\base φ=\id_{\base p}$
and the map $\red(π∘φ)$ is surjective on the total spaces,
where $π:q⨯\id_L→q$ is the projection map.
The equivalence relation is $(ψ^*g,φ)\sim(g,ψφ)$, where $ψ:q→q'$ is an isomorphism in $\SFEmb_{1|1}$ such that $\base ψ=\id$.

Taking $(p:M→U)≅(\RR^{1|1}⨯U→U)$ to be isomorphic to the projection map,
we can also take $(q:N→U)=(\RR^{1|1}⨯U→U)$ to be the projection map.
Again following \cref{friemex},
we present $\frakI_{1|1}\Eucl_{1|1}(p)$ as the quotient
of the smooth set of fiberwise embeddings $p→\RR^{1|1}$
by the free action of the group $\sm(U,\hat\EE^{1|1})$:
$$\frakI_{1|1}\Eucl_{1|1}(p)≅\frakSFEmb_{1|1}(p,\RR^{1|1})\hq\sm(U,\hat\EE^{1|1}).$$
Since the action is free, the ordinary quotient computes the homotopy quotient.

Consider the $\smset$-enriched category $\cD=\fraksmFEmb_1/\tdeloop\RR^⨯$,
whose objects are pairs $(p,Ψ)$, where $p∈\fraksmFEmb_1$ and $Ψ$ is a real smooth line bundle over the total space of~$p$
and morphisms $(p_1,Ψ_1)→(p_2,Ψ_2)$ are morphisms $φ:p_1→p_2$ in $\fraksmFEmb_1$ together with a morphism of line bundles $Ψ_1→φ^*Ψ_2$.
There is an equivalence of $\smset$-enriched categories
$$χ:\cD=\fraksmFEmb_1/\tdeloop\RR^⨯→\frakSFEmb_{1|1}$$
that sends $(p,Ψ)↦ΠΨ=ΛΨ^*$.

We have a chain of weak equivalences of smooth sets
$$\frakSFEmb_{1|1}(p,\RR^{1|1})≅\cD(χ^{-1}p,χ^{-1}\RR^{1|1})≃\fraksmFEmb_1(\red p,\red\RR^{1|1})⨯\Or(χ^{-1}p)≃\Or(\red p)⨯\Or(χ^{-1}p),$$
where $\Or(\red p)$ is the two-element set of fiberwise orientations of~$\red p$ and $\Or(χ^{-1}p)$ is the two-element set of orientations
of the line bundle component of $χ^{-1}p$.
The action of $\sm(U,\hat\EE^{1|1})$ on $\Or(\red p)⨯\Or(χ^{-1}p)$ is given by the composite of
$$\sm(U,\hat\EE^{1|1})⨯\Or(\red p)⨯\Or(χ^{-1}p)\lto{11}{(\sm(U,ψ),\id)}\sm(U,\{1,-1\})⨯\Or(\red p)⨯\Or(χ^{-1}p)\lto3{α}\Or(\red p)⨯\Or(χ^{-1}p),$$
where $ψ:\hat\EE^{1|1}→\{1,-1\}$ is the projection homomorphism
and $α$ makes the first factor act on the third factor,
using the fact that every map $U→\{1,-1\}$ is constant.
Therefore, we have a weak equivalence of smooth simplicial sets
$$\frakI_{1|1}\Eucl_{1|1}(p)≃(\Or(\red p)⨯\Or(χ^{-1}p))\hq\sm(U,\hat\EE^{1|1})≃\Or(\red p)⨯\tdeloop \sm(U, \EE^{1|1}),$$
which is natural in~$p$.

Thus, we have an objectwise weak equivalence $\frakI_{1|1}\Eucl_{1|1}→\gs$ in $\frakStruct_{1|1}$, where
$$\gs:\frakSFEmb_{1|1}^\op→\smsset, \qquad p↦\Or(\red p)⨯\tdeloop \sm(U, \EE^{1|1}), \qquad (φ:p→q) ↦ \Or(\red φ)⨯\tdeloop\sm(\base φ,\EE^{1|1}).$$
\end{example}

\subsection{Cuts, cut tuples, and cut grids}

In this section, we define the notions of a cut, cut tuple, and cut grid on an object $(p:M\to U)\in \smFEmb_d$,
which are responsible for implementing the structure of a symmetric monoidal $d$-category for bordisms.
The notion of a cut (\cref{cut}) is inspired by Stolz–Teichner \cite[Definition~2.21]{StolzTeichner.SUSY}.
The notion of a cut grid (\cref{cutgrid}) roughly resembles the constructions of Lurie \cite[Definition~2.2.9]{Lurie.TFT}
and Calaque–Scheimbauer \cite[Definition~5.1]{CalaqueScheimbauer}.
The notion of a globular cut grid is inspired by Henriques \cite[\S2.2]{Henriques}, who talks about \emph{cobordisms with thin parts}.

We begin with the notion of a cut on an object $p:M\to U\in \smFEmb_d$.

\begin{definition}
\label{cut}
A \emph{cut} of an object $p:M\to U$ in $\smFEmb_d$ is a triple $(C_{<},C_=,C_>)$ of subsets of $M$
such that there is a smooth map $h:M\to \RR$ (called the \emph{height function})
whose fiberwise-regular values form an open neighborhood of~$0$ and we have $h^{-1}(-\infty,0)=C_{<}$, $h^{-1}(0)=C_=$, and $h^{-1}(0,\infty)=C_>$.
Here $t∈\RR$ is a \emph{fiberwise-regular value} if for all $m∈M$ such that $h(m)=t$, the map $T h|_{\ker T p}$ is surjective.
We set
$$C_\le=C_<\cup C_=, \qquad C_{\ge}=C_>\cup C_=.$$
We equip the set of cuts with a natural ordering $\le$, with $C\le C'$ if and only if $C_<\subset C'_<$.
With this ordering, the set of cuts is a partially ordered set.

We define a functor
\begin{equation}
\Cut_{\po}:\smFEmb_d^\op\to \poset
\end{equation}
as follows.
\begin{enumerate}
\item An object $p:M\to U$ is sent to the poset of cuts $\Cut(p:M\to U)$ on~$p$.
\item A morphism $(f,g):(M\to U)\to (N\to V)$ is sent to the order-preserving function
$$\Cut(f,g):\Cut(N\to V)\to \Cut(M\to U), \quad \Cut(f,g)(C)=(f,g)^*C=(f^{-1}(C_{<}),f^{-1}(C_{=}),f^{-1}(C_{>})).$$
That this is a well defined cut follows from the fact that $f$ is a fiberwise open embedding.
In particular, if $h$ is a height function for the cut $C$, then $h\circ f$ is a height function for the cut $(f,g)^*C$.
\end{enumerate}
Composing the functor $\Cut_{\po}$ with the nerve functor $\mathcal{N}:\poset\to \sset$ yields a functor
\begin{equation}
\Cut:\smFEmb_d^\op\to \sset.
\end{equation}
We call an $m$-simplex in $\Cut(p:M\to U)$ a \emph{cut} $[m]$-\emph{tuple} $C$ for $p:M\to U$.
Thus, a cut $[m]$-tuple is a collection of cuts $C_j=(C_{<j},C_{=j},C_{>j})$ of $p:M\to U$ indexed by vertices $j\in [m]$ such that
$$C_0\leq C_1\leq \cdots \leq C_m.$$
\end{definition}

We introduce notation for open and closed regions between cuts, as well as cut tuples obtained by removing some of the cuts.

\begin{notation}
\label{notation.cut}
Let $p:M\to U$ be an object in $\smFEmb_d$.
Let $C$ be a cut $[m]$-tuple.
Given $j≤j'$, we set
$$C_{(j,j')}≔C_{>j}\cap C_{<j'}, \qquad C_{[j,j']}≔C_{\ge j}\cap C_{\le j'}.$$
We also denote by
$C_{⟨j,j'⟩}$
the cut $[j'-j]$-tuple obtained from~$C$ by removing the cuts $C_k$ with $k<j$ or $k>j'$.
\end{notation}

\begin{figure}[ht]
\begin{center}
\begin{tikzpicture}[scale=.65]
\draw (0,0) to [out=20,in=120] (5,2) to [out=-60, in=110] (9,1);
\draw (0,2) to [out=10,in=115] (5,2) to [out=300, in=145] (9,3);
\draw (0,2.5) to [out=0, in=125] (6,3) to [out=305, in=110] (9,3.5);
\node at (0-.7,0) {$C_{=0}$};
\node at (0-.7,2) {$C_{=1}$};
\node at (0-.7,2.5) {$C_{=2}$};
\end{tikzpicture}
\end{center}
\caption{A cut $[2]$-tuple $C=\{(C_{<j},C_{=j},C_{>j})\}_{j\in [2]}$ on $\RR^2→\RR^0$.
The cuts $C_{=0}$ and $C_{=1}$ intersect and we have $C_{\leq 0}\subset C_{\leq 1}$.
Cuts are not allowed to intersect transversally, as this would violate the ordering in \cref{cut}.}
\label{cut.figure}
\end{figure}

Next, we will extend the functor $\Cut$ further to a functor
$$\tCut:(\Delta^{\times d})^\op\times \smFEmb_d^\op\to \set,$$
where the subscript notation will become apparent in a moment.
Morally, $\tCut$ sends a pair consisting of a multisimplex~${\bf m}$ and $p:M→U$ in $\smFEmb_d$ to the set of collections of cut $[m_i]$-tuples, where $i\in \{1,\ldots, d\}$.
However, we do not want arbitrary collections of cut tuples.
Instead, we only take those cut tuples that intersect transversally in directions indexed by different elements of $\{1,\ldots,d\}$.
This motivates the following definition.

\begin{definition}
\label{cutgrid}
Fix $d\geq 0$.
We define the \emph{cut grid functor}
$$\tCut:(\Delta^{\times d})^\op\times \smFEmb_d^\op\to \set$$
as follows.
Let $p:M\to U$ be an object in $\smFEmb_d$ and let ${\bf m}=([m_1],\ldots,[m_d])\in \Delta^{\times d}$ be a multisimplex.
The set $\tCut({\bf m},p)$ has elements:
\begin{itemize}
\item for each $1\leq i\leq d$, a cut $[m_i]$-tuple $C^i$ on $p:M\to U$,
\end{itemize}
which satisfy the following transversality property.
\begin{enumerate}
\item[$\pitchfork$.]
For every subset $S\subset \{1,\ldots,d\}$ and $j:S→\ZZ$ such that for every $i∈S$ we have $0\leq j_i\leq m_i$,
there is a smooth map $h_j:M\to \RR^S$ such that for every $i\in S$, the map $$\pi_i\circ h_j:M\to\RR,$$
where $\pi_i:\RR^S\to \RR$ is the $i$th projection, yields the $j_i$-th cut $C_{j_i}^i$ in the cut tuple $C^i$, as in \cref{cut}.
We require that the fiberwise-regular values of $h_j$ form an open neighborhood of $0$ in $\RR^S$.
\end{enumerate}
The structure maps for the $i$th factor of $\Delta$ in $\Delta^{\times d}$ are given by applying a simplicial map to the $i$th cut tuple~$C^i$.
We observe that the transversality property is still satisfied after applying a simplicial map, since simplicial maps simply remove cuts or duplicate cuts,
i.e., the function $h_j$ in the above definition can be left unchanged.
The structure maps for the factor $\smFEmb_d$ are given by applying the corresponding structure maps to every cut tuple.
We call the elements of $\tCut({\bf m},p)$ \emph{cut ${\bf m}$-grids}.
\end{definition}

The condition $\pitchfork$ in \cref{cutgrid} guarantees that cuts in \emph{different} simplicial directions intersect transversally.
This is forced by the requirement that the functions~$h_j$ have the origin as a regular value and by the compatibility of $h_j$ with the cuts.
We remind the reader that cuts in the \emph{same} simplicial direction are allowed to intersect and even overlap,
but are not allowed to intersect transversally (see \cref{cut.figure}).
The next example illustrates this point.

\begin{example}
\label{transcut}
We provide an example of an object $p:M\to \RR^0\in \smFEmb_2$, equipped with cut tuples in each direction that \emph{do not} assemble to form a cut grid.
Take $M=\RR^2$ and consider the following two cut $[0]$-tuples (i.e., cuts) $C^1$ and $C^2$.
The set $C^1_{=}$ is given by the $y$-axis in $\RR^2$, $C_{<}^1=\{(x,y)\in \RR^2 \mid x<0\}$, $C_{>}^1=\{(x,y)\in \RR^2 \mid x>0\}$.
The set $C^2_{=}$ is given by the parabola $x=y^2$, $C^2_{<}=\{(x,y)\in \RR^2 \mid x<y^2\}$, $C^2_{>}=\{(x,y)\in \RR^2 \mid x>y^2\}$.
In this case, the two cuts $C^1_{=}$ and $C^2_{=}$ are tangent at the origin (see \cref{figure3}).

These cut tuples do not form a cut grid for the following reason.
The condition $\pitchfork$ in \cref{cutgrid} implies that for $S=\{1,2\}$ there is a smooth function $h_j:\RR^2\to \RR^2$,
corresponding to the function $j:\{1,2\}\to \ZZ$ defined by $j(1)=0$, $j(2)=0$, such that $h_j$ has zero as a regular value
and $(\pi_1\circ h_j)^{-1}(0)=C^1_{=}$, $(\pi_2\circ h_j)^{-1}(0)=C^2_{=}$.

The standard basis vector $e_2=(0,1)$ is tangent to both curves $C^1_{=}$ and $C^2_{=}$.
Therefore, since $\pi_1\circ h_j$ vanishes on~$C^1_{=}$ and $\pi_2\circ h_j$ vanishes on $C^2_{=}$,
we must have $d\pi_1dh_j(0,0)e_2=d\pi_2dh_j(0,0)e_2=0$.
Thus, $dh_j(0,0)e_2=0$, which implies that $(0,0)$ is not a regular value of $h_j$.

\begin{figure}[ht]
\begin{center}
\begin{tikzpicture}
\fill[blue, opacity=.3] (2,1) to[out=-70-90,in=180-90, looseness=.8] (1,0) to[out=-90, in=250-90, looseness=.8] (2,-1) -- (2,1);
\draw (0,0) -- (2,0);
\draw[purple] (1,-1) -- (1,1);
\fill[red, opacity=.3] (1,1) -- (1,-1) -- (2,-1) -- (2,1) -- (1,1);
\draw[blue] (2,1) to[out=-70-90,in=180-90, looseness=.8] (1,0) to[out=-90, in=250-90, looseness=.8] (2,-1);
\draw[->] (2.5,0) -- (3.2,0);
\node at (2.88,.3) {$h_j$};
\begin{scope}[xshift=1.5in]
\fill[red, opacity=.3] (1,1) -- (1,-1) -- (2,-1) -- (2,1) -- (1,1);
\fill[blue, opacity=.3] (0,1) -- (0,0) -- (2,0) -- (2,1) -- (0,1);
\draw[blue] (0,0) -- (2,0);
\draw[purple] (1,-1) -- (1,1);
\end{scope}
\node at (1.9,1.3) {$C^2_{= }$};
\node at (1, 1.3) {$C^1_{= }$};
\end{tikzpicture}
\end{center}
\caption{Illustrating \cref{transcut}, the map $h_j$ sends the blue region $C_{>}^2$ into the blue region $y>0$ and red region $C_{>}^1$ into red region $x>0$.
It sends the blue curve $C_{=}^2$ into the blue line $y=0$ and the red line $C^1_{=}$ into the red line $x=0$.
The two curves $C^2_{=}$ and $C^1_{=}$ are tangent at the origin.}
\label{figure3}
\end{figure}
\end{example}

We introduce notation for open and closed regions between cells in cut grids.
\begin{notation}
\label{cut.grid.core}
Recall \cref{notation.cut}.
For a cut ${\bf m}$-grid $C$, a subset $S\subset \{1,\ldots,d\}$ and $j,j':S→\ZZ$ satisfying $0\leq j_i\leq j'_i\leq m_i$ for all $i∈S$,
we use the following notation.
\begin{itemize}
\item
We define $$C_{[j,{j'}]}≔\bigcap_{i\in S} C^i_{[j_i,j'_i]}\subset M,\qquad C_{(j,{j'})}≔\bigcap_{i\in S} C^i_{(j_i,j'_i)}\subset M.$$
These subsets are the regions between corresponding cuts in various directions.
\item We define $C_{⟨j,{j'}⟩}$ to be the cut ${\bf m}'$-grid,
whose $i$th cut tuple is $C^i$ if $i∉S$
and $C^i_{⟨j_i,j'_i⟩}$ if $i∈S$,
with ${\bf m}'$ defined accordingly.
\end{itemize}
\end{notation}

\begin{remark}
\label{proper.maps}
Recall the following definitions and theorems from the Stacks Project \cite{Stacks}.
\begin{itemize}
\item (Tag 005O.)
A continuous map $f:X→Y$ of topological spaces is \emph{proper}
if it is separated (meaning the induced map $X→X⨯_Y X$ is closed)
and universally closed (meaning for every continuous $Z→Y$, the induced map $Z⨯_Y X→Z$ is closed).
\item (Tag 005R.)
A continuous map~$f$ is universally closed if and only if $f$ is closed and every fiber of $f$ is a compact topological space.
\item (Tag 0CY3.)
A continuous map with a Hausdorff domain is separated.
\item (Tag 0CY4.)
Separated maps are closed under base change.
Therefore, proper maps are closed under base change.
\item If two maps are separated, universally closed, or proper,
then their composition has the same property.
The inclusion of a closed subset is a proper map.
Restricting a proper map to a closed subset of its domain again yields a proper map.
\end{itemize}
\end{remark}

\begin{definition}
\label{compactgrid}
Let $C$ be a cut ${\bf m}$-grid for $p:M\to U$.
We say that $C$ is \emph{compact} if for $S=\{1,2,\ldots,d\}$, the restriction of $p$ to $C_{[j,j']}$
is proper, for all $j,j':S\to \ZZ$.
\end{definition}

The following examples demonstrate that to avoid pathological situations, we must require the restriction of~$p$ to be closed and have compact fibers.

\begin{example}
Take $(p:M→U)=(\RR^1→\RR^0)$.
Consider the cut $([1])$-grid~$C$, whose only cut tuple is $(C^1_0,C^1_1)$,
with $$C^1_0=((-∞,0),\{0\},(0,∞)), \qquad C^1_1=(\RR^1,∅,∅).$$
Take $j=0$ and $j'=1$.
The restriction of~$p$ to $C_{[j,j']}$
is the map $q:[0,∞)→\RR^0$.
The fiber of~$q$ at the only point of~$\RR^0$ is not compact, therefore the map~$q$ is not proper.
Thus, the cut grid~$C$ is not compact.
\end{example}

\begin{example}
\label{noncompact.grid.compact.fibers}
Take $(p:M→U)=(\RR^2→\RR^1)$ to be the projection map to the first coordinate~$x$.
Consider the cut $([1])$-grid~$C$, whose only cut tuple is $(C^1_0,C^1_1)$,
with
$$C^1_{<i}=\{(x,y)\mid xy<i+1\lor x<0\},\qquad C^1_{=i}=\{(x,y)\mid xy=i+1\land x>0\}, \qquad C^1_{>i}=\{(x,y)\mid xy>i+1\land x>0\}.$$
Take $j=0$ and $j'=1$.
The restriction of~$p$ to $C_{[j,j']}$
is a map~$q$ with compact fibers,
namely, the fiber at $x∈\RR$ is empty if $x≤0$ and $\{x\}⨯[1/x,2/x]$ if $x>0$.
However, the map~$q$ is not a closed map, since its image is $(0,∞)⊂\RR^1=U$, which is not a closed subset of~$U$.
Therefore, $q$ is not a proper map and the cut grid~$C$ is not compact.
\begin{figure}[ht]
\begin{tikzpicture}[domain=.4:4]
\draw[->] (-1.2,0) -- (4.2,0) node[right] {$x$};
\draw[->] (0,-1.2) -- (0,2.2) node[above] {$y$};
\draw[thick] plot (\x,1/\x);
\draw[thick, domain=.7:4] plot (\x,2/\x);
\node at (.5,1) {$C^1_{=0}$};
\node at (2,1.5) {$C^1_{=1}$};
\end{tikzpicture}
\caption{\cref{noncompact.grid.compact.fibers}: A noncompact cut grid with compact fibers.}
\end{figure}
\end{example}

\begin{figure}[ht]
\begin{tikzpicture}[scale=.60]
\draw (0,0) to [out=20,in=120] (5,2) to [out=-60, in=110] (9,1);
\draw (0,2) to [out=10,in=115] (5,2) to [out=300, in=145] (9,3);
\draw (0,2.5) to [out=0, in=125] (6,3) to [out=305, in=110] (9,3.5);
\node at (0-.7,0) {$C^1_{=0}$};
\node at (0-.7,2) {$C^1_{=1}$};
\node at (0-.7,2.8) {$C^1_{=2}$};
\draw (1,6) to [out=270, in=110] (2,3) to [out=290, in=110] (1,-1);
\draw (1.5,6) to [out=-45, in=110] (2,3) to [out=290, in=110] (1,-1);
\draw (3,6) to [out=-45, in=110] (5,3) to [out=290, in=110] (1,-1);
\draw (4,6) to [out=-90, in=110] (6,3) to [out=290, in=90] (4,-1);
\draw (5,6) to [out=200, in =180] (9,4);
\node at (.6,6.5+.3) {$C_{=0}^2$};
\node at (1.6,6.5+.3) {$C_{=1}^2$};
\node at (3,6.5+.3) {$C_{=2}^2$};
\node at (4,6.5+.3) {$C_{=3}^2$};
\node at (5.2,6.5+.3) {$C_{=4}^2$};
\end{tikzpicture}
\caption{A cut $([2],[4])$-grid on $\RR^2$.}
\end{figure}

The notion of a cut ${\bf m}$-grid will allow us to define a higher categorical structure on bordisms which is the $d$-fold generalization of a double category.
A \emph{double category} is a categorical structure that has objects, horizontal 1-morphisms, vertical 1-morphisms, and 2-cells,
whose boundary is a square with two horizontal 1-morphisms and two vertical 1-morphisms.
Having multiple notions of 1-morphisms is an undesirable feature of a bordism category, since it destroys the properties
and constructions pertaining to the notion of dualizability in the setting of the cobordism hypothesis.
To eliminate this issue, we can pass from double categories to bicategories.
A \emph{bicategory} is a (pseudo) double category in which vertical morphisms are identities, a property also known as \emph{globularity}.
To get the correct generalization of a bicategory, we will need a \emph{globular} version of the above cut ${\bf m}$-grids.
Our cut ${\bf m}$-grids are general enough that we can simply extract those cut grids that satisfy the globular condition.

\begin{figure}
$$
\xymatrix{
\bullet \ar[r]^-{f}="1" \ar[d]_{a} & \bullet\ar[d]_{b}\ar[r]^-{g}="3" & \bullet \ar[d]^-{c}
\\
\bullet\ar[r]_{h}="2" & \bullet \ar[r]_{i}="4" & \bullet
\ar@{}"1";"2"|(0.25){\ }="5"
\ar@{}"1";"2"|(0.75){\ }="6"
\ar@{}"3";"4"|(0.25){\ }="7"
\ar@{}"3";"4"|(0.75){\ }="8"
\ar@{=>}_{u}"5";"6"
\ar@{=>}^-v "7";"8"
} \qquad \vcenter{\vspace{1cm}
\xymatrix{
\bullet \ar@/^2pc/[r]^-{f}="1"\ar@/_2pc/[r]_{h}="2" & \bullet \ar@/^2pc/[r]^-{g}="3"\ar@/_2pc/[r]_-{i}="4" & \bullet
\ar@{}"1";"2"|(0.25){\ }="5"
\ar@{}"1";"2"|(0.75){\ }="6"
\ar@{}"3";"4"|(0.25){\ }="7"
\ar@{}"3";"4"|(0.75){\ }="8"
\ar@{=>}_{u}"5";"6"
\ar@{=>}^-v "7";"8"
}}
$$
\caption{The left picture illustrates a composable pair of 2-cells in a double category.
The 1-morphisms on the boundary of the two 2-cells $u$ and $v$ can be either vertical or horizontal with no additional constraints.
In contrast, the right picture illustrates a composable pair of 2-morphisms in a bicategory.
In this case, the vertical 1-morphisms on the boundary of the two 2-cells are forced to be identities, while the horizontal 1-morphisms are allowed to be arbitrary.}
\end{figure}

\begin{notation}
\label{vijcut}
Fix $d\geq 0$.
Let $p:M\to U$ be an object in $\smFEmb_d$ and let ${\bf m}=([m_1],\ldots,[m_d])\in \Delta^{\times d}$ be a multisimplex.
Given $i∈\{1,…,d\}$ and $j∈[m_i]$, we define a multisimplex ${\bf m}^i$ and a morphism $V^i_j:{\bf m}^i→{\bf m}$ as follows:
$${\bf m}^i_k=\begin{cases}[m_k],&k≠i;\cr[0],&k=i;\cr\end{cases}\qquad\qquad (V^i_j)_k=\begin{cases}\id_{[m_k]},&k≠i;\cr[0]≅\{j\}\into[m_i],&k=i.\cr\end{cases}$$
For each such $i$ and $j$, we denote the corresponding presheaf structure map by
$$v^i_j=\tCut(V^i_j,p):\tCut({\bf m},p:M\to U)\to \tCut({\bf m}^i,p:M\to U).$$
\end{notation}

\begin{definition}
\label{globgrid}
Assume \cref{vijcut}.
We let
 $$\tCutglob({\bf m},p:M\to U)\subset \tCut({\bf m},p:M\to U)$$
be the subset comprising cut ${\bf m}$-grids $C$
such that for all $i∈\{1,…,d\}$ and $j∈[m_i]$
the cut grid
$$D=v^i_jC\in \tCut({\bf m}^i,p:M\to U)$$
(\cref{vijcut}) satisfies the following property.
\begin{itemize}
\item
The closed subset $D_{[0,{\bf m}^i]}⊂M$ (\cref{cut.grid.core}) admits an open neighborhood $N⊂M$
such that the restriction of~$D$ to $p\vert_{N}:N\to U$
is a simplicial degeneration of a cut grid in
$$\tCut(([m_1],\ldots,[m_{i-1}],[0],\ldots,[0]), p\vert_{N}:N\to U),$$
in the simplicial directions $i+1,i+2,\ldots,d$.
\end{itemize}
The above property is preserved under pullback of cut grids, so that $\tCutglob$ defines a functor on $(\Delta^{\times d})^\op\times \smFEmb_d^\op$.
We call an element of $\tCutglob({\bf m},p:M\to U)$ a \emph{globular cut ${\bf m}$-grid}.
If a globular cut ${\bf m}$-grid is compact, we call it a \emph{compact globular cut ${\bf m}$-grid}.
\end{definition}

The next example illustrates cut ${\bf m}$-grids and globular cut ${\bf m}$-grids in the case $d=2$.

\begin{example}
\label{globular.cut.grid.example}
Let $d=2$, $U=\RR^0$, $p:\RR^2\to \RR^0$, and ${\bf m}=([0],[1])$.
We define a cut ${\bf m}$-grid $C$ on~$p$ as follows.
Fix $\epsilon>0$ and let $\psi:\RR\to[0,∞)$ be a smooth function satisfying $\psi(x)=0$ for all $x\in (-\epsilon,\epsilon)$
and $\psi(x)=1$ for all $x\in \RR\setminus (-2\epsilon,2\epsilon)$.
Take $C^1_0=(x<0, x=0, x>0)$, $C^2_0=(y<0,y=0,y>0)$, and $C^2_1=(y<\psi, y=\psi,y>\psi)$.
An image of the cut grid is depicted below.
\begin{center}
\begin{tikzpicture}[yscale=1.3]
\draw [decorate, decoration={brace, amplitude=5pt, raise=2pt}] (-0.2,1.2) -- (0.2,1.2) node [black, midway, yshift=0.5cm] {$N$};
\node at (0,1.3) (t) {};
\node at (0,-1.3) (s) {};
\node at (0,-1.4) (t') {$C_{=0}^1$};
\draw[purple] (t) -- (s);
\node at (-1,-.2) (q) {};
\node at (1,-.2) (p) {};
\node at (1.5,-.4) (q') {$C_{=0}^2$};
\draw[blue] (q.center) -- (p.center);
\node at (-1,.4) (q) {};
\node at (1,.4) (p) {};
\node at (1.5,.6) (q') {$C_{=1}^2$};
\node at (0,-.2) (int') { $\bullet$};
\draw[blue] (q.center) to [out=0,in=180] (int');
\draw[blue] (int') to [out=0,in=180] (p.center);
\node at (-2,.4) (1) {};
\node at (-1,.4) (2) {};
\node at (-2,-.2) (3) {};
\node at (-1,-.2) (4) {};
\node at (1,.4) (5) {};
\node at (2,.4) (6) {};
\node at (1,-.2) (7) {};
\node at (2,-.2) (8) {};
\draw[blue] (1.center) -- (2.center);
\draw[blue] (3.center) -- (4.center);
\draw[blue] (5.center) -- (6.center);
\draw[blue] (7.center) -- (8.center);
\draw[dashed] (0.2,1.2) -- (0.2,-1.2);
\draw[dashed] (-0.2,1.2) -- (-0.2,-1.2);
\end{tikzpicture}
\end{center}
We claim that this cut grid is globular in the sense of \cref{globgrid}.
The nonvacuous globularity conditions use~$i$ such that $1≤i<d$.
In our case, $i=1$.
Since $[m_1]=[0]$, i.e., there is only one cut in the first direction, we have
$j=0$,
${\bf m}^i_j={\bf m}=([0],[1])$,
$v^i_j=\id$,
and $D=C$.

The globularity condition says that there is an open neighborhood $N\subset \RR^2$ of the closed set $D_{[0,{\bf m}^i_j]}=\{(0,0)\}$
such that restricting the cut grid to $N$ yields a cut grid that is degenerate in the second direction.
Take $N=(-\epsilon,\epsilon)⨯\RR$, as depicted by the region bounded by the dashed vertical lines above.
Then restricting $D$ to $N$ yields a degenerate cut grid in the second direction, since $\psi=0$ on the interval $(-\epsilon,\epsilon)$.

Another example of a globular cut grid on $\RR^2$, taking ${\bf m}=([1],[1])$, is depicted below.
\begin{center}
\begin{tikzpicture}[yscale=1.3]
\node at (-.2,1.3) (t) {};
\node at (-.2,-1.3) (s) {};
\node at (-.3,-1.4) (t') {$C_{=0}^1$};
\draw[purple] (t) -- (s);
\node at (.4,1.3) (tt) {};
\node at (.4,-1.3) (ss) {};
\node at (.5,-1.4) (tt') {$C_{=1}^1$};
\draw[purple] (tt) -- (ss);
\node at (-2,-.2) (q) {};
\node at (2,-.2) (p) {};
\node at (1.5,-.4) (q') {$C_{=0}^2$};
\draw[blue] (q) --(p);
\node at (-1,.4) (qq) {};
\node at (1,.4) (pp) {};
\node at (1.5,.6) {$C_{=1}^2$};
\node at (-.2,-.2) (int) {$\bullet$};
\node at (.4,-.2) (int') {$\bullet$};
\node at (.1,.2) (i) {};
\draw[blue] (qq.center) to [out=0,in=180] (int.center);
\draw[blue] (int.center) to [out=0,in=180] (i.center);
\draw[blue] (i.center) to [out=0,in=180] (int'.center);
\draw[blue] (int'.center) to [out=0,in=180] (pp.center);
\node at (-2,.4) (3) {};
\node at (-1,.4) (4) {};
\node at (1,.4) (7) {};
\node at (2,.4) (8) {};
\draw[blue] (3.center) -- (4.center);
\draw[blue] (7.center) -- (8.center);
\draw[dashed] (-.25,-1.2) -- (-.25,1.2);
\draw[dashed] (-.15,-1.2) -- (-.15,1.2);
\draw[dashed] (.45,-1.2) -- (.45,1.2);
\draw[dashed] (.35,-1.2) -- (.35,1.2);
\draw [decorate, decoration={brace, amplitude=1.5pt, raise=2pt}] (-.25,1.2) -- (-.15,1.2) node [black, midway, yshift=0.4cm] {$N_0$};
\draw [decorate, decoration={brace, amplitude=1.5pt, raise=2pt}] (.35,1.2) -- (.45,1.2) node [black, midway, yshift=0.4cm] {$N_1$};
\end{tikzpicture}
\end{center}
In this case, the structure map $v^1_j$ removes $C^1_j$, where $j∈\{0,1\}$.
The resulting cut grid satisfies the condition of \cref{globgrid}, with the neighborhoods $N_j$ depicted by the regions bounded by the dashed vertical lines.
\end{example}

\begin{example}
Set $d=2$.
The following images depict cut $([1],[0])$- and $([1],[1])$-grids on a $2$-manifold given by the gray sheet
(as an object in $\smFEmb_2$, we take the base of the submersion to be~$\RR^0$).
The image on the left is a cut $([1],[0])$-grid and the image in the center is a cut $([1],[1])$-grid.
The image on the right depicts a globular cut $([1],[1])$-grid.
\begin{center}
\begin{tikzpicture}[scale=.40]
\fill[fill=gray!25, rounded corners] (0+.3,-1+.3) -- (0+.3,6+.3) -- (9+.3,6+.3) -- (9+.3,-1+.3) -- (0+.3,-1+.3);
\draw[ blue] (1+.3,6+.3) to [out=270, in=110] (2+.3,3+.3);
\draw[dashed, rounded corners] (0+.3,-1+.3) -- (0+.3,6+.3) -- (9+.3,6+.3) ;
\fill[fill=gray!25, rounded corners] (0,-1) -- (0,6) -- (9,6) -- (9,-1) -- (0,-1);
\draw[dashed, rounded corners] (9.7,-.5) to[out=270,in=0, looseness=.8] (9,-1) -- (0,-1) -- (0,6) -- (9,6);
\fill[fill=gray!25, dashed] (9,6) -- (9,-1) to[out=0,in=270, looseness=1.2] (9.7,.5) -- (9.7,6.2);
\filldraw[fill=gray!25, dashed] (9+.3,6+.3) to [out=0, in=0, looseness=4] (9,6);
\draw [purple] (0,0) to [out=20,in=120] (5,2) to [out=-60, in=130] (9,1) to [out=310, in=-90] (9.7,1.5);
\draw [purple] (0,2.5) to [out=0, in=125] (6,3) to [out=305, in=110] (9,3.5) to [out=300, in=-130] (9.7,3.7);
\node at (0-.7,0) {$\scriptstyle C^1_{=0}$};
\node at (0-.7,2.8) {$\scriptstyle C^1_{=1}$};
\draw[ blue] (1,6) to [out=270, in=110] (2,3) to [out=290, in=110] (1,-1);
\node at (.6,6.5+.3) {$\scriptstyle C_{=0}^2$};
\draw[fill=white] (2.65-.2,4.6) to [out=57, in=118] (3.9-.2,4.6);
\draw[thick] (2.5-.2,4.5) to [out=60, in=115] (4-.2,4.5);
\draw[thick, fill=white] (2.65-.2,4.6) to [out=-45, in=-135] (3.9-.2,4.6);
\draw[fill=white] (6.5+.2,.5+.2) to [out=46, in=130] (8.5-.2,.5+.2);
\draw[thick] (6.5,.5) to [out=60, in=115] (8.5,.5);
\draw[thick, fill=white] (6.5+.2,.5+.2) to [out=-45, in=-135] (8.5-.2,.5+.2);
\end{tikzpicture}
\begin{tikzpicture}[scale=.40]
\fill[fill=gray!25, rounded corners] (0+.3,-1+.3) -- (0+.3,6+.3) -- (9+.3,6+.3) -- (9+.3,-1+.3) -- (0+.3,-1+.3);
\draw[ blue] (1+.3,6+.3) to [out=270, in=110] (2+.3,3+.3);
\draw[ blue] (3+.3,6+.3) to [out=-45, in=110] (5+.3,3+.3);
\draw[dashed, rounded corners] (0+.3,-1+.3) -- (0+.3,6+.3) -- (9+.3,6+.3) ;
\fill[fill=gray!25, rounded corners] (0,-1) -- (0,6) -- (9,6) -- (9,-1) -- (0,-1);
\draw[dashed, rounded corners] (9.7,-.5) to[out=270,in=0, looseness=.8] (9,-1) -- (0,-1) -- (0,6) -- (9,6);
\fill[fill=gray!25, dashed] (9,6) -- (9,-1) to[out=0,in=270, looseness=1.2] (9.7,.5) -- (9.7,6.2);
\filldraw[fill=gray!25, dashed] (9+.3,6+.3) to [out=0, in=0, looseness=4] (9,6);
\draw [purple] (0,0) to [out=20,in=120] (5,2) to [out=-60, in=130] (9,1) to [out=310, in=-90] (9.7,1.5);
\draw [purple] (0,2.5) to [out=0, in=125] (6,3) to [out=305, in=110] (9,3.5) to [out=300, in=-130] (9.7,3.7);
\node at (0-.7,0) {$\scriptstyle C^1_{=0}$};
\node at (0-.7,2.8) {$ \scriptstyle C^1_{=1}$};
\draw[ blue] (1,6) to [out=270, in=110] (2,3) to [out=290, in=110] (1,-1);
\draw[ blue] (3,6) to [out=-45, in=110] (5,3) to [out=290, in=110] (1,-1);
\node at (.6,6.5+.3) {$\scriptstyle C_{=0}^2$};
\node at (3,6.5+.3) {$ \scriptstyle C_{=1}^2$};
\draw[fill=white] (2.65-.2,4.6) to [out=57, in=118] (3.9-.2,4.6);
\draw[thick] (2.5-.2,4.5) to [out=60, in=115] (4-.2,4.5);
\draw[thick, fill=white] (2.65-.2,4.6) to [out=-45, in=-135] (3.9-.2,4.6);
\draw[fill=white] (6.5+.2,.5+.2) to [out=46, in=130] (8.5-.2,.5+.2);
\draw[thick] (6.5,.5) to [out=60, in=115] (8.5,.5);
\draw[thick, fill=white] (6.5+.2,.5+.2) to [out=-45, in=-135] (8.5-.2,.5+.2);
\end{tikzpicture}
\begin{tikzpicture}[scale=.40]
\fill[fill=gray!25, rounded corners] (0+.3,-1+.3) -- (0+.3,6+.3) -- (9+.3,6+.3) -- (9+.3,-1+.3) -- (0+.3,-1+.3);
\draw[ blue] (1+.3,6+.3) to [out=270, in=110] (2+.3,3+.3);
\draw[ blue] (1.5+.3,6+.3) to [out=-45, in=110] (2+.3,3+.3);
\draw[dashed, rounded corners] (0+.3,-1+.3) -- (0+.3,6+.3) -- (9+.3,6+.3) ;
\fill[fill=gray!25, rounded corners] (0,-1) -- (0,6) -- (9,6) -- (9,-1) -- (0,-1);
\draw[dashed, rounded corners] (9.7,-.5) to[out=270,in=0, looseness=.8] (9,-1) -- (0,-1) -- (0,6) -- (9,6);
\fill[fill=gray!25, dashed] (9,6) -- (9,-1) to[out=0,in=270, looseness=1.2] (9.7,.5) -- (9.7,6.2);
\filldraw[fill=gray!25, dashed] (9+.3,6+.3) to [out=0, in=0, looseness=4] (9,6);
\draw [purple] (0,0) to [out=20,in=120] (5,2) to [out=-60, in=130] (9,1) to [out=310, in=-90] (9.7,1.5);
\draw [purple] (0,2.5) to [out=0, in=125] (6,3) to [out=305, in=110] (9,3.5) to [out=300, in=-130] (9.7,3.7);
\node at (0-.7,0) {$\scriptstyle C^1_{=0}$};
\node at (0-.7,2.8) {$\scriptstyle C^1_{=1}$};
\draw[ blue] (1,6) to [out=270, in=110] (2,3) to [out=290, in=110] (1,-1);
\draw[ blue] (1.5,6) to [out=-45, in=110] (2.05,2.9) to [out=-45, in =110] (3,2.8) to [out=-90, in=80] (1.3,.5) to [out=-90,in=90] (3,-1);
\node at (.6,6.5+.3) {$\scriptstyle C_{=0}^2$};
\node at (2,6.5+.3) {$\scriptstyle C_{=1}^2$};
\draw[fill=white] (2.65-.2,4.6) to [out=57, in=118] (3.9-.2,4.6);
\draw[thick] (2.5-.2,4.5) to [out=60, in=115] (4-.2,4.5);
\draw[thick, fill=white] (2.65-.2,4.6) to [out=-45, in=-135] (3.9-.2,4.6);
\draw[fill=white] (6.5+.2,.5+.2) to [out=46, in=130] (8.5-.2,.5+.2);
\draw[thick] (6.5,.5) to [out=60, in=115] (8.5,.5);
\draw[thick, fill=white] (6.5+.2,.5+.2) to [out=-45, in=-135] (8.5-.2,.5+.2);
\end{tikzpicture}
\end{center}
\end{example}

Finally, we treat the factor~$Γ$ in a manner analogous to~$Δ^{⨯d}$.
Observe that the monoidal structure on bordisms is given by the disjoint union,
and disjoint unions of bordisms can be interpreted as compositions in a direction where all sources and targets are empty.
In our setting, a cut $[m]$-tuple $C=(C_0,…,C_m)$ on $p:M→U$ such that for every~$i$ we have $C_{=i}=∅$
partitions $M$ into disjoint open subsets (\cref{cut.grid.core}):
$$M=C_{<0}⊔C_{(0,1)}⊔⋯⊔C_{(m-1,m)}⊔C_{>m}.$$
We think of such a cut $[m]$-tuple as a collection of $m$ bordisms $C_{(0,1)}$, …, $C_{(m-1,m)}$,
together with two “trash bins” $C_{<0}$ and $C_{>m}$, which are discarded once we pass to a sufficiently small neighborhood of $C_{[0,m]}$.
Such a disjoint partition can also be encoded by a smooth map $M→\{-∞,1,…,m,∞\}$,
which describes almost the same data as the map $C^⊗:M→⟨ℓ⟩$ in \cref{monoidal.cut.grid} (taking $ℓ=m$),
with two important changes.
\begin{itemize}
\item
The totally ordered set of intervals
$$\{(-∞,0),(0,1),…,(m-1,m),(m,∞)\}≅\{-∞,1,…,m,∞\}$$ indexing the disjoint bordisms
is replaced by the (unordered) pointed finite set $$⟨ℓ⟩=\{*,1,…,ℓ\}.$$
\item Instead of two trash bins $C_{<0}$ and $C_{>m}$ indexed by the elements $(-∞,0)$ and $(0,∞)$,
we have a single trash bin indexed by the element $*∈⟨ℓ⟩$.
\end{itemize}

\begin{definition}
\label{monoidal.cut.grid}
Fix $d\geq 0$.
We define the \emph{monoidal cut grid functor}
$$\mtCut:Γ^\op⨯(\Delta^{\times d})^\op\times \smFEmb_d^\op\to \set,$$
as follows.
Let $p:M\to U$ be an object in $\smFEmb_d$, $⟨ℓ⟩$ be an object in~$Γ$, and ${\bf m}=([m_1],\ldots,[m_d])\in \Delta^{\times d}$ be a multisimplex.
The set $\mtCut({\bf m},p)$ has as elements ordered pairs:
\begin{itemize}
\item a smooth map $C^⊗:M→⟨ℓ⟩$;
\item a cut ${\bf m}$-grid $(C^1,…,C^d)∈\tCut({\bf m},p)$ (\cref{cutgrid}).
\end{itemize}
The structure maps for the factor $Γ$ leave $C^i$ unchanged and compose $C^⊗:M→⟨ℓ⟩$ with the given map of pointed finite sets $⟨ℓ⟩→⟨ℓ'⟩$.
The structure maps for the factor $Δ^{⨯d}$ leave $C^⊗$ unchanged and for $(C^1,…,C^d)$ are given in \cref{cutgrid}.
The structure maps for the factor $\smFEmb_d$ are given by composing $C^⊗:M→⟨ℓ⟩$ with the given map $M'→M$ of total spaces
and applying the corresponding structure map to the cut grid~$(C^1,…,C^d)$.
We call the elements of $\tCut(⟨ℓ⟩,{\bf m},p)$ \emph{monoidal cut $(⟨ℓ⟩,{\bf m})$-grids}.
\end{definition}

We introduce notation similar to \cref{cut.grid.core}, but incorporating the monoidal structure.

\begin{notation}
Recall \cref{notation.cut} and \cref{cut.grid.core}.
For a monoidal cut $(⟨ℓ⟩,{\bf m})$-grid $C$, a subset $S\subset \{1,\ldots,d\}$ and $j,j':S→\ZZ$ satisfying $0\leq j_i\leq j'_i\leq m_i$ for all $i∈S$,
we use the following notation.
\begin{itemize}
\item
\label{monoidal.cut.grid.core}
\label{nonembedded.core}
We define $$C_{⊗,[j,{j'}]}≔C_{[j,{j'}]}∩(C^⊗)^{-1}(⟨ℓ⟩∖\{*\})\subset M.$$
These subsets are the regions between corresponding cuts in various directions, with the trash bin discarded.
In the case where $S=\{1,\ldots,d\}$, $j=0$ and $j'(i)=m_i$, we set
$$\ncore(p,C)= C_{⊗,[j,j']}$$
and call it the \emph{core} of the monoidal cut grid~$C$.

\item We define $C_{⟨j,{j'}⟩}$ to be the monoidal cut $(⟨ℓ⟩,{\bf m}')$-grid
with the same map $C^⊗$ and the cut grid $C_{⟨j,{j'}⟩}$, with ${\bf m}'$ defined accordingly.
\end{itemize}
\end{notation}

\begin{definition}
\label{compactmonoidalgrid}
Let $C$ be a monoidal cut $(⟨ℓ⟩,{\bf m})$-grid for $p:M\to U$ (\cref{monoidal.cut.grid}).
We say that $C$ is \emph{compact} if for $S=\{1,2,\ldots,d\}$ and all $j,j':S\to \ZZ$, the restriction of $p$ to $C_{⊗,[j,j']}$ (\cref{monoidal.cut.grid.core})
is a proper map of topological spaces (\cref{proper.maps}).

\label{globular.monoidal.cut.grid}
Let $$\mtCutglob(⟨ℓ⟩,{\bf m},p:M\to U)\subset \mtCut(⟨ℓ⟩,{\bf m},p:M\to U)$$
be the subset comprising pairs $(C^⊗,C)$, where the cut grid~$C$ restricted to $(C^⊗)^{-1}(⟨ℓ⟩∖\{*\})$ is globular (\cref{globgrid}).
The above property is preserved under pullback of cut grids, so that $\mtCutglob$ defines a functor
$$\mtCutglob:Γ^\op⨯(\Delta^{\times d})^\op\times \smFEmb_d^\op→\set.$$
We call an element of $\mtCutglob(⟨ℓ⟩,{\bf m},p:M\to U)$ a \emph{globular monoidal cut $(⟨ℓ⟩,{\bf m})$-grid}.
If a globular monoidal cut $(⟨ℓ⟩,{\bf m})$-grid is compact, we call it a \emph{compact globular monoidal cut $(⟨ℓ⟩,{\bf m})$-grid}.
\end{definition}

In order to define the isotopy version of the bordism category, we will need to allow for smooth families of monoidal cut grids.
The smooth family version is easily obtained from the above as follows.

\begin{definition}
\label{monoidal.cut.grid.smooth}
\label{globular.monoidal.cut.grid.smooth}
Recall the categories $\smset$ (\cref{def.smset}) and $\fraksmFEmb_d$ (\cref{def.fraksmFEmb}) and the functor $\mtCut$ (\cref{monoidal.cut.grid}).
We define $\smset$-enriched functors
$$\frakmtCut:Γ^\op⨯(\Delta^{\times d})^\op\times \fraksmFEmb_d^\op\to \smset,
\qquad\frakmtCutglob⊂\frakmtCut$$
as follows.
\begin{itemize}
\item
On objects, we set $$\frakmtCut(⟨ℓ⟩,{\bf m},p:M→U)_L=\mtCut(⟨ℓ⟩,{\bf m},p⨯\id_L:M⨯L→U⨯L),$$
where $L∈\cart$ is an object in the site used to define smooth sets (\cref{def.smset}).
\item The structure maps for $Γ⨯Δ^{⨯d}$ are induced by the structure maps of $\mtCut$.
\item
The enriched structure maps for $\fraksmFEmb_d$ are defined
for a fixed $(⟨ℓ⟩,{\bf m})∈Γ⨯Δ^{⨯d}$ (omitted from notation below) as follows.
For $X,Y\in \smset$, we let $[X,Y]$ denote the internal hom.
Given objects $p_1:M_1→U_1$ and $p_2:M_2→U_2$, we construct the morphism of smooth sets
$$(\frakmtCut)_{p_1,p_2}:\fraksmFEmb_d(p_1,p_2)→[\frakmtCut(p_2),\frakmtCut(p_1)]$$
by passing to the adjoint map
$$\fraksmFEmb_d(p_1,p_2)\times \frakmtCut(p_2)→\frakmtCut(p_1)$$
and defining its $L$-points to be the map of sets
$$\fraksmFEmb_d(p_1,p_2)_L\times \frakmtCut(p_2)_L→\frakmtCut(p_1)_L,\qquad (φ,α)↦φ^*α.$$
The subfunctor $\frakmtCutglob⊂\frakmtCut$ is constructed using $\mtCutglob$ instead of $\mtCut$.
\end{itemize}
\end{definition}

\subsection{Categories of bordisms}
\label{categories.of.bordisms}

In this section, we define two variants of the geometric symmetric monoidal $(\infty,d)$-category of bordisms:
one which includes isotopies for fields (\cref{bord.isotopy}) and one without (\cref{bord}).
In both cases, roughly speaking, a composable tuple of $k$-morphisms with $k\leq d$ is an object $p:M\to U$ of $\FEmb_d$,
together with a compact globular monoidal cut grid
(\cref{globular.monoidal.cut.grid})
that is degenerate in the last $(d-k)$-directions,
along with a vertex in $\gs(p:M\to U)$, where $\gs∈\Struct_d$ is a field stack (\cref{geometric.structure}).
In the case of the isotopy bordism category, we have isotopies of fields (\cref{geometric.structure.isotopy}) and compact monoidal cut grids
(\cref{globular.monoidal.cut.grid.smooth}) as $k$-morphisms for $k>d$.
See \cref{bord.explicit,bord.isotopy.explicit} for more details.

We begin with a version of the bordism category that allows for noncompact bordisms.
This version will be used as an intermediate step and we will pass to the subobject on compact bordisms.

\begin{definition}
\label{noncompact.bord}
Fix $d\geq 0$ and a fibered geometric site $\FEmb_d$ (\cref{fibered.geometric.site}), including the site $\stcart$ and the reduction functor $\red=\red_t:\FEmb_d→\smFEmb_d$.
Recall $\Struct_d$ from \cref{geometric.structure} and fix $\gs\in \Struct_d$.
Recall also the globular monoidal cut grid functor $\mtCutglob$ from \cref{globular.monoidal.cut.grid}.
Consider the functor
\begin{equation}\label{grothcutfunct}Γ^\op⨯(Δ^{⨯d})^\op→\PSh(\FEmb_d,\sset), \qquad (⟨ℓ⟩,{\bf m})↦((p:M→U)↦\mtCutglob(⟨ℓ⟩,{\bf m},\red p)⨯\gs(p)).\end{equation}
Apply degreewise the relative Grothendieck construction functor~$\relgro$ (\cref{relative.grothendieck}) to obtain a functor
$$Γ^\op⨯(Δ^{⨯d})^\op\to \PSh(\FEmb_d,\sset)\lto7{(\relgro)^{Δ^\op}}\PSh(\stcart,\smallcat^{Δ^\op}).$$
Moving the category $\stcart$ to the left side using currying yields a functor
\begin{equation}\label{nccatbord}\ncBord^\gs_d:\stcart^\op⨯Γ^\op⨯(Δ^{⨯d})^\op\to \smallcat^{Δ^\op},\end{equation}
which we call the \emph{geometric symmetric monoidal $(\infty,d)$-category of noncompact bordisms with $\gs$-structure}.
The construction $\gs↦\ncBord_d^\gs$ is $\sset$-enriched functorial in~$\gs$.
\end{definition}

The previous definition has the advantage that it is manifestly functorial in $\Gamma$, $\Delta^{\times d}$, $\stcart$, $\gs$, and $\mtCutglob$,
however it is not clear what the construction has to do with bordisms.
This will be explained after the unwinding of the definition of the bordism category in \cref{bord.explicit} (see also \cref{examplebords} for many examples).

In the next proposition, we give an explicit description of the value of the functor $\ncBord_d^{\gs}$ on a triple $(U,\langle \ell\rangle, {\bf m})\in \stcart\times \Gamma\times \Delta^{\times d}$.

\begin{proposition}
\label{explicit.noncompact.bords}
The functor $\ncBord^{\gs}_d$ in \cref{nccatbord} has the following explicit description.
Let $(U,\langle \ell\rangle,{\bf m})\in \stcart\times \Gamma\times \Delta^{\times d}$ and $[k]\in \Delta$.
The category of $k$-simplices $\ncBord^\gs_d(U,\langle \ell\rangle,{\bf m})_k$ is given by the following.
\begin{itemize}
\item Objects are triples $(p:M\to U,C,\sigma)$, where $C$ is a globular monoidal cut $(\langle \ell\rangle,{\bf m})$-grid on $\red p$ and $\sigma\in \gs(p:M\to U)_k$ is a $k$-simplex.
\item Morphisms $(p,C,\sigma)\to (q,C',\sigma')$ are maps $\varphi:p\to q\in \FEmb_d$ such that $\base \varphi=\id_U$, $\varphi^*C'=C$ and $\varphi^*\sigma'=\sigma$,
where the functor $\base:\FEmb_d→\stcart$ is given by the data of a fibered geometric site (\cref{fibered.geometric.site}).
We call a morphism a \emph{cut-respecting embedding}.
\end{itemize}
\end{proposition}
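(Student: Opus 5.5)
The plan is to unwind \cref{noncompact.bord} directly using the explicit description of the relative Grothendieck construction in \cref{explicitgroth}. Fix $(U,⟨ℓ⟩,{\bf m})$ and $[k]∈Δ$. By construction, $\ncBord^\gs_d(U,⟨ℓ⟩,{\bf m})$ is obtained by evaluating the functor $(\relgro)^{Δ^\op}$ on the simplicial presheaf
$$F=\mtCutglob(⟨ℓ⟩,{\bf m},\red(-))⨯\gs(-)∈\PSh(\FEmb_d,\sset)$$
at $U$. Here $(\relgro)^{Δ^\op}$ acts degreewise in the simplicial direction. Consequently, the category of $k$-simplices is $(\relgro F_k)(U)$, where $F_k$ is the presheaf of sets $p↦\mtCutglob(⟨ℓ⟩,{\bf m},\red p)⨯\gs(p)_k$. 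The first factor is discrete in the simplicial direction, since it is a presheaf of sets promoted to simplicial sets.

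Next I would apply \cref{explicitgroth} to $F_k$. Objects of $(\relgro F_k)(U)$ are pairs $(p,x)$ with $\base p=U$ and $x∈F_k(p)$. Such an $x$ is exactly a pair $(C,σ)$ with $C$ a globular monoidal cut grid on $\red p$ and $σ∈\gs(p)_k$, which gives the stated triples. Morphisms $(p,x)→(q,y)$ are maps $φ:p→q$ with $\base φ=\id_U$ and $F_k(φ)(y)=x$. The action of $F_k(φ)$ is componentwise: on the cut factor it is $\mtCutglob(\red φ)$, which is the pullback $φ^*C'$, and on the field factor it is $\gs(φ)_k$, that is, $φ^*σ'$. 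So the condition $F_k(φ)(y)=x$ splits into the two stated equalities. Composition and identities are inherited from $\FEmb_d$.

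There is one point to check: the notation $φ^*C'$ needs to be interpreted via $\red_t$. This is consistent because the presheaf in \cref{grothcutfunct} was defined by precomposing $\mtCutglob$ with $\red$. Beyond that, there is no real obstacle. The only mildly delicate step is confirming that the relative Grothendieck construction of a presheaf of simplicial sets really is computed levelwise. This holds by the definition of $(\relgro)^{Δ^\op}$, followed by currying.
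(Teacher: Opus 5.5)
Your proposal is correct and matches the paper's proof. The paper also treats this as a direct application of \cref{explicitgroth} to the presheaf of sets obtained by evaluating \cref{grothcutfunct} at fixed $(\langle \ell\rangle,{\bf m})$ and $[k]$. Your remarks on the componentwise action of $F_k(\varphi)$ and on reading $\varphi^*C'$ through $\red$ simply spell out what the paper leaves implicit.
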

\begin{proof}
This is a direct application of \cref{explicitgroth}, taking $F$ to be the functor obtained by evaluating \cref{grothcutfunct}
at a fixed object $(\langle \ell\rangle,{\bf m})∈Γ⨯Δ^{⨯d}$ and $[k]\in \Delta$.
\end{proof}
Taking $\FEmb_d=\smFEmb_d$ and $\gs=\munit$,
for every $(U,⟨ℓ⟩,{\bf m})∈\stcart⨯Γ⨯Δ^{⨯d}$, the category $$\cC=\ncBord_d^\gs(U,⟨ℓ⟩,{\bf m})$$ has an initial object $(p:M→U,C,σ)$, where $M=∅$.
Thus, the nerve of $\cC$ is weakly contractible.
The geometric meaning of this is that morphisms in $\cC$ need not preserve the core of the monoidal cut grid.
This is addressed in the next proposition, where we construct a subobject $\CatBord_d$ of $\ncBord_d$ which will be used to construct the bordism category.

\begin{proposition}
\label{compactbords}
Recall \cref{noncompact.bord} and let $\ncBord^\gs_d$ be the functor defined by \cref{nccatbord}.
Recall the explicit description of $\ncBord_d^\gs$ in \cref{explicit.noncompact.bords}.
For all $(U,⟨ℓ⟩,{\bf m})∈\stcart⨯Γ⨯Δ^{⨯d}$ and $[k]\in \Delta$, there is a subcategory
$$\CatBord_d^\gs(U,⟨ℓ⟩,{\bf m})_k⊂\ncBord_d^{\gs}(U,⟨ℓ⟩,{\bf m})_k$$
defined by the following conditions.
\begin{conditions}[(c1)]
\item
\label{bordism.compact}
On objects $(p,C,\sigma)$, we require the globular monoidal cut grid~$C$ to be compact (\cref{compactmonoidalgrid}).
\item
\label{bordism.core}
On morphisms $\varphi$, we require the image of $\red\varphi$ to contain $\ncore(\red p',C')$ (\cref{monoidal.cut.grid.core}).
\end{conditions}
Moreover, $\CatBord_d^{\gs}$ defines a subpresheaf of $\ncBord^\gs_d$,
which we call the \emph{geometric symmetric monoidal $(\infty,d)$-category of compact bordisms with $\gs$-structure}.
The construction $\gs↦\CatBord_d^\gs$ is $\sset$-enriched functorial in~$\gs$.
\end{proposition}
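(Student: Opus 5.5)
We need to check three things: the objects and morphisms cut out by \cref{bordism.compact} and \cref{bordism.core} form a subcategory of each category $\ncBord_d^\gs(U,⟨ℓ⟩,{\bf m})_k$; these subcategories are preserved by all structure maps in $\stcart$, $Γ$, $Δ^{⨯d}$, and $Δ$; and everything is natural in~$\gs$. We work throughout with the explicit description of \cref{explicit.noncompact.bords}.

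\emph{Subcategory.} Identities clearly satisfy \cref{bordism.core}. For composition, take $φ:(p,C,σ)→(p',C',σ')$ and $ψ:(p',C',σ')→(p'',C'',σ'')$ satisfying the condition. Since $ψ^*C''=C'$ and $\red ψ$ is a fiberwise open embedding, $\red ψ$ maps $\ncore(\red p',C')$ bijectively onto $\ncore(\red p'',C'')∩\image(\red ψ)$. The latter equals $\ncore(\red p'',C'')$ by hypothesis. Because $\image(\red φ)⊃\ncore(\red p',C')$, the image of $\red(ψφ)$ contains $\ncore(\red p'',C'')$.

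\emph{Structure maps.} The $Δ$-direction (simplices of $\gs$) and naturality in $\gs$ only act on $σ$, so they affect neither condition. A $Γ$-map $⟨ℓ⟩→⟨ℓ'⟩$ postcomposes $C^⊗$, which can only enlarge the trash bin. Hence each new $C_{⊗,[j,j']}$ is a closed-and-open subset of the old one, so properness and compactness are preserved. The core can only shrink, so the image condition is preserved. A $Δ^{⨯d}$-map deletes or duplicates cuts. Each new region $C_{⊗,[j,j']}$ is then a region of the old grid between cuts of the old grid, so it is proper over $U$. The new core is contained in the old core. Globularity is already handled by $\mtCutglob$. For $f:V→U$ in $\stcart$, \cref{explicitgroth} sends $(p,C,σ)$ to the split $\base$-cartesian pullback $f^*p$, where $\red f^*p$ is the pullback of $\red p$ along $\red_b f$ by \cref{reduction.preserves.splitting}. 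Each region $C_{⊗,[j,j']}$ pulls back to the base change of a proper map, which is proper by \cref{proper.maps}. For morphisms, the induced $φ'$ is the base change of $φ$ along $f$. A point of the pulled-back core lies over a core point of~$q$, which is hit by $\red φ$, and that preimage lies over the same base point, so it is in the image of $\red φ'$.

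The main obstacle is the compactness claim under $Δ^{⨯d}$-maps when a cut is duplicated or removed, in particular the outer-face case where $C_0$ or $C_{m}$ is removed. In that case the new region in direction $i$ is still $C^i_{[j_i,j'_i]}$ for some old indices, so the conclusion holds. For degeneracies this must be checked explicitly, and we also need to confirm that intersections over different directions are preserved. Finally, the $\sset$-enriched functoriality in~$\gs$ is inherited from that of $\ncBord_d^\gs$ in \cref{noncompact.bord}, since the conditions do not involve~$\gs$.
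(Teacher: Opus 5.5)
Your proof is correct and follows the paper's argument essentially step by step. Closure under composition uses the same surjection of $\ncore(\red p',C')$ onto $\ncore(\red p'',C'')$, and the structure-map checks are the same for $\Gamma$ (regions shrink to clopen subsets, core shrinks), for $\Delta^{\times d}$ (new regions are old regions, core shrinks), and for $\stcart$ (proper maps are stable under base change; the pulled-back core is the preimage of the image). Your final ``obstacle'' paragraph is unnecessary: your own observation that a map $\alpha$ in $\Delta^{\times d}$ identifies each new region with the old region $C_{\otimes,[\alpha(j),\alpha(j')]}$ already covers degeneracies, outer faces, and intersections across different directions.
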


\begin{proof}
We first prove that $\CatBord_d^\gs(U,⟨ℓ⟩,{\bf m})_k$ is a well defined category.
By \cref{explicit.noncompact.bords},
a morphism $\varphi:(p,C,\sigma)\to (p',C',\sigma')$ is a map $\varphi:p\to p'$ such that $\base φ=\id_U$, $φ^*C'=C$, and $\varphi^*\sigma'=\sigma$.
If $\varphi$ satisfies \cref{bordism.core}, then $\image\red\varphi⊃\ncore(\red p',C')$.
Therefore, the map~$\red φ$ restricts to a bijection
\begin{equation}\label{condition2image}\ncore(\red p,C)=\ncore(\red p,φ^*C')\mathrel{\ltoarr2^{≅}}\ncore(\red p',C')∩\image \red φ=\ncore(\red p',C').\end{equation}
Clearly identity morphisms satisfy \cref{bordism.core}.
Suppose we have morphisms
$$φ:(p,C,σ)→(p',C',σ'), \qquad \varphi':(p',C',\sigma')\to (p'',C'',\sigma'')$$
satisfying \cref{bordism.core}.
Then the composition $φ'φ:p→p''$ satisfies
$$\image\red(φ'φ)=\red φ'(\image\red φ)⊃\red φ'(\ncore(\red p',C'))=\ncore(\red p'',C'')$$
(with the last equality following from \cref{condition2image}), hence it satisfies \cref{bordism.core}.
This proves we have a well defined category.

It remains to show that $\CatBord_d^{\gs}$ is a well defined subpresheaf.
Hence, we must show that applying a structure map in $\stcart\times \Gamma\times \Delta^{\times d}$ preserves \cref{bordism.compact,bordism.core}.
It suffices to show that \cref{bordism.compact,bordism.core} are preserved by structure maps of the form $f\times \id_c$,
where $f$ is a morphism in one of the factors and $c$ is an object in the remaining two factors.

Let $f$ be a morphism in~$\Gamma$.
Given an object $(p,C,σ)$, we want to show that that the globular monoidal cut grid $f^*C$ is compact.
Denote by~$M$ the domain of~$\red p$.
The structure map corresponding to~$f$ repartitions the set of connected components of $M$ and moves some connected components to the trash bin.
Thus for each $j,j'$, the corresponding subset $(f^*C)_{⊗,[j,{j'}]}$ (\cref{monoidal.cut.grid.core})
is given by removing some connected components of $C_{⊗,[j,{j'}]}$.
Hence the restriction of $\red p$ to $(f^*C)_{\otimes,[j,j']}$ is still proper (\cref{proper.maps}), so \cref{bordism.compact} is preserved.
\cref{bordism.core} is preserved since $\ncore(\red p',f^*C')\subset \ncore(\red p',C')$ and the embedding~$φ$ does not change under the structure map of~$f$.

Let $f$ be a morphism in~$\Delta^{\times d}$.
We can assume that $f$ is a face or degeneracy map in a single simplicial direction.
Then $f$ either repeats a cut or omits a cut.
In both cases, $(f^*C)_{\otimes,[j,j']}\subset C_{\otimes,[j,j']}$ is a closed subset for all $j,j'$.
Hence the restriction of $p$ to $(f^*C)_{\otimes,[j,j']}$ is proper (\cref{proper.maps}) and \cref{bordism.compact} is preserved.
Again \cref{bordism.core} is preserved since $\ncore(\red p',f^*C')\subset \ncore(\red p',C')$ and the embedding~$φ$ does not change under the structure map of~$f$.

Let $f$ be a morphism in~$\stcart$.
Using \cref{explicitgroth} applied to the functor obtained by evaluating \cref{grothcutfunct} at a fixed object $(\langle \ell\rangle,{\bf m})\in \Gamma\times \Delta^{⨯d}$,
we see that the structure map of~$f$ pulls back the cut grid $C$ by the cartesian arrow $\red \base^*f: \red p'\to \red p$
where $\base^*f$ is the cartesian arrow in the splitting cleavage of the Grothendieck fibration~$\base$.
Since $\red$ preserves the splitting cleavage by \cref{fibered.geometric.site},
it follows that $\red \base^*f=\base^* \red f:\red p'\to \red p$ is the cartesian lift of $\red f$.

To verify \cref{bordism.compact}, we must show that the restriction of $\red p'$ to $(f^*C)_{\otimes,[j,j']}$ is proper for all $j,j'$.
First, by definition of the cartesian liftings for the base space functor $\base:\smFEmb_d\to \cart$ (\cref{smoothgrothfib}),
the map $\red\base^*f:\red p'\to \red p$ is the natural projection in the pullback diagram in the category $\man$:
$$\xymatrix@C=4em{
\red U'\times_{\red U} \red M\ar[r]^-{\red\base^*f}\ar[d]^-{\red p'} & \red M\ar[d]^-{\red p}\cr
\red U'\ar[r]^-{\red f} & \red U.\cr
}$$
Since the cut grid~$f^*C$ on $\red p'$ is obtained by pullback along $\red\base^*f$, it follows that for each $j,j'$, we have a pullback diagram of topological spaces:
$$\xymatrix@C=3em{
(f^*C)_{\otimes,[j,j']}\ar[r]^-{\red\base^*f}\ar[d]^-{\red p'} & C_{\otimes,[j,j']}\ar[d]^-{\red p}\cr
\red U'\ar[r]^-{\red f} & \red U.\cr
}$$
By assumption, $\red p$ is proper.
Since proper maps are closed under base change (\cref{proper.maps}), it follows that $\red p':(f^*C)_{\otimes,[j,j']}\to \red U'$ is also proper.

To verify \cref{bordism.core}, suppose a map $φ:p→q$ is a morphism $(p,C,σ)→(q,C',σ')$.
We have $$\image(\red f^*φ)=(\red\base^*f)^{-1}(\image(\red φ))⊃(\red\base^*f)^{-1}(\ncore(\red p',C'))=\ncore(\red f^*p',f^*C').\qedhere$$
\end{proof}

We provide an example of an object in both the noncompact (\cref{explicit.noncompact.bords}) and compact (\cref{compactbords}) bordism categories in \cref{figure2}.
We also provide an example of a morphism in the compact bordism category (\cref{compactbords}) in \cref{morphismcatbords}.

\begin{figure}[ht]
\begin{center}
\begin{tikzpicture}[scale=.60]
\filldraw[fill=gray!25, rounded corners, dashed] (0+.3,-1+.3) -- (0+.3,6+.3) -- (9+.3,6+.3) -- (9+.3,-1+.3) -- (0+.3,-1+.3);
\draw[purple] (5+.3,2+.3) to [out=-60, in=110] (9+.3,1+.3);
\draw[purple] (5+.3,2+.3) to [out=300, in=145] (9+.3,3+.3);
\draw[purple] (6+.3,3+.3) to [out=305, in=110] (9+.3,3.5+.3);
\draw[blue] (1+.3,6+.3) to [out=270, in=110] (2+.3,3+.3);
\draw[blue] (1.5+.3,6+.3) to [out=-45, in=110] (2+.3,3+.3);
\draw[blue] (3+.3,6+.3) to [out=-45, in=110] (5+.3,3+.3);
\draw[blue] (4+.3,6+.3) to [out=-90, in=110] (6,3);
\draw[orange] (5+.3,6+.3) to [out=200, in =180] (9+.3,4+.3);
\filldraw[fill=gray!25, rounded corners, dashed] (0,-1) -- (0,6) -- (9,6) -- (9,-1) -- (0,-1);
\draw[purple] (0,0) to [out=20,in=120] (5,2) to [out=-60, in=110] (9,1);
\draw[purple] (0,2) to [out=10,in=115] (5,2) to [out=300, in=145] (9,3);
\draw[purple] (0,2.5) to [out=0, in=125] (6,3) to [out=305, in=110] (9,3.5);
\node at (0-.7,0) {$C^1_{=0}$};
\node at (0-.7,2) {$C^1_{=1}$};
\node at (0-.7,2.8) {$C^1_{=2}$};
\draw[blue] (1,6) to [out=270, in=110] (2,3) to [out=290, in=110] (1,-1);
\draw[blue] (1.5,6) to [out=-45, in=110] (2,3) to [out=290, in=110] (1,-1);
\draw[blue] (3,6) to [out=-45, in=110] (5,3) to [out=290, in=110] (1,-1);
\draw[blue] (4,6) to [out=-90, in=110] (6,3) to [out=290, in=90] (4,-1);
\draw[orange] (5,6) to [out=200, in =180] (9,4);
\node at (.6,6.5+.3) {$C_{=0}^2$};
\node at (1.6,6.5+.3) {$C_{=1}^2$};
\node at (3,6.5+.3) {$C_{=2}^2$};
\node at (4.1,6.5+.3) {$C_{=3}^2$};
\node at (5.2,6.5+.3) {$C^2_{= 4}$};
\draw[fill=white] (2.65-.2,4.6) to [out=57, in=118] (3.9-.2,4.6);
\draw[thick] (2.5-.2,4.5) to [out=60, in=115] (4-.2,4.5);
\draw[thick, fill=white] (2.65-.2,4.6) to [out=-45, in=-135] (3.9-.2,4.6);
\draw[fill=white] (6.5+.2,.5+.2) to [out=46, in=130] (8.5-.2,.5+.2);
\draw[thick] (6.5,.5) to [out=60, in=115] (8.5,.5);
\draw[thick, fill=white] (6.5+.2,.5+.2) to [out=-45, in=-135] (8.5-.2,.5+.2);
\end{tikzpicture}
\begin{tikzpicture}[scale=.60]
\fill[fill=gray!25, rounded corners] (0+.3,-1+.3) -- (0+.3,6+.3) -- (9+.3,6+.3) -- (9+.3,-1+.3) -- (0+.3,-1+.3);
\draw[blue] (1+.3,6+.3) to [out=270, in=110] (2+.3,3+.3);
\draw[blue] (1.5+.3,6+.3) to [out=-45, in=110] (2+.3,3+.3);
\draw[blue] (3+.3,6+.3) to [out=-45, in=110] (5+.3,3+.3);
\draw[blue] (4+.3,6+.3) to [out=-90, in=110] (6,3);
\draw[blue] (5+.3,6+.3) to [out=200, in =180] (9+.3,4+.3);
\draw[dashed, rounded corners] (0+.3,-1+.3) -- (0+.3,6+.3) -- (9+.3,6+.3) ;
\fill[fill=gray!25, rounded corners] (0,-1) -- (0,6) -- (9,6) -- (9,-1) -- (0,-1);
\draw[dashed, rounded corners] (9.7,-.5) to[out=270,in=0, looseness=.8] (9,-1) -- (0,-1) -- (0,6) -- (9,6);
\fill[fill=gray!25, dashed] (9,6) -- (9,-1) to[out=0,in=270, looseness=1.2] (9.7,.5) -- (9.7,6.2);
\filldraw[fill=gray!25, dashed] (9+.3,6+.3) to [out=0, in=0, looseness=4] (9,6);
\draw[purple] (0,0) to [out=20,in=120] (5,2) to [out=-60, in=130] (9,1) to [out=310, in=-90] (9.7,1.5);
\draw[purple] (0,2) to [out=10,in=115] (5,2) to [out=300, in=145] (9,3) to [out=325, in=-110] (9.7,3.3);
\draw[purple] (0,2.5) to [out=0, in=125] (6,3) to [out=305, in=110] (9,3.5) to [out=300, in=-130] (9.7,3.7);
\node at (0-.7,0) {$D^1_{=0}$};
\node at (0-.7,2) {$D^1_{=1}$};
\node at (0-.7,2.8) {$D^1_{=2}$};
\draw[blue] (1,6) to [out=270, in=110] (2,3) to [out=290, in=110] (1,-1);
\draw[blue] (1.5,6) to [out=-45, in=110] (2,3) to [out=290, in=110] (1,-1);
\draw[blue] (3,6) to [out=-45, in=110] (5,3) to [out=290, in=110] (1,-1);
\draw[blue] (4,6) to [out=-90, in=110] (6,3) to [out=290, in=90] (4,-1);
\draw[blue] (5,6) to [out=200, in =180] (9,4) to [out=0, in=-130] (9.7,4.2);
\node at (.6,6.5+.3) {$D_{=0}^2$};
\node at (1.6,6.5+.3) {$D_{=1}^2$};
\node at (3,6.5+.3) {$D_{=2}^2$};
\node at (4.1,6.5+.3) {$D_{=3}^2$};
\node at (5.2,6.5+.3) {$D^2_{= 4}$};
\draw[fill=white] (2.65-.2,4.6) to [out=57, in=118] (3.9-.2,4.6);
\draw[thick] (2.5-.2,4.5) to [out=60, in=115] (4-.2,4.5);
\draw[thick, fill=white] (2.65-.2,4.6) to [out=-45, in=-135] (3.9-.2,4.6);
\draw[fill=white] (6.5+.2,.5+.2) to [out=46, in=130] (8.5-.2,.5+.2);
\draw[thick] (6.5,.5) to [out=60, in=115] (8.5,.5);
\draw[thick, fill=white] (6.5+.2,.5+.2) to [out=-45, in=-135] (8.5-.2,.5+.2);
\end{tikzpicture}
\end{center}
\caption{Let $\stcart=\cart$.
Let $d=2$, $\gs=\ast$, $U=\RR^0$, $\ell=1$, $k=0$, and ${\bf m}=([2],[4])$.
The image on the left depicts an object $(p,C,\sigma=\ast)\in \ncBord^\gs_d(U,\langle \ell\rangle,{\bf m})_k$.
The image on the right depicts an object $(q,D,\tau=\ast)\in \CatBord^\gs_d(U,\langle \ell\rangle,{\bf m})_k$.
The gray region is the ambient 2-dimensional smooth manifold, which is the total space of the submersion $p:M\to \RR^0$.
On the left, the ambient manifold is two parallel sheets connected by two “tunnels”.
On the right, the ambient manifold is one folded sheet connected by two tunnels.
The cuts $C_{=j}^i$ and $D_{=j}^i$ are part of the data of cut grids $C=(C^1,C^2)$ and $D=(D^1,D^2)$ on $p$ and $q$, respectively.
On the left, the orange cut $C^2_{4}$ makes the core noncompact.
On the right, the core is compact since the two sheets are joined together.
}
\label{figure2}
\end{figure}

\begin{figure}[ht]
\begin{tikzpicture}[scale=.40]
\draw[dashed, rounded corners] (5+.3,-1+.3) -- (0+.3,-1+.3) -- (0+.3,4+.3) -- (5+.3,4+.3) -- (5+.3,-1+.3);
\fill[fill=gray!25, rounded corners] (5+.3,-1+.3) -- (0+.3,-1+.3) -- (0+.3,4+.3) -- (5+.3,4+.3) -- (5+.3,-1+.3);
\draw[ blue] (1.5+.3,4+.3) to [out=-70, in=110] (2+.3,3+.3);
\draw[ blue] (2+.3,4+.3) to [out=-90, in=110] (2.05+.3,2.9+.3);
\draw [purple] (0+.3,0+.3) to [out=20,in=120] (5+.3,2+.3);
\draw [purple] (0,2.5+.3) to [out=0, in=125] (5+.3,3+.3);

\fill[fill=gray!25, rounded corners] (5,-1) -- (0,-1) -- (0,4) -- (5,4) -- (5,-1);
\draw[dashed, rounded corners] (5,-1) -- (0,-1) -- (0,4) -- (5,4) -- (5,-1);
\draw [purple] (0,0) to [out=20,in=120] (5,2);
\draw [purple] (0,2.5) to [out=0, in=125] (5,3);
\node at (0-.7,0) {$\scriptstyle C^1_{=0}$};
\node at (0-.7,2.8) {$\scriptstyle C^1_{=1}$};
\draw[ blue] (1.5,4) to [out=-70, in=110] (2,3) to [out=290, in=110] (1,-1);
\draw[ blue] (2,4) to [out=-90, in=110] (2.05,2.9) to [out=-45, in =110] (3,2.8) to [out=-90, in=80] (1.3,.5) to [out=-90,in=90] (3,-1);
\node at (.6,4.5+.3) {$\scriptstyle C_{=0}^2$};
\node at (2,4.5+.3) {$\scriptstyle C_{=1}^2$};
\draw[->] (6,2)--(8,2);

\end{tikzpicture}
\begin{tikzpicture}[scale=.40]
\fill[fill=gray!25, rounded corners] (0+.3,-1+.3) -- (0+.3,6+.3) -- (9+.3,6+.3) -- (9+.3,-1+.3) -- (0+.3,-1+.3);
\draw[ blue] (1+.3,6+.3) to [out=270, in=110] (2+.3,3+.3);
\draw[ blue] (1.5+.3,6+.3) to [out=-45, in=110] (2+.3,3+.3);
\draw[dashed, rounded corners] (0+.3,-1+.3) -- (0+.3,6+.3) -- (9+.3,6+.3);
\fill[fill=gray!25, rounded corners] (0,-1) -- (0,6) -- (9,6) -- (9,-1) -- (0,-1);
\draw[dashed, rounded corners] (9.7,-.5) to[out=270,in=0, looseness=.8] (9,-1) -- (0,-1) -- (0,6) -- (9,6);
\fill[fill=gray!25, dashed] (9,6) -- (9,-1) to[out=0,in=270, looseness=1.2] (9.7,.5) -- (9.7,6.2);
\filldraw[fill=gray!25, dashed] (9+.3,6+.3) to [out=0, in=0, looseness=4] (9,6);
\draw [purple] (0,0) to [out=20,in=120] (5,2) to [out=-60, in=130] (9,1) to [out=310, in=-90] (9.7,1.5);
\draw [purple] (0,2.5) to [out=0, in=125] (6,3) to [out=305, in=110] (9,3.5) to [out=300, in=-130] (9.7,3.7);
\node at (0-.7,0) {$\scriptstyle C^1_{=0}$};
\node at (0-.7,2.8) {$\scriptstyle C^1_{=1}$};
\draw[ blue] (1,6) to [out=270, in=110] (2,3) to [out=290, in=110] (1,-1);
\draw[ blue] (1.5,6) to [out=-45, in=110] (2.05,2.9) to [out=-45, in =110] (3,2.8) to [out=-90, in=80] (1.3,.5) to [out=-90,in=90] (3,-1);
\node at (.6,6.5+.3) {$\scriptstyle C_{=0}^2$};
\node at (2,6.5+.3) {$\scriptstyle C_{=1}^2$};
\draw[fill=white] (2.65-.2,4.6) to [out=57, in=118] (3.9-.2,4.6);
\draw[thick] (2.5-.2,4.5) to [out=60, in=115] (4-.2,4.5);
\draw[thick, fill=white] (2.65-.2,4.6) to [out=-45, in=-135] (3.9-.2,4.6);
\draw[fill=white] (6.5+.2,.5+.2) to [out=46, in=130] (8.5-.2,.5+.2);
\draw[thick] (6.5,.5) to [out=60, in=115] (8.5,.5);
\draw[thick, fill=white] (6.5+.2,.5+.2) to [out=-45, in=-135] (8.5-.2,.5+.2);
\draw[dashed, rounded corners] (5+.2,-1+.2) -- (0+.2,-1+.2) -- (0+.2,4+.2) -- (5+.2,4+.2) -- (5+.2,-1+.2);
\end{tikzpicture}
\caption{Let $\stcart=\cart$.
Let $d=2$, $\gs=\ast$, $U=\RR^0$, $\ell=1$, $k=0$, and ${\bf m}=([1],[1])$.
The image depicts a morphism in $\CatBord_d^{\gs}(U,\langle \ell\rangle, {\bf m})_0$, given by the inclusion of an open subset of the bordism on the right.
Since the open subset contains the core, this morphism satisfies condition \cref{bordism.core} in \cref{compactbords}.}\label{morphismcatbords}
\end{figure}

We are now ready to introduce the geometric extended bordism category.

\begin{definition}
\label{bord}
\label{bordstr}
Assume the context of \cref{noncompact.bord}.
Composing the subfunctor $\CatBord_d^\gs$ in \cref{compactbords} with the degreewise nerve functor followed by the diagonal functor yields a functor
\begin{equation}\label{nerveofcatbord.unenriched}\Bord^\gs_d:\stcart^\op⨯Γ^\op⨯(Δ^{⨯d})^\op\lto{8}{\CatBord_d^\gs}\smallcat^{Δ^\op}\lto5{\nerve^{Δ^\op}}\sset^{Δ^\op}\lto5{\diag}\sset,\end{equation}
hence an object $\Bord^\gs_d∈\smcat{d}$ (\cref{globular.model.structure}, taking $\site=\stcart$).
We call $\Bord^\gs_d$ the \emph{geometric symmetric monoidal $(\infty,d)$-category of bordisms with $\gs$-structure}.

In the simplicial set $\Bord_d^\gs(U,⟨ℓ⟩,{\bf m})$,
vertices are known as \emph{bordisms} and 1-simplices are known as \emph{virtual isomorphisms of bordisms} (\cref{cats.defs,virtual.isomorphism}).

Since the construction $\gs↦\CatBord_d^\gs$ is $\sset$-enriched functorial in~$\gs$ (\cref{compactbords}),
the construction \cref{nerveofcatbord.unenriched} defines an $\sset$-enriched functor
$$\Bord_d:\Struct_d\to \smcat{d}, \qquad \gs\mapsto \Bord_d^{\gs}.$$
\end{definition}

We now unwind \cref{bord}.

\begin{remark}
\label{bord.explicit}
Assume the context of \cref{bord}.
Given $\gs∈\Struct_d$, $(U,⟨ℓ⟩,{\bf m})∈\stcart⨯Γ⨯Δ^{⨯d}$, and $[k]∈Δ$,
the set $\Bord_d^\gs(U,⟨ℓ⟩,{\bf m})_k$ has elements given by a composable chain of morphisms
in the category $\CatBord_d^\gs(U,⟨ℓ⟩,{\bf m})_k$:
$$(p_0,C_0,σ_0)\lto3{φ_1}(p_1,C_1,σ_1)\lto3{φ_2}⋯\lto3{φ_k}(p_k,C_k,σ_k).$$
Explicitly,
\begin{enumerate}
\item each $p_i:M_i\to U_i$ is an object of $\FEmb_d$ (\cref{fibered.geometric.site});
\item each $C_i\in \mtCutglob(⟨ℓ⟩,{\bf m},\red p_i)$ is a compact globular monoidal cut grid on $p_i:M_i\to U_i$ (\cref{globular.monoidal.cut.grid});
\item each $\sigma_i\in \gs(p_i:M_i\to U_i)_k$ is a $k$-simplex in the field stack~$\gs$ (\cref{geometric.structure});
\item the morphisms $φ_i:p_{i-1}→p_i$ are maps in $\FEmb_d$ such that $\base(φ_i)=\id_U$, $(\red φ_i)^*C_i=C_{i-1}$, and $φ^*σ_i=σ_{i-1}$,
with $\image\red φ_i⊃\ncore(\red p_i,C_i)$.
\end{enumerate}

The structure maps corresponding to morphisms in $\stcart⨯Γ⨯Δ^{⨯d}$ are inherited from the structure maps of $\CatBord_d^\gs$ (\cref{compactbords}).
\end{remark}

In the case where $d=2$, $\stcart=\cart$, $\gs=\ast$, $U=\RR^0$, $\ell=1$, ${\bf m}=([2],[4])$, and $k=0$,
an example of an element in $\Bord_d^{\gs}(U,\langle\ell\rangle,{\bf m})_k$ is given by the right image in \cref{figure2}.
Changing $U$, $\gs$, $\ell$, or ${\bf m}$ can be visualized by adding additional data to an image of this form (which we call a bordism) as follows.
\begin{itemize}
\item Passing from $U=\RR^0$ to arbitrary $U$ amounts to taking $U$-parametrized families of bordisms.
One can imagine smoothly varying the right image in \cref{figure2}, along with its cut grid, with respect to the parameter space $U$.
\item Passing from $\gs=\ast$ to arbitrary $\gs$ amounts to adding the data of a field $\sigma\in \gs(p)_0$ on the bordism,
such as a Riemannian metric or a principal $G$-bundle with connection.
\item Passing from $\ell=1$ to arbitrary $\ell$ amounts to labeling the connected components of the bordism with elements of $\langle \ell\rangle$.
In \cref{figure2}, one can imagine adding more connected components to the image on the right in \cref{figure2}
and labeling each connected component with an element of $\langle \ell\rangle$.
Some components may in particular be labeled by the trash bin $\ast\in \langle \ell\rangle$.
\item Passing from ${\bf m}=([2],[4])$ to arbitrary multisimplices amounts to changing the number of cuts in the cut grid.
One can imagine adding or removing cuts in the cut grid in \cref{figure2}, according to the multisimplex.
\end{itemize}

We now provide a conceptual explanation of the $k$-simplices in \cref{bord.explicit}.
For simplicity, we take $\gs=\ast$.
In this case, a $k$-simplex in \cref{bord.explicit}
is just a $k$-simplex in the nerve of the category $\CatBord_d^{\gs}(U,\langle \ell\rangle, {\bf m})$ of \cref{compactbords} (hence a composable chain of morphisms).
By definition, a morphism must map the cuts in the cut grid on the domain to corresponding cuts in the cut grid of the codomain.
In \cref{figure2}, one can image a map between two bordisms of the form depicted on the right, where the map must send cuts to corresponding cuts.
Working in a single fiber over the parametrizing space $U$, such a map is an open embedding on the ambient manifold.
By \cref{bordism.core} in \cref{compactbords}, the image must contain the core,
so on a sufficiently small neighborhood of the core (in both domain and codomain) this map is a diffeomorphism.
Thus the category $\CatBord_d^{\gs}(U,\langle \ell\rangle, {\bf m})$ encodes the groupoid of diffeomorphisms between bordisms.

\begin{remark}
\label{bord.isotopy.explicit}
Substituting $T=\Bord_d^{\gs}$ in \cref{cats.defs} (see \cref{nonfibrant.target})
yields explicit descriptions of $U$-families of $k$-morphisms in the bordism category for $0\leq k\leq d$:
a $U$-family of $k$-morphisms is given by an element of the set $$\Bord_d^{\gs}(U,\langle 1\rangle,[1],\ldots,[1],[0],\ldots,[0])_0,$$ where $[1]$ is repeated $k$~times.
For example, a circle can be interpreted as a 1-morphism in the 2-dimensional unoriented bordism category, by taking $d=2$, $k=1$, $U=\RR^0$, $\gs=\ast$.
The ambient manifold is the cylinder $C_z=\{(x,y,z) \mid x^2+y^2=1\}$.
The cut tuple in the first direction is given by the pair of cuts
$$C^1_0=(\emptyset,\emptyset, C_z), \quad C^1_1=(C_z,\emptyset,\emptyset)$$
The cut tuple in the second direction is given by
$$C^2_0=(z<0, z=0, z>0).$$
A picture of this 1-morphism is given in \cref{circle.1mor}.
\begin{figure}
\input{1-morph.tikz}
\caption{A circle regarded as a 1-morphism in the unoriented 2-dimensional bordism category.}
\label{circle.1mor}
\end{figure}
\end{remark}

The version of the bordism category described in \cref{bord.explicit} \emph{does not} encode the homotopy type of the space of diffeomorphisms of bordisms
like Galatius–Madsen–Tillmann–Weiss \cite{GMTW} and Lurie \cite{Lurie.TFT}.
The bordism category with isotopies, which we now construct, does encode the homotopy type.

\begin{definition}
\label{fraknccatbord}
Fix $d\geq 0$ and a fibered geometric site with isotopies $\frakFEmb_d$ (\cref{fibered.geometric.site.isotopy}),
including the site $\stcart$ and the reduction functor $\red=\red_t:\frakFEmb_d→\fraksmFEmb_d$.
Recall $\frakStruct_d$ from \cref{geometric.structure.isotopy} and fix $\gs\in \frakStruct_d$.
Recall also the globular monoidal cut grid functor with isotopies $\frakmtCutglob$ from \cref{globular.monoidal.cut.grid.smooth}.
Consider the functor
\begin{equation}\label{isot.grothcutfunct}Γ^\op⨯(Δ^{⨯d})^\op→\PSh(\frakFEmb_d,\smsset), \qquad (⟨ℓ⟩,{\bf m})↦((p:M→U)↦\frakmtCutglob(⟨ℓ⟩,{\bf m},\red p)⨯\gs(p)).\end{equation}
Apply objectwise the $\smset$-enriched relative Grothendieck construction functor~$\relgro$ (\cref{relative.grothendieck}) to obtain a functor
$$Γ^\op⨯(Δ^{⨯d})^\op\to \PSh(\frakFEmb_d,\smsset)\lto{14}{(\relgro)^{Δ^\op}}\PSh(\stcart,\smallcat^{\cart^\op\times Δ^\op}).$$
Moving the category $\stcart$ to the left side using currying yields a functor
\begin{equation}\label{isot.nccatbord}\frakncBord^\gs_d:\stcart^\op⨯Γ^\op⨯(Δ^{⨯d})^\op\to \smallcat^{\cart^\op⨯Δ^\op},\end{equation}
which we call the \emph{geometric symmetric monoidal $(\infty,d)$-category of noncompact bordisms with isotopies}.
The construction $\gs↦\frakncBord_d^\gs$ is $\smsset$-enriched functorial in~$\gs$.
\end{definition}

The following proposition is the analog of \cref{explicit.noncompact.bords} in the isotopy case.

\begin{proposition}
\label{explicit.noncompact.isot}
The functor $\frakncBord^\gs_d$ in \cref{isot.nccatbord} has the following explicit description.
Let $(U,\langle \ell\rangle,{\bf m})\in \stcart\times \Gamma\times \Delta^{\times d}$, $L\in \cart$, and $[k]\in \Delta$.
The category $\frakncBord^\gs_d(U,\langle \ell\rangle,{\bf m})_{L,k}$ is given by the following.
\begin{itemize}
\item Objects are triples $(p:M\to U,C,\sigma)$, where $p∈\frakFEmb_d$,
$C∈\frakmtCutglob(⟨ℓ⟩,{\bf m},\red p)_L$ is an $L$-family of globular monoidal cut $(\langle \ell\rangle,{\bf m})$-grids on $\red p$,
and $\sigma\in \gs(p:M\to U)_{L,k}$.
\item Morphisms $(p,C,\sigma)\to (p',C',\sigma')$ are maps $\varphi:p\to p'$ in $(\frakFEmb_d)_L$
such that $\base \varphi=\id_U$, $(\red\varphi)^*C'=C$, and $\varphi^*\sigma'=\sigma$.
We call a morphism an \emph{$L$-family of cut-respecting embeddings}.
\end{itemize}
\end{proposition}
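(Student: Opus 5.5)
The plan is to apply the $\smset$-enriched form of \cref{explicitgroth}, in the same way that \cref{explicit.noncompact.bords} follows from its unenriched form. Fix $(⟨ℓ⟩,{\bf m})∈Γ⨯Δ^{⨯d}$ and $[k]∈Δ$. Evaluating \cref{isot.grothcutfunct} at $(⟨ℓ⟩,{\bf m})$ and then at $[k]$ produces a $\smset$-enriched presheaf of smooth sets
$$F:\frakFEmb_d^\op→\smset,\qquad p↦\frakmtCutglob(⟨ℓ⟩,{\bf m},\red p)⨯\gs(p)_k.$$
By \cref{isot.nccatbord}, the category $\frakncBord^\gs_d(U,⟨ℓ⟩,{\bf m})_{L,k}$ is $(\relgro F)_L(U)$. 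By \cref{relative.grothendieck}, this equals $\relgrobare_{\base_L}F[L]$ evaluated at $U$.

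Next I unwind $F[L]$. On objects it sends $p$ to the product of sets
$$F(p)_L=\frakmtCutglob(⟨ℓ⟩,{\bf m},\red p)_L⨯\gs(p)_{L,k}.$$
This product is computed levelwise because products of presheaves are objectwise. On an $L$-family $φ∈\frakFEmb_d(p,q)_L$, the map $F(φ)$ acts on each factor separately. On the cut grid factor, it is the pullback $(\red φ)^*$ along the $L$-family $\red φ∈\fraksmFEmb_d(\red p,\red q)_L$, following \cref{globular.monoidal.cut.grid.smooth}. On the field factor, it is $φ^*$ given by the enriched functoriality of $\gs$. Here the enriched structure map $\frakFEmb_d(p,q)→[F(q),F(p)]$ is evaluated on $L$-points, which is exactly how $F[L]$ was defined.

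With this in hand, I apply the explicit Grothendieck construction of \cref{explicitgroth} to $F[L]$ and the split fibration $\base_L:(\frakFEmb_d)_L→\stcart$, working in the fiber over $U$. Its objects are pairs $(p,x)$ with $\base p=U$ and $x∈F(p)_L$; these are exactly the triples $(p,C,σ)$. Its morphisms $(p,x)→(p',x')$ are $φ∈(\frakFEmb_d)_L(p,p')$ with $\base φ=\id_U$ and $F[L](φ)(x')=x$, which is the pair of conditions $(\red φ)^*C'=C$ and $φ^*σ'=σ$.

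The one point needing care is that $\base_L$ is genuinely a split Grothendieck fibration, so that \cref{explicitgroth} applies for each $L$. This is exactly \cref{fibered.geometric.site.isotopy} together with \cref{enriched.splitting}. The splitting is natural in $L$, and that naturality only matters for the functoriality in $L$ and $U$, not for the description of the fiber stated here. I expect no real obstacle beyond this bookkeeping.
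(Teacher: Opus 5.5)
Your proposal is correct and is essentially the paper's own argument. The paper also applies \cref{explicitgroth} with $\base$ replaced by $\base_L:(\frakFEmb_d)_L\to\stcart$, taking $F$ to be the evaluation of \cref{isot.grothcutfunct} at $(\langle\ell\rangle,{\bf m})$ and $(L,[k])$. The one difference is minor: the splitness of each $\base_L$ is already part of the definition of a split $\smset$-enriched Grothendieck fibration, so you do not need \cref{enriched.splitting}.
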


\begin{proof}
This is an application of \cref{explicitgroth},
replacing $\base:\FEmb_d→\stcart$ with $\base_L:(\frakFEmb_d)_L→\stcart$
and taking $F$ to be the functor obtained by evaluating \cref{isot.grothcutfunct}
at fixed objects $(\langle \ell\rangle,{\bf m})∈Γ⨯Δ^{⨯d}$ and $(L,[k])\in \cart\times \Delta$.
\end{proof}

The next proposition is the analog of \cref{compactbords} in the isotopy case.

\begin{proposition}
\label{compactbords.isotopy}
Recall \cref{fraknccatbord} and the functor~$\frakncBord_d^\gs$.
Recall the explicit description of $\frakncBord_d^\gs$ in \cref{explicit.noncompact.isot}.
For all $(U,⟨ℓ⟩,{\bf m})∈\stcart⨯Γ⨯Δ^{⨯d}$ and $(L,[k])\in \cart\times \Delta$, there is a subcategory
$$\frakCatBord_d^\gs(U,⟨ℓ⟩,{\bf m})_{L,k}⊂\frakncBord_d^\gs(U,⟨ℓ⟩,{\bf m})_{L,k}$$
defined by the following conditions.
\begin{enumerate}[(c1)]
\item
\label{bordism.compact.isotopy}
On objects $(p,C,\sigma)$, we require the $L$-family of globular monoidal cut grids~$C$ (\cref{globular.monoidal.cut.grid.smooth}) to be compact (\cref{compactmonoidalgrid}).
\item
\label{bordism.core.isotopy}
On morphisms $\varphi$, we require the image of $\red\varphi$ to contain $\ncore(\red p',C')$ (\cref{monoidal.cut.grid.core}).
\end{enumerate}
Moreover, $\frakCatBord_d^\gs$ defines a subpresheaf of $\frakncBord_d^\gs$.
The construction $\gs↦\frakCatBord_d^\gs$ is $\smsset$-enriched functorial in~$\gs$.
\end{proposition}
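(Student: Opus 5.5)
The plan is to follow the proof of \cref{compactbords} line by line, working at a fixed $L∈\cart$ and $[k]∈Δ$, and then add one genuinely new check: compatibility with the $\cart^\op$-structure in the $L$-variable. By \cref{explicit.noncompact.isot}, $\frakncBord_d^\gs(U,⟨ℓ⟩,{\bf m})_{L,k}$ is the unenriched construction of \cref{explicit.noncompact.bords}, with $\base$ replaced by $\base_L:(\frakFEmb_d)_L→\stcart$ and cut grids replaced by cut grids on $\red p⨯\id_L$ (\cref{monoidal.cut.grid.smooth}). Compactness in \cref{bordism.compact.isotopy} is then read as compactness of the monoidal cut grid on $\red p⨯\id_L:M⨯L→U⨯L$ (\cref{compactmonoidalgrid}). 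Likewise, the image condition \cref{bordism.core.isotopy} is read for the fiberwise open embedding $\red φ:M⨯L→M'⨯L$, whose image must contain $\ncore(\red p'⨯\id_L,C')$.

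\textbf{Subcategory.} An $L$-family $φ$ is a morphism of $(\fraksmFEmb_d)_L$, so $\red φ$ is an open embedding of total spaces, injective by \cref{admissible.is.monomorphism.enriched}. Since $(\red φ)^*C'=C$, the bijection \cref{condition2image} holds verbatim for cores in $M⨯L$ and $M'⨯L$. Identities and composites then satisfy \cref{bordism.core.isotopy} by the same image computation as in \cref{compactbords}.

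\textbf{Presheaf in $Γ$, $Δ^{⨯d}$, $\stcart$.} For maps in $Γ$ and $Δ^{⨯d}$, the regions $(f^*C)_{⊗,[j,j']}$ are unions of components of, or closed subsets of, the old regions. Properness is therefore preserved by \cref{proper.maps}, and cores shrink. For $f:U'→U$ in $\stcart$, the structure map pulls back along the split cartesian lift. This lift is the trivial $L$-family $ϖ^*\base^*f$ of \cref{enriched.splitting}. Because $\red$ preserves the splitting, after reduction and product with $L$ this lift is the pullback square $(U'⨯L)⨯_{U⨯L}(M⨯L)→M⨯L$ over $\red f⨯\id_L$. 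The regions $(f^*C)_{⊗,[j,j']}$ are base changes of the old ones, so they are proper by \cref{proper.maps}. The image condition follows from preimages exactly as in the last display of the proof of \cref{compactbords}.

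\textbf{Presheaf in $L$ (main obstacle).} For $l:L'→L$, objects and morphisms are pulled back along $\id_M⨯l$ and $\id_{U}⨯l$, using the structure maps of \cref{def.fraksmFEmb,monoidal.cut.grid.smooth}.
\begin{itemize}
\item Each region $C'_{⊗,[j,j']}⊂M⨯L'$ is the base change of $C_{⊗,[j,j']}→U⨯L$ along $U⨯L'→U⨯L$. Hence $C'$ is again compact by \cref{proper.maps}.
\item For a morphism $φ$, its restriction $φ'=(π_1∘φ∘(\id⨯l),π_2)$ has image equal to the preimage of $\image\red φ$ under $\id_{M'}⨯l$. The new core is likewise the preimage of the old core, since the cut grids pull back. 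So the containment in \cref{bordism.core.isotopy} persists.
\end{itemize}
This step is where I expect care to be needed: it requires verifying that the $\smset$-enriched structure maps in \cref{monoidal.cut.grid.smooth} are literally pullbacks along $\id⨯l$, so that both cores and images transform by preimage.

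\textbf{Functoriality in $\gs$.} The conditions \cref{bordism.compact.isotopy,bordism.core.isotopy} do not involve $σ$. The $\smsset$-enriched functoriality of $\frakncBord_d$ in $\gs$ from \cref{fraknccatbord} therefore restricts to $\frakCatBord_d$.
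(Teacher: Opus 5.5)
Your proposal is correct and follows the paper's approach: the paper's proof simply reruns the proof of \cref{compactbords} at a fixed pair $(L,k)$. Your extra check that the structure maps for $l:L'\to L$ preserve (c1) and (c2) goes beyond the paper's one-line proof, which leaves it implicit. You show that the new regions are base changes along $U\times L'\to U\times L$, and that the new images and cores are preimages under $\id\times l$.
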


\begin{proof}
The proof is adapted from the proof of \cref{compactbords} by evaluating at a pair $(L,k)$ instead of just $k$.
\end{proof}

The following definition is the isotopy analog of \cref{bord}.

\begin{definition}
\label{bord.isotopy}
\label{frakbord}
\label{enrichedbordstr}
Assume the context of \cref{fraknccatbord}.
Composing the subfunctor $\frakCatBord_d^\gs$ in \cref{compactbords.isotopy} with the objectwise nerve functor followed by the objectwise diagonal functor yields a functor
$$\frakBord^\gs_d:\stcart^\op⨯Γ^\op⨯(Δ^{⨯d})^\op
\lto{7}{\frakCatBord_d^\gs}\smallcat^{\cart^\op\times Δ^\op}
\lto{12}{\nerve^{\cart^\op\times Δ^\op}}\sset^{\cart^\op\times Δ^\op}
\lto{9}{\diag^{\cart^\op}}\smsset,$$
hence an object $\frakBord^\gs_d∈\fraksmcat{d}$ (\cref{globular.model.structure.smooth}).
We call $\frakBord^\gs_d$ the \emph{geometric symmetric monoidal $(\infty,d)$-category of bordisms with $\gs$-structure and isotopies}.

In the simplicial set $\Bord_d^\gs(U,⟨ℓ⟩,{\bf m})_L$,
vertices are known as \emph{$L$-families of bordisms}
and 1-simplices are known as \emph{$L$-families of virtual isomorphisms of bordisms} (\cref{cats.defs,virtual.isomorphism}).
If $L=\RR^1$, we get \emph{isotopies of bordisms} and \emph{isotopies of virtual isomorphisms of bordisms}.

Since the construction $\gs↦\frakCatBord_d^\gs$ is $\smsset$-enriched functorial in~$\gs$ (\cref{compactbords.isotopy}),
the construction of $\frakBord_d^\gs$ defines an $\smsset$-enriched functor
$$\frakBord_d:\frakStruct_d\to \fraksmcat{d}, \qquad \gs\mapsto \frakBord_d^{\gs}.$$
\end{definition}

\begin{remark}
\label{frakbordl0}
Recall the isotopification functor $$\frakI_d:\Struct_d→\frakStruct_d$$
and its right adjoint $\gs↦\gs_{\RR^0}$ from \cref{isotopification}.

Given $\gs∈\frakStruct_d$, consider the bordism category
$$\frakBord_d^\gs∈\fraksmcat{d}=\smsPSh(\site\times\Gamma\times\Delta^{\times d})_{\glob}.$$
By evaluating the values of this presheaf of smooth simplicial sets at $\RR^0∈\cart$ we get
$$\Bord_d^{\gs_{\RR^0}}∈\smcat{d}=\sPSh(\site\times\Gamma\times\Delta^{\times d})_{\glob}.$$
This can be seen by comparing the explicit descriptions of these objects in
\cref{compactbords,compactbords.isotopy}.

Conversely, given $\gs∈\Struct_d$, we can consider the bordism category
$$\frakBord_d^{\frakI_d\gs}∈\fraksmcat{d}.$$
Many practical examples of bordism categories arise in this fashion.
\end{remark}

\begin{definition}
\label{def.FFTspace}
Fix $d≥0$.
A \emph{functorial field theory} with geometric structure $\gs∈\Struct_d$ valued in $\vcat∈\smcat{d}$ is a point in the derived mapping space
$$\FFT_{d,\vcat}^\gs = \dmap(\Bord_d^\gs,\vcat).$$
A \emph{functorial field theory with isotopies} with geometric structure $\gs∈\frakStruct_d$ valued in $\vcat∈\fraksmcat{d}$ is a point in the derived mapping space
$$\frakFFT_{d,\vcat}^\gs = \dmap(\frakBord_d^\gs,\vcat).$$
\end{definition}

In particular, if $\vcat$ is fibrant, then a functorial field theory is simply a morphism $\Bord_d^\gs→\vcat$, consistent with the traditional definition
of a functorial field theory.

\begin{remark}
\label{bordism.not.fibrant}
As explained in \cref{not.fibrant}, it is neither necessary nor beneficial for symmetric monoidal $(∞,d)$-categories to be fibrant in some model structure.
A functorial field theory (\cref{def.fft}) is a point in a derived mapping object $\dmap(\Bord_d^\gs,\vcat)$,
which can be presented as a morphism of the form $\Bord_d^\gs→\vcat$ if $\Bord_d^\gs$ is cofibrant and $\vcat$ is fibrant.
All objects in the model structure for geometric symmetric monoidal $(∞,d)$-categories (\cref{globular.model.structure}, with or without isotopies) are cofibrant.

Although our bordism categories $\Bord_d$ and $\frakBord_{d}$ are not fibrant objects in $\smcat{d}$ or $\fraksmcat{d}$, respectively,
they are local with respect to some of the morphisms in \cref{globular.model.structure}.
Since these locality properties are not useful for the development of functorial field theories, we omit their proofs.
\begin{itemize}
\item For a field stack $\gs$ that satisfies homotopy descent in $\Struct_{d}$ (respectively $\frakStruct_{d}$), the category $\Bord_d^\gs$ (respectively $\frakBord_d^\gs$) is local with respect to the Čech morphisms \cref{homotopy.descent}, Segal $\Gamma$-maps \cref{monoidal1,monoidal2}, Segal $\Delta$-maps \cref{segal1}.
\item If the field stack $\gs$ does not satisfy homotopy descent, then all three of the above conditions may be violated.
This is the case when $\gs$ is representable, corresponding to the case of embedded bordism categories
(\cref{geometrically.framed.bordisms}).
\item For any field stack $\gs$, the object $\Bord_d^\gs$ is local with respect to the Segal completion maps \cref{segal2}.
If $d\geq 5$, and $\gs=\ast$, then $\frakBord_d^\gs$ is not local with respect to the completion maps, due to the existence of nontrivial $h$-cobordisms.
\item Both $\Bord_d$ and $\frakBord_d$ are local with respect to the globular maps in \cref{globular.maps}.
\end{itemize}
If desired, one could make the following simple adjustments to the definitions of bordism categories,
which produce weakly equivalent bordism categories that satisfy all locality conditions of \cref{globular.model.structure}.
\begin{itemize}
\item In the case of $\frakBord_d$, add h-cobordisms to the smooth simplicial set of objects,
as in Lurie \cite[Definition~2.2.10]{Lurie.TFT} and Calaque–Scheimbauer \cite[Definition~5.24]{CalaqueScheimbauer}.
\item Replace the field stack~$\gs$ by its associated ∞-sheaf $\frep\gs$ in all constructions.
\end{itemize}
We remark that the remaining fibrancy condition, that of \emph{injective} fibrancy, is typically false unless $\gs=∅$.
\end{remark}

\begin{remark}
Working with nonfibrant models for the bordism category gives rise to significant simplifications in the proof of locality
of functorial field theories in Grady–Pavlov \cite{GradyPavlov.Loc},
which uses representable field stacks (\cref{geometrically.framed.bordisms}).
These stacks do not satisfy descent and give rise to nonfibrant bordism categories.
Nonfibrancy should not be seen as a disadvantage in this case,
but rather as an advantage that allows us to work with a much smaller model for the bordism category in our proofs.
\end{remark}

\begin{remark}
\label{cut.grid.functor.parameter}
The constructions of \cref{noncompact.bord,compactbords,bord} are functorial in $\mtCutglob$.
More precisely, set $$\catC=\PSh(Γ⨯Δ^{⨯d}⨯\smFEmb_d,\set)/\mtCut.$$
Then substituting an arbitrary object of~$\catC$ instead of $\mtCutglob$ in \cref{bord}
produces an object of $\smcat{d}$ for which \cref{compactbords} continues to hold,
given that $\ncore(\red p,C)$ is defined for an arbitrary monoidal cut grid~$C$ in $\mtCut$.
We define the \emph{uple bordism category} $\Bord_{d,\uple}^\gs$ (see also \cref{uple.globular})
by taking the identity map on $\mtCut$ as the object of~$\catC$.

Likewise, the constructions of \cref{fraknccatbord,compactbords.isotopy,bord.isotopy} are functorial in $\frakmtCutglob$
as an object in $$\catC=\PSh(Γ⨯Δ^{⨯d}⨯\fraksmFEmb_d,\smset)/\frakmtCut.$$
We define the \emph{uple bordism category with isotopies} $\frakBord_{d,\uple}^\gs$ (see also \cref{uple.globular})
by taking the identity map on $\frakmtCut$ as the object of~$\catC$.
\end{remark}

\begin{remark}
\label{cut.grid.equivalence}
An objectwise isotopy equivalence of objects in the category $$\catC=\PSh(Γ⨯Δ^{⨯d}⨯\fraksmFEmb_d,\smset)/\frakmtCut$$
of \cref{cut.grid.functor.parameter} induces an objectwise weak equivalence
of the resulting bordism categories in $\fraksmcat{d}$.
\end{remark}

\cref{cut.grid.equivalence} can be used to establish equivalences of various models of bordism categories.
For example, it can be used to define other versions of globular monoidal cut grid functors,
e.g., the ones where the strict globularity condition is relaxed to a \emph{cylindrical} globularity condition (meaning the condition
of being simplicially degenerate is relaxed to the condition of being isotopic to a simplicially degenerate element),
and show them to be weakly equivalent to the (strictly) globular bordism category with isotopies (\cref{bord.isotopy}).
We can also add a specific choice of height functions to a cut grid, as is commonly done in the literature (Lurie \cite[Definition~2.2.9]{Lurie.TFT},
Calaque–Scheimbauer \cite[Definition~5.1]{CalaqueScheimbauer},
Schommer-Pries \cite[Definition~5.8]{SchommerPries}).

\begin{conjecture}
\label{comparison.conjecture}
Taking $\stcart=1$ and $\frakFEmb_d=\fraksmEmb_d$
(\cref{trivial.fibered.geometric.site.isotopy})
yields a symmetric monoidal $(∞,d)$-category
$\frakBord_d^\gs$
that is weakly equivalent to the symmetric monoidal $(∞,d)$-categories of bordisms
of Calaque–Scheimbauer \cite[Definition~9.10]{CalaqueScheimbauer} and Schommer-Pries \cite[Definition~5.8]{SchommerPries}
with the tangential structure associated to $\gs∈\frakStruct_d$ as constructed in \scref{fiberwiseshape}.
\end{conjecture}

\subsection{Examples of bordism categories}
\label{examplebords}

In this section, we provide some examples of bordisms in both $\Bord_d$ and $\frakBord_d$ for various types of fields.
In each example, we describe an element of the set obtained from \cref{bord,bord.isotopy} by evaluating at the following parameters:
\begin{enumerate}[series=params]
\item $d\geq 0$;
\item $\base:\FEmb_d\to \stcart$ (\cref{fibered.geometric.site});
\item $\gs\in \Struct_d$ (\cref{geometric.structure});
\item $U\in \stcart$ (\cref{structured.cartesian.site});
\item $\langle \ell\rangle\in \Gamma$;
\item ${\bf m}\in \Delta^{\times d}$;
\item $[k]\in \Delta$.
\end{enumerate}
In the case of the isotopy bordism category, we evaluate at
\begin{enumerate}[resume*=params]
\setcounter{enumi}{1}\item $\base:\frakFEmb_d\to \stcart$ (\cref{fibered.geometric.site.isotopy});
\setcounter{enumi}{2}\item $\gs\in \frakStruct_d$ (\cref{geometric.structure.isotopy});
\setcounter{enumi}{7}\item $L\in \cart$.
\end{enumerate}
If we do not set any of the above parameters in an example, we implicitly keep these parameters free.
In the case of the functor~$\base$, omitting the description means we take the standard smooth geometric structure,
as described in \cref{standard.fibered.geometric.site,standard.fibered.geometric.site.isotopy}.

\begin{notation}
Following the conventions of \cref{cats.defs}, we introduce the following notation in the case where $T$ is $\Bord_d^\gs$.
\begin{itemize}
\item We call an element of $\Bord^{\gs}_d(U,\langle 1\rangle,([1],\hdots,[1],[0],\hdots,[0]))_0$, with $[1]$ repeated $n$ times, a \emph{$U$-family of $n$-bordisms}.
\item We call an element of $\Bord^{\gs}_d(U,\langle 1\rangle,{\bf m})_1$ a \emph{$U$-family of virtual isomorphisms}.
We will depict a virtual isomorphism by an arrow $f:x\to y$, where $d_0f=y$ and $d_1f=x$.
\end{itemize}
In the isotopy case, we have the following notation.
\begin{itemize}
\item We call an element of $\frakBord^{\gs}_d(U,\langle 1\rangle,{\bf m})_\RR$ an \emph{$U$-family of isotopies}.
\end{itemize}
In the special case where $U=\RR^0$, we drop “$U$-family of” in all of the above.
\end{notation}

We begin with the simple example of a point in the bordism category.

\begin{example}
\label{thepoint}
Set $d=2$, $\gs=\ast$, $U=\RR^0$, $\ell=1$, ${\bf m}=([0],[0])$, and $k=0$.
An element in $\Bord_d^\gs(U,⟨ℓ⟩,{\bf m})_k$
is a pair $(p,C)$, where $p:M\to \RR^0$ and $C$ is a compact globular monoidal cut grid on~$p$.
We have a canonical $0$-bordism, which we call the \emph{point}, given by taking $M=\RR^2$ and the monoidal cut grid given by the coordinate cross
$$C^1_0=(y<0,y=0,y>0),\qquad C^2_0=(x<0,x=0,x>0),\qquad C^⊗=1∈⟨1⟩,$$
as depicted in the following figure:
\begin{center}
\begin{tikzpicture}[scale=.25]
\draw[help lines, color=gray!30, dashed] (-4.9,-4.9) grid (4.9,4.9);
\draw[thick] (-6,0)--(6,0) node[right]{$C^1_{=0}$};
\draw[thick] (0,-6)--(0,6) node[above]{$C^2_{=0}$};

\node at (-3.5,2.5) {$C^1_{>0}, C^2_{<0}$};
\node at (3.5,2.5) {$C^1_{>0}, C^2_{>0}$};
\node at (-3.5,-2.5) {$C^1_{<0}, C^2_{<0}$};
\node at (3.5,-2.5) {$C^1_{<0}, C^2_{>0}$};

\end{tikzpicture}
\end{center}
Observe that we have 3 other points with the same sets $C_{=0}^1$ and $C_{=0}^2$, obtained by reversing the normal orientation of a cut,
i.e., reversing $C_{>0}$ and $C_{<0}$.
All these points are related by virtual isomorphisms in the bordism category, given by the obvious change of basis.
\end{example}

\begin{example}
\label{globulargrid}
Set $d=2$, $\gs=\ast$, $U=\RR^0$, $\ell=1$, ${\bf m}=([2],[2])$, and $k=0$.
A composable chain of $2$-bordisms can be constructed as follows.
Take $M=\RR^2$ and $p:M\to \RR^0$ the projection.
We construct a globular monoidal cut ${\bf m}$-grid as follows.
Choose a smooth bump function $\phi\geq 0$ with support in the interval $(-1,1)$ such that $\phi(0)=2$.
Consider the two translates $\psi_1=\phi(x+1)$ and $\psi_2=\phi(x-1)$ and the function $\psi=\psi_1+\psi_2$.
Then we have a globular monoidal cut grid with $C^⊗=1∈⟨1⟩$ and the following cuts:
$$C^1_0=(x<-2,x=-2,x>-2),\qquad C^1_1=(x<0,x=0,x>0),\qquad C^1_2=(x<2,x=2,x>2),$$
$$C^2_0=(y<-\psi,y=-\psi,y>-\psi),\qquad C^2_1=(y<0,y=0,y>0),\qquad C^2_2=(y<\psi,y=\psi,y>\psi).$$
\begin{center}
\begin{tikzpicture}[scale=.25]
\draw (-6,0) to[out=0,in=180] (-3,6);
\draw (-3,6) to[out=0,in=180] (0,0);

\draw (0,0) to[out=0,in=180] (3,6);
\draw (3,6) to[out=0,in=180] (6,0);

\draw (-6,0) to[out=0,in=180] (-3,-6);
\draw (-3,-6) to[out=0,in=180] (0,0);

\draw (0,0) to[out=0,in=180] (3,-6);
\draw (3,-6) to[out=0,in=180] (6,0);

\fill[fill=gray, opacity=.5] (-6,0) to[out=0,in=180] (-3,6) to[out=0,in=180] (0,0) to[out=0,in=180] (3,6) to[out=0,in=180] (6,0);
\fill[fill=gray, opacity=.5] (-6,0) to[out=0,in=180] (-3,-6) to[out=0,in=180] (0,0) to[out=0,in=180] (3,-6) to[out=0,in=180] (6,0);

\draw[thick] (-7,0)--(7,0) node[right]{$y=0$};
\draw[thick] (0,-6)--(0,6) node[above]{$x=0$};
\draw[thick] (-6,-6)--(-6,6) node[above]{$x=-2$};
\draw[thick] (6,-6)--(6,6) node[above]{$x=2$};

\node at (-3,4) {$\psi$};
\node at (-3,-4) {$-\psi$};
\end{tikzpicture}
\end{center}

The core of this bordism is the light gray region bounded below by $-\psi$, above by $\psi$, on the left by $x=-2$ and on the right by $x=2$.
Removing the cut $C^1_1$ using the first face map in the first simplicial direction yields the composition of the two bordisms,
the first one being given by the second face in the first simplicial direction, bounded by the pair of vertical lines $x=-2$ and $x=0$
and
the second one being given by the zeroth face in the first simplicial direction, bounded by the pair of vertical lines $x=0$ and $x=2$.
The composition is bounded by the vertical lines $x=-2$ and $x=2$.
\end{example}

The next example shows that in the uple case, manifolds with corners give examples of morphisms in the uple bordism category (see \cref{uple.globular}).
This example can be modified to obtain globular bordisms by deforming the cuts to a globular cut grid, similar to the grid constructed in \cref{globulargrid}.
Such a construction is conceptually clear, although somewhat tedious to write down.

\begin{example}
Set $d=2$, $\gs=\ast$, $U=\RR^0$, $\ell=1$, ${\bf m}=([2],[2])$, and $k=0$.
We provide a simple example of a composable chain of 2-bordisms in the uple bordism category of \cref{cut.grid.functor.parameter}.

An object in the bordism category is thus a pair $(p:M→\RR^0,C)$,
where $M$ is a smooth 2-manifold and $C$ is a compact monoidal (uple) cut grid on~$p$.
We choose $M$ to be the torus $M=S^1\times S^1$.
We now construct an (uple) cut grid on $M$.
Let $e:M\into \RR^3$ be the embedding of the torus given by $$e(\theta,\phi)=((\cos\theta+3)\cos\phi,(\cos\theta+3)\sin\phi,\sin\theta).$$
Take the (uple) monoidal cut grid with $C^⊗=1∈⟨1⟩$ and the following cuts:
$$C^1_0=e^{-1}(x<-3,x=-3,x>-3),\qquad C^1_1=e^{-1}(x<0,x=0,x>0),\qquad C^1_2=e^{-1}(x<3,x=3,x>3),$$
$$C^2_0=e^{-1}(y<-3,y=-3,y>-3),\qquad C^2_1=e^{-1}(y<0,y=0,y>0),\qquad C^2_2=e^{-1}(y<3,y=3,y>3).$$
A sketch of the resulting bordism is given below.
\begin{center}
\begin{tikzpicture}
\draw (0,0) to[out=90, in=180] (2,1);
\draw (2,1) to[out=0, in=90] (4,0);
\draw (0,0) to[out=-90, in=180] (2,-1);
\draw (2,-1) to[out=0, in=-90] (4,0);
\draw (3.5,-.75) to[out=90+10, in=-90-10] (3.5,.75);
\draw[dashed] (3.5,-.75) to[out=90-10, in=-90+10] (3.5,.75);
\draw (.5,-.75) to[out=90+10, in=-90-10] (.5,.75);
\draw[dashed] (.5,-.75) to[out=90-10, in=-90+10] (.5,.75);
\draw (1.5,.25) to[out=40, in=180] (2,.4);
\draw (2,.4) to[out=0, in=140] (2.5,.25);
\draw (1.44,.3) to[out=-60, in=180] (2,0);
\draw (2,0) to[out=0, in=-90-30] (2.6,.3);
\draw (2,-1) to[out=100,in=-90-10] (2,0);
\draw[dashed] (2,-1) to[out=90-10,in=-90+10] (2,0);
\draw (2,.4) to[out=100,in=-90-10] (2,1);
\draw[dashed] (2,.4) to[out=90-10,in=-90+10] (2,1);
\draw (.05,-.3) to[out=10,in=180] (2,-.2);
\draw (2,-.2) to[out=0,in=170] (4-.05,-.3);
\draw[dashed] (.05,-.3) to[out=-10,in=180] (2,-.4);
\draw[dashed] (2,-.4) to[out=0,in=190] (4-.05,-.3);
\draw (.45,1-.3) to[out=10,in=180] (2,1-.2);
\draw (2,1-.2) to[out=0,in=170] (4-.45,1-.3);
\draw[dashed] (.45,1-.3) to[out=-10,in=180] (2,1-.4);
\draw[dashed] (2,1-.4) to[out=0,in=190] (4-.45,1-.3);
\draw (.05,.2) to[out=10,in=170] (1.5,.2);
\draw[dashed] (.05,.2) to[out=-10,in=-170] (1.5,.2);
\draw[dashed] (.05,-.3) to[out=-10,in=180] (2,-.4);
\draw[dashed] (2,-.4) to[out=0,in=190] (4-.05,-.3);
\draw (2.55,.2) to[out=10,in=170] (4,.2);
\draw[dashed] (2.55,.2) to[out=-10,in=-170] (4,.2);
\end{tikzpicture}
\end{center}
In this case, the torus is the ambient manifold.
The region lying between the planes $x=-3$, $x=3$, $y=-3$, and $y=3$ is a manifold with corners.
This manifold with corners is the core of the bordism.
\end{example}

The following example of a bordism category is used to define sigma models, encoding fields given by a smooth map to a fixed target manifold~$X$.

\begin{example}
Let $X$ be a smooth manifold, viewed as a sheaf on $\smFEmb_d$ via
$$\gs(p:M\to U)=C^{\infty}(M,X).$$
Fix $U\in \cart, \langle \ell \rangle\in \Gamma, {\bf m}\in \Delta^{\times d}$.
A bordism in $\Bord_d^\gs(U,\langle \ell \rangle, {\bf m})_0$ is given by a triple $(p:M\to U,C,f)$,
where $C$ is a compact globular monoidal cut $(\langle \ell\rangle,{\bf m})$-grid on $p$ and $f:M\to X$ is a smooth map.
A virtual isomorphism $\varphi\in \Bord_d^\gs(U,\langle \ell \rangle, {\bf m})_1$
such that $d_1\varphi=(p,C,f)$ and $d_0\varphi=(p',C',f')$ is a morphism $\varphi=(\alpha,\beta):p\to p'\in \smFEmb_d$
such that $\varphi^*C'=C$, $\ncore(p',C')\subset \image α$, and $f'\circ \alpha=f$,
where $\alpha:M\to M'$ is the map on total spaces of $p$ and $p'$, respectively.
\end{example}

\begin{example}
\label{metricsregb}
(See \cref{metricsregb.figure}.)
Set $d=1$, $U=\RR^0$, $\ell=1$, ${\bf m}=[1]$, and $k=0$.
Let $\gs$ be the sheaf $\FRiem\in \Struct_1$ of fiberwise Riemannian metrics (\cref{riemmet}).

A bordism in $\Bord_d^\gs(U,⟨ℓ⟩,{\bf m})_k$ is a triple $(M,C,g)$,
where $M$ is a 1-dimensional manifold, $C$ is a compact globular monoidal cut $(\langle 1\rangle,[1])$-grid on $p:M\to \RR^0$, and $g$ is a Riemannian metric on $M$.
In the case where $\ncore(p,C)$ is connected, the metric gives $\ncore(p,C)$ a Riemannian length $t\in \RR_{≥0}$.

A virtual isomorphism of such bordisms is an element $\varphi\in \Bord_1^{\FRiem}(\RR^0,⟨1⟩,[1])_1$,
given by an isometric embedding $\varphi:(M,g)\to (M',g')$ such that $\varphi^*C'=C$ and $\image\varphi⊃\ncore(p,C')$.
\end{example}

\begin{figure}[ht]
\begin{center}
\begin{tikzpicture}[scale=.65]
\draw (0,0) to [out=20,in=120] (5,2) to [out=-60, in=110] (9,1);
\filldraw (2,1.28) circle (2pt);
\filldraw (7,1.52) circle (2pt);
\node at (2,2) {$C_{=0}$};
\node at (7,2) {$C_{=1}$};
\node at (4.5,3) {$t$};
\node at (0,1) {$M$};
\end{tikzpicture}
\end{center}
\caption[]{A vertex in $\Bord_1^{\FRiem}(\RR^0,⟨1⟩,[1])$.
The 1-manifold $M$ is equipped with a Riemannian metric $g$.
The core is the 1-manifold with boundary lying between the cuts $C_{=0}$ and $C_{=1}$.
The Riemannian length of the core is $t\in \RR_{≥0}$.}
\label{metricsregb.figure}
\end{figure}

\begin{example}
Set $d=2$, $U=\RR^0$, $\ell=1$, ${\bf m}=([1],[1])$.
Consider the field stack $G\Bunconn$ from \cref{gbunexample1}, where $G$ is a Lie group.
An object in the corresponding bordism category is thus a quadruple $(C,φ:(M→\RR^0)→(N→V),P,\nabla)$,
where $M$ is a 2-dimensional manifold,
$C$ is a compact globular monoidal $(\langle 1\rangle,([1],[1]))$-cut grid on~$M$,
$φ=(a:M→N,b:\RR^0→V)$ is a morphism in $\smFEmb_d$,
and $(P,\nabla)$ is a principal $G$-bundle over~$N$ equipped with a fiberwise connection $\nabla$.

A virtual isomorphism is given by a pair $(ψ,g)$,
where $ψ$ is an open embedding $ψ:M \to M'$ such that $ψ^*C'=C$,
$\image ψ ⊃ \ncore(p,C')$,
and $g:φ^*(P,\nabla)\to ψ^*φ'^*(P',\nabla')$ is a connection-preserving isomorphism.
\end{example}

\begin{example}
Recall (\cref{1.1.Euclidean}) the field stack $$\Eucl_{1|1}∈\frakStruct_{1|1}=\smsPSh(\frakSFEmb_{1|1})$$
of fiberwise oriented $1|1$-Euclidean metrics
on the fibered geometric site $\frakSFEmb_{1|1}$ of real smooth $1|1$-\hskip0pt supermanifolds (\cref{def.fibered.supercart}).
Consider the bordism category $$\cB=\frakBord_1^{\frakI_{1|1} \Eucl_{1|1}},$$
where $\frakI_{1|1}$ is the isotopification functor (\cref{isotopification}).
We examine the subobject~$Ψ$ of the moduli stack of objects $$\cB(-,⟨1⟩,[0])∈\smsPSh(\scart)$$
on those bordisms whose core is a single point.

Define a smooth category as a presheaf of categories on $\cart$.
Smooth categories embed into smooth simplicial sets via the objectwise nerve functor,
and therefore inherit the relevant notions such as weak equivalences in $\smsset$.

Pick $U∈\scart$ and evaluate~$Ψ$ on~$U$
to obtain a smooth simplicial set $$Ψ(U)⊂\cB(U,⟨1⟩,[0]),$$
which is the objectwise nerve of a smooth category~$\cC$.
Fix $L∈\cart$.
Objects in~$\cC_L$ are triples $(p,C,σ)$, where $(p:M→U)∈\SFEmb_{1|1}$, $C$ is an $L$-family of compact globular monoidal cut $(⟨1⟩,[0])$-grids on~$\red p$,
and $σ∈\frakI_{1|1} \Eucl_{1|1}(p)_L$ is
given by an $L$-family of fiberwise open embeddings of~$p$ into~$\RR^{1|1}$.
Morphisms in~$\cC_L$ of the form $(p_1,C_1,σ_1)→(p_2,C_2,σ_2)$ are pairs $(φ,τ)$, where $φ:p_1→p_2$ is a morphism in $(\frakSFEmb_{1|1})_L$
such that $$\base φ=\id_U, \qquad (\red φ)^*C_2=C_1, \qquad \image(\red φ)⊃\ncore(\red p_2,C_2)$$
and the map $τ:U→\hat\EE^{1|1}$ satisfies $σ_2∘φ=τ⋅σ_1$.

Consider the inclusion functor $ι:Ψ_0→Ψ$ such that $Ψ_0(U)_L⊂Ψ(U)_L$ is the full subcategory on objects $(q,C_q,σ_q)$ for which
$q:\RR^{1|1}⨯U→U$ is the projection map and $σ_q=ϖ^*\id_q∈\frakI_{1|1} \Eucl_{1|1}(q)_L$, where $ϖ$ is the unique map $L→\RR^0$ in $\cart$.
For every object $(p,C,σ)∈Ψ$
the comma category $(p,C,σ)/ι$ has as its objects morphisms $$(τ⋅σ,τ):(p,C,σ)→(q,C_q,σ_q),$$
where $τ:U→\hat\EE^{1|1}$ satisfies $(\red τ⋅σ)^*C_q=C$.
In particular, setting $τ=0$ and $C_q=(\red σ)(C)$ (with the induced normal orientation of the cut) shows that the category $(p,C,σ)/ι$ is nonempty.
Morphisms in $(p,C,σ)/ι$ of the form $(τ⋅σ,τ)→(τ'⋅σ',τ')$ are pairs $$(φ,τ_0),\qquad (φ∘(τ⋅σ),τ_0⋅τ)=(τ'⋅σ',τ'),$$
so $$τ_0=τ'⋅τ^{-1}, \qquad φ=(τ'⋅σ')∘(τ⋅σ)^{-1}=(τ'∘τ^{-1})⋅(σ'⋅σ^{-1}).$$
Thus, $(p,C,σ)/ι$ is a contractible groupoid and $ι$ is a final functor.

The smooth simplicial set $Ψ_0(U)$ is the homotopy quotient of the smooth subset~$Ψ_1$ of $\frakmtCutglob(U,⟨1⟩,[0])_L(q)$
comprising cut grids with a singleton core,
by the action of the discrete group $\sm(U,\hat\EE^{1|1})$ that acts via reduced fiberwise translations.
The smooth set~$Ψ_1$ is isomorphic to two copies of the smooth set of sections of $\red q$ (corresponding to the two normal orientations of the cut),
and therefore is weakly equivalent to $\ZZ/2$ in the model category $\smsset$.
Thus, $Ψ≃Ψ_0$ is weakly equivalent to the presheaf $$U↦\ZZ/2⨯\tdeloop \sm(U, \hat\EE^{1|1}).$$

Analogously, if we drop fiberwise orientations from $\Eucl_{1|1}$, then $Ψ$ is weakly equivalent to $U↦\tdeloop \sm(U, \hat\EE^{1|1})$.
\end{example}

\begin{example}
\label{2.1.Euclidean.bordisms}
Using the $2|1$-Euclidean field stack $\Eucl_{2|1}$ of \cref{def.super.euclidean},
we define the (extended) $2|1$-Euclidean bordism category as
$$2|1\text{-}{\sf EBord}≔\Bord_2^{\Eucl_{2|1}}.$$
This provides a fully extended variant of the Stolz–Teichner bordism category \cite[Definitions 2.46 and 4.4]{StolzTeichner.SUSY}.
\end{example}

\begin{example}
\label{unoriented}
Taking $\gs=\ast$ to be the terminal object in $\Struct_d$, we get the \emph{unoriented bordism category} $\Bord^\ast_d$.
We make the following observations about the moduli space of connected 0-bordisms in the unoriented bordism category.

Recall that the simplicial set $\Bord^\ast_d(\RR^0,\langle 1\rangle,{\bf 0})$
is the nerve of the category $\CatBord^\ast_d(\RR^0,\langle 1\rangle,{\bf 0})$ (\cref{compactbords}).
We concentrate our attention on the disjoint summand~$\cA$ of this simplicial set
that comprises all 0-bordisms with connected core, i.e., 0-bordisms whose core is a single point.
The simplicial set~$\cA$ is weakly equivalent to the nerve of the category whose objects
are cut ${\bf 0}$-grids on $p:\RR^d→\RR^0$
and morphisms are cut grid-preserving smooth embeddings $\RR^d→\RR^d$.
This category is connected.
By a variant of Segal's theorem, the simplicial set~$\cA$
is weakly equivalent to the delooping of the group
of germs of diffeomorphisms $\RR^d→\RR^d$ that preserve the basepoint~0,
the coordinate hyperplanes $x_i=0$,
and the coordinate half-spaces $x_i>0$.

Denote by $\lgO(d)$ the orthogonal group (not a Lie group).
There is an injective map of sets
$$\lgO(d)\cong V_d(\RR^d)\into \Bord^{\ast}_d(\RR^0,\langle 1\rangle,{\bf 0})_0,$$
obtained by sending an orthonormal framing $\{e_i\}_{i=1}^d$ of $\RR^d$
to the compact globular monoidal cut $(\langle 1\rangle,{\bf 0})$-grid on $p:\RR^d→\RR^0$ with $C_0^i=(x_i<0, x_i=0, x_i>0)$,
where $x_i$ is the $i$th coordinate function of the framing.
This function extends to a map of simplicial sets
\begin{equation}\label{eodmap}\ast\simeq \tdelooptotal\lgO(d)=\lgO(d)\hq\lgO(d)\to \Bord^\ast_d(\RR^0,\langle 1\rangle,{\bf 0}),\end{equation}
by taking the nerve of the functor
$$\lgO(d)\rtimes \lgO(d)\to \CatBord^{\ast}_d(\RR^d,\langle 1\rangle,{\bf 0})$$
(with $\CatBord$ as in \cref{compactbords}),
defined on objects as above,
and on morphisms by sending $Q:A\to Q A$ to the linear map $Q:\RR^d\to \RR^d$, which preserves the cut grids corresponding to $A$ and $QA$.
The map \eqref{eodmap} is far from being a weak equivalence, even if we restrict to the summand of connected $0$-bordisms, as shown in the previous paragraph.
\end{example}

We now move on to examples of isotopy bordism categories.
Here we must specify the parameter $L\in \cart$, which encodes an $L$-family of isotopies of fields.

\begin{example}
\label{unoriented.isotopy}
Continuing \cref{unoriented},
taking $\gs=\ast$ to be the terminal object in $\frakStruct_d$, we get the \emph{unoriented isotopy bordism category}
$\frakBord^{\ast}_d$.
We make the following observations about the moduli space of connected 0-bordisms in the unoriented isotopy bordism category.
Let $\lgO(d)$ be the smooth set $L↦\sm(L,\lgO(d))$.
There is a monomorphism of smooth sets
$$\lgO(d)\into \frakBord^*_d(\RR^0,\langle 1\rangle,{\bf 0})_0,$$
which on $L$-points is obtained by sending an $L$-family of orthonormal framings $\{e_i\}_{i=1}^d$ of $\RR^d$
to the $L$-family of compact globular monoidal cut $(\langle 1\rangle,{\bf 0})$-grids
on $p:\RR^d→\RR^0$, with $C_0^i=(x_i<0, x_i=0, x_i>0)$,
where $x_i$ is the $i$th coordinate function of the framing.
This function extends to a map of smooth simplicial sets
$$\ast \simeq \delooptotal \lgO(d)=\lgO(d)\hq\lgO(d)\to \frakBord^{\ast}_d(\RR^0,\langle 1\rangle,{\bf 0}),\eqlabel{enriched.map.points}$$
by taking the nerve of the internal functor
$$\lgO(d)\rtimes \lgO(d)\to \frakCatBord^{\ast}_d(\RR^d,\langle 1\rangle,{\bf 0})$$
(with $\frakCatBord$ as in \cref{compactbords.isotopy}),
defined on objects as above, and on morphisms by sending an $L$-family of orthogonal matrices $Q:A\to Q\cdot A$ to the $L$-family of linear maps $Q:\RR^d\to \RR^d$,
which preserves the $L$-families of cut grids corresponding to $A$ and $QA$.

In this case, the map \cref{enriched.map.points} is a weak equivalence onto the connected component of connected 0-bordisms,
in the model structure of \cref{smsset.injective}.
Hence, the simplicial presheaf~$\cA$ of connected 0-bordisms is contractible.
This is established by reproducing the arguments of \cref{unoriented} in the isotopy context,
establishing a weak equivalence from the moduli stack~$\cA$ of connected 0-bordisms
to the delooping of the sheaf of groups
of germs of $L$-families of diffeomorphisms $\RR^d→\RR^d$ that preserve the basepoint~0,
the coordinate hyperplanes $x_i=0$, and the coordinate half-spaces $x_i>0$.
The presence of the parameter~$L$ allows for a classical argument with a “zoom in homotopy”,
which establishes the contractibility of~$\cA$.
\end{example}

\begin{example}
\label{1driem}
Recall (\cref{metricsregb}) the description of a vertex and 1-simplex in $\Bord_1^{\FRiem}(\RR^0,⟨1⟩,[1])$.
Set $d=1$, $\gs=\frakFRiem$ (\cref{friemex}), $U=\RR^0$, $\ell=1$, ${\bf m}=[1]$, $k=0$, $L=\RR$.
An object in $\frakBord_1^\frakFRiem(U,⟨ℓ⟩,{\bf m})_{L,k}$ is a triple $(M, C, g)$,
where $M$ is a 1-dimensional manifold, $C$ is an $\RR$-family of compact globular monoidal cut $(\langle 1\rangle,[1])$-grids on $M$,
and $g\in \Gamma(M\times \RR,{\rm Sym}^2(T^*M))$ is an $\RR$-family of Riemannian metrics on $M$
obtained by pulling back a Riemannian metric $g'$ on some 1-manifold $N$ along an isotopy $M\times\RR\to N$ of open embeddings $M→N$.

As an example, fix a 1-dimensional manifold $M$ and a compact globular monoidal cut $(\langle 1\rangle,[1])$-grid on $M$.
Consider the $\RR$-family of compact globular monoidal cut grids $C$ obtained by keeping the cut $C_1$ fixed
and moving the cut $C_0$ in \cref{metricsregb.isotopy.figure} in the direction of $C_1$,
with the two cuts being equal at $t=1$.
This deformation defines an isotopy that appears to collapse the Riemannian interval.
However, this is not the case, since the same isotopy also moves the source 0-bordism along the same interval.
Thus, the isotopy of 1-bordisms constructed above translates the data of a 1-bordism to the data of an isotopy of 0-bordisms given by the sources of 1-bordisms,
similar to \cref{companion.convert.isotopy.morphism}, but reversing the direction of the (invertible) process.
The object $(M,C,g)$ can be depicted by the following diagram from \cref{convert.isotopy}:
$$\xymatrix{
y\ar@{=}[r] & y \cr
x\ar[u]^-{i}\ar[r]_{f} & y,\ar@{=}[u] \cr
}$$
where vertical arrows and equalities represent isotopies and the horizontal arrows and equalities represent 1-morphisms.
See \cref{metricsregb.isotopy.figure} for an illustration.

\begin{figure}[ht]
\begin{center}
\begin{tikzpicture}[scale=.6]
\draw (0,0) to [out=20,in=120] (5,2) to [out=-60, in=110] (9,1);
\filldraw (2,1.28) circle (2pt);
\filldraw (7,1.52) circle (2pt);
\node at (3,.8) {$(C_{=0})_{t=0}$};
\node at (7,2) {$(C_{=1})_{t=0}$};
\node at (4.5,3) {$t$};
\node at (0,1) {$M$};
\end{tikzpicture}
\qquad
\begin{tikzpicture}[scale=.6]
\draw (0,0) to [out=20,in=120] (5,2) to [out=-60, in=110] (9,1);
\filldraw (2,1.28) circle (2pt);
\filldraw (7,1.52) circle (2pt);
\filldraw (4,2.55) circle (2pt);
\node at (7.1,2) {$(C_{=1})_{t=.5}$};
\node at (4,3) {$(C_{=0})_{t=.5}$};
\node at (6,1) {$t$};
\node at (2.5,2.2) {$t'$};
\node at (0,1) {$M$};
\end{tikzpicture}
\end{center}
\caption{A depiction of the bordism $(M,C,g)$ in \cref{1driem}.
The left and right images illustrate the $\RR$-family of $(\langle 1\rangle,[1])$-grids $C_t$ evaluated at $t=0$ and $t=.5$, respectively.
At $t=1$, the cuts $C_{=0}$ and $C_{=1}$ coincide.
The Riemannian length of the core of each individual cut tuple changes from $t$ to $t'$ as $t$ changes from $t=0$ to $t=.5$.
However, the length of the \emph{entire $\RR$-family} is recorded throughout the deformation.
The bottom and top horizontal maps in the diagram in \cref{1driem}
correspond to $t=0$ and $t=1$, respectively.
The left map $i:x→y$ is the isotopy moving the vertex~0 from $C_0$ to~$C_1$.
The right map corresponds to $C_1$, which stays in place.
}
\label{metricsregb.isotopy.figure}
\end{figure}
\end{example}

\subsection{Examples of embedded bordisms}
\label{geometrically.framed.bordisms}

In this section, we give examples of bordisms in the case where the field stack is a representable object.
In this case, the field stack does not satisfy homotopy descent.
Hence, the bordism category is not local with respect to the Segal maps.
Nevertheless, this example is essential for our formulation of the axioms (\cref{axioms,frakaxioms})
and our proof of locality and the geometric cobordism hypothesis in \cite{GradyPavlov.Loc,GradyPavlov.GCH}.

We begin with the nonisotopy case.

\begin{example}
\label{one.dimensional.bordisms}
Set $d=1$, $U=\RR^0$, $\ell=1$, ${\bf m}=[0]$, $k=0$.
Consider the representable presheaf $\Yo{p}$ on the object $(p:\RR^1\to \RR^0)\in \smFEmb_d$.
A $0$-bordism in $\Bord_d^{\Yo{p}}(U,⟨ℓ⟩,{\bf m})_k$ is given by a triple $(M,C,i)$,
where $M$ is a smooth 1-manifold,
$C$ is a compact globular monoidal cut $(\langle 1\rangle,[0])$-grid on $M$,
and $i:M\to \RR$ is an open embedding.
We call a 0-bordism $(M,C,i)$ a \emph{point} if the core is connected.
Observe that the normal orientation of the cut $C$ induces an orientation on a sufficiently small open neighborhood of the core.
We call a point \emph{positive} if $i$ is orientation-preserving (with respect to the orientation induced by $C$ and the standard orientation of $\RR$).
We call a point \emph{negative} if it is not positive.

For example, we have a positive point $+_s$ given by taking $M=\RR$, $i=\id$, and the cut grid given by a single cut $C=(C_{<0},C_{=0},C_{>0})=(x<s, x=s,x>s)$.
Similarly, we have the \emph{negative point} $-_s$, given by again taking $M=\RR$ and $i=\id$, but setting $C=(C_{<0},C_{=0},C_{>0})=(x>s,x=s,x<s)$.
Taking $i=-\id$ instead of $i=\id$ yields another positive point $+_{\bar s}$ and another negative point $-_{\bar s}$.
In the notation $±_s$, $±_{\bar s}$, the subscript is $C_{=0}=s$, with a bar on top if $i=-\id$.
We illustrate points by a line segment (depicting~$M$) with an arrow pointing in the direction from $C_{<0}$ to $C_{>0}$, as in \cref{posandnegpointsfig}.
The cases $i=\id$ and $i=-\id$ are indicated by the red and blue color, respectively.
\begin{figure}[ht]
\begin{center}
\tikzset{->-/.style={decoration={markings,mark=at position #1 with {\arrow[scale=1.2]{>}}},postaction={decorate}}}
\tikzset{-<-/.style={decoration={markings,mark=at position #1 with {\arrow[scale=-1.2]{>}}},postaction={decorate}}}
\begin{tikzpicture}

\draw[-<-=.15,thick, red] (-5.5,-1) -- (-4.5,-1);
\node at (-5,-1) {$\bullet$};
\node at (-5,-1+.4) {$-_0$};

\node at (-4,-1) {$≔$};

\begin{scope}[xshift=-4em]
\draw (-2,0) -- (2,0);
\node at (0,0) {$\bullet$};
\node at (0,-.4) {$C_{=0}$};
\draw (0,-.2) -- (0,.2);
\node at (-1.5,-.4) {$C_{>0}$};
\node at (1.5,-.4) {$C_{<0}$};
\draw[->] (0,-.7)--(0,-1.7);
\draw (-2,-2) -- (2,-2);
\node at (.7,-1) {$i=\id$};
\node at (1.5,-1.7) {$\RR$};
\node at (1.5,.3) {$M=\RR$};
\end{scope}
\end{tikzpicture}
\qquad
\qquad
\begin{tikzpicture}

\draw[->-=.3,thick, blue] (-5.5,-1) -- (-4.5,-1);
\node at (-5,-1) {$\bullet$};
\node at (-5,-1+.4) {$-_{\bar0}$};

\node at (-4,-1) {$≔$};

\begin{scope}[xshift=-4em]
\draw (-2,0) -- (2,0);
\node at (0,0) {$\bullet$};
\node at (0,-.4) {$C_{=0}$};
\draw (0,-.2) -- (0,.2);
\node at (-1.5,-.4) {$C_{<0}$};
\node at (1.5,-.4) {$C_{>0}$};
\draw[->] (0,-.7)--(0,-1.7);
\draw (-2,-2) -- (2,-2);
\node at (.7,-1) {$i=-\id$};
\node at (1.5,-1.7) {$\RR$};
\node at (1.5,.3) {$M=\RR$};
\end{scope}
\end{tikzpicture}
\\
\vspace{1cm}
\begin{tikzpicture}

\draw[->-=.3,thick, red] (-5.5,-1) -- (-4.5,-1);
\node at (-5,-1) {$\bullet$};
\node at (-5,-1+.4) {$+_0$};

\node at (-4,-1) {$≔$};

\begin{scope}[xshift=-4em]
\draw (-2,0) -- (2,0);
\node at (0,0) {$\bullet$};
\node at (0,-.4) {$C_{=0}$};
\draw (0,-.2) -- (0,.2);
\node at (-1.5,-.4) {$C_{<0}$};
\node at (1.5,-.4) {$C_{>0}$};
\draw[->] (0,-.7)--(0,-1.7);
\draw (-2,-2) -- (2,-2);
\node at (.7,-1) {$i=\id$};
\node at (1.5,-1.7) {$\RR$};
\node at (1.5,.3) {$M=\RR$};
\end{scope}
\end{tikzpicture}
\qquad
\qquad
\begin{tikzpicture}

\draw[-<-=.15,thick, blue] (-5.5,-1) -- (-4.5,-1);
\node at (-5,-1) {$\bullet$};
\node at (-5,-1+.4) {$+_{\bar0}$};

\node at (-4,-1) {$≔$};

\begin{scope}[xshift=-4em]
\draw (-2,0) -- (2,0);
\node at (0,0) {$\bullet$};
\node at (0,-.4) {$C_{=0}$};
\draw (0,-.2) -- (0,.2);
\node at (-1.5,-.4) {$C_{>0}$};
\node at (1.5,-.4) {$C_{<0}$};
\draw[->] (0,-.7)--(0,-1.7);
\draw (-2,-2) -- (2,-2);
\node at (.7,-1) {$i=-\id$};
\node at (1.5,-1.7) {$\RR$};
\node at (1.5,.3) {$M=\RR$};
\end{scope}
\end{tikzpicture}
\end{center}
\caption{Positive and negative points in the embedded bordism category.
The red line segments represent bordisms with the embedding $i=\id_\RR$.
The blue line segments represent bordisms with the embedding $i=-\id_\RR$.
The arrow indicates the normal orientation of the cut~$C$ in~$M$.
Bordisms labeled~$+$ are virtually isomorphic (corresponding to $k=1$), by the cut-respecting embedding $\varphi=-\id$.
Similarly, bordism labeled~$-$ are isomorphic.}
\label{posandnegpointsfig}
\end{figure}

The positive point~$+_0$ is not isomorphic to the negative point~$-_0$.
Indeed, a virtual isomorphism $\varphi:(M,C,i)\to (M',C',i')$ in the bordism category is given by an open embedding $\varphi:M\into M'$
such that $i'\circ \varphi=i$, $\varphi^*C'=C$, and $\image\varphi⊃\ncore(p,C')=C'_{=0}$,
where the second condition implies the third for the case of connected 0-bordisms.
Suppose we have a virtual isomorphism $\varphi:+_0\to -_0$.
Then since $\varphi$ has to commute with $i=i'=\id$, we must have $\varphi=\id$.
But then $\varphi^*C'=C'≠C$, since the normal orientation of~$C'$ is opposite that of~$C$, contradicting the fact that $\varphi$ is a virtual isomorphism.

Since we are in the nonisotopy case, the value of $i(s)$ is an isomorphism invariant, so $+_s\not\cong +_t$ for $s\neq t$.
In fact, the moduli stack of points is weakly equivalent to the disjoint union of two copies
of the representable presheaf of~$\RR$ (on the site $\cart$),
indicating $i(s)$ and whether the point is positive or negative.
\end{example}

\begin{example}
\label{elbows}
Set $d=1$, $U=\RR^0$, $\ell=1$, ${\bf m}=[1]$, $k=0$.
Fix real numbers $s<t$.
We have a 1-bordism $$\epsilon_{s,t}: +_s⊔-_t→∅,$$ which we call a \emph{right elbow}, constructed as follows.
The domain of~$\epsilon_{s,t}$ is the 0-bordism $+_s⊔-_t$ given by $(M=\RR,C,i=\id)$,
whose only cut~$C$ is given by
$$C=((-\infty,s)\cup (t,\infty),\{s,t\}, (s,t)).$$
(See the construction of the map~$\tilde x$ in \cref{frcircle} for an explanation of the notation $+_s⊔-_t$.)

\begin{figure}[ht]
\begin{center}
\tikzset{->-/.style={decoration={markings,mark=at position #1 with {\arrow[scale=1.2]{>}}},postaction={decorate}}}
\tikzset{-<-/.style={decoration={markings,mark=at position #1 with {\arrow[scale=-1.2]{>}}},postaction={decorate}}}
\begin{tikzpicture}
\draw[-<-=.25, ->-=.7, thick, red] (-2.5,-3)--(.5,-3);
\node at (-2,-3) {$\bullet$};
\node at (0,-3) {$\bullet$};
\node at (-2, -2.6) {$-_{-1}$};
\node at (0,-2.6) {$+_1$};
\node at (-1,-2.7) {$\eta_{-1,1}$};
\node at (1.7,-3) {$≔ (\RR,C^{\vee},\id)$};
\end{tikzpicture}
\qquad
\begin{tikzpicture}
\draw[-<-=.25, ->-=.7, thick, blue] (-2.5,-3)--(.5,-3);
\node at (-2,-3) {$\bullet$};
\node at (0,-3) {$\bullet$};
\node at (-2, -2.6) {$+_{-\bar1}$};
\node at (0,-2.6) {$-_{\bar1}$};
\node at (-1,-2.7) {$\bar\eta_{-1,1}$};
\node at (1.9,-3) {$≔ (\RR,C^{\vee},-\id)$};
\end{tikzpicture}

\begin{tikzpicture}
\draw[->-=.3, -<-=.65, thick, red] (-2.5,-3)--(.5,-3);
\node at (-2,-3) {$\bullet$};
\node at (0,-3) {$\bullet$};
\node at (-2, -2.6) {$+_{-1}$};
\node at (0,-2.6) {$-_1$};
\node at (-1,-2.7) {$\epsilon_{-1,1}$};
\node at (1.7,-3) {$≔ (\RR,C,\id)$};
\end{tikzpicture}
\qquad
\begin{tikzpicture}
\draw[->-=.3, -<-=.65, thick, blue] (-2.5,-3)--(.5,-3);
\node at (-2,-3) {$\bullet$};
\node at (0,-3) {$\bullet$};
\node at (-2, -2.6) {$-_{-\bar1}$};
\node at (0,-2.6) {$+_{\bar1}$};
\node at (-1,-2.7) {$\bar\epsilon_{-1,1}$};
\node at (1.8,-3) {$≔ (\RR,C,-\id)$};
\end{tikzpicture}
\end{center}
\caption{Elbows in the geometrically framed bordism category.
The two elbows labeled $\epsilon$ are isomorphic by the map $-\id$.
Similarly, the two bordisms labeled $\eta$ are isomorphic.}
\label{1bordisms.framed}
\end{figure}

Consider the 1-bordism $\epsilon_{s,t}$ given by the triple $(M=\RR,C,i=\id)$,
where $C$ is the compact globular monoidal cut $(\langle 1\rangle,[1])$-grid given by the two cuts
\begin{equation}\label{epsgrid}((-\infty,s)\cup (t,\infty),\{s,t\}, (s,t))\leq (\RR,\emptyset,\emptyset).\end{equation}

We can also encode the \emph{left elbow}~$\eta_{s,t}:\emptyset\to -_s\sqcup +_t$ by the triple $(M=\RR^d,C^{\vee},i=\id)$, where the cut grid $C^{\vee}$ is given by
\begin{equation}\label{etagrid}(\emptyset,\emptyset,\RR)\leq ((s,t),\{s,t\},(-\infty,s)\cup (t,\infty)).\end{equation}
Taking $i=-\id$, we get two additional elbows $\bar\epsilon_{s,t}$ and $\bar\eta_{s,t}$.
We depict several of these right and left elbows in \cref{1bordisms.framed}.
We have virtual isomorphisms $\epsilon_{s,t}→\bar\epsilon_{-t,-s}$
and $\eta_{s,t}→\bar\eta_{-t,-s}$, given by $φ=-\id$.

Since we are in the nonisotopy case, the value of $(i(s),i(t))$ is an isomorphism invariant, so $\epsilon_{s,t}\not\cong \epsilon_{s',t'}$ for $(s,t)≠(s',t')$.
In fact, the moduli stack of right elbows is weakly equivalent to the representable presheaf of~$\{(a,b)∈\RR^2\mid a<b\}$ (on the site $\cart$), where $(a,b)=(i(s),i(t))$.
\end{example}

\begin{example}
\label{d1bords}
The 1-bordisms $\eta$ and $\epsilon$ described in \cref{elbows} can almost be identified with the unit and counit for a duality between the positive and negative point,
but not quite.
The failure of the duality is a consequence of the fact that cylinders in the bordism category are not invertible.
For example, we can form the composition that corresponds to one of the triangle identities
by composing the 1-bordisms $\epsilon_{-1,0} \sqcup \id_{+_{1}}$ and $\id_{+_{-1}}\sqcup \eta_{0,1}$:
\begin{center}
\begin{tikzpicture}
\tikzset{->-/.style={decoration={markings,mark=at position #1 with {\arrow[scale=1.2]{>}}},postaction={decorate}}}
\tikzset{-<-/.style={decoration={markings,mark=at position #1 with {\arrow[scale=-1.2]{>}}},postaction={decorate}}}

\draw[->-=.4, -<-=.8, thick, red] (-4.5,-3)--(-2,-3);
\draw[-<-=.15, ->-=.7, thick, red] (-2,-3)--(.5,-3);

\node at (-4,-3) {$\bullet$};
\node at (-2,-3) {$\bullet$};
\node at (0,-3) {$\bullet$};
\node at (-3,-3.3) {$\epsilon_{-1,0}$};
\node at (-1,-3.3) {$\eta_{0,1}$};

\node at (-4,-2.6) {$+_{-1}$};
\node at (-2, -2.6) {$-_0$};
\node at (0,-2.6) {$+_1$};
\end{tikzpicture}
\end{center}
The composition is the interval
\begin{center}
\begin{tikzpicture}
\tikzset{->-/.style={decoration={markings,mark=at position #1 with {\arrow[scale=1.2]{>}}},postaction={decorate}}}
\tikzset{-<-/.style={decoration={markings,mark=at position #1 with {\arrow[scale=-1.2]{>}}},postaction={decorate}}}

\draw[->-=.4, thick, red] (-4.5,-3)--(-2,-3);
\draw[ ->-=.7, thick, red] (-2,-3)--(.5,-3);

\node at (-4,-3) {$\bullet$};
\node at (0,-3) {$\bullet$};

\node at (-4,-2.6) {$+_{-1}$};
\node at (0,-2.6) {$+_1$};
\end{tikzpicture}
\end{center}
which is not isomorphic to the identity.
Moreover, the 0-bordism $+_{-1}$ is not isomorphic to the 0-bordism $+_1$.
They do become isotopic in the isotopy bordism category via an isotopy that translates from~$-1$ to~$1$.
Likewise, the other triangle identity also needs such isotopies.

It is precisely this failure that prevents the nonisotopic bordism category $\Bord_d$ from satisfying the geometric cobordism hypothesis.
However, in the isotopy case, there is an isotopy that contracts the interval to the identity 1-bordism,
which implements duality for the 0-bordism~$+_1$.
We explore the isotopy analog in \cref{d1bordsisot}.
\end{example}

\begin{example}
\label{isotopyofpoints}
Set $d=1$, $U=\RR^0$, $\ell=1$, ${\bf m}=[0]$, $k=0$, $L=\RR$.
Consider the representable $\smset$-enriched presheaf $\Yo{p}$ on the object $(p:\RR^1\to\RR^0)\in\fraksmFEmb_d$.
An isotopy of 0-bordisms in the resulting bordism category is given by a triple $(M,C,i)$,
where $M$ is a smooth 1-manifold,
$C$ is an $\RR$-family of compact globular monoidal cut $(\langle 1\rangle,[0])$-grids on $M$,
and $i:M\times \RR\to \RR$ is an $\RR$-family of open embeddings.

We call an isotopy of 0-bordisms an \emph{isotopy of points} if the 0-bordism $(M,C_l,i_l)$ is a point for all $l\in \RR$.
We call an isotopy of points \emph{positive} if $(M,C_l,i_l)$ is positive for all $l\in \RR$ (\cref{one.dimensional.bordisms}).
Similarly, we call an isotopy of points \emph{negative} if it is not positive.
Any isotopy of points must be either positive or negative, since the space of orientations of a point is a $\ZZ/2$-torsor and $\RR$ is connected.

Fix $s,t∈\RR$ and consider the positive points $+_s$ and $+_t$ in \cref{one.dimensional.bordisms} as objects in
$\frakBord_d^{\Yo{p}}(U,⟨ℓ⟩,{\bf m})_{\RR^0,k}$.
Consider the isotopy of points $\rho^+_{s,t}=(M=\RR,C^+,M⨯L→\RR)$, where
$$C^+_l=(x<s(1-l)+tl, x=s(1-l)+tl, x>s(1-l)+tl)$$
and the map $M⨯L=\RR⨯\RR→\RR$ is given by the projection onto~$M$.
Then $\rho^+_{s,t}$ is an isotopy of points connecting $+_s$ and $+_t$.
Hence $+_s\cong +_t$ (contrast with the nonisotopy case in \cref{one.dimensional.bordisms}).
If $s≤t$, this isotopy can be converted to a 1-morphism in the bordism category via \cref{companion.convert.isotopy.morphism},
namely, an interval going from~$s$ to~$t$.
If $s>t$, by \cref{nonfibrant.target} we get a 1-morphism in the fibrant replacement of the bordism category.
In the case of negative points, we have a similar isotopy, given by $\rho_{s,t}^-=(\RR,C^-,M⨯L→\RR)$.
\end{example}

\begin{example}
\label{d1bordsisot}
The bordisms depicted in \cref{1bordisms.framed} exhibit $+_{-1}$ as the dual of $-_0$.
(Duality for $U$-families of objects in a smooth symmetric monoidal $(∞,d)$-category~$T$
can be defined as duality in the symmetric monoidal 1-category given by evaluating the fibrant replacement of~$T$ at $U∈\stcart$ and taking the homotopy 2-category.)

The presence of isotopies allows us to take $\epsilon_{-1,0}$ and $\eta_{0,1}$ as the counit and unit for the 0-bordism $+_{-1}$.
The first triangle identity is given by composing the two 1-bordisms $\epsilon_{-1,0}\sqcup \id_{+_1}$ and $\id_{+_{-1}}\sqcup \eta_{0,1}$,
which again yields the interval depicted in \cref{d1bords}.
Now however, the interval is isotopic to the identity 1-bordism on $+_{-1}$ via the isotopy of cut grids
$$C_l=\bigl((x<-1,x=-1,x>-1)\leq (x<1-2l, x=1-2l,x>1-2l)\bigr), \qquad 0\leq l\leq 1.$$
For $l=0$, this gives the cut grid on the interval in \cref{d1bords}.
For $l=1$, the cut grid is the simplicial degeneration on the cut $(x<-1,x=-1,x>1)$, implementing the identity 1-bordism on~$+_{-1}$.
For the other triangle identity, replace $\epsilon_{-1,0}$ by the isotopic bordism $\epsilon_{1,2}$ and then form a similar composition.
\end{example}

The next example shows how to compute the trace of a point in the embedded bordism category without explicitly identifying a fibrant replacement.
Although the example shows that it is possible to do this,
a more geometrically appealing calculation of the trace can be obtained by observing that
the embedded bordism category is weakly equivalent to the framed bordism category, as explained in \cref{replacewframed}.

\begin{example}
\label{frcircle}
We reexamine \cref{d1bords}
in the setting of the isotopy bordism category $\frakBord_d^{\RR→\RR^0}$
and its fibrant replacement (\cref{nonfibrant.target}).
We compute the trace of $\id_{+_1}$ by composing the 1-bordism~$\eta_{-1,1}$,
the appropriate braiding 1-morphism from the target of~$\eta_{-1,1}$ to the source of~$\epsilon_{-1,1}$, and the 1-bordism $\epsilon_{-1,1}$,
as depicted in \cref{traceofpoint}.
Set $d=1$, $U=\RR^0$, $\ell=1$.
We take ${\bf m}=[0]$ for objects and ${\bf m}=[1]$ for morphisms.
We take $L=\RR^0$ for all objects and morphisms except for the isotopy used to construct~$β$, for which we take $L=\RR^1$.
The embedding $i$ is always taken to be the restriction of the identity map $\RR→\RR$ to $M⊂\RR$.

\begin{figure}[ht]
\begin{center}
\begin{tikzpicture}
\tikzset{
->-/.style={decoration={markings,mark=at position #1 with {\arrow[scale=1.2]{>}}},postaction={decorate}},
-<-/.style={decoration={markings,mark=at position #1 with {\arrow[scale=-1.2]{>}}},postaction={decorate}}
}

\draw[-<-=.25, ->-=.7, thick, red] (3,-2.5)--(3,.5);
\node at (3,-2) {$\bullet$};
\node at (3,0) {$\bullet$};
\node at (2.6,-2) {$-_{-1}$};
\node at (2.6,0) {$+_1$};
\node at (3,1) {$x$};

\draw[-<-=.6, thick, red] (5,-2.5)--(5,-1.2);
\draw[->-=.4, thick, red] (5,-.8)--(5,.5);
\node at (5,-2) {$\bullet$};
\node at (5,0) {$\bullet$};
\node at (4.6,-2) {$-_{-1}$};
\node at (4.6,0) {$+_1$};
\node at (5,1) {$x'$};

\draw[-<-=.6, thick, red] (7,-2.5)--(7,-1.2);
\draw[->-=.4, thick, red] (7,-.8)--(7,.5);
\node at (7,-1) {$\otimes$};
\node at (7,-2) {$\bullet$};
\node at (7,0) {$\bullet$};
\node at (6.6,-2) {$-_{-1}$};
\node at (6.6,0) {$+_{1}$};
\node at (7,1) {$z_1\otimes z_2$};

\draw[->-=.75, thick, red] (9,-2.5)--(9,-1.2);
\draw[-<-=.25, thick, red] (9,-.8)--(9,.5);
\node at (9,-1) {$\otimes$};
\node at (9,-2) {$\bullet$};
\node at (9,0) {$\bullet$};
\node at (8.6,-2) {$+_{1}$};
\node at (8.6,0) {$-_{-1}$};
\node at (9,1) {$z_2\otimes z_1$};

\draw[->-=.75, thick, red] (11,-2.5)--(11,-1.2);
\draw[-<-=.25, thick, red] (11,-.8)--(11,.5);
\node at (11,-1) {$\otimes$};
\node at (11,-2) {$\bullet$};
\node at (11,0) {$\bullet$};
\node at (10.6,-2) {$+_{-1}$};
\node at (10.6,0) {$-_1$};
\node at (11,1) {$z_2'\otimes z'_1$};

\draw[->-=.75, thick, red] (13,-2.5)--(13,-1.2);
\draw[-<-=.25, thick, red] (13,-.8)--(13,.5);
\node at (13,-2) {$\bullet$};
\node at (13,0) {$\bullet$};
\node at (12.6,-2) {$+_{-1}$};
\node at (12.6,0) {$-_1$};
\node at (13,1) {$y'$};

\draw[->-=.32, -<-=.65, thick, red] (15,-2.5)--(15,.5);
\node at (15,-2) {$\bullet$};
\node at (15,0) {$\bullet$};
\node at (14.6,-2) {$+_{-1}$};
\node at (14.6,0) {$-_1$};
\node at (15,1) {$y$};

\draw[->] (1.5,-1)--(2.5,-1); \node at (1,-1) {$\emptyset$}; \node at (2,-.5) {$\eta_{-1,1}$};
\draw[<-] (3.5,-1)--(4.5,-1); \node at (4,-.5) {$f'$};
\draw[<-] (5.5,-1)--(6.5,-1); \node at (6,-.5) {$f$};
\draw[->] (7.5,-.5) to[out=-30,in=150] (8.5,-1.5); \node at (8,0) {$b$};
\draw[->] (7.5,-1.5) to[out=30,in=-150] (8.5,-.5);
\draw[->] (9.5,-.5)--(10.5,-.5); \node at (10,0) {$β$};
\draw[->] (9.5,-1.5)--(10.5,-1.5);
\draw[->] (11.5,-1)--(12.5,-1); \node at (12,-.5) {$g$};
\draw[->] (13.5,-1)--(14.5,-1); \node at (14,-.5) {$g'$};
\draw[->] (15.5,-1)--(16.5,-1); \node at (17,-1) {$\emptyset$}; \node at (16,-.5) {$\epsilon_{-1,1}$};

\node at (5.4,.3) {$x'_2$};
\node at (5.4,-2.3) {$x'_1$};
\node at (13.4,.3) {$y'_1$};
\node at (13.4,-2.3) {$y'_2$};
\node at (7.4,.3) {$z_2$};
\node at (7.4,-2.3) {$z_1$};
\node at (9.4,.3) {$z_1$};
\node at (9.4,-2.3) {$z_2$};
\node at (11.4,.3) {$z'_1$};
\node at (11.4,-2.3) {$z'_2$};
\end{tikzpicture}
\end{center}
\caption{The trace of the point $\id_{+_1}$ in the fibrant replacement of $\frakBord_d^{\RR^1\to \RR^0}$.
The 1-bordisms $\eta_{-1,1}$ and $\epsilon_{-1,1}$ are described in \cref{elbows}.
The remaining morphisms are virtual isomorphisms or isotopies that are converted to invertible 1-bordisms using \cref{companion.convert.virtual.isomorphism.morphism} or \cref{companion.convert.isotopy.morphism}, respectively.
Inverting arrows pointing to the left and forming the composition computes the trace.}
\label{traceofpoint}
\end{figure}

First, we adjust the target of~$\eta_{-1,1}$ and the source of~$\epsilon_{-1,1}$ using virtual isomorphisms (\cref{cats.defs,virtual.isomorphism}) $f'$ and $g'$,
to ensure that the ambient manifold~$M$ splits as a disjoint union of two intervals.
The cut grid on $\eta_{-1,1}$ is given in \cref{etagrid}, with target cut $((-1,1),\{-1,1\},(-\infty,-1)\cup (1,\infty))$ on $M_0=\RR$.
We let $x$ be the 0-bordism given by taking $M_0=\RR$, equipped with the target cut of $\eta_{-1,1}$.
Setting $M_1=(-2,0)\sqcup (0,2)$, we have an embedding $f':M_1\into M_0$.
Pulling back the monoidal cut grid defining~$x$ along~$f'$ to $M_1$ yields a 0-bordism~$x'$.

Following the construction of \cref{companion.monoidal.product}, we write
$$x'=x'_1\otimes x'_2=m_{x'_1,x'_2}μ,$$
where $x'_1$ has the same data as $x'$, except the map $C^⊗:M_1\to \langle 1\rangle$ sends $(0,2)\mapsto \ast\in \langle 1\rangle$.
Similarly, $x'_2$ has the same data as $x'$, except the map $C^⊗:M_1\to \langle 1\rangle$ sends $(-2,0)\mapsto \ast\in \langle 1\rangle$.
We have $\ncore(x'_1)=\{-1\}$ and $\ncore(x'_2)=\{1\}$, since the construction of the core in \cref{nonembedded.core} discards the trash bin.
The multiplied pair $m_{x'_1,x'_2}$ in \cref{companion.monoidal.product} exhibiting the monoidal product is given by the same data as $x'$,
but with the map $C^⊗:M_1\to \langle 2\rangle$ sending $(-2,0)\mapsto 1\in \langle 2\rangle$ and $(0,2)\mapsto 2\in \langle 2\rangle$.

Next, we modify $x_1'$ and $x_2'$, by throwing away the trash bin using the map~$f$ as follows.
Consider the bordism~$z_1$ with ambient manifold given by the summand $(-2,0)$ in~$M_1$.
The cut grid is given by a single cut $((-1,0),\{-1\}, (-2,-1))$.
We have $\ncore(z_1)=\{-1\}$.
The inclusion $(-2,0)\into M_1$ gives a morphism $z_1\to x_1'$ in the bordism category.
Similarly, we have a bordism $z_2'$ with ambient manifold $(0,2)$ and cut grid given by $((0,1),\{1\},(1,2))$.
We have a morphism $z_2\to x_2'$ given by the inclusion.
By \cref{companion.monoidal.product}, we can form the monoidal product $z_1\otimes z_2$ by lifting the pair $(z_1,z_2)$ to a multiplied pair in the fibrant replacement.
By \cref{companion.monoidal.product.virtual.isomorphisms},
the two inclusions $z_1\to x_1'$ and $z_2\to x_2'$ induce a virtual isomorphism
$$f:z_1\otimes z_2\to x_1'\otimes x_2'$$
in the fibrant replacement.

We have an analogous construction of the virtual isomorphisms $g'$ and~$g$.
We let $y$ be the 0-bordism obtained by taking $M_2=\RR$ and monoidal cut grid given by the cut $((-\infty,-1)\cup(1,\infty),\{-1,1\},(-1,1))$,
i.e., the source of $\epsilon_{-1,1}$ in \cref{elbows}.
Let $y'$ be obtained from $y$ by pulling back the monoidal cut grid along the inclusion $g':M_1\into M_2$.
The obvious symmetric construction yields bordisms $y_1'$, $y_2'$.
We have morphisms $z'_1\to y_1'$ and $z'_2\to y_2'$ given by the inclusions of the corresponding summands, hence also a virtual isomorphism
$$g:z'_2\otimes z'_1\to y_2'\otimes y_1'=y'.$$

Next, let $\beta_1:z_1\to z'_1$ and $\beta_2:z_2\to z_2'$ be the isotopies $\beta_1(x,t)=2t+x$ and $\beta_2(x,t)=x-2t$, which translate the intervals $(-2,0)$ and $(0,2)$ to $(0,2)$ and $(-2,0)$, respectively.
We use the construction in \cref{companion.monoidal.product.isotopies} to form the monoidal product $\beta=\beta_2\otimes \beta_1$ of the pair of isotopies,
where the map $\partial m_{\beta_1,\beta_2}$ is determined by the data for the monoidal products $z_2\otimes z_1$ and $z_2'\otimes z_1'$.

Finally, we form the braiding virtual isomorphism $b:z_1\otimes z_2\to z_2\otimes z_1$, as described in \cref{braidingiso}.
We convert all five virtual isomorphisms above into 1-morphisms
using \cref{companion.convert.virtual.isomorphism.morphism,companion.convert.virtual.isomorphism.opposite.morphism}.
We convert the isotopy~$\beta$ to a 1-morphism using \cref{companion.convert.isotopy.morphism}.
Using \cref{companion.compose.morphisms},
we then form the composition
$$\tr(\id_{+_1})=\epsilon_{-1,1}\circ g'\circ g\circ \beta\circ b\circ f^{-1}\circ (f')^{-1}\circ \eta_{-1,1},$$
which is a 1-bordism in the fibrant replacement of the bordism category $\frakBord_1^{\RR^1\to \RR^0}$.
That this computes the trace follows from the fact that $\epsilon_{-1,1}$ and $\eta_{-1,1}$ are the counit and unit for a duality, as observed in \cref{d1bordsisot}.
\end{example}

\begin{example}
\label{replacewframed}
Let
$$\Fr_d\in \frakStruct_d$$
be the field stack with isotopies that sends a submersion $p:M\to U$ with $d$-dimensional fibers to the smooth set of framings of the vertical tangent bundle of~$p$.
That is,
$\Fr_d(p)_L$ is the set of isomorphisms of vector bundles
$$(r⨯\id_L:\T_pM⨯L→M⨯L) \quad \lto5{} \quad (π_{2,3}:\RR^d⨯M⨯L→M⨯L),$$
where $\T_pM=\ker \T p$ is the vertical tangent bundle of $p:M→U$
and $r:\T_p M→M$ is the projection map.

We have a canonical map
\begin{equation}\label{maptoframings}ζ:\Yo{\RR^d\to \RR^0}\to \Fr_d\end{equation}
induced by the enriched Yoneda lemma from the element of $\Fr_d(\RR^d→\RR^0)$
given by the canonical trivialization of the tangent bundle of~$\RR^d$.
The map~$ζ$ induces a map
$$\frakBord_d^ζ: \frakBord_d^{\Yo{\RR^d→\RR^0}}→\frakBord_d^{\Fr_d}$$
of bordism categories, whose domain is the $d$-dimensional embedded bordism category.

The comparison map~$ζ$ is a local weak equivalence by \scref{geometric.framings}.
Hence, the map $\frakBord_d^ζ$ is a local weak equivalence by \ecref{a1a2a3}.

Continuing \cref{frcircle},
which computes the trace of the identity morphism on a point in $\frakBord_d^{\Yo{\RR^d→\RR^0}}$,
we explain how to compute the trace of its image in $\frakBord_d^{\Fr_d}$.
These bordism categories are weakly equivalent,
but the construction itself is much shorter
because $\Fr_d$ is an ∞-sheaf, whereas the weakly equivalent field stack with isotopies $\Yo{\RR^d→\RR^0}$ is not.
This enables a direct construction of the braiding bordism~$b$ below.

Take $d=1$, $U=\RR^0$, $ℓ=1$, $[m]=[3]$, $L=\RR^0$, $M=[0,4]/\{0\sim4\}≅S^1$,
and the cut $[3]$-tuple~$C$ on $p:M→U$ uniquely determined by the following data:
$$C_{<0}=∅,\qquad C_{=0}=∅,\qquad C_{(0,1)}=(3,4),\qquad C_{=1}=\{0,3\},\qquad C_{(1,2)}=(0,1)∪(2,3),$$
$$C_{=2}=\{1,2\},\qquad C_{(2,3)}=(1,2),\qquad C_{=3}=∅,\qquad C_{>3}=∅.$$
Equip $M$ with the framing induced by the standard framing of the tangent bundle of $[0,4]$.
The resulting 1-bordism is depicted below.
It computes the trace of the 0-bordism $P_+=\frakBord_d^ζ(+_0)$ given by applying the map~$\frakBord_d^ζ$ to the 0-bordism~$+_0$ of \cref{one.dimensional.bordisms}.
The dual 0-bordism $P_-=\frakBord_d^ζ(-_0)$ is obtained by equipping $P_+$ with the opposite framing.
The left and right semicircles below implement the unit~$η$ and counit~$ϵ$ of~$P_+$, respectively,
whereas the middle part is the braiding~$b$ of~$P_+$ and its dual~$P_-$.
The curved arrows denote the framing on the circle.
The arrows near the points depict the normal orientations of cuts.

\begin{center}
\begin{tikzpicture}\label{framedcircle}
\tikzset{->-/.style={decoration={markings,mark=at position #1 with {\arrow[scale=1.2]{>}}},postaction={decorate}}}
\tikzset{-<-/.style={decoration={markings,mark=at position #1 with {\arrow[scale=-1.2]{>}}},postaction={decorate}}}
\tikzset{on top/.style={preaction={draw=white,-,line width=#1}},
on top/.default=4pt}

\node at (2,1.3) {$P_-$};
\node at (0,1.3) {$P_+$};
\node at (0,-.7) {$P_-$};
\node at (2,-.7) {$P_+$};

\draw[->-=.15, thick] (0,1) to [out=0,in=180] (2,-1);
\draw[->-=.15, thick, on top] (0,-1) to [out=0,in=180] (2,1);

\node at (2,-1) {$\bullet$};
\node at (2,1) {$\bullet$};
\node at (0,1) {$\bullet$};
\node at (0,-1) {$\bullet$};

\draw[-<-=.15, thick] (0,1) to[out=180, in=90] (-1,0);
\draw[->-=.85, thick] (-1,0) to[out=-90, in=180] (0,-1);

\draw[->-=.3, thick] (2,1) to[out=0, in=90] (3,0);
\draw[-<-=.7, thick] (3,0) to[out=-90, in=0] (2,-1);

\draw[->] (-1,-.8) to [out=120,in=240] (-1,.8);
\draw[->] (3,-.8) to [out=240+180,in=120+180] (3,.8);

\node at (3.5,0) {$\epsilon$};
\node at (1.5,0) {$b$};
\node at (-1.5,0) {$\eta$};

\node at (0,-1.3) {$3$};
\node at (0,.7) {$0$};
\node at (2,-1.3) {$1$};
\node at (2,.7) {$2$};
\end{tikzpicture}
\end{center}

With minor modifications, the same figure also describes the trace of the identity morphism on a point in the 2-dimensional framed bordism category $\frakBord_2^{\Fr_2}$.
Replace $M$ with a small open tubular neighborhood of the immersed circle in the figure
and equip it with the framing induced from the standard framing of~$\RR^2$.
Replace the sets used to define the cut tuple $C$ with their products with~$\RR$ (implementing the vertical coordinate).
The resulting bordisms implement the duality for the image of the 0-bordism $+_0$ in $\frakBord_2^{\Fr_2}$.
\end{example}

\section{Verification of the axioms}
\label{axioms.section}

In this section, we will prove that the bordism category without isotopies $\Bord_{d}$ (\cref{bord}) satisfies \cref{axiom.functorial,axiom.cocontinuous,axiom.embedded} (\cref{axioms})
and that the bordism category with isotopies $\frakBord_{d}$ (\cref{bord.isotopy}) satisfies \cref{frakaxiom.functorial,frakaxiom.cocontinuous,frakaxiom.embedded} (\cref{frakaxioms}).

\cref{axiom.functorial,frakaxiom.functorial} were established in \cref{bord,bord.isotopy}.
\cref{section.cocontinuous} proves \cref{axiom.cocontinuous,frakaxiom.cocontinuous}.
In \cref{section.embedded} we introduce
the embedded bordism categories $\EBord_d^q$ and $\frakEBord_d^q$ and use them to formulate and prove \cref{axiom.embedded,frakaxiom.embedded}.

\subsection{Proof of the second axiom}
\label{section.cocontinuous}

\begin{proposition}
\label{bord.cocontinuous}
\label{def.FFT}
Fix $d≥0$.
The $\sset$-enriched functor
$$\Bord_d:\Struct_d→\smcat{d}$$
(\cref{bord}) is an $\sset$-enriched left adjoint functor that preserves monomorphisms and objectwise weak equivalences.
In particular, $\Bord_d$ admits an $\sset$-enriched right adjoint functor
$$\FFT_d:\smcat{d}→\Struct_d,\qquad \vcat↦(q↦\map(\Bord_d^{\Yo{q}},\vcat)),$$
which sends $\vcat\in \smcat{d}$ to the field stack whose $q$-points ($q∈\FEmb_d$)
are given by the simplicial set of $\vcat$-valued functorial field theories
with fields on bordisms given by embeddings into~$q$.
\end{proposition}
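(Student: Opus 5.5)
The plan is to show that $\gs\mapsto\Bord_d^\gs$ preserves all small colimits, objectwise in $(U,\langle\ell\rangle,{\bf m})\in\stcart\times\Gamma\times\Delta^{\times d}$ and simplicial degree. Once that is in hand, the right adjoint follows formally: $\Struct_d$ is a presheaf category, so every $\gs$ is the colimit of the representables $\Yo{q}\otimes\Delta^n$ over its category of elements. The right adjoint is then forced to be $\FFT_d(\vcat)(q)=\map(\Bord_d^{\Yo{q}},\vcat)$, and the $\sset$-enrichment comes from the fact that $\Bord_d$ commutes with tensoring by simplicial sets.

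First I unwind \cref{bord.explicit}. An $n$-simplex of $\Bord_d^\gs(U,\langle\ell\rangle,{\bf m})$ is a chain
$$(p_0,C_0,\sigma_0)\to\cdots\to(p_n,C_n,\sigma_n)$$
with $\sigma_i\in\gs(p_i)_n$ and $\varphi_i^*\sigma_i=\sigma_{i-1}$. Thus the $\sigma_i$ are determined by $\sigma_n\in\gs(p_n)_n$. This gives a natural bijection
$$\Bord_d^\gs(U,\langle\ell\rangle,{\bf m})_n\cong\coprod_{(p_\bullet,C_\bullet,\varphi_\bullet)}\gs(p_n)_n,$$
where the coproduct runs over chains of cut-respecting embeddings, compact globular cut grids, and morphisms satisfying \cref{bordism.core}, none of which involve $\gs$. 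Each summand $\gs\mapsto\gs(p_n)_n$ is evaluation at a fixed object, so it preserves all colimits and monomorphisms. Coproducts preserve both as well. The simplicial structure maps (faces drop links, and on the top field apply $\varphi^*$ together with the simplicial operator of $\gs$) are natural in $\gs$. Hence $\Bord_d$ is cocontinuous and preserves monomorphisms. The one check here is compatibility of the diagonal with the bijection: the degree-$n$ field sits in $\gs(\cdot)_n$, so face maps in the diagonal act on the $\gs$ coordinate and on the chain at once. This is still a natural transformation built from evaluations, so it is fine.

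Next, objectwise weak equivalences. Fix $(U,\langle\ell\rangle,{\bf m})$ and view $\Bord_d^\gs$ there as the diagonal of the bisimplicial set
$$(a,b)\mapsto\coprod_{\text{$a$-chains}}\gs(p_a)_b,$$
which is the nerve of the Grothendieck construction taken degreewise in $b$. Equivalently, it is the homotopy colimit over $\CatBord_d^{*}(U,\langle\ell\rangle,{\bf m})^{\op}$ of the functor $(p,C)\mapsto\gs(p)$. A map $\gs\to\gs'$ that is an objectwise weak equivalence gives a levelwise weak equivalence of bisimplicial sets in the $b$-direction for each fixed $a$, and the diagonal preserves levelwise weak equivalences. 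So $\Bord_d$ sends objectwise weak equivalences to objectwise weak equivalences in $\sPSh(\stcart\times\Gamma\times\Delta^{\times d})$, and these are weak equivalences in $\smcat{d}$. The main obstacle is bookkeeping: identifying the diagonal correctly with this bisimplicial set. It requires that the field lives on the last object and that the degree-$b$ simplex of $\gs$ is pulled back along the chain.

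Finally, the $\sset$-enrichment. The bijection above shows $\Bord_d^{\gs\otimes K}\cong\Bord_d^\gs\times K$ naturally for any $K\in\sset$ (diagonal product). Combined with cocontinuity, this gives an enriched adjunction whose right adjoint is computed on representables as stated. Homotopy cocontinuity, \cref{axiom.cocontinuous}, then follows: the functor preserves monomorphisms (injective cofibrations) and objectwise weak equivalences, so it is left Quillen for the injective structure into $\smcat{d}$.
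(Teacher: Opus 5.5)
Your proposal is correct and follows essentially the same route as the paper. In both, the key step is the natural (in $\gs$ and in both simplicial degrees) decomposition $\nerve(\CatBord_d^\gs(U,\langle\ell\rangle,{\bf m})_l)_k\cong\coprod_{\alpha}\gs(p_k)_l$ over chains $\alpha$ in $\CatBord_d^{*}(U,\langle\ell\rangle,{\bf m})$. From it follow preservation of colimits, monomorphisms and tensorings, and, via the diagonal of a levelwise weak equivalence of bisimplicial sets, preservation of objectwise weak equivalences. The only difference is cosmetic: you apply the decomposition directly to the diagonal for the colimit and monomorphism claims, whereas the paper first splits off the diagonal functor as an $\sset$-enriched left adjoint and then argues at the bisimplicial level throughout.
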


\begin{proof}
Recall (\cref{bord}) that $\Bord_d$ is given by composing
$$\Struct_d⨯\stcart^\op⨯Γ^\op⨯(Δ^{⨯d})^\op\lto{8}{\CatBord_d}\smallcat^{Δ^\op}\lto5{\nerve^{Δ^\op}}\sset^{Δ^\op}$$
with the diagonal functor
$$\diag:\sset^{Δ^\op}→\sset,$$
and moving all factors except $\Struct_d$ to the right side using currying.

Since the diagonal functor is an $\sset$-enriched left adjoint functor that preserves monomorphisms and
sends objectwise weak equivalences in $\sset^{Δ^\op}$ to weak equivalences in $\sset$,
it remains to show that $\nerve^{Δ^\op}∘\CatBord_d$
is an $\sset$-enriched left adjoint functor
that preserves monomorphisms and objectwise weak equivalences.

All of these properties are verified objectwise for every $(U,⟨ℓ⟩,{\bf m})∈\stcart⨯Γ⨯Δ^{⨯d}$.
Accordingly, we work with the composite
$$\Struct_d\lto{15}{\CatBord_d(U,⟨ℓ⟩,{\bf m})}\smallcat^{Δ^\op}\lto5{\nerve^{Δ^\op}}\sset^{Δ^\op}.$$

Given $\gs∈\Struct_d$ and $[k],[l]∈Δ$,
recall (\cref{bord.explicit}) the explicit description of the set
of $k$-simplices in $\nerve(\CatBord_d^\gs(U,⟨ℓ⟩,{\bf m})_l)$:
$$(p_0,C_0,σ_0)\lto3{φ_1}(p_1,C_1,σ_1)\lto3{φ_2}⋯\lto3{φ_k}(p_k,C_k,σ_k),$$
where $σ_i∈\gs(p_i)_l$.
Since $σ_i=φ_{i+1}^*⋯φ_k^*(σ_k)$, the same data can be presented by dropping $σ_i$ from the tuple, while retaining the data of~$σ_k$:
$$α=\left( (p_0,C_0)\lto3{φ_1}(p_1,C_1)\lto3{φ_2}⋯\lto3{φ_k}(p_k,C_k) \right),\qquad σ_k∈\gs(p_k)_l.$$
Consequently, we have a natural (in $[k]$, $[l]$, and $\gs$) isomorphism
$$\nerve(\CatBord_d^\gs(U,⟨ℓ⟩,{\bf m})_l)_k ≅ ∐_{α∈\nerve(\CatBord_d^*(U,⟨ℓ⟩,{\bf m})_l)_k} \gs(p_k)_l,$$
where $\ast∈\Struct_d$ is the terminal field stack.

Fix $[k],[l]∈Δ^\op$.
Interpreting the right side as a functor in $\gs$ of the form $\Struct_d→\set$,
it preserves monomorphisms because monomorphisms are stable under disjoint unions
and it preserves small colimits because small colimits commute with small coproducts.
Thus, $\nerve^{Δ^\op}∘\CatBord_d(U,⟨ℓ⟩,{\bf m})$ preserves monomorphisms and small colimits.

Fix $[k]∈Δ^\op$.
Interpreting the right side as a functor in $\gs$ and $[l]$
of the form $\Struct_d→\sset$,
it preserves objectwise weak equivalences because weak equivalences of simplicial sets are stable under small coproducts
and it preserves tensorings because products of simplicial sets are computed levelwise
and the product functor $A⨯-$ (where $A$ is a fixed set) preserves disjoint unions.
Thus, $\nerve^{Δ^\op}∘\CatBord_d(U,⟨ℓ⟩,{\bf m})$ preserves objectwise weak equivalences and tensorings.
\end{proof}

The following isotopy variant of \cref{bord.cocontinuous} has an entirely analogous proof, working with $L$-families for some $L∈\cart$.

\begin{proposition}
\label{bord.cocontinuous.enriched}
\label{def.frakFFT}
The $\smsset$-enriched functor
$$\frakBord_d:\frakStruct_d→\fraksmcat{d}$$
is an $\smsset$-enriched left adjoint functor that preserves monomorphisms and objectwise weak equivalences.
In particular, $\frakBord_d$ admits an $\smsset$-enriched right adjoint functor
$$\frakFFT_d:\fraksmcat{d}→\frakStruct_d,\qquad \vcat↦(q↦\map_{\smsset}(\frakBord_d^{\Yo{q}},\vcat)).$$
\end{proposition}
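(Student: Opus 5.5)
We follow the proof of \cref{bord.cocontinuous} verbatim, now keeping track of the extra parameter $L∈\cart$ from the $\smsset$-enrichment. By \cref{bord.isotopy}, $\frakBord_d$ is obtained from $\frakCatBord_d$ by applying the nerve objectwise in $\cart^\op⨯Δ^\op$ and then the diagonal in the $Δ$-directions, objectwise in $\cart$. The functor $\diag^{\cart^\op}:\smsset^{Δ^\op}→\smsset$ is a $\smsset$-enriched left adjoint. It preserves monomorphisms and sends objectwise weak equivalences to objectwise (hence, by \cref{stalk.eq}, $\smsset$-) weak equivalences. So it suffices to check the required properties for $\nerve∘\frakCatBord_d(U,⟨ℓ⟩,{\bf m})$, objectwise in $(U,⟨ℓ⟩,{\bf m})$ and in $(L,[l],[k])$.

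The key step is the decomposition of nerve simplices. By \cref{explicit.noncompact.isot,compactbords.isotopy}, a $k$-simplex of $\nerve(\frakCatBord_d^\gs(U,⟨ℓ⟩,{\bf m})_{L,l})$ is a chain $(p_0,C_0,σ_0)→⋯→(p_k,C_k,σ_k)$. Here the $φ_i$ are $L$-families of cut-respecting embeddings in $(\frakFEmb_d)_L$ and $σ_i∈\gs(p_i)_{L,l}$. Since $σ_i=φ_{i+1}^*⋯φ_k^*σ_k$, where pullback uses the enriched action of $\frakFEmb_d(p_i,p_k)_L$ on $\gs(p_k)_L$, this gives a bijection
$$\nerve(\frakCatBord_d^\gs(U,⟨ℓ⟩,{\bf m})_{L,l})_k≅∐_{α∈\nerve(\frakCatBord_d^*(U,⟨ℓ⟩,{\bf m})_{L,l})_k}\gs(p_k)_{L,l}.$$
The index set does not depend on $l$, since $*$ is terminal and constant. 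The main point to verify is that this bijection is natural in $L$ (restriction along $L'→L$ acts compatibly on chains and on $\gs(p_k)_L$, by enriched functoriality of $\gs$), as well as in $[k]$, $[l]$ and $\gs$. This naturality is the only place where extra care beyond the unenriched case is needed.

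Given the decomposition, the argument concludes as before. For fixed $k$, the right-hand side, viewed as a functor of $\gs$ valued in $\PSh(\cart⨯Δ)=\smsset$, is a coproduct over a fixed presheaf of index sets of evaluations $\gs↦\gs(p_k)$. Evaluations commute with colimits, with tensoring by objects of $\smsset$ (which is computed objectwise), and with monomorphisms. Coproducts preserve all three, as well as objectwise weak equivalences. Hence the composite is a cocontinuous, tensor-preserving functor between locally presentable enriched categories, i.e.\ an $\smsset$-enriched left adjoint preserving monomorphisms and objectwise weak equivalences. The right adjoint is then given by the enriched Yoneda formula $\frakFFT_d(\vcat)(q)=\map(\frakBord_d^{\Yo{q}},\vcat)$, as in \cref{bord.cocontinuous}.
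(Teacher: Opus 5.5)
Your decomposition of the $k$-simplices of the nerve as a coproduct indexed by chains in $\frakCatBord_d^*$ is the same as the paper's proof. So are the resulting checks of cocontinuity, tensor preservation and monomorphism preservation, and the formula for the right adjoint. You are in fact a bit more careful than the paper in noting that the index set varies with $L$: for fixed $k$ the functor is $\gs\mapsto\coprod_p A_p\times\gs(p)$, with $A_p$ the smooth set of chains ending at~$p$.

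The gap is in which weak equivalences you handle. Your appeal to \cref{stalk.eq} for $\diag^{\cart^\op}$ works only if the input is a weak equivalence of simplicial sets at every $L\in\cart$. So you prove preservation only for maps $\gs_1\to\gs_2$ that are weak equivalences of simplicial sets at every pair $(p,L)$. What is needed, and used in \cref{proof.cocontinuous}, are the weak equivalences of $\frakStruct_{d,\inj}=\PSh(\frakFEmb_d,\smsset_{\sing,\inj})_\inj$, which at each $p$ are only $\sing$-equivalences. For example, take the projection $\RR\times\gs\to\gs$, with the smooth set $\RR$ acting through the $\smsset$-tensoring. It is such a weak equivalence, but at a fixed $L$ it is $\sm(L,\RR)\times\gs(p)(L)\to\gs(p)(L)$, which is not a weak equivalence of simplicial sets. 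For such maps neither of your two steps applies. Your claim that coproducts preserve weak equivalences ignores the factors $A_p\times-$. The stalkwise argument for the diagonal fails because the levelwise input is no longer an objectwise weak equivalence.

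To repair this, reduce both steps to simplicial sets via $\sing$.
\begin{itemize}
\item Coproduct step: $\sing$ is evaluation at $\gsim^l$ followed by a diagonal, so it commutes with products and coproducts. Hence $\sing\bigl(\coprod_p A_p\times\gs(p)\bigr)\cong\coprod_p\sing A_p\times\sing\gs(p)$, and weak equivalences of simplicial sets are stable under both operations. Equivalently, use that $\smsset_{\sing,\inj}$ is cartesian with all objects cofibrant (\cref{smsset.injective}).
\item Diagonal step: $\sing\circ\diag^{\cart^\op}\cong\diag\circ\sing$, with $\sing$ applied levelwise on the right; both send $Y$ to $n\mapsto Y(\gsim^n)_{n,n}$. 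A levelwise $\smsset$-weak equivalence therefore becomes, after $\sing$, a levelwise weak equivalence of bisimplicial sets. Its diagonal is a weak equivalence by the classical lemma.
\end{itemize}
With these two replacements your argument is complete and coincides with the paper's.
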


\begin{proof}
Recall (\cref{bord.isotopy}) that $\frakBord_d$ is given by composing
$$\frakStruct_d⨯\stcart^\op⨯Γ^\op⨯(Δ^{⨯d})^\op\lto{8}{\frakCatBord_d}\smallcat^{\cart^\op⨯Δ^\op}\lto{13}{\nerve^{\cart^\op⨯Δ^\op}}\sset^{\cart^\op⨯Δ^\op}$$
with the diagonal functor
$$\diag^{\cart^\op}:\sset^{\cart^\op⨯Δ^\op}→\smsset,$$
and moving all factors except $\frakStruct_d$ to the right side using currying.

Since the diagonal functor is an $\smsset$-enriched left adjoint functor that preserves monomorphisms and
sends objectwise weak equivalences in $\smsset^{Δ^\op}$ to weak equivalences in $\smsset$,
it remains to show that $\nerve^{\cart^\op⨯Δ^\op}∘\frakCatBord_d$
is an $\smsset$-enriched left adjoint functor
that preserves monomorphisms and objectwise weak equivalences.

All of these properties are verified objectwise for every $(U,⟨ℓ⟩,{\bf m})∈\stcart⨯Γ⨯Δ^{⨯d}$.
Accordingly, we work with the composite
$$\frakStruct_d\lto{15}{\frakCatBord_d(U,⟨ℓ⟩,{\bf m})}\smallcat^{\cart^\op⨯Δ^\op}\lto{13}{\nerve^{\cart^\op⨯Δ^\op}}\smsset^{Δ^\op}.$$

Given $\gs∈\frakStruct_d$, $[k],[l]∈Δ$, and $L∈\cart$,
we have the following explicit description of the set
of $k$-simplices in $\nerve(\frakCatBord_d^\gs(U,⟨ℓ⟩,{\bf m})_{L,l})$
analogous to the unenriched case in \cref{bord.explicit},
using \cref{explicit.noncompact.isot}:
$$(p_0,C_0,σ_0)\lto3{φ_1}(p_1,C_1,σ_1)\lto3{φ_2}⋯\lto3{φ_k}(p_k,C_k,σ_k),$$
where $p_i∈\frakFEmb_d$, $C∈\frakmtCutglob(⟨ℓ⟩,{\bf m},\red p)_L$, $σ_i∈\gs(p_i)_{L,l}$,
and $φ_i:p_{i-1}→p_i$ are morphisms in $\frakFEmb_d(p_{i-1},p_i)_L$.
Since $σ_i=φ_{i+1}^*⋯φ_k^*(σ_k)$, the same data can be presented by dropping $σ_i$ from the tuple, while retaining the data of~$σ_k$:
$$α=\left( (p_0,C_0)\lto3{φ_1}(p_1,C_1)\lto3{φ_2}⋯\lto3{φ_k}(p_k,C_k) \right),\qquad σ_k∈\gs(p_k)_{L,l}.$$
Consequently, we have a natural (in $[k]$, $[l]$, $L$, and $\gs$) isomorphism
$$\nerve(\frakCatBord_d^\gs(U,⟨ℓ⟩,{\bf m})_{L,l})_k ≅ ∐_{α∈\nerve(\frakCatBord_d^*(U,⟨ℓ⟩,{\bf m})_{L,l})_k} \gs(p_k)_{L,l},$$
where $\ast∈\frakStruct_d$ is the terminal field stack with isotopies.

Fix $[k],[l]∈Δ^\op$ and $L∈\cart$.
Interpreting the right side as a functor in $\gs$ of the form $\frakStruct_d→\set$,
it preserves monomorphisms because monomorphisms are stable under disjoint unions
and it preserves small colimits because small colimits commute with small coproducts.
Thus, $\nerve^{\cart^\op⨯Δ^\op}∘\frakCatBord_d(U,⟨ℓ⟩,{\bf m})$ preserves monomorphisms and small colimits.

Fix $[k]∈Δ^\op$.
Interpreting the right side as a functor in $\gs$, $[l]$, and $L$ of the form $\frakStruct_d→\smsset$,
it preserves objectwise weak equivalences because weak equivalences of smooth simplicial sets are stable under small coproducts
and it preserves tensorings because products of smooth simplicial sets are computed levelwise (with respect to $L$ and $[l]$)
and the product functor $A⨯-$ (where $A$ is a fixed set) preserves disjoint unions.
Thus, $\nerve^{\cart^\op⨯Δ^\op}∘\frakCatBord_d(U,⟨ℓ⟩,{\bf m})$ preserves objectwise weak equivalences and tensorings.
\end{proof}

As an immediate corollary of \cref{bord.cocontinuous,bord.cocontinuous.enriched},
we have that $\Bord_d$ and $\frakBord_d$ satisfy \cref{axiom.cocontinuous,frakaxiom.cocontinuous}, respectively.
The following theorem is a precursor to a much stronger result in \ecref{a1a2a3},
which proves a similar statement for sheaves instead of presheaves,
meaning $\Struct_d$ and $\frakStruct_d$ are equipped with the Čech-local injective model structure of \cref{geometric.structure,geometric.structure.isotopy}
instead of the injective model structure.

\begin{theorem}
\label{proof.cocontinuous}
Fix $d\geq 0$.
Recall
the categories $\Struct_d$ (\cref{geometric.structure}), $\frakStruct_{d}$ (\cref{geometric.structure.isotopy}),
and the model categories $\smcat{d}$ and $\fraksmcat{d}$ (\cref{globular.model.structures}).
Denote by $\Struct_{d,\inj}$ and $\frakStruct_{d,\inj}$ the injective model structures (\cref{injective.model.structure})
on $\sPSh(\FEmb_d)$ and $\sm \sPSh(\frakFEmb_d)$, respectively.
The functor
$$\Bord_{d}:\Struct_{d,\inj}\to \smcat{d}, \quad \gs\mapsto \Bord_{d}^\gs$$
is an $\sset$-enriched left Quillen functor that preserves all weak equivalences.
In particular, it is homotopy cocontinuous.
Similarly, the functor
$$\frakBord_{d}:\frakStruct_{d,\inj}\to \fraksmcat{d}, \quad \gs\mapsto \frakBord_{d}^\gs$$
is an $\smsset$-enriched left Quillen functor that preserves all weak equivalences.
In particular, it is homotopy cocontinuous.
\end{theorem}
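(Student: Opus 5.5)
The plan is to deduce the theorem almost formally from \cref{bord.cocontinuous,bord.cocontinuous.enriched}. I treat the unenriched case; the isotopy case is identical, with $\smsset$ replacing $\sset$ and $L$-families inserted throughout.

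First I establish that $\Bord_d$ is a left Quillen functor $\Struct_{d,\inj}\to\smcat{d}$. By \cref{bord.cocontinuous}, $\Bord_d$ is an $\sset$-enriched left adjoint that preserves monomorphisms and objectwise weak equivalences. The cofibrations of $\Struct_{d,\inj}$ are precisely the monomorphisms. The cofibrations of $\smcat{d}$ are those of the injective model structure on $\sPSh(\stcart\times\Gamma\times\Delta^{\times d})$, because a left Bousfield localization does not change cofibrations (\cref{enriched.left.Bousfield.localization.exists}). So $\Bord_d$ preserves cofibrations.

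For acyclic cofibrations, note that an acyclic cofibration in $\Struct_{d,\inj}$ is a monomorphism that is an objectwise weak equivalence. Its image under $\Bord_d$ is therefore a monomorphism and an objectwise weak equivalence in $\sPSh(\stcart\times\Gamma\times\Delta^{\times d})$. Objectwise weak equivalences are weak equivalences of the injective structure, and these remain weak equivalences after localization. Hence $\Bord_d$ preserves acyclic cofibrations and is left Quillen. The $\sset$-enrichment of the Quillen adjunction comes from \cref{bord.cocontinuous}, which provides an $\sset$-enriched right adjoint $\FFT_d$.

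Next I check that $\Bord_d$ preserves all weak equivalences. This is immediate: weak equivalences of $\Struct_{d,\inj}$ are objectwise weak equivalences, and these are sent to objectwise, hence local, weak equivalences.

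Finally, homotopy cocontinuity follows because every object of $\Struct_{d,\inj}$ is cofibrant. A left Quillen functor is therefore its own left derived functor, and left derived functors of left Quillen functors preserve homotopy colimits. Alternatively, one can note that $\Bord_d$ preserves colimits and weak equivalences, and homotopy colimits in the injective structure can be computed by colimits of cofibrant (Reedy or projective) replacements of diagrams, which $\Bord_d$ maps to cofibrant diagrams.

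The only step needing care is the bookkeeping of the enriched statement in the isotopy case. There the cofibrations of $\frakStruct_{d,\inj}$ are the monomorphisms of $\smsset$-valued presheaves, since $\smsset$ carries the injective-type structure of \cref{smsset.injective}. Its weak equivalences are objectwise $\sing$-equivalences, which \cref{bord.cocontinuous.enriched} shows are preserved. I do not expect a genuine obstacle here.
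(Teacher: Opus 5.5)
Your proposal is correct and follows essentially the same route as the paper: both deduce the statement directly from \cref{bord.cocontinuous,bord.cocontinuous.enriched}, using that the cofibrations on both sides are the monomorphisms (unchanged by the left Bousfield localization) and that objectwise weak equivalences remain weak equivalences in the localized target. Your remarks on homotopy cocontinuity (every object is cofibrant, so the functor is its own left derived functor) only spell out what the paper leaves as ``in particular.''
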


\begin{proof}
Recall (\cref{injective.model.structure}) that in an injective model structure
the class of cofibrations coincides with the class of monomorphisms.
In particular, all objects are cofibrant.

By \cref{bord.cocontinuous}, the functor $\Bord_d$ is an $\sset$-enriched left adjoint functor
that preserves monomorphisms and objectwise weak equivalences.
Since all objects in $\Struct_{d,\inj}$ are cofibrant and weak equivalences are objectwise weak equivalences,
the functor $\Bord_d$ is an $\sset$-enriched left Quillen functor.

By \cref{bord.cocontinuous.enriched}, the functor $\frakBord_d$ is an $\smsset$-enriched left adjoint functor
that preserves monomorphisms and objectwise weak equivalences.
Since all objects in $\frakStruct_{d,\inj}$ are cofibrant and weak equivalences are objectwise weak equivalences,
the functor $\frakBord_d$ is an $\smsset$-enriched left Quillen functor.
\end{proof}

\begin{remark}
Working in the setting of \cref{proof.cocontinuous}, we get the following formulas for $\gs∈\Struct_d$ or $\gs∈\frakStruct_d$, respectively,
where $⊗$ denotes weighted colimits:
$$\Bord_d^\gs≅\gs⊗_{\FEmb_d}\Bord_d^{\Yo{-}}≃\gs⊗_{\FEmb_d}\EBord_d≃\hocolim_{α:Δ^n⨯\Yo{p}→\gs}\EBord_d^p,$$
$$\frakBord_d^\gs≅\gs⊗_{\frakFEmb_d}\frakBord_d^{\Yo{-}}≃\gs⊗_{\frakFEmb_d}\frakEBord_d≃\hocolim_{α:Δ^n⨯\Yo{L}⨯\Yo{p}→\gs}\frakEBord_d^p.$$
As another practical consequence, for $\gs∈\Struct_d$ we get (using $\sset$-enriched cocontinuity of $\frakI_d$ and $\frakBord_d$)
$$\frakBord_d^{\frakI_d\gs}≅\frakBord_d^{\frakI_d(\gs⊗_{\FEmb_d}\Yo{-})}≅\gs⊗_{\FEmb_d}\frakBord_d^{\frakI_d\Yo{-}}≃\gs⊗_{\FEmb_d}\frakEBord_d≃\hocolim_{α:Δ^n⨯\Yo{p}→\gs}\frakEBord_d^p.$$
The main theorem of \cite{GradyPavlov.Loc} can now be interpreted as saying that the weighted colimits above
send Čech-local weak equivalences $\gs_1→\gs_2$ (and not just objectwise weak equivalences) to weak equivalences in $\smcat{d}$ or $\fraksmcat{d}$, respectively.
\end{remark}

\begin{remark}
Recall the moduli stacks of functorial field theories
$$\FFT_{d,\vcat}^\gs = \dmap(\Bord_d^\gs,\vcat),\qquad \frakFFT_{d,\vcat}^\gs = \dmap(\frakBord_d^\gs,\vcat)$$ from \cref{def.FFTspace}.
The adjunctions $\Bord_d⊣\FFT_d$ and $\frakBord_d⊣\frakFFT_d$ of \cref{def.FFT,def.frakFFT} yield
$$\FFT_{d,\vcat}^\gs ≅ \map(\gs,\rdf\FFT_d(\vcat)),\qquad \frakFFT_{d,\vcat}^\gs ≅ \map(\gs,\rdf\frakFFT_d(\vcat)),$$
where $\rdf\FFT_d$ and $\rdf\frakFFT_d$ denote the right derived functors of $\FFT_d$ and $\frakFFT_d$, respectively.
This reduces the problem of computing moduli stacks of functorial field theories
to the problem of computing field stacks $\rdf\FFT_d(\vcat)$ and $\rdf\frakFFT_d(\vcat)$.
The main theorem of \cite{GradyPavlov.Loc} enables a computation of these field stacks in terms of their values on objects of $\FEmb_d$ and $\frakFEmb_d$
whose reduction has the form $\RR^d⨯U→U$.
The main theorem of \cite{GradyPavlov.GCH} then computes the latter values as the invertible part of~$\vcat$.
\end{remark}

\subsection{Proof of the third axiom}
\label{section.embedded}

\cref{proof.cocontinuous} reduces the study of bordism categories $\Bord_d^\gs$ and $\frakBord_d^\gs$ with arbitrary field stacks~$\gs$ to the study of bordism categories
$\smash{\Bord_d^{\Yo{p:M→U}}}$ and $\smash{\frakBord_d^{\Yo{p:M→U}}}$ with representable field stacks $\Yo{p:M→U}$,
meaning bordisms are embedded into a fixed $d$-dimensional manifold~$M$, or a family thereof.
In this section, we simplify the description of the resulting bordism category further
and refer to it as the \emph{embedded bordism category}.
Variants of nonextended embedded bordism categories have appeared in the literature previously, see, for example,
Galatius–Madsen–Tillmann–Weiss \cite[Section~2]{GMTW}, Galatius–Randal-Williams \cite[Definition~3.8]{GalatiusRandalWilliams}.
The main advantage of this category is that it has no nontrivial virtual isomorphisms.
That is, the embedded bordism category is just a presheaf of sets (or smooth sets in the isotopy case),
as opposed to a simplicial presheaf (or a smooth simplicial presheaf in the isotopy case).
Another advantage is that the ambient manifolds of bordisms are open subsets of a $d$-dimensional manifold,
which can eventually be taken to be~$\RR^d$.
This simplifies the cutting and gluing constructions in the sequel papers \cite{GradyPavlov.Loc,GradyPavlov.GCH}.

A key observation (see the proof of \cref{embcat.v1.iso}) enabling the simplification of the description
of $\smash{\Bord_d^{\Yo{p:M→U}}}$ and $\smash{\frakBord_d^{\Yo{p:M→U}}}$
is that the nerve of the category $\smash{\CatBord_d^{\Yo{p:M→U}}(U,⟨ℓ⟩,{\bf m})}$
is homotopy 0-truncated, hence weakly equivalent to its set of connected components.
These connected components can be explicitly described as germs of the image of the core in~$M$, which motivates the definition of an embedded bordism (\cref{embcat.v1}).
Thus, the comparison map from $\smash{\frakBord_d^{\Yo{p:M→U}}}$ to the embedded bordism category is conceptually simple:
it sends a bordism equipped with a fiberwise embedding into $p$ to the germ of the image of its core under the embedding into $p$.

We begin by defining the embedded bordism category functors.
The following definition has the advantage of being concise and having all the details
by virtue of building upon the existing \cref{bord,bord.isotopy}.
However, it is not as simple as the isomorphic construction in \cref{embcat.v3}, which does not make use of
any constructions in \cref{categories.of.bordisms} and instead defines the embedded bordism category directly in terms of compact globular monoidal cut grids.

Fix $d≥0$ and a fibered geometric site $\FEmb_d$ (\cref{fibered.geometric.site})
or a fibered geometric site with isotopies $\frakFEmb_d$ (\cref{fibered.geometric.site.isotopy}).
Recall also the functors $\mtCutglob$ (\cref{globular.monoidal.cut.grid})
and $\frakmtCutglob$ (\cref{globular.monoidal.cut.grid.smooth}),
as well as the notation $\ncore(p,C)$ (\cref{nonembedded.core}).

\begin{definition}
\label{embcat.v1}
Fix $d≥0$.
Recall the functors $\Bord_d:\Struct_d→\smcat{d}$ (\cref{bordstr})
and $\frakBord_d:\frakStruct_d→\fraksmcat{d}$ (\cref{bord.isotopy}).
The functor
$$\EBord_d:\FEmb_d\to \smcat{d}$$
is defined as the composition
$$\FEmb_d\xrightarrow{\Yo{}}\Struct_d\xrightarrow{\Bord_d}\PSh(\stcart⨯Γ⨯Δ^{⨯d},\sset)\xrightarrow{\PSh(\stcart⨯Γ⨯Δ^{⨯d},\pi_0)}\PSh(\stcart⨯Γ⨯Δ^{⨯d},\set),$$
where the last functor is given by composing with $\pi_0:\sset\to \set$.
\label{embeddedbordcat.enriched}
Similarly, the functor
$$\frakEBord_d:\frakFEmb_d\to\fraksmcat{d}$$
is defined as the composition
$$\frakFEmb_d\xrightarrow{\Yo{}}\frakStruct_d\xrightarrow{\frakBord_d}\PSh(\stcart⨯Γ⨯Δ^{⨯d},\smsset)\xrightarrow{\PSh(\stcart⨯Γ⨯Δ^{⨯d},\smpi)}\PSh(\stcart⨯Γ⨯Δ^{⨯d},\smset),$$
where the last functor is given by composing with
$$\smpi:\smsset\to \smset, \qquad (\smpi X)_L=\pi_0(X_L),\qquad X∈\smsset,\qquad L\in \cart.$$

\label{emapconstr.unenriched}
\label{emapconstr.enriched}
We have quotient maps that are natural in~$q∈\FEmb_d$:
$$\ecm^q:\Bord_d^{\Yo{q}}\to \pi_0\Bord_d^{\Yo{q}}=\EBord_d^q, \qquad \frakecm^q:\frakBord_d^{\Yo{q}}\to \pi_0\frakBord_d^{\Yo{q}}=\frakEBord_d^q.$$
\end{definition}

We now prove that the comparison maps $\ecm^q$ and $\frakecm^q$ in \cref{emapconstr.unenriched} are weak equivalences.

\begin{definition}
\label{weakly.cofiltered}
A category~$\cC$ is \emph{weakly cofiltered} if every finite connected diagram $G:I→\cC$ admits a cone.
A presheaf~$\cD$ of categories on~$\cart$ (\cref{def.cart}) is \emph{locally weakly cofiltered}
if for every $L∈\cart$ and finite connected diagram $G:I→\cD(L)$
there is a covering family $\{f_i:U_i→L\}_{i∈I}$
such that for every $i∈I$ the diagram $\cD(f_i)∘G:I→\cD(U_i)$ admits a cone.
\end{definition}

\begin{proposition}
\label{weakly.cofiltered.contractible}
If $\cC∈\smallcat$ is weakly cofiltered (\cref{weakly.cofiltered}), then the canonical map $\nerve\cC→π_0\nerve\cC$ is a simplicial weak equivalence.
If $\cD∈\smallcat^{\cart^\op}$ is locally weakly cofiltered, then the canonical map $\nerve\cD→\tilde\pi_0\nerve\cD$ is a weak equivalence in $\smsset$.
Here $(\tilde\pi_0 F)(L)=π_0(F(L))$ is the presheaf of connected components of~$F$.
\end{proposition}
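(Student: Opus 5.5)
Both assertions reduce to showing that each connected component of the nerve is weakly contractible; in the smooth case, contractibility need only hold after passing to a cover, which is harmless in $\smsset$. The canonical map $\nerve\cC→π_0\nerve\cC$ is a disjoint union over connected components $K$ of $\cC$ of the maps $\nerve K→*$. So the first claim amounts to showing that every connected component $K$ of a weakly cofiltered category has weakly contractible nerve.

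Each such $K$ is again weakly cofiltered, since finite connected diagrams land in a single component, and it is nonempty. I will show that every map $f:∂Δ^n→\nerve K$ (or from a finite simplicial set $A$) is null-homotopic. The image of $A$ is contained in the nerve of a finite subdiagram: take the finite category freely generated by the vertices and edges of $A$ modulo the relations imposed by its 2-simplices, or simply the finite diagram $G:I→K$ indexed by the poset of nondegenerate simplices of~$A$. If $A$ is disconnected, I first join its pieces by zigzags in~$K$ to make $G$ connected. A cone over $G$ with apex $c$ then gives a natural transformation from the constant functor at $c$ to $G$. Natural transformations induce simplicial homotopies on nerves, so $f$ is homotopic to the constant map at $c$ (via the standard identification $\nerve(\cC^I)$, or Quillen's argument with $\nerve(G)×Δ^1$). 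Hence all homotopy groups of $\nerve K$ vanish at every basepoint, and $K$ is weakly contractible. The slickest form of this argument writes $\nerve K$ as the filtered colimit of nerves of finite connected subdiagrams, each of which maps through a cone and is therefore nullhomotopic inside $\nerve K$. Homotopy groups commute with this colimit, so they vanish.

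For the smooth case, I will use \cref{stalk.eq}: stalkwise weak equivalences are weak equivalences in $\smsset$. The functors $\nerve$ and $\tilde\pi_0$ commute with stalks, because stalks are filtered colimits that commute with nerves and with $π_0$ of simplicial sets. So it suffices to show that the stalk $\cD_x=\colim_{x∈L}\cD(L)$ is weakly cofiltered at every point $x$ of every $L∈\cart$. Take a finite connected diagram in the stalk. Because the diagram is finite and all its objects and relations are finitely many pieces of data, it is represented by a diagram $G:I→\cD(L)$ on some neighborhood. Local weak cofilteredness supplies a cover $\{U_i→L\}$ such that $G$ restricted to each $U_i$ admits a cone. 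Choosing $i$ with $x∈U_i$ yields a cone in the stalk. The first part then applies stalkwise.

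The main obstacle is the finiteness bookkeeping in both steps. In the first part, I must make sure that an arbitrary finite simplicial subcomplex of $\nerve K$ really factors through the nerve of a finite connected diagram admitting a cone; working with the indexing category of simplices of $A$ handles this. In the smooth part, I must make sure that the site's stalks are well defined and that diagrams in the stalk lift to some $\cD(L)$. If the stalk argument gives trouble, I would instead use Čech descent: a local cone gives a homotopy on a cover, and covers are weak equivalences in $\smsset$ by \cref{stalk.eq}.
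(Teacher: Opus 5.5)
Your proposal is correct and follows the paper's route. For $\cC$, the paper also splits into connected components, which are cofiltered, and then simply cites weak contractibility of cofiltered nerves. For $\cD$, it observes that $\nerve\cD\to\tilde\pi_0\nerve\cD$ is a stalkwise weak equivalence and appeals to \cref{stalk.eq}. You supply what the paper leaves implicit: contractibility via cones, the commutation of stalks with $\nerve$ and $\tilde\pi_0$, and weak cofilteredness of the stalk categories.

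One small point in your contractibility argument. The category presented by the vertices, edges and $2$-simplices of $A$ need not be finite, and the poset of nondegenerate simplices is inadequate when $A$ is not regular (for example, a loop). This is harmless: a cone over the finite generating graph is already a cone over the whole diagram, and it can be built from cones over cospans and equalizing cones over parallel pairs.
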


\begin{proof}
A weakly cofiltered category is a disjoint union of cofiltered categories.
Since the nerve of a cofiltered category is weakly contractible, the first claim follows.
For the second claim, observe that the map
$$\nerve\cD \to \tilde \pi_0(\nerve\cD)$$
is a stalkwise weak equivalence, equivalently,
a Čech-local weak equivalence (Jardine \cite[Section~4.1]{Jardine}, see also Pavlov \cite[Proposition~12.5]{Pavlov.Diffeo}),
and hence a weak equivalence in $\smsset$.
\end{proof}

\begin{proposition}
\label{embcat.v1.iso}
Fix $d≥0$.
For every $q∈\FEmb_d$, the presheaf $\CatBord_d^{\Yo{q}}$ (\cref{compactbords}) is valued in preorders
and the presheaf $\frakCatBord_d^{\Yo{q}}$ (\cref{compactbords.isotopy}) is valued in presheaves of preorders. 

Furthermore, the natural transformations
$$\ecm:\Bord_d∘\Yo{}\to \EBord_d, \qquad \frakecm:\frakBord_d∘\Yo{}\to \frakEBord_d$$ (\cref{emapconstr.unenriched})
of $\sset$- or $\smsset$-valued presheaves, respectively,
are objectwise weak equivalences.

Finally, the maps $\ecm$ and $\frakecm$ identify two bordisms $b_0$ and $b_1$
if and only if they are connected by a single span $b_0←b→b_2$ of bordisms,
locally with respect to an open cover of~$L$ in the case of $\frakecm$.
\end{proposition}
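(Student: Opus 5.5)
The plan is to establish the preorder property, then weak cofilteredness, and to derive both the weak equivalence and the span description from \cref{weakly.cofiltered.contractible}.

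\emph{Preorders.} An object of $\CatBord_d^{\Yo{q}}(U,⟨ℓ⟩,{\bf m})_l$ is a triple $(p,C,σ)$ with $σ∈\Yo{q}(p)_l=\FEmb_d(p,q)$. A morphism $φ:(p,C,σ)→(p',C',σ')$ must satisfy $σ'∘φ=σ$. By \cref{factor.red.cartesian.base.cartesian}, we may factor $σ'$ as an open embedding over $\id$ followed by a split $\base$-cartesian arrow. Since $\base φ=\id_U$, the arrow $σ'$ behaves as a monomorphism on arrows lying over $\id_U$; this uses \cref{admissible.is.monomorphism} and the uniqueness in the universal property of cartesian arrows. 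Hence $φ$ is uniquely determined by $σ$ and $σ'$, so each hom-set has at most one element. The enriched case is identical, with $L$-families in place of single morphisms and \cref{admissible.is.monomorphism.enriched} and \cref{factor.red.cartesian.base.cartesian.enriched} in place of their unenriched versions.

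\emph{Weak cofilteredness.} Since the nerve of a preorder with the diagonal taken over $l$ is just levelwise data, \cref{weakly.cofiltered.contractible} reduces the weak equivalence claim to showing the following: each preorder is weakly cofiltered, or locally weakly cofiltered over $L$ in the isotopy case. For a preorder, the required cone condition amounts to this: whenever $b_1←b→b_2$ or $b_1→b←b_2$, the pair $b_1,b_2$ has a common lower bound. For a cospan, I take the lower bound to be the restriction of $b$ (equivalently, of $b_1$) to the open subset $W=\image σ_1∩\image σ_2$, pulled back along the embedding into $q$. The key point is that $W$ still contains the embedded core. Indeed, by \cref{bordism.core} and \cref{condition2image}, both $σ_1$ and $σ_2$ map the cores bijectively onto the image of the core of $b$. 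Because the cut grids are pulled back from $b$, the restriction to $W$ satisfies compactness \cref{bordism.compact} and globularity: the core is unchanged, and proper maps restricted to closed subsets remain proper. Spans are handled by the same argument after first mapping to a common upper bound, if one exists; if not, I instead work with longer zigzags. Concretely, any finite connected diagram in a preorder has a cone once every zigzag-connected pair has a lower bound. I would prove the latter directly by showing that two bordisms in the same component share the same germ of embedded core in $q$, and then restricting both to the intersection of their images.

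\emph{Main obstacle.} The step I expect to be hardest is the isotopy case. There, $L$-families of embedded images $\image σ_i$ vary with $L$, and their intersection need not contain a fixed product neighbourhood of the core uniformly in $L$. To handle this, I would cover $L$ by small opens on which a common open neighbourhood of the family of embedded cores exists. This is available because compactness makes the core proper over $U$, hence locally over $U⨯L$ a tube lemma applies. That yields local weak cofilteredness, as required by \cref{weakly.cofiltered}.

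\emph{Span description.} The final claim follows from the same analysis. By the preorder property, two objects lie in the same component exactly when they are connected by a zigzag. The restriction to $W$ constructed above collapses any zigzag to a single span $b_0←b→b_1$, and in the isotopy case this holds locally on $L$.
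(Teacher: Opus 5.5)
Your preorder argument is the paper's, and your overall plan matches the paper's proof: lower bounds for cospans, then \cref{weakly.cofiltered.contractible}, then collapsing zigzags. Two steps, however, do not work as written.

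\textbf{The cospan construction.} $\sigma$ lies over $\base\sigma\colon U\to V$, which need not be injective (e.g.\ $q=(\RR^d\to\RR^0)$ and $U=\RR$). So $\sigma$ is not injective on total spaces, and $\sigma^{-1}(\image\sigma_1\cap\image\sigma_2)$ can be much larger than $\image\omega_1\cap\image\omega_2$. A morphism into $b_1$ forces the total space into $\image\omega_1$, so your restricted bordism need not lie below $b_1$ or $b_2$. The intersection must be formed inside $p$, i.e.\ as the pullback $p_1\times_p p_2$. This pullback exists because $\base\omega_i=\id$ makes $\omega_i$ admissible (\cref{base.creates.admissible}) and admissible maps have base changes. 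This is exactly the paper's argument that the preorder has pullbacks. Spans need no argument, since the apex is already a lower bound. Pullbacks then give cones over all finite connected diagrams by induction along zigzags. Your fallback (restricting two bordisms in the same component to the intersection of their images) is not valid as stated, because their cut grids and labellings $C^{\otimes}$ agree only near the core. It is also unnecessary.

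\textbf{The isotopy step} has a genuine gap. The tube lemma gives common neighbourhoods only locally over $U\times L$. Local weak cofilteredness (\cref{weakly.cofiltered}) allows only covers of $L$, with the noncompact $U$ fixed, and then no uniform $B$ need exist. Here is a counterexample.
\begin{itemize}
\item Take $d=1$, $U=L=\RR$, $p=q=(\RR^2\to\RR)$ with $(u,x)\mapsto u$, and $b=(p,C,\id)$, where $C$ consists of the cuts $x=ul$ and $x=ul+1$. The core $K=\{ul\le x\le ul+1\}$ is proper over $U\times L$.
\item Let $\omega_1(u,y,l)=(u,y+ul,l)$ on $\RR\times(-1,2)$. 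Its image $\{ul-1<x<ul+2\}$ contains $K$.
\item Consider the cospan $b_1\to b\leftarrow b_1$ with $b_1=(p_1,\omega_1^*C,\omega_1)$ and both legs equal to $\omega_1$.
\item Suppose $N\subset\RR^2$ and $B\supset(-\delta,\delta)$ satisfy $K\cap(\RR^2\times B)\subset N\times B\subset\image\omega_1$. The core at $l=\delta/2$ gives $(u,u\delta/2)\in N$ for every $u$.
\item Hence $(u,u\delta/2,-\delta/2)\in\image\omega_1$, i.e.\ $u\delta<2$ for every $u$, which is absurd.
\end{itemize}
The paper's proof asserts the existence of such $N$ and $B$ ``by properness'' in the same way, so here you are following the paper, but the step fails as stated.

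A cone does exist in this example ($b_1$ with identity maps), but in general the cone has to move with $l$. One needs a $B$-family of open embeddings $j$ of a fixed manifold into $M$ over $U$ whose image contains the cores of $b$ over $B$ and lies in $\image\omega_1\cap\image\omega_2$. Then $i_k=\omega_k^{-1}\circ j$ give the cone. Producing such a $j$ uniformly over a noncompact $U$ (e.g.\ by an isotopy that follows the cuts) is the missing ingredient.
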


\begin{proof}
Fix $q∈\FEmb_d$ and $(U,⟨ℓ⟩,{\bf m})∈\stcart⨯Γ⨯Δ^{⨯d}$.
For the case of $\frakecm$, also fix $L\in \cart$.
Set $$\cC=\CatBord_d^{\Yo{q}}(U,⟨ℓ⟩,{\bf m})_0∈\smallcat \qquad{\rm or}\qquad \cD=\frakCatBord_d^{\Yo{q}}(U,⟨ℓ⟩,{\bf m})_{0}∈\smallcat^{\cart^\op},$$ respectively.
We start by showing that $\cC$ and $\cD(L)$ are preorders.

By \cref{compactbords,compactbords.isotopy},
morphisms in $\cC$ or $\cD(L)$ are given by morphisms~$φ$ in $\FEmb_d$ or $(\frakFEmb_d)_L$ with certain properties, including $\base φ=\id_U$.
Suppose $$\def\gap{\mskip 15mu } φ_1,φ_2:(p,C,σ)→(p',C',σ'), \gap φ_i:p→p',\gap \base φ_i=\id,\gap (\red φ_i)^*C'=C,\gap σ'∘φ_i=σ,\gap\image(\red φ_i)⊃\ncore(\red p',C')$$
are morphisms in~$\cC$.
Set $χ:p''→q$ to the $\base$-cartesian arrow that lifts $\base σ'$.
Use the universal property of~$χ$ to factor~$σ'$ through~$χ$ as a map $ψ:p'→p''$ such that $\base ψ=\id$ (so $ψ$ is an open embedding):
$$\xymatrix{
p \ar@<.5ex>[rr]^{\varphi_1} \ar@<-.5ex>[rr]_{\varphi_2} \ar[dr]_\sigma && p' \ar[dl]^{\sigma\smash{'}} \ar[d]^\psi\cr
& q & p''.\ar[l]^\chi\cr
}$$
Since $χψφ_1=σ=χψφ_2$ and $\base(ψφ_1)=\base(ψφ_2)=\id$, the universal property of~$χ$ implies that $ψφ_1=ψφ_2$.
To prove that $φ_1=φ_2$, it remains to show that $ψ$ is a monomorphism,
which follows from \cref{admissible.is.monomorphism,admissible.is.monomorphism.enriched}
and the fact that $ψ$ is an open embedding.

Next, we prove that $\cC$ admits pullbacks,
which implies that $\cC$ is weakly cofiltered,
and hence $\ecm$ is an objectwise weak equivalence by \cref{weakly.cofiltered.contractible}.
Every morphism $ω_1:(p_1,C_1,\sigma_1)\to (p,C,\sigma)$ in $\cC$ satisfies $\base ω_1=\id$, by definition.
By \cref{base.creates.admissible} of \cref{fibered.geometric.site}, the underlying map $ω_1:p_1\to p$ in~$\FEmb_d$ is admissible.
Since admissible maps are closed under base change (\cref{admissibility.structure}),
given any map $(p_2,C_2,\sigma_2)\to (p,C,\sigma)$ with an underlying map $ω_2:p_2\to p$ in $\FEmb_d$,
the fiber product
$$\xymatrix{
p_{12} \ar[r]^-{i_2}\ar[d]_-{i_1} & p_2 \ar[d]^-{ω_2}\cr
p_1\ar[r]_{ω_1} & p\cr
}$$
exists in $\FEmb_d$.
Take the cut grid on $p_{12}≔p_1⨯_p p_2$ to be $C_{12}=\red(ω_1 i_1)^*C$
and the map $σ_{12}:p_{12}\to q$ to be $σ_{12}=σ ω_1 i_1$.
By the commutativity of the above square, we have a commutative diagram in $\cC$:
$$\xymatrix{
(p_{12},C_{12},σ_{12}) \ar[r]^-{i_2}\ar[d]_-{i_1} & (p_2,C_2,\sigma_2) \ar[d]^-{ω_2}\cr
(p_1,C_1,\sigma_1) \ar[r]_{ω_1} & (p,C,\sigma).\cr
}$$
This last square is a pullback square: the cartesian property follows from the cartesian property of the preceding square for~$p_{12}$.
Thus, $\cC$ admits pullbacks.

Finally, we prove that $\cD$ locally admits cones for cospans,
which implies that $\cD$ is locally weakly cofiltered,
and hence $\frakecm$ is an objectwise weak equivalence by \cref{weakly.cofiltered.contractible}.
We first prove the claim when $\frakFEmb_d=\fraksmFEmb_d$ (\cref{fibered.geometric.site.isotopy}).
Let $\omega_1:(p_1,C_1,\sigma_1)\to (p,C,\sigma)$
and $\omega_2:(p_2,C_2,\sigma_2)\to (p,C,\sigma)$ be morphisms in $\cD(L)$.
By definition, $\omega_1$ and $\omega_2$ are morphisms in $(\fraksmFEmb_d)_L$,
i.e., morphisms of the form $\omega_1:p_1\times \id_L\to p\times \id_L$ and $\omega_2:p_2\times \id_L\to p\times \id_L$ in $\smFEmb_d$ (\cref{def.fraksmFEmb})
such that $\flat \omega_1=\flat \omega_2=\id_{U\times L}$.
Hence, $\omega_1$ and $\omega_2$ are open embeddings and we can form the pullback in $\smFEmb_d$, given by $\image(\omega_1)\cap \image(\omega_2)$.
This pullback need not be isomorphic in $\smFEmb_d$ to a product of the form $p_{12}⨯\id_L$
and so does not define an object in $\cD(L)$ in general.

To simplify notation below, we let $r=p\times\id_{L}$ and $r_i=p_i\times \id_{L}$, $i=1,2$.
Recall that $\ncore(r,C)\subset \image(\omega_1)\cap \image(\omega_2)$ and the restriction of $r$ to $\ncore(r,C)$ is proper.
Fix $l\in L$.
By properness, there is an open neighborhood $i_B:B\into L$ containing $l$ 
and an open subset $N\subset \image(l^*\omega_1)\cap \image(l^*\omega_2)$ containing $\ncore(l^*r,l^*C)\subset r^{-1}(U\times \{l\})$ such that 
$\ncore(i_B^*r,i_B^*C)\subset N\times B\subset \image(\omega_1)\cap \image(\omega_2)$.
Let $p_{12}$ denote the restriction of $p$ to $N$.  
Denote by $ι:p_{12}→p$ the inclusion map, which is a morphism in $\fraksmFEmb_d(p_{12},p)_{\RR^0}$.

The previous construction gives a commutative diagram 
$$\xymatrix{
(p_{12}, C_{12},\sigma_{12}) \ar[r]^-{i_2} \ar[d]_-{i_1} \ar[dr]^{\varpi^*\iota} & i_B^*(p_2,C_2,\sigma_2) \ar[d]^-{i_B^* ω_2}\cr
i_B^*(p_1,C_1,\sigma_1)\ar[r]_{i_B^* ω_1} & i_B^*(p,C,\sigma)\cr
}$$
in $\cD(B)$,
where
$\varpi:B→\RR^0$ is the terminal map
and $i_1$ and $i_2$ are induced by factoring the trivial $B$-family of inclusions $\varpi^*ι:p_{12}→p$ through the open embeddings $i_B^* ω_1$ and $i_B^* ω_2$.

For the general case of $\frakFEmb_d$, let $\omega_1$ and $\omega_2$ be morphisms in $\cD(L)$.
The previous construction gives an open cover $\{i_B:B\into L\}_{B∈I}$ and for every $B∈I$ a commutative square 
$$\xymatrix@C=4em{
p_{12} \ar[r]^-{i_2} \ar[d]_-{i_1} \ar[dr]^{\varpi^*\iota} & \red p_2 \ar[d]^-{i_B^* \red ω_2}\cr
\red p_1\ar[r]_{i_B^* \red ω_1} & \red p\cr
}$$
in $(\fraksmFEmb_d)_B$.
We lift this diagram to a commutative diagram in $\frakFEmb_d$ as follows. 
Since $\red_t:\frakFEmb_d\to \fraksmFEmb_d$ is a $\smset$-enriched admissibility fibration (\cref{enriched.admissibility.fibration}),
the morphism $ι:p_{12}\to \red p$ in $(\fraksmFEmb_d)_{\RR^0}$ can be lifted to an $\red_t$-cartesian morphism $ι':p'_{12}\to p$ in $(\frakFEmb_d)_{\RR^0}$.
By pulling back along the terminal map $ϖ:B→\RR^0$, we get the trivial $B$-family of morphisms $ϖ^* ι'$.
Now factor $ϖ^* ι'$ through the $\red$-cartesian arrows $ω_1$ and $ω_2$,
obtaining the $B$-families of morphisms $i'_1$ and $i'_2$
such that $\red i'_1=i_1$ and $\red i'_2=i_2$.
We get a commutative square in $(\frakFEmb_d)_B$:
$$\xymatrix@C=3em{
p'_{12} \ar[r]^-{i'_2} \ar[d]_-{i'_1} \ar[dr]^{ϖ^*ι'} & p_2 \ar[d]^-{i_B^* ω_2}\cr
p_1\ar[r]_{i_B^* ω_1} & p.\cr
}$$
Then we have a commutative square 
$$\xymatrix{
(p'_{12}, C_{12},\sigma_{12}) \ar[r]^-{i'_2} \ar[d]_-{i'_1} \ar[dr]^{ϖ^*ι'} & i_B^* (p_2,C_2,\sigma_2) \ar[d]^-{i_B^* ω_2}\cr
i_B^*(p_1,C_1,\sigma_1)\ar[r]_{i_B^* ω_1} & i_B^*(p,C,\sigma)\cr
}$$
in $\cD(B)$, which proves that $\cD$ locally admits cones for cospans.

To prove the final claim, suppose $b,b'∈\cC$ are identified by the quotient map $\cC→π_0\cC$.
This means that $b$ and $b'$ are connected by a chain of morphisms (in either direction) in the preorder~$\cC$.
Inserting identities if necessary, we get a chain of zigzags $b=b_0←b_1→b_2←b_3→b_4←⋯←b_{2k}=b'$.
Since $\cC$ has pullbacks, we can compose the chain of zigzags to a single zigzag, i.e., a span $b←b_1→b'$.

Likewise, if $b,b'∈\cD(L)$ are identified by the quotient map $\cD(L)→π_0\cD(L)$,
then they are connected by a chain of zigzags $b=b_0←b_1→b_2←b_3→b_4←⋯←b_{2k}=b'$.
As shown above, for every index $i∈(0,k)$ there is an open cover $\{U_j\}_{j∈J}$ of~$L$
such that for every $j∈J$ the restriction of the cospan $b_{2i-1}→b_{2i}←b_{2i+1}$ to $U_j$ admits a cone.
By passing to the common refinement of these open covers,
we get a single open cover $\{V_k\}_{k∈K}$ such that
for every $k∈K$ we have a single zigzag $b←b_1→b'$ connecting the restrictions of $b$ and $b'$ to~$V_k$.
\end{proof}

We now provide the following alternative description of the embedded bordism category,
which allows us to replace the data of a morphism~$σ$ in \cref{embcat.v1} by a subobject.

\begin{proposition}
\label{embcat.v3}
Recall \cref{embcat.v1}.
Fix $d≥0$ and $(U,⟨ℓ⟩,{\bf m})∈\stcart⨯Γ⨯Δ^{⨯d}$.

For every $q:N→V$ in $\FEmb_d$, the set $\EBord_d^q(U,⟨ℓ⟩,{\bf m})$
is isomorphic to the set~$\cD$ of equivalence classes of triples consisting of:
\begin{enumerate}
\item a morphism $f:U→V$ in $\stcart$;
\item\label{embcat.v3.subobject}
an open embedding $ρ:p→f^*q$ in $\FEmb_d$ such that $\base ρ=\id_U$;
\item a compact globular monoidal cut $(\langle \ell\rangle,{\bf m})$-grid $C∈\mtCutglob(⟨ℓ⟩,{\bf m},\red p)$.
\end{enumerate}
Two triples $(f_1,ρ_1,C_1)$ and $(f_2,ρ_2,C_2)$ are defined to be equivalent if
\begin{enumerate}[resume]
\item $f_1=f_2$;
\item there are open embeddings $τ_k:p→p_k$ in $\FEmb_d$ such that
\begin{enumerate}
\item $\base τ_1=\base τ_2=\id_U$;
\item $(\red τ_1)^*C_1=(\red τ_2)^*C_2=C$ is a compact globular monoidal cut grid;
\item $ρ_1 τ_1=ρ_2 τ_2$;
\item $\ncore(\red p_k,C_k)\subset (\red τ_k)(\red p)$ for $k∈\{1,2\}$.
\end{enumerate}
\end{enumerate}

For every $q:N→V$ in $\frakFEmb_d$ and $L∈\cart$, the set $\frakEBord_d^q(U,⟨ℓ⟩,{\bf m})_L$
is isomorphic to the set~$\cD$ of equivalence classes of triples consisting of:
\begin{enumerate}
\item a morphism $f:U→V$ in $\stcart$;
\item\label{embcat.v3.subobject.isotopy}
an $L$-family of open embeddings $ρ:p→f^*q$ in $(\frakFEmb_d)_L$ such that $\base ρ=\id_U$;
\item an $L$-family of compact globular monoidal cut grids $C$ in $\frakmtCutglob(⟨ℓ⟩,{\bf m},\red p)_L$.
\end{enumerate}
Two triples $(f_1,ρ_1,C_1)$ and $(f_2,ρ_2,C_2)$ are defined to be equivalent if
\begin{enumerate}[resume]
\item $f_1=f_2$;
\item there is an open cover $\{ι_i:U_i→L\}_{i∈I}$ of $L$ such that for every $i∈I$
there are open embeddings $τ_k:p→p_k$ in $(\frakFEmb_d)_{U_i}$
such that
\begin{enumerate}
\item $\base τ_1=\base τ_2=\id_U$;
\item $(\red τ_1)^*ι_i^*C_1=(\red τ_2)^*ι_i^*C_2=C$ is an $L$-family of compact globular monoidal cut grids;
\item $ι_i^*ρ_1 ∘ τ_1=ι_i^*ρ_2 ∘ τ_2$;
\item $\ncore(\red ι_i^*p_k,ι_i^*C_k)\subset (\red τ_k)(\red p)$ for $k∈\{1,2\}$.
\end{enumerate}
\end{enumerate}
\end{proposition}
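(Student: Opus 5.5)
The plan is to unwind $\EBord_d^q(U,⟨ℓ⟩,{\bf m})=\pi_0\nerve\CatBord_d^{\Yo{q}}(U,⟨ℓ⟩,{\bf m})_0$ and identify both its elements and its equivalence relation with the data in the statement. First I will describe objects of $\CatBord_d^{\Yo{q}}(U,⟨ℓ⟩,{\bf m})_0$ via \cref{explicit.noncompact.bords} and \cref{compactbords}. Such an object is a triple $(p,C,σ)$ with $\base p=U$, $C$ a compact globular monoidal cut grid on $\red p$, and $σ:p→q$ a morphism in $\FEmb_d$, i.e.\ a vertex of $\Yo{q}(p)$. By \cref{factor.red.cartesian.base.cartesian}, $σ$ factors uniquely as $σ=(\base^*f)∘ρ$. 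Here $f=\base σ:U→V$, $\base^*f:f^*q→q$ is the split cartesian lift, and $ρ:p→f^*q$ is an open embedding with $\base ρ=\id_U$. This yields a bijection between objects $(p,C,σ)$ and triples $(f,ρ,C)$, natural in everything.

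Next I will match the relations. By \cref{embcat.v1.iso}, two objects have the same image in $\pi_0$ if and only if they are joined by a single span $b_1←b→b_2$ in the preorder. Write $b=(p,C,σ)$ and $b_k=(p_k,C_k,σ_k)$, with morphisms $τ_k:p→p_k$. The morphism conditions of \cref{compactbords} say $\base τ_k=\id_U$, $(\red τ_k)^*C_k=C$ with $C$ compact, $σ_kτ_k=σ$, and $\image\red τ_k⊃\ncore(\red p_k,C_k)$. These are conditions (b) and (d) of the statement. It remains to translate $σ_1τ_1=σ_2τ_2$. Applying $\base$ gives $f_1=f_2$, since $\base τ_k=\id$. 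Then uniqueness of factorization through the cartesian arrow $\base^*f$ turns $(\base^*f)ρ_1τ_1=(\base^*f)ρ_2τ_2$ into $ρ_1τ_1=ρ_2τ_2$, which is condition (c). Conversely, conditions (a)–(d) build such a span. The $τ_k$ are automatically open embeddings: they satisfy $\base τ_k=\id$, so they are admissible by \cref{base.creates.admissible}. Hence the quotient $\cC\to\pi_0\cC$ becomes exactly $\cD$. In particular the relation on $\cD$ is an equivalence relation, because it coincides with the connected-components relation.

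For the isotopy version I will run the same argument with $L$-families, using $(\frakFEmb_d)_L$ and \cref{explicit.noncompact.isot,compactbords.isotopy}. Factorization is supplied by \cref{factor.red.cartesian.base.cartesian.enriched}, where the cartesian arrow is the trivial $L$-family $ϖ^*\base^*f$. For the relation I will use the final clause of \cref{embcat.v1.iso}: connectedness in $\tilde\pi_0$ means connectedness by a span locally on an open cover of $L$. Here $\tilde\pi_0$ is the presheaf quotient, so $L$-points are $\pi_0$ of the category over $L$. This is what produces the open cover $\{ι_i\}$ in the statement.

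The main obstacle is this last point. Taken at face value, $\pi_0$ of $\frakCatBord_d^{\Yo{q}}(\dots)_L$ identifies objects only by global zigzags. The statement's relation, by contrast, is local on $L$. I will therefore read $\cD$ as the quotient in which equivalence classes glue compatibly, which matches the local-span description in \cref{embcat.v1.iso}, and I will check the equivalence relation under that reading. Should the strict presheaf $\pi_0$ be meant instead, the local relation still refines to a chain of zigzags only after sheafification. In that case I expect to state the isomorphism up to this local identification, which is harmless because the comparison is already a weak equivalence in $\smsset$.
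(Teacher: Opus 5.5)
\textbf{Unenriched case.} This part is correct and is essentially the paper's argument. You factor $\sigma=(\base^*f)\circ\rho$ via \cref{factor.red.cartesian.base.cartesian}. You then translate the morphism conditions of \cref{compactbords} for a span into (a)--(d), using uniqueness of lifts through the cartesian arrow $\base^*f$ to get $\rho_1\tau_1=\rho_2\tau_2$. The paper phrases this as a functor $\kappa$ that is surjective and injective on $\pi_0$. Your route through the single-span description from \cref{embcat.v1.iso} has the small bonus of making transitivity of the relation on $\cD$ explicit.

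\textbf{Isotopy case: injectivity is not proved.} By \cref{embcat.v1}, $\frakEBord_d^q(U,\langle\ell\rangle,{\bf m})_L$ is $\pi_0$ of the preorder $\frakCatBord_d^{\Yo{q}}(U,\langle\ell\rangle,{\bf m})_{L,0}$ itself, with no sheafification. The proposition asserts a bijection of sets with $\cD$.
\begin{itemize}
\item Surjectivity is fine, and so is ``same component $\Rightarrow$ equivalent'' (take the trivial cover).
\item Injectivity needs the converse. Suppose that for each member $U_i$ of an open cover of $L$ the restrictions of $b_1,b_2$ are joined by a span over $U_i$. You must show that $b_1,b_2$ are then joined by a zigzag in the category over $L$.
\item You cannot take this from the last clause of \cref{embcat.v1.iso}. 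Its proof only shows that global connectedness yields local spans, via local cones for cospans, not the reverse.
\item Your fallbacks do not close the gap. Reading $\cD$ as a quotient in which classes glue, or accepting an isomorphism only after sheafification because $\frakecm$ is needed only up to weak equivalence, both change the statement rather than prove it.
\end{itemize}

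What is missing is a genuine gluing step: build one object over all of $L$ that maps to both $b_1$ and $b_2$. Concretely, you need an $L$-independent $p$ and an $L$-family of open embeddings $\rho\colon p\to f^*q$ with $\base\rho=\id_U$. Its images must contain the common image of the cores and lie inside the open set where the two pushed-forward cut grids agree. Constructing this is a real geometric argument. It should use that $L$ is a cartesian space and that the core is proper over $U\times L$, and it must cope with cores whose shape degenerates as cuts touch and separate.

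The paper's own proof also passes over this point, since it only says the unenriched argument carries over to $L$-families. So you have located the delicate step correctly, but your proposal does not resolve it.
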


\begin{proof}
Recall that the splitting cleavage of $\base$ provides for every morphism $f:U→V$ in $\stcart$ a $\base$-cartesian morphism $\base^*f:f^*q→q$.
In the $\smset$-enriched case, the $\smset$-enriched splitting cleavage of~$\base$ provides
for every morphism $f:U→V$ in $\stcart$ a $\base$-cartesian $L$-family of morphisms $\base^*f:f^*q→q$
(\cref{enriched.grothendieck.fibration}).

The proof given below is formulated for the unenriched case.
The same proof also works in the enriched case, by fixing some $L∈\cart$
and working with $L$-families. 

Set $$\cC=\CatBord_d^{\Yo{q}}(U,⟨ℓ⟩,{\bf m})_0.$$
An object in~$\cC$ is a triple $(p,C,σ)$,
where $p∈\FEmb_d$, $C$ is a compact globular monoidal cut $(\langle \ell\rangle,{\bf m})$-grid in $\mtCutglob(⟨ℓ⟩,{\bf m},\red p)$,
and $σ:p→q$ is a morphism in $\FEmb_d$.
A morphism $φ:(p,C,σ)→(p',C',σ')$ in~$\cC$ is
a morphism $φ:p→p'$ in $\FEmb_d$ such that $\base φ=\id$, $(\red φ)^*C'=C$, $σ'∘φ=σ$, and $\image(\red φ)⊃\ncore(\red p',C')$.
By \cref{embcat.v1.iso}, the category~$\cC$ is a preorder.

We construct a functor $κ:\cC→\cD$ as follows.
Send an object $(p,C,σ)$ to the equivalence class of the triple $(\base σ,ρ,C)$,
where $ρ:p→f^*q$ is a unique morphism such that $\base^*f∘ρ=σ$,
as provided by the universal property of the $\base$-cartesian arrow~$\base^*f$.
The morphism $ρ$ is an open embedding by \cref{factor.red.cartesian.base.cartesian}.
$$\vcenter{\xymatrix{
p \ar[r]^\rho \ar[dr]_\sigma & f^*q \ar[d]\cr
& q\cr
}}
\qquad
\lmapsto5{\base}
\qquad
\vcenter{\xymatrix{
U \ar[r]^{\id} \ar[dr]_f & U \ar[d]^f\cr
& V.\cr
}}$$

Given a morphism $φ:(p,C,σ)→(p',C',σ')$ in~$\cC$,
we claim that $$κ(p,C,σ)=(\base σ,ρ,C)\sim(\base σ',ρ',C')=κ(p',C',σ').$$
To establish the middle equivalence, we verify the conditions in its definition.
We have $\base φ=\id_U$, so $\base σ=\base σ'$.
Take $τ_1=\id_p$ and $τ_2=φ$.
Now $\base τ_1=\base τ_2=\id_U$.
The cut grid $(\red τ_1)^*C=C=(\red φ)^*C'=(\red τ_2)^*C'$ is a compact globular monoidal cut grid because $C$ is one.
To show that $ρ_1 τ_1=ρ_2 τ_2$, we rewrite it as $ρ = ρ' ∘ φ$.
To prove the latter equality, we compose with the $\base$-cartesian arrow $\base^*f$,
obtaining $\base^*f ∘ ρ = σ = σ' ∘ φ = \base^*f ∘ ρ' ∘ φ$, as desired.
The condition $\ncore(\red p_k,C_k)\subset (\red τ_k)(\red p)$ is tautological for $k=1$
and for $k=2$ it holds because $τ_2=φ$.

It remains to show that the induced map of sets $π_0 \cC→\cD$ is a bijection.
To show surjectivity, suppose $(f,ρ:p→f^*q,C)$ is a representative of an equivalence class in~$\cD$.
Then $(p,C,\base^*f∘ρ)$ is an object in~$\cC$.
Furthermore, $$κ(p,C,\base^*f∘ρ)=(\base(\base^*f∘ρ),ρ,C)=(f∘\id_U,ρ,C)=(f,ρ,C),$$
proving the surjectivity of~$κ$.

For injectivity, suppose $$κ(p_1,C_1,σ_1)=(\base σ_1,ρ_1,C_1)\sim(\base σ_2,ρ_2,C_2)=κ(p_2,C_2,σ_2).$$
Then $\base σ_1=\base σ_2$.
Denote by $τ_1:p→p_1$ and $τ_2:p→p_2$ the morphisms provided by the definition of equivalence.
Then $(p,C,σ)∈\cC$ is an object in~$\cC$,
where $C=(\red τ_1)^*C_1=(\red τ_2)^*C_2$
and $σ=σ_1 τ_1=\base^*f ρ_1 τ_1 = \base^* f ρ_2 τ_2 = σ_2 τ_2$.
Furthermore, we have morphisms in~$\cC$:
$$τ_1:(p,C,σ)→(p_1,C_1,σ_1), \qquad τ_2:(p,C,σ)→(p_2,C_2,σ_2).$$
Indeed, by assumptions on $τ_1$ and $τ_2$ we have
$\base τ_1=\base τ_2=\id_U$,
$C=(\red τ_1)^*C_1=(\red τ_2)^*C_2$,
$σ_1∘τ_1=σ=σ_2∘τ_2$,
and $\image(\red τ_k)⊃\ncore(\red p_k,C_k)$ for $k∈\{1,2\}$.
Thus, we have morphisms $(p_1,C_1,σ_1)←(p,C,σ)→(p_2,C_2,σ_2)$,
hence the equivalence classes of $(p_1,C_1,σ_1)$ and $(p_2,C_2,σ_2)$ are equal.
\end{proof}

\begin{remark}
\label{deck.transformations}
For $q\in \FEmb_d$, the category $\EBord^q_d$ does not satisfy the Segal condition in $\smcat{d}$,
since representable objects in $\sPSh(\FEmb_d)$ do not satisfy homotopy descent.
This issue can be fixed by taking the fibrant replacement $\et(q)$ of $q$.
In the case where $\FEmb_d=\smFEmb_d$, the sheaf $\et(p)$ sends a submersion $p:M\to U$
to the set of commutative squares $(f,g):p\to q$ in \cref{def.smFEmb}, but where $f$ is only required to be a fiberwise etale map.

Although the fibrancy properties of $\EBord_d^{\et(q)}$ are better,
this comes at a cost since now an object of the category $\cC=\CatBord_d^{\et(q)}(U,⟨ℓ⟩,{\bf m})_k$ (\cref{bord})
can have nontrivial automorphisms, given by deck transformations of bordisms.
For example, a disjoint union of two identical bordisms in
$\CatBord^{\et(p)}_d(U,⟨ℓ⟩,{\bf m})_k$ possesses such a deck transformation, which exchanges the two connected components via the map~$φ$ in \cref{bord}.
The main advantage of working with the nonfibrant presheaf $\EBord^q_d$ is that it is valued in sets, not categories.
\end{remark}

\begin{remark}
\label{reduction.to.enriched}
Continuing \cref{frakbordl0},
suppose $\frakFEmb_d$ is a fibered geometric site with isotopies (\cref{fibered.geometric.site.isotopy})
and $\FEmb_d$ is its underlying fibered geometric site (\cref{strip.enrichment}).
Suppose $q$ is an object in $\frakFEmb_d$ (hence also in $\FEmb_d$)
and consider the bordism category
$$\frakEBord_d^q∈\fraksmcat{d}=\smsPSh(\site\times\Gamma\times\Delta^{\times d})_{\glob}.$$
By evaluating the values of this presheaf of smooth simplicial sets at $\RR^0∈\cart$ we get
$$\EBord_d^q∈\smcat{d}=\sPSh(\site\times\Gamma\times\Delta^{\times d})_{\glob}.$$
This can be seen by comparing the explicit descriptions of these objects in
\cref{compactbords,compactbords.isotopy}, or in \cref{embcat.v3}.
\end{remark}

This completes the verification of \cref{axiom.embedded,frakaxiom.embedded}.
We are now ready to complete the proof of \cref{existencebord,existencebord2}.

\begin{theorem}
\label{unenriched.axioms.hold}
There exists a geometric symmetric monoidal $(\infty,d)$-category of bordisms (\cref{axioms}), unique up to a weakly contractible choice.
Furthermore, the bordism category $\Bord_d$ (\cref{bord}) satisfies \cref{axioms}.
\end{theorem}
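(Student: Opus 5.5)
The proof splits into two independent parts: the existence claim, which amounts to checking that $\Bord_d$ of \cref{bord} satisfies each axiom of \cref{axioms}, and the uniqueness claim, which follows formally from the axioms.

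For existence, \cref{axiom.functorial} is built into \cref{bord}. There $\gs\mapsto\Bord_d^\gs$ is an $\sset$-enriched functor, because $\gs\mapsto\CatBord_d^\gs$ is $\sset$-enriched functorial by \cref{compactbords}, and nerve and diagonal preserve this. \cref{axiom.cocontinuous} is \cref{proof.cocontinuous}. That result says $\Bord_d\colon\Struct_{d,\inj}\to\smcat{d}$ is a left Quillen functor preserving all weak equivalences, so its left derived functor is $\Bord_d$ itself and preserves homotopy colimits. For \cref{axiom.embedded} we take the comparison map $\ecm^q\colon\Bord_d^{\Yo{q}}\to\EBord_d^q$ of \cref{emapconstr.unenriched}. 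It is natural in $q$ by construction, being induced by the natural quotient $\sset\to\set$, $\pi_0$. By \cref{embcat.v1.iso} it is an objectwise weak equivalence, hence a weak equivalence in $\smcat{d}$, since the globular model structure is a left Bousfield localization of the injective one. \cref{embcat.v3} supplies the isomorphic variant.

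For uniqueness, let $\gs\mapsto B^\gs$ be any functor satisfying \cref{axioms}. Every $\gs\in\Struct_d$ is, in the injective model structure, weakly equivalent to the homotopy colimit of representables given by the enriched co-Yoneda formula $\gs\simeq\hocolim_{\Delta^n\times\Yo{q}\to\gs}\Yo{q}$; equivalently $\gs\cong\gs\otimes_{\FEmb_d}\Yo{-}$. \cref{axiom.cocontinuous} then gives
$$B^\gs\simeq\gs\otimes^{\ldf}_{\FEmb_d}B^{\Yo{-}},$$
and \cref{axiom.embedded} identifies the coefficient functor $q\mapsto B^{\Yo{q}}$ with $q\mapsto\EBord_d^q$ up to a natural weak equivalence. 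Consequently $B^\gs\simeq\gs\otimes^{\ldf}_{\FEmb_d}\EBord_d$ naturally in $\gs$. So $B$ agrees with $\Bord_d$ as an $\infty$-functor, and both are determined by the restriction to representables.

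To make ``unique up to a weakly contractible choice'' precise, we would argue $\infty$-categorically. Restriction along the Yoneda embedding gives an equivalence between homotopy cocontinuous $\sset$-enriched functors from $\infty$-presheaves on $\FEmb_d$ to $\smcat{d}$ and $\infty$-functors $\FEmb_d\to\smcat{d}$; this is the universal property of presheaf $\infty$-categories, using \cref{axiom.functorial} for enrichment. The space of such functors together with the identification of \cref{axiom.embedded} is then the space of objects $F$ in the $\infty$-category $\Fun(\FEmb_d,\smcat{d})$ equipped with an equivalence $F\simeq\EBord_d$, and this space is contractible.

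I expect the main obstacle to be this last step: checking that the strict, model-categorical data in the axioms (enriched functoriality and a natural weak equivalence on representables) really pins down a point of this contractible space. The plan is to pass to underlying $\infty$-categories via the simplicial nerve of the enriched categories of cofibrant–fibrant objects and then apply the universal property of presheaf $\infty$-categories. The existence part is just a collation of earlier results.
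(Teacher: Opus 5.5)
Your proposal is correct and follows the paper's proof. Existence is the same collation: \cref{bord} gives \cref{axiom.functorial}, \cref{proof.cocontinuous} gives \cref{axiom.cocontinuous}, and \cref{embcat.v1.iso} applied to $\ecm$ gives \cref{axiom.embedded}. Uniqueness likewise rests on the injective model structure on $\sPSh(\FEmb_d)$ being a homotopy cocompletion of $\FEmb_d$, so that homotopy cocontinuous functors $F$ equipped with a weak equivalence $F\circ\Yo{}\to\EBord_d$ form a weakly contractible space; your co-Yoneda and $\infty$-categorical discussion only spells out what the paper asserts in one line.
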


\begin{proof}
\cref{axiom.functorial} was established in \cref{bord}.
By \cref{proof.cocontinuous}, \cref{axiom.cocontinuous} is satisfied.
By \cref{embcat.v1.iso}, the map $\ecm$ in \cref{emapconstr.enriched} is a weak equivalence.
This proves \cref{axiom.embedded}.

For the uniqueness part, observe that $\Struct_d$ is a homotopy cocompletion of $\FEmb_d$.
Therefore, the relative category of homotopy cocontinuous functors $\Struct_d→\smcat{d}$
equipped with a weak equivalence $F∘\Yo{}→\EBord_d$ is weakly contractible.
\end{proof}

\begin{theorem}
\label{enriched.axioms.hold}
There exists a geometric symmetric monoidal $(\infty,d)$-category with isotopies of bordisms (\cref{frakaxioms}), unique up to a weakly contractible choice.
Furthermore, the bordism category $\frakBord_d$ (\cref{bord.isotopy}) satisfies \cref{frakaxioms}.
\end{theorem}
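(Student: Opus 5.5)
The argument mirrors the proof of \cref{unenriched.axioms.hold}, replacing every unenriched ingredient by its isotopy counterpart. We verify the three axioms of \cref{frakaxioms} one at a time for the functor $\gs\mapsto\frakBord_d^\gs$ of \cref{bord.isotopy}, and then deduce uniqueness from a universal property of $\frakStruct_d$.

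\Cref{frakaxiom.functorial} holds by construction. \Cref{bord.isotopy} already records that $\gs\mapsto\frakCatBord_d^\gs$ is $\smsset$-enriched functorial (\cref{compactbords.isotopy}). Taking objectwise nerves and diagonals preserves this enrichment, so $\frakBord_d$ is an $\smsset$-enriched functor $\frakStruct_d\to\fraksmcat{d}$.

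\Cref{frakaxiom.cocontinuous} is the second half of \cref{proof.cocontinuous}. By \cref{bord.cocontinuous.enriched}, $\frakBord_d$ is an $\smsset$-enriched left adjoint that preserves monomorphisms and objectwise weak equivalences. It is therefore a left Quillen functor out of the injective model structure $\frakStruct_{d,\inj}$. Since all objects there are cofibrant, it preserves all weak equivalences, and hence it is homotopy cocontinuous.

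\Cref{frakaxiom.embedded} asks for a weak equivalence between $\frakBord_d^{\Yo{q}}$ and $\frakEBord_d^q$ that is natural in $q\in\frakFEmb_d$. The candidate is the quotient map $\frakecm^q$ of \cref{emapconstr.enriched}, which is natural by definition. It is an objectwise weak equivalence in $\smsset$ by \cref{embcat.v1.iso}. That result rests on three facts: each $\frakCatBord_d^{\Yo{q}}(U,\langle\ell\rangle,{\bf m})_0$ is a presheaf of preorders; this presheaf is locally weakly cofiltered, because cospans locally admit cones; and \cref{weakly.cofiltered.contractible} then applies. The main point needing care is that objectwise weak equivalences of $\smsset$-valued presheaves on $\stcart\times\Gamma\times\Delta^{\times d}$ are weak equivalences in the localized model structure $\fraksmcat{d}$. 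This holds because left Bousfield localization only enlarges the class of weak equivalences. The isomorphic variant of \cref{embcat.v3} can be substituted for $\frakEBord_d^q$ if desired.

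For uniqueness, we use that $\frakStruct_{d,\inj}$ presents the free $\smsset$-enriched homotopy cocompletion of $\frakFEmb_d$. Every object is a homotopy colimit of tensors $X\otimes\Yo{q}$ (\cref{presheaf.generators}, enriched co-Yoneda). Consequently, a homotopy cocontinuous $\smsset$-enriched functor $F$ is determined up to a contractible space of choices by its restriction $F\circ\Yo{}$. So the relative category of pairs $(F,\ F\circ\Yo{}\simeq\frakEBord_d)$ satisfying the axioms is weakly contractible, and it is nonempty by the verification above. The main obstacle is making this universal property precise in the enriched, model-categorical setting. I would cite the $\infty$-categorical statement that $\PSh_{(\infty,1)}$ is the free cocompletion, applied to the underlying $\infty$-categories, with the $\smsset$-enrichment handled by \cref{smsset.injective}: enriched homotopy colimits over $\smsset$ agree with those over $\sset$ up to Quillen equivalence.
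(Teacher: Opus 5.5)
Your proposal is correct and follows the paper's proof step for step. You get \cref{frakaxiom.functorial} by construction (\cref{bord.isotopy}), \cref{frakaxiom.cocontinuous} via \cref{proof.cocontinuous}, \cref{frakaxiom.embedded} via \cref{embcat.v1.iso} applied to the natural quotient map $\frakecm$, and uniqueness because $\frakStruct_d$, with its injective model structure, is a homotopy cocompletion of $\frakFEmb_d$. Your extra remarks only make explicit what the paper leaves implicit: objectwise weak equivalences remain weak equivalences after left Bousfield localization, and the relevant cocompletion is the injective rather than the \v{C}ech-local structure.
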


\begin{proof}
\cref{frakaxiom.functorial} was established in \cref{bord.isotopy}.
By \cref{proof.cocontinuous}, \cref{frakaxiom.cocontinuous} is also satisfied.
By \cref{embcat.v1.iso}, the map $\frakecm$ in \cref{emapconstr.enriched} is a weak equivalence.
This proves \cref{frakaxiom.embedded}.

For the uniqueness part, observe that $\frakStruct_d$ is a homotopy cocompletion of $\frakFEmb_d$.
Therefore, the relative category of homotopy cocontinuous functors $F:\frakStruct_d→\fraksmcat{d}$
equipped with a weak equivalence $F∘\Yo{}→\frakEBord_d$ is weakly contractible.
\end{proof}

\begin{remark}
\label{uple.globular}
The multiple versions of the bordism category are defined by using the uple cut grid functors of \cref{monoidal.cut.grid,monoidal.cut.grid.smooth},
as explained in \cref{cut.grid.functor.parameter}.
We remark that the proof of the main theorem also works in the uple case.
In more detail, there is an obvious axiomatization of the uple category of bordisms
with the same \cref{axiom.functorial,axiom.cocontinuous} and with \cref{axiom.embedded} obtained by removing the word globular.
The proof of \cref{axiom.embedded} in this section carries through verbatim in the uple case.
\end{remark}

\end{document}

%% file: lectures.tikzstyles
\tikzstyle{none}=[]
\tikzstyle{new style 0}=[fill=white, draw=black, shape=circle]
\tikzstyle{new style 1}=[fill=black, draw=black, shape=circle]
\tikzstyle{new style 3}=[fill=black, draw=black, shape=circle, inner sep=0pt, minimum size=5pt]

\tikzstyle{new edge style 0}=[->]
\tikzstyle{new edge style 1}=[<-]
\tikzstyle{new edge style 2}=[-]
\tikzstyle{new edge style 3}=[-, dashed, fill=none]
\tikzstyle{new edge style 4}=[-, dashed, orange]
\tikzstyle{new edge style 5}=[-, orange]
\tikzstyle{new edge style 6}=[double, ->]
\tikzstyle{new edge style 7}=[triple, ->]
\tikzstyle{new edge style 8}=[-, thick, blue]
\tikzstyle{new edge style 9}=[-, fill={rgb,255: red,203; green,203; blue,203}]
\tikzstyle{new edge style 10}=[-, thick, red]
\tikzstyle{new edge style 11}=[-, thick]

%% file: 1-morph.tikz
\begin{tikzpicture}[scale=.5]
	\begin{pgfonlayer}{nodelayer}
		\node [style] (0) at (1, 16) {};
		\node [style] (1) at (1, 11) {};
		\node [style] (2) at (1, 6.75) {};
		\node [style] (4) at (-3, 11) {};
		\node [style] (6) at (1, 6.75) {};
		\node [style] (7) at (-3, 6.75) {};
		\node [style] (8) at (1, 16) {};
		\node [style] (9) at (-3, 16) {};
		\node [style] (10) at (-0.25, 10.5) {$C^2_{=0}$};
		\node [style] (12) at (-4.75, 11) {$C^1_{=0}=\emptyset$};
		\node [style] (13) at (2.5, 11) {$C^1_{=1}=\emptyset$};
	\end{pgfonlayer}
	\begin{pgfonlayer}{edgelayer}
		\draw (0.center) to (2.center);
		\draw [in=-90, out=-90] (1.center) to (4.center);
		\draw [in=90, out=90] (1.center) to (4.center);
		\draw [style=new edge style 3, in=-90, out=-90] (6.center) to (7.center);
		\draw [style=new edge style 3, in=90, out=90] (6.center) to (7.center);
		\draw [style=new edge style 3, in=-90, out=-90] (8.center) to (9.center);
		\draw [style=new edge style 3, in=90, out=90] (8.center) to (9.center);
		\draw (9.center) to (7.center);
	\end{pgfonlayer}
\end{tikzpicture}